\let\mathscr\mathcal
\newcommand{\yo}{\text{\usefont{U}{min}{m}{n}\symbol{'210}}}
\DeclareFontFamily{U}{min}{}
\DeclareFontShape{U}{min}{m}{n}{<-> udmj30}{}
\setlist[enumerate,1]{label={(\arabic*)},itemsep=\parskip} 
\setlist[itemize,1]{itemsep=\parskip} 
\newlist{thmlist}{enumerate}{2}
\setlist[thmlist,1]{label={\em(\roman*)},ref={(\roman*)},%
  itemsep=\parskip,leftmargin=*,align=left}
\setlist[thmlist,2]{label={\em(\alph*)},ref={(\alph*)},%
  itemsep=\parskip,leftmargin=*,align=left,topsep=0.1cm}
\newlist{defnlist}{enumerate}{2}
\setlist[defnlist,1]{label={(\roman*)},ref={(\roman*)},itemsep=\parskip,%
  leftmargin=*,align=left}
\setlist[defnlist,2]{label={(\alph*)},ref={(\alph*)},itemsep=\parskip,%
  leftmargin=*,align=left,topsep=0.1cm}
 \newtheorem*{thm*}{Theorem}
\newtheorem{thm}[subsection]{Theorem}
\newtheorem{cor}[subsection]{Corollary}
\newtheorem{lem}[subsection]{Lemma}
\newtheorem{prop}[subsection]{Proposition}
\theoremstyle{definition}
\newtheorem{defn}[subsection]{Definition}
\newtheorem{obs}[subsection]{Observation}
\newtheorem{rem}[subsection]{Remark}
\newtheorem{warning}[subsection]{Warning}
\newtheorem{exam}[subsection]{Example}
\newtheorem{notation}[subsection]{Notation}
\renewcommand{\eqref}[1]{(\ref{#1})}
\numberwithin{equation}{subsection}
\newcommand{\nc}{\newcommand}
\nc{\renc}{\renewcommand}
\nc{\ssec}{\subsection}
\nc{\sssec}{\subsubsection}
\nc{\on}{\operatorname}
\nc{\term}[1]{#1\xspace}
\nc{\sA}{\ensuremath{\mathcal{A}}\xspace}
\nc{\sB}{\ensuremath{\mathcal{B}}\xspace}
\nc{\sC}{\ensuremath{\mathcal{C}}\xspace}
\nc{\sD}{\ensuremath{\mathcal{D}}\xspace}
\nc{\sE}{\ensuremath{\mathcal{E}}\xspace}
\nc{\sF}{\ensuremath{\mathcal{F}}\xspace}
\nc{\sG}{\ensuremath{\mathcal{G}}\xspace}
\nc{\sH}{\ensuremath{\mathcal{H}}\xspace}
\nc{\sI}{\ensuremath{\mathcal{I}}\xspace}
\nc{\sJ}{\ensuremath{\mathcal{J}}\xspace}
\nc{\sK}{\ensuremath{\mathcal{K}}\xspace}
\nc{\sL}{\ensuremath{\mathcal{L}}\xspace}
\nc{\sM}{\ensuremath{\mathcal{M}}\xspace}
\nc{\sN}{\ensuremath{\mathcal{N}}\xspace}
\nc{\sO}{\ensuremath{\mathcal{O}}\xspace}
\nc{\sP}{\ensuremath{\mathcal{P}}\xspace}
\nc{\sQ}{\ensuremath{\mathcal{Q}}\xspace}
\nc{\sR}{\ensuremath{\mathcal{R}}\xspace}
\nc{\sS}{\ensuremath{\mathcal{S}}\xspace}
\nc{\sT}{\ensuremath{\mathcal{T}}\xspace}
\nc{\sU}{\ensuremath{\mathcal{U}}\xspace}
\nc{\sV}{\ensuremath{\mathcal{V}}\xspace}
\nc{\sW}{\ensuremath{\mathcal{W}}\xspace}
\nc{\sX}{\ensuremath{\mathcal{X}}\xspace}
\nc{\sY}{\ensuremath{\mathcal{Y}}\xspace}
\nc{\sZ}{\ensuremath{\mathcal{Z}}\xspace}
\nc{\bA}{\ensuremath{\mathbf{A}}\xspace}
\nc{\bB}{\ensuremath{\mathbf{B}}\xspace}
\nc{\bC}{\ensuremath{\mathbf{C}}\xspace}
\nc{\bD}{\ensuremath{\mathbf{D}}\xspace}
\nc{\bE}{\ensuremath{\mathbf{E}}\xspace}
\nc{\bF}{\ensuremath{\mathbf{F}}\xspace}
\nc{\bG}{\ensuremath{\mathbf{G}}\xspace}
\nc{\bH}{\ensuremath{\mathbf{H}}\xspace}
\nc{\bI}{\ensuremath{\mathbf{I}}\xspace}
\nc{\bJ}{\ensuremath{\mathbf{J}}\xspace}
\nc{\bK}{\ensuremath{\mathbf{K}}\xspace}
\nc{\bL}{\ensuremath{\mathbf{L}}\xspace}
\nc{\bM}{\ensuremath{\mathbf{M}}\xspace}
\nc{\bN}{\ensuremath{\mathbf{N}}\xspace}
\nc{\bO}{\ensuremath{\mathbf{O}}\xspace}
\nc{\bP}{\ensuremath{\mathbf{P}}\xspace}
\nc{\bQ}{\ensuremath{\mathbf{Q}}\xspace}
\nc{\bR}{\ensuremath{\mathbf{R}}\xspace}
\nc{\bS}{\ensuremath{\mathbf{S}}\xspace}
\nc{\bT}{\ensuremath{\mathbf{T}}\xspace}
\nc{\bU}{\ensuremath{\mathbf{U}}\xspace}
\nc{\bV}{\ensuremath{\mathbf{V}}\xspace}
\nc{\bW}{\ensuremath{\mathbf{W}}\xspace}
\nc{\bX}{\ensuremath{\mathbf{X}}\xspace}
\nc{\bY}{\ensuremath{\mathbf{Y}}\xspace}
\nc{\bZ}{\ensuremath{\mathbf{Z}}\xspace}
\nc{\dA}{\ensuremath{\mathds{A}}\xspace}
\nc{\dB}{\ensuremath{\mathds{B}}\xspace}
\nc{\dC}{\ensuremath{\mathds{C}}\xspace}
\nc{\dD}{\ensuremath{\mathds{D}}\xspace}
\nc{\dE}{\ensuremath{\mathds{E}}\xspace}
\nc{\dF}{\ensuremath{\mathds{F}}\xspace}
\nc{\dG}{\ensuremath{\mathds{G}}\xspace}
\nc{\dH}{\ensuremath{\mathds{H}}\xspace}
\nc{\dI}{\ensuremath{\mathds{I}}\xspace}
\nc{\dJ}{\ensuremath{\mathds{J}}\xspace}
\nc{\dK}{\ensuremath{\mathds{K}}\xspace}
\nc{\dL}{\ensuremath{\mathds{L}}\xspace}
\nc{\dM}{\ensuremath{\mathds{M}}\xspace}
\nc{\dN}{\ensuremath{\mathds{N}}\xspace}
\nc{\dO}{\ensuremath{\mathds{O}}\xspace}
\nc{\dP}{\ensuremath{\mathds{P}}\xspace}
\nc{\dQ}{\ensuremath{\mathds{Q}}\xspace}
\nc{\dR}{\ensuremath{\mathds{R}}\xspace}
\nc{\dS}{\ensuremath{\mathds{S}}\xspace}
\nc{\dT}{\ensuremath{\mathds{T}}\xspace}
\nc{\dU}{\ensuremath{\mathds{U}}\xspace}
\nc{\dV}{\ensuremath{\mathds{V}}\xspace}
\nc{\dW}{\ensuremath{\mathds{W}}\xspace}
\nc{\dX}{\ensuremath{\mathds{X}}\xspace}
\nc{\dY}{\ensuremath{\mathds{Y}}\xspace}
\nc{\dZ}{\ensuremath{\mathds{Z}}\xspace}
\nc{\bbA}{\ensuremath{\mathbb{A}}\xspace}
\nc{\bbB}{\ensuremath{\mathbb{B}}\xspace}
\nc{\bbC}{\ensuremath{\mathbb{C}}\xspace}
\nc{\bbD}{\ensuremath{\mathbb{D}}\xspace}
\nc{\bbE}{\ensuremath{\mathbb{E}}\xspace}
\nc{\bbF}{\ensuremath{\mathbb{F}}\xspace}
\nc{\bbG}{\ensuremath{\mathbb{G}}\xspace}
\nc{\bbH}{\ensuremath{\mathbb{H}}\xspace}
\nc{\bbI}{\ensuremath{\mathbb{I}}\xspace}
\nc{\bbJ}{\ensuremath{\mathbb{J}}\xspace}
\nc{\bbK}{\ensuremath{\mathbb{K}}\xspace}
\nc{\bbL}{\ensuremath{\mathbb{L}}\xspace}
\nc{\bbM}{\ensuremath{\mathbb{M}}\xspace}
\nc{\bbN}{\ensuremath{\mathbb{N}}\xspace}
\nc{\bbO}{\ensuremath{\mathbb{O}}\xspace}
\nc{\bbP}{\ensuremath{\mathbb{P}}\xspace}
\nc{\bbQ}{\ensuremath{\mathbb{Q}}\xspace}
\nc{\bbR}{\ensuremath{\mathbb{R}}\xspace}
\nc{\bbS}{\ensuremath{\mathbb{S}}\xspace}
\nc{\bbT}{\ensuremath{\mathbb{T}}\xspace}
\nc{\bbU}{\ensuremath{\mathbb{U}}\xspace}
\nc{\bbV}{\ensuremath{\mathbb{V}}\xspace}
\nc{\bbW}{\ensuremath{\mathbb{W}}\xspace}
\nc{\bbX}{\ensuremath{\mathbb{X}}\xspace}
\nc{\bbY}{\ensuremath{\mathbb{Y}}\xspace}
\nc{\bbZ}{\ensuremath{\mathbb{Z}}\xspace}
\nc{\mrm}[1]{\ensuremath{\mathrm{#1}}\xspace}
\nc{\mit}[1]{\ensuremath{\mathit{#1}}\xspace}
\nc{\mbf}[1]{\ensuremath{\mathbf{#1}}\xspace}
\nc{\mcal}[1]{\ensuremath{\mathcal{#1}}\xspace}
\nc{\msc}[1]{\ensuremath{\mathscr{#1}}\xspace}
\nc{\mfr}[1]{\ensuremath{\mathfrak{#1}}\xspace}
\renc{\bar}[1]{\overline{#1}}
\nc{\sub}{\subset}
\nc{\too}{\longrightarrow}
\nc{\hook}{\hookrightarrow}
\nc*{\hooklongrightarrow}{\ensuremath{\lhook\joinrel\relbar\joinrel\rightarrow}}
\nc{\hooklong}{\hooklongrightarrow}
\nc{\twoheadlongrightarrow}{\relbar\joinrel\twoheadrightarrow}
\nc{\shiso}{\approx}
\nc{\isoto}{\xrightarrow{\sim}}
\nc{\isofrom}{\xleftarrow{\sim}}
\renc{\ge}{\geqslant}
\renc{\le}{\leqslant}
\nc{\id}{\mathrm{id}}
\DeclareMathOperator{\Hom}{\on{Hom}}
\nc{\uHom}{\underline{\smash{\Hom}}}
\DeclareMathOperator{\Maps}{\on{Map}}
\DeclareMathOperator{\End}{\on{End}}
\nc{\uEnd}{\underline{\smash{\End}}}
\renc{\lim}{\varprojlim}
\newcommand{\colim@}[2]{%
  \vtop{\m@th\ialign{##\cr
    \hfil$#1\operator@font colim$\hfil\cr
    \noalign{\nointerlineskip\kern1.5\ex@}#2\cr
    \noalign{\nointerlineskip\kern-\ex@}\cr}}%
}
\newcommand{\colim}{%
  \mathop{\mathpalette\colim@{\rightarrowfill@\textstyle}}\nmlimits@
}
\nc{\Cofib}{\on{Cofib}}
\nc{\Fib}{\on{Fib}}
\nc{\initial}{\varnothing}
\nc{\op}{\mathrm{op}}
\nc{\Set}{\mrm{Set}}
\nc{\Spc}{\mrm{Spc}}
\nc{\Spt}{\mrm{Spt}}
\nc{\Spec}{\on{Spec}}
\nc{\Stk}{\mrm{Stk}}
\nc{\Sch}{\mrm{Sch}}
\nc{\aff}{\mrm{aff}}
\nc{\A}{\mbf{A}}
\renc{\P}{\mbf{P}}
\nc{\cl}{{\mrm{cl}}}
\nc{\bDelta}{\mathbf{\Delta}}
\nc{\un}{\mathbf{1}}
\nc{\Tot}{\on{Tot}}
\nc{\Cech}{\textnormal{\v{C}}}
\nc{\Mod}{\mrm{Mod}}
\nc{\Qcoh}{\on{Qcoh}}
\nc{\free}{\mrm{free}}
\nc{\perf}{\mrm{perf}}
\nc{\aperf}{\mrm{aperf}}
\nc{\coh}{\mrm{coh}}
\nc{\Cat}{\mrm{Cat}}
\nc{\unitm}{\mbf{1}}
\nc{\sphere}{\mbf{S}}
\nc{\Z}{\mbf{Z}}
\nc{\Map}{\mrm{Map}}
\nc{\map}{\mrm{map}}
\nc{\PrL}{\mathcal{P}r^\mrm{L}}
\nc{\PrR}{\mathcal{P}r^\mrm{R}}
\nc{\PrLst}{\mathcal{P}r^\mrm{L}_\mrm{St}}
\nc{\Motadd}{\mathcal{M}_\mrm{add}}
\nc{\Motnc}{\mathcal{M}_{\mrm{loc}}}
\nc{\Einfty}{{\sE_\infty}}
\nc{\E}[1]{{\sE_{#1}}}
\nc{\modmod}{/\!\!/}
\nc{\heart}{\heartsuit}
\nc{\proj}{\mrm{proj}}
\nc{\LL}{\on{L}}
\nc{\K}{\on{K}}
\nc{\G}{\on{G}}
\nc{\GL}{\on{GL}}
\nc{\BGL}{\on{BGL}}
\nc{\M}{\on{M}}
\nc{\KH}{\on{KH}}
\nc{\Alg}{\on{Alg}}
\nc{\CAlg}{\on{CAlg}}
\nc{\cn}{\mrm{cn}}
\nc{\hw}{\mrm{Hw}}
\nc{\htt}{\mrm{Ht}}
\nc{\Fun}{\on{Fun}}
\nc{\Funadd}{\on{Fun}_{\mrm{add}}}
\nc{\Funex}{\on{Fun}_{\mrm{ex}}}
\nc{\ex}{\mrm{ex}}
\nc{\Ind}{\on{Ind}}
\nc{\Pro}{\on{Pro}}
\nc{\Kar}{\on{Kar}}
\nc{\Obj}{\on{Obj}}
\nc{\Calk}{\mrm{Calk}}
\nc{\Cone}{\mrm{Cone}}
\nc{\Cof}{\mrm{cof}}
\nc{\splt}{\mrm{split}}
\nc{\scr}{\term{simplicial commutative ring}}
\nc{\scrs}{\term{simplicial commutative rings}}
\nc{\Einfring}{\term{$\Einfty$-ring}}
\nc{\Einfrings}{\term{$\Einfty$-rings}}
\nc{\Ering}{\term{$\sE_1$-ring}}
\nc{\Erings}{\term{$\sE_1$-rings}}
\nc{\inftyCat}{\term{$\infty$-category}}
\nc{\inftyCats}{\term{$\infty$-categories}}
\nc{\inftyTop}{\term{$\infty$-topos}}
\nc{\inftyTops}{\term{$\infty$-toposes}}
\nc{\inftyGrpd}{\term{$\infty$-groupoid}}
\nc{\inftyGrpds}{\term{$\infty$-groupoids}}
\title{}
\begin{document}

\title{Every motive is the motive of a stable $\infty$-category}
\author{Maxime Ramzi}
\address{Fachbereich Mathematik und Informatik\\
Universit\"at M\"unster\\
48149 M\"unster\\
Germany
}
\email{\href{mailto:mramzi@uni-muenster.de}{mramzi@uni-muenster.de}}

\author{Vladimir Sosnilo}
\address{
Fakult\"at f\"ur Mathematik\\
Universit{\"a}t Regensburg\\
93040 Regensburg\\
Germany
}
\email{\href{mailto:vsosnilo@gmail.com}{vsosnilo@gmail.com}}

\author{Christoph Winges}
\address{
Fakult\"at f\"ur Mathematik\\
Universit{\"a}t Regensburg\\
93040 Regensburg\\
Germany
}
\email{\href{mailto:christoph.winges@ur.de}{christoph.winges@ur.de}}

\bibliographystyle{alphamod}

\begin{abstract}
We define a class of motivic equivalences of small stable $\infty$-categories $W_{\mathrm{mot}}$ and show that the Dwyer--Kan 
localization functor $\mathrm{Cat}^{\mathrm{perf}}_\infty \to \mathrm{Cat}^{\mathrm{perf}}_\infty[W_{\mathrm{mot}}^{-1}]$ is 
the universal localizing invariant in the sense of Blumberg--Gepner--Tabuada. 
In particular, we show that every object in its target $\mathcal{M}_{\mathrm{loc}}$ can be represented as 
$\mathcal{U}_{\mathrm{loc}}(\mathcal{C})$ for some small stable $\infty$-category $\mathcal{C}$. As another consequence, and using work of Efimov,
we improve the universal property of $\mathcal{M}_{\mathrm{loc}}$ and show that any $\aleph_1$-finitary localizing invariant 
factors uniquely through it. 
\end{abstract}

\maketitle

\setcounter{tocdepth}{1}
\tableofcontents

\section{Introduction}

One of the many challenges in the subject of K-theory has always been to give it a general enough definition 
which would be both computationally useful and would satisfy a natural universal property. 
One can give a very general universal construction of K-theory in the context of localizing invariants 
\cite{blumberg2013universal, CisinskiTabuada}. To do so, one defines a universal localizing invariant 
\[
\Cat^\perf_\infty \to \Motnc
\]
and proves that nonconnective K-theory is corepresented by the symmetric monoidal unit in $\Motnc$. 
This definition is very satisfying from the theoretical point of view, but is quite hard to approach practically, 
because of our lack of understanding of what $\Motnc$ looks like. In particular, 
there is no concrete way to think about objects of $\Motnc$, limits and colimits of those, and the general 
mapping spaces between them. 
The goal of this paper is to give a new, more concrete construction of $\Motnc$ which is independent of the construction in \cite{blumberg2013universal}.

Let us recall some of the basic notions required to state the main theorem.

\begin{defn}\label{defn:KV_sequence}
We say that a diagram in $\Cat^{\perf}_{\infty}$
\[
\sC \stackrel{f}\to \sD \to \sE
\]
is a {\bf Karoubi sequence} if it is a cofiber sequence and $f$ is fully faithful.
\end{defn}

Since we are working in $\Cat^\perf_\infty$, the $\infty$-category of idempotent complete stable $\infty$-categories, the 
cofiber of a fully faithful functor is given by the idempotent completion of the usual Verdier quotient of stable 
$\infty$-categories. We will call this the {\bf Karoubi quotient}.

\begin{defn}\label{defn:localizing-finitary}
A functor $\Cat^{\perf}_{\infty} \to \sX$ is 
\begin{enumerate}
 \item a {\bf localizing invariant} if it sends $0$ to the final object of $\sX$ and Karoubi sequences to fibre sequences in $\sX$;
 \item {\bf finitary} if it preserves filtered colimits.
\end{enumerate}
We denote by $\Fun^{\mrm{loc}, \mrm{fin}}(\mrm{Cat}^{\perf}_{\infty}, \sX)$ the full subcategory of $\Fun(\Cat^{\perf}_{\infty}, \sX)$ consisting of finitary localizing invariants.
\end{defn}

\begin{exam}
The nonconnective K-theory functor $\K : \Cat^\perf_\infty \to \Spt$ is a finitary localizing invariant. 
\end{exam}

\begin{defn}\label{def:motivic-equivalence}
 An exact functor $f : \sC \to \sD$ of small stable $\infty$-categories is a {\bf motivic equivalence} if $L(f)$ is an 
 equivalence for every finitary localizing invariant $L : \Cat^\perf_\infty \to \sX$ with $\sX$ a stable  
 $\infty$-category. 
\end{defn}

\begin{thm}\label{thm:main-intro}
 Let $W_\mrm{mot}$ denote the class of motivic equivalences in $\Cat^\perf_\infty$.
 \begin{enumerate}
  \item The Dwyer--Kan localization $\Motnc := \Cat^\perf_\infty[W^{-1}_\mrm{mot}]$ is a locally small and stable presentable $\infty$-category.
  \item The symmetric monoidal structure on $\Cat^\perf_\infty$ induces a symmetric monoidal structure on $\Motnc$ such that the localization functor $\gamma : \Cat^\perf_\infty \to \Motnc$ refines to a symmetric monoidal functor.
  \item The localization functor $\gamma$ is the universal finitary localizing invariant: the functor $\gamma$ is a finitary localizing invariant, and for every stable presentable
  $\infty$-category $\sX$, restriction along $\gamma$ induces an equivalence
  \[ \gamma^* : \Fun^\mrm{L}(\Motnc,\sX) \xrightarrow{\sim} \Fun^{\mrm{loc},\mrm{fin}}(\Cat^\perf_\infty,\sX). \]
 \end{enumerate} 
\end{thm}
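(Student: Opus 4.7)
\medskip

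\noindent\textbf{Proof proposal.} The plan is to first construct a candidate $\widetilde{\Motnc}$ as a presentable stable $\infty$-category via an accessible Bousfield localization (this is essentially the Blumberg--Gepner--Tabuada style construction), and then to identify $\widetilde{\Motnc}$ with the Dwyer--Kan localization $\Cat^\perf_\infty[W_\mrm{mot}^{-1}]$ by proving that $\gamma$ is essentially surjective. The claims about stability, the universal property, and the symmetric monoidal structure will then be read off the Bousfield-localization description, while the first part of (1) (that the DK localization is locally small) follows from the presentability of $\widetilde{\Motnc}$.

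Concretely, I first recall that $\Cat^\perf_\infty$ is compactly generated (the compact objects being the finitely presented idempotent-complete stable $\infty$-categories). I identify a small set $W_0 \subseteq W_\mrm{mot}$ whose strong saturation is all of $W_\mrm{mot}$: it consists of the map $\initial \to 0$ together with the canonical maps encoding the Karoubi-cofiber-sequence condition, one for each Karoubi sequence of compact objects. Standard accessible-localization machinery then produces a presentable reflective localization $\gamma \colon \Cat^\perf_\infty \to \widetilde{\Motnc}$. Stability of $\widetilde{\Motnc}$ follows by checking, on a generating family, that the localization is compatible with the stable structure (equivalently, that $\widetilde{\Motnc}$ is pointed and the suspension is invertible). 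For the universal property in (3), one combines the universal property of Bousfield localization with the fact that colimit-preserving functors out of a compactly generated category correspond to finitary functors on the original category; applied to a stable presentable target $\sX$, the condition of inverting $W_0$ becomes precisely the localizing-invariant condition, yielding $\gamma^\ast \colon \Fun^\mrm{L}(\widetilde{\Motnc},\sX) \xrightarrow{\sim} \Fun^{\mrm{loc},\mrm{fin}}(\Cat^\perf_\infty,\sX)$.

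The crucial step, and the main obstacle, is showing that the essential image of $\gamma$ is all of $\widetilde{\Motnc}$: this is what is needed to identify $\widetilde{\Motnc}$ with the Dwyer--Kan localization, since once $\gamma$ is essentially surjective the universal property of (3) forces $\widetilde{\Motnc}$ to satisfy the universal property of $\Cat^\perf_\infty[W_\mrm{mot}^{-1}]$ in $\mathrm{Cat}_\infty$. To prove essential surjectivity, I would argue that the essential image is closed under colimits in $\widetilde{\Motnc}$ and contains a set of generators, namely $\gamma$ applied to the compact generators of $\Cat^\perf_\infty$. Closure under filtered colimits is immediate from the finitarity of $\gamma$; the difficulty is closure under finite colimits, shifts, and cofibers. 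The heart of the argument will therefore consist of explicit constructions in $\Cat^\perf_\infty$ realizing these operations after applying $\gamma$: for instance, producing for each $\sC$ a small stable $\infty$-category whose image under $\gamma$ is $\Sigma \gamma(\sC)$, and producing for each exact functor $f \colon \sC \to \sD$ a small stable $\infty$-category whose image is $\cofib(\gamma(f))$. This is the technically substantial part of the paper and is precisely the content that distinguishes the Dwyer--Kan localization from the abstractly constructed $\widetilde{\Motnc}$.

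Granting all of this, (2) follows by observing that the tensor product on $\Cat^\perf_\infty$ preserves filtered colimits and Karoubi sequences in each variable, hence preserves motivic equivalences in each variable; the symmetric monoidal structure therefore descends to the DK localization by the universal property of symmetric monoidal localizations, and $\gamma$ refines to a symmetric monoidal functor.
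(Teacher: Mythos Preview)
Your proposal contains a genuine gap in the identification step. You claim that once $\gamma \colon \Cat^\perf_\infty \to \widetilde{\Motnc}$ is essentially surjective, ``the universal property of (3) forces $\widetilde{\Motnc}$ to satisfy the universal property of $\Cat^\perf_\infty[W_\mrm{mot}^{-1}]$ in $\mathrm{Cat}_\infty$.'' This is not correct: the universal property (3) only characterizes $\widetilde{\Motnc}$ among colimit-preserving functors into stable presentable targets, whereas the Dwyer--Kan localization is characterized by the condition that $\gamma^* \colon \Fun(\widetilde{\Motnc},\sE) \to \Fun_{W_\mrm{mot}}(\Cat^\perf_\infty,\sE)$ is an equivalence for \emph{every} $\infty$-category $\sE$. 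Essential surjectivity of $\gamma$ plus (3) does not imply this stronger statement; there is no general principle that lets you pass from one to the other. (A toy illustration: the inclusion of the discrete category $\{0,1\}$ into $\Delta^1$ is essentially surjective and inverts only identities, but $\Delta^1$ is certainly not the localization of $\{0,1\}$ at the identities.) A related issue is your claim that the essential image is closed under filtered colimits ``immediately from finitarity'': finitarity says $\gamma$ preserves filtered colimits computed in $\Cat^\perf_\infty$, but a filtered diagram in $\widetilde{\Motnc}$ whose vertices lie in the image need not a priori lift to a diagram in $\Cat^\perf_\infty$.

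There is also a confusion in your construction of $\widetilde{\Motnc}$. An accessible reflective localization of $\Cat^\perf_\infty$ at a set of maps does yield a presentable $\infty$-category which \emph{is} a Dwyer--Kan localization, but $\Cat^\perf_\infty$ is not stable, so there is no ``stable structure'' for the localization to be compatible with, and no reason for the result to be stable. The actual BGT construction first passes to a stable presheaf category and then localizes; the resulting $\gamma$ is then \emph{not} a reflective localization of $\Cat^\perf_\infty$, so you lose the automatic identification with a DK localization.

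The paper's approach is essentially orthogonal to yours and deliberately avoids BGT. It equips $\Cat^\perf_\infty$ with the structure of a category of cofibrant objects (motivic equivalences as weak equivalences, fully faithful functors as cofibrations) and uses Cisinski's calculus of fractions to control the DK localization directly. Set-theoretic issues are handled by exhausting $\Cat^\perf_\infty$ by small subcategories $\Cat^{\perf,\kappa}_\infty$ closed under the cofibration structure; each $\sM_\kappa$ is then a small, finitely cocomplete $\infty$-category. Stability of $\sM_\kappa$ is proved by exhibiting an explicit tensor-inverse to the suspension object $\Calk$, namely the Grayson construction $\Gamma$; this is the technical input replacing your unproved claim that ``the localization is compatible with the stable structure.'' The universal property (3) is then deduced from the DK localization property together with stability, rather than the other way around. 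In short, the paper proves DK localization $\Rightarrow$ stable presentable $\Rightarrow$ universal property, while you attempt universal property $+$ essential surjectivity $\Rightarrow$ DK localization, and that implication does not hold.
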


This gives a completely new computational and theoretical approach to studying the universal localizing invariant. In particular this result can be used to study functors out of $\Motnc$ that are not exact, and so do not correspond to localizing invariants, in relation to e.g. multiplicative structures or polynomial functoriality. 

One of the key observations (see Proposition~\ref{prop:cat-perf-cofibration-cat}) lies in the fact that the triple $(\Cat^\perf_\infty, W_{\mrm{mot}}, \Cof)$ gives $\Cat^\perf_\infty$ the structure of a {\it cofibration category} in the sense of Cisinski \cite[Chapter~7]{cisinski-rl}, where $\Cof$ is the class of fully faithful functors.
This is reminiscent of classical homotopy theory, where one can approach the $\infty$-category of spaces through the model category of simplicial sets.
In this sense, one can think of $\Motnc$ as a {\it homotopy theory} of stable $\infty$-categories. 

To prove stability of the localization $\Cat^\perf_\infty[W^{-1}_\mrm{mot}]$, we rely on the fact that desuspensions in the category of motives can be categorified in terms of the Grayson construction.
In principle, the necessary statements can be deduced from \cite{KasprowskiWinges}, but we include a discussion of the Grayson construction for the reader's convenience in Appendix~\ref{sec:reminder_on_the_grayson_construction}.
While doing so, we also seize the opportunity to provide a more geometric interpretation of the Grayson construction, and remove an unnecessary assumption from \cite{KasprowskiWinges}.

Appendix~\ref{app:linear} sketches a generalization of our main results to the setting of $\sV$-linear categories, where $\sV$ is any presentably symmetric monoidal stable $\infty$-category.

%
%

We demonstrate the utility of our results via several applications. 

\sssec{Every spectrum is the K-theory of a stable category}
An immediate corollary of Theorem~\ref{thm:main-intro} is that every object of $\Motnc$ is equivalent to 
$\gamma(\sC)$ for some $\sC \in \Cat^\perf_\infty$. 
In our previous paper \cite{rsw:every-spectrum-is-k-theory} we proved the following result. 

\begin{thm}\label{thm:main-spectra}
 For every spectrum $M$, there exists a small stable $\infty$-category $\sC$ with $\K(\sC) \simeq M$.
\end{thm}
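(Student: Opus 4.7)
The plan is to deduce Theorem~\ref{thm:main-spectra} directly from Theorem~\ref{thm:main-intro} combined with the classical fact from \cite{blumberg2013universal} that nonconnective K-theory is corepresented by the symmetric monoidal unit of $\Motnc$. Explicitly, this identifies $\K$ with the composite of $\gamma : \Cat^\perf_\infty \to \Motnc$ and the corepresentable functor $\Map_{\Motnc}(\unitm, -) : \Motnc \to \Spt$. The strategy is then to realize an arbitrary spectrum $M$ as $\Map_{\Motnc}(\unitm, X)$ for some $X \in \Motnc$, and to invoke essential surjectivity of $\gamma$ to obtain a preimage $\sC$.

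First I would construct a left adjoint $L : \Spt \to \Motnc$ to the functor $\Map_{\Motnc}(\unitm, -)$. This functor preserves limits and, since $\Motnc$ is presentable by Theorem~\ref{thm:main-intro}(1) and $\unitm$ is $\kappa$-compact for some $\kappa$, is accessible; the adjoint functor theorem therefore supplies $L$. Because $\Spt$ is freely generated by the sphere under colimits, $L$ is determined by $L(\sphere)$, and a direct Yoneda computation forces $L(\sphere) \simeq \unitm$. Adjunction then yields, for every spectrum $M$,
\[
\Map_{\Motnc}(\unitm, L(M)) \simeq \Map_{\Spt}(\sphere, M) \simeq M.
\]

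Next I would apply the essential surjectivity assertion of Theorem~\ref{thm:main-intro}(3), which says that every object of $\Motnc$ is equivalent to $\gamma(\sC)$ for some $\sC \in \Cat^\perf_\infty$. Applied to $L(M)$, this produces a small stable $\infty$-category $\sC$ with $\gamma(\sC) \simeq L(M)$, and combining with the previous display gives
\[
\K(\sC) \simeq \Map_{\Motnc}(\unitm, \gamma(\sC)) \simeq \Map_{\Motnc}(\unitm, L(M)) \simeq M,
\]
as required.

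The main obstacle is entirely encapsulated in the appeal to Theorem~\ref{thm:main-intro}: the nontrivial input is the essential surjectivity of $\gamma$, which is the substantive new content of the present paper. Once this is granted, together with the identification of $\K$ with the corepresentable functor at the unit, Theorem~\ref{thm:main-spectra} follows by the formal adjoint-functor argument sketched above, with no further computation needed.
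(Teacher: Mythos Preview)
Your argument has a genuine gap at the step ``Adjunction then yields, for every spectrum $M$, $\map_{\Motnc}(\unitm, L(M)) \simeq \map_{\Spt}(\sphere, M) \simeq M$.'' Writing $R = \map_{\Motnc}(\unitm,-)$ for the K-theory functor and $L$ for its left adjoint, what you are asserting is that the unit map $M \to RL(M)$ is an equivalence, i.e.\ that $L$ is fully faithful. This does not follow from adjunction alone, and in fact it is false: evaluating at $M=\sphere$ gives $RL(\sphere) \simeq R(\unitm) \simeq \K(\sphere)$, and the unit map $\sphere \to \K(\sphere)$ is not an equivalence. So the left adjoint $L$ is \emph{not} a section of K-theory, and your construction does not produce an object of $\Motnc$ with prescribed K-theory.

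The paper fixes exactly this issue with one extra piece of input: the Dennis--Waldhausen trace exhibits $\sphere$ as a retract of $\K(\sphere)$ in ring spectra. Combined with the identification $\map_{\Motnc}(\unitm,\unitm) \simeq \K(\sphere)$ and compactness of $\unitm$, this yields a genuine (fully faithful) section $\Spt \to \Mod(\K(\sphere)) \to \Motnc$ of the K-theory functor (Lemma~\ref{lem:motivic-section}). After that, your use of essential surjectivity of $\gamma$ is exactly right. So the overall strategy is the same as the paper's; what is missing from your version is the trace argument that upgrades ``has a left adjoint'' to ``has a section''.
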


We obtain an alternative proof of Theorem~\ref{thm:main-spectra} 
by combining Theorem~\ref{thm:main-intro} with the following lemma.

\begin{lem}\label{lem:motivic-section}
 The functor $\Motnc \to \Spt$ induced by nonconnective algebraic K-theory admits a section.
\end{lem}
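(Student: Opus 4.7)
The plan is to construct a colimit-preserving section $s : \Spt \to \Motnc$ of the colimit-preserving functor $\bar{\K} : \Motnc \to \Spt$ obtained from \thmref{thm:main-intro}(3) applied to nonconnective K-theory. Since $\Spt$ is the free stable presentable $\infty$-category on one generator (namely the sphere spectrum), a colimit-preserving functor $s : \Spt \to \Motnc$ is uniquely determined by its value $X := s(\sphere)$, and the condition $\bar{\K} \circ s \simeq \id_{\Spt}$ becomes, by the same universal property applied to the composite, the single equivalence $\bar{\K}(X) \simeq \sphere$. The problem therefore reduces to exhibiting a single object $X \in \Motnc$ whose image under $\bar{\K}$ is the sphere spectrum.

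The most obvious candidate, the unit $\un_{\Motnc} = \gamma(\Spt^\omega)$, fails, since $\bar{\K}(\un_{\Motnc}) \simeq \K(\sphere)$ rather than $\sphere$. Still, $\sphere$ is a split summand of $\K(\sphere)$: the unit map $\sphere \to \K(\sphere)$ of the $\E_\infty$-ring $\K(\sphere)$ is split by the Dennis trace $\K(\sphere) \to \mathrm{THH}(\sphere) \simeq \sphere$. The strategy is to promote this retraction to the level of $\Motnc$ and extract $X$ as the corresponding summand of $\un_{\Motnc}$.

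The main technical obstacle is that this splitting cannot come from an idempotent endomorphism of $\un_{\Motnc}$ inside $\Motnc$: the idempotent $\K(\sphere) \to \sphere \to \K(\sphere)$ does not lie in the image of the natural ring map
\[ \pi_0 \End_{\Motnc}(\un_{\Motnc}) \simeq \pi_0 \K(\sphere) \simeq \Z \longrightarrow \pi_0 \End_{\Spt}(\K(\sphere)), \]
which only sees scalar multiplications. I would therefore look for $X$ as a genuinely different object of $\Motnc$, built as a finite (co)limit from $\un_{\Motnc}$, its shifts (accessible via the Grayson construction recalled in Appendix~\ref{sec:reminder_on_the_grayson_construction}), and motives of other carefully chosen small stable $\infty$-categories, engineered so that its image under $\bar{\K}$ computes $\sphere$ directly. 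Checking this equivalence should then be a direct computation once the construction is in place; designing the construction is the hard part.
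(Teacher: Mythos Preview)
Your reduction is correct: by the universal property of $\Spt$, a colimit-preserving section amounts to finding $X \in \Motnc$ with $\bar{\K}(X) \simeq \sphere$. You also correctly observe that the idempotent on $\K(\sphere)$ projecting onto $\sphere$ does not lift along $\pi_0\End_{\Motnc}(\un_{\Motnc}) \cong \Z \to \pi_0\End_{\Spt}(\K(\sphere))$, so $X$ cannot be a direct summand of $\un_{\Motnc}$.

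The gap is in what comes next. Your proposed path---building $X$ by hand as a finite (co)limit of motives of chosen categories---is not a proof, and you yourself flag the construction as ``the hard part'' without supplying it. The paper's argument avoids this entirely by exploiting one additional piece of structure you noted but did not use: the retraction $\sphere \to \K(\sphere) \to \sphere$ takes place in \emph{ring} spectra, not merely in spectra. This buys a module-theoretic factorisation. Since $\bar{\K}$ is lax symmetric monoidal, it factors as $\Motnc \to \Mod(\K(\sphere)) \to \Spt$, and each leg admits a section: the second because restriction along the ring map $\K(\sphere) \to \sphere$ gives $\Spt \simeq \Mod(\sphere) \to \Mod(\K(\sphere))$; the first because $\un_{\Motnc}$ is compact with endomorphism ring $\K(\sphere)$, so Schwede--Shipley yields a fully faithful left adjoint $\Mod(\K(\sphere)) \to \Motnc$. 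The sought-after $X$ is then $\sphere \otimes_{\K(\sphere)} \un_{\Motnc}$, which is an infinite colimit (a bar construction), not a finite one---so your search space would likely have missed it anyway.
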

\begin{proof}
 Since K-theory is lax symmetric monoidal, the lax symmetric monoidal and colimit-preserving functor $\Motnc \to \Spt$  induced by $\K$ factors over $\Mod(\K(\mbf{S}))$.
 Since the Dennis--Waldhausen trace $\K(\mbf{S}) \to \mbf{S}$ exhibits the sphere spectrum as a retract of $\K(\mbf{S})$ in the category of ring spectra, the forgetful functor $\Mod(\K(\mbf{S})) \to \Mod(\mbf{S}) \simeq \Spt$ admits a section.
 
 Moreover, we have $\map_{\Motnc}(\sU_{\mrm{loc}}(\Spt^\omega),\sU_{\mrm{loc}}(\Spt^\omega)) \simeq \K(\mbf{S})$ as ring spectra, which induces a fully faithful section $\Mod(\K(\mbf{S})) \to \Motnc$ to the functor $\Motnc \to \Mod(\K(\mbf{S}))$ because $\sU_{\mrm{loc}}(\Spt^\omega)$ is compact by \cite[Theorem~9.8]{blumberg2013universal} (see also Corollary~\ref{cor:Kthcorep} below).
\end{proof}

\sssec{Improving the universal property}
Even though the class $W_{\mrm{mot}}$ in Theorem~\ref{thm:main-intro} is rather inexplicit, we are able to give a more 
tangible description of motivic invariance, at least for $\aleph_1$-finitary localizing invariants, and using currently unpublished work of Sasha Efimov.
 
More precisely, we construct a very explicit class of motivic equivalences $W_{\mrm{simple}}$ in $\Cat^\perf_\infty$ such 
that for any $\infty$-category $\sX$ the restriction functor
\[
\Fun^{\aleph_1-\mathrm{fin}}(\Motnc,\sX) \to \Fun^{\aleph_1-\mathrm{fin}}_{W_{\mrm{simple}}}(\Cat^\perf_\infty,\sX)
\]
is an equivalence. Here the superscript ``$\aleph_1$-fin'' denotes the the subcategory of functors that preserve 
$\aleph_1$-filtered colimits and the subscript ``$W_{\mrm{simple}}$'' denotes the subcategory of functors that send elements of 
$W_{\mrm{simple}}$ to equivalences.

The explicit description allows us to check directly whether a functor $\Cat^\perf_\infty \to \sX$ factors over $\gamma$. 
In particular we obtain the following stronger universal property of $\Motnc$. 

\begin{thm}[{Theorem~\ref{thm:universal_omega_1_finitary}}]\label{thm:omega_1intro}
The functor $\gamma : \Cat^\perf_\infty \to \Motnc$ is the universal $\aleph_1$-finitary localizing invariant: 
for every stable $\infty$-category $\sX$, restriction along $\gamma$ induces an equivalence
\[ \gamma^* : \Fun^{\aleph_1-\mathrm{fin}}_{\mathrm{ex}}(\Motnc,\sX) \xrightarrow{\sim} \Fun^{\mrm{loc},\aleph_1-\mathrm{fin}}(\Cat^\perf_\infty,\sX). \]
\end{thm}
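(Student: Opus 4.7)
The plan is to deduce the theorem from the intermediate equivalence
\[ \Fun^{\aleph_1-\mathrm{fin}}(\Motnc,\sX) \isoto \Fun^{\aleph_1-\mathrm{fin}}_{W_{\mrm{simple}}}(\Cat^\perf_\infty,\sX) \]
alluded to above (whose proof relies on Efimov's work and the explicit class $W_{\mrm{simple}}$) by restricting both sides to appropriate full subcategories: exact $\aleph_1$-finitary functors on the left, and $\aleph_1$-finitary localizing invariants on the right.

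First I would observe that every $\aleph_1$-finitary localizing invariant automatically inverts $W_{\mrm{simple}}$. This should be essentially tautological from the construction of $W_{\mrm{simple}}$: its elements are motivic equivalences whose invariance follows directly from the localizing axioms, typically because they arise from standard manipulations of Karoubi sequences. Consequently, $\Fun^{\mathrm{loc},\aleph_1-\mathrm{fin}}(\Cat^\perf_\infty,\sX)$ sits as a full subcategory of $\Fun^{\aleph_1-\mathrm{fin}}_{W_{\mrm{simple}}}(\Cat^\perf_\infty,\sX)$, and via the displayed equivalence every $\aleph_1$-finitary localizing invariant $L$ can be written uniquely as $F \circ \gamma$ for some $\aleph_1$-finitary $F : \Motnc \to \sX$.

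The central remaining point is then that, for such $F$ and $L = F \circ \gamma$, the functor $F$ is exact if and only if $L$ is a localizing invariant. The implication $F$ exact $\Rightarrow$ $L$ localizing is immediate, since $\gamma$ is itself a localizing invariant by Theorem~\ref{thm:main-intro} and exact functors between stable $\infty$-categories preserve fiber sequences and zero objects. The reverse implication is the substantive one: assuming $L$ is localizing and $F$ is $\aleph_1$-finitary, I need to show that $F$ preserves finite colimits. This in turn reduces to showing that every cofiber sequence in $\Motnc$ can be exhibited as an $\aleph_1$-filtered colimit of cofiber sequences of the form $\gamma(\sC) \to \gamma(\sD) \to \gamma(\sE)$ arising from Karoubi sequences in $\Cat^\perf_\infty$; for those, the hypothesis that $L$ is a localizing invariant gives exactly preservation under $F$.

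The main obstacle is this final generation statement: that $\Motnc$ is $\aleph_1$-accessible with $\gamma(\Cat^\perf_\infty)$ forming a family of $\aleph_1$-compact generators in a way compatible with cofiber sequences. This refinement over the filtered-colimit generation implicit in Theorem~\ref{thm:main-intro} is precisely where Efimov's continuity results are indispensable, and one expects it to be already packaged into the proof of the intermediate equivalence above, so that Theorem~\ref{thm:omega_1intro} follows formally once the matching of the two pairs of full subcategories is in place.
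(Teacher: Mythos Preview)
Your overall outline matches the paper's: deduce the theorem from Theorem~\ref{thm:localization_simple} by restricting both sides to the appropriate full subcategories (exact on the left, localizing on the right), using that $\aleph_1$-finitary localizing invariants invert $W_{\mrm{simple}}$ (Lemma~\ref{lem:aleph1inverts}).

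However, you misidentify the ``main obstacle''. The implication ``$L$ localizing $\Rightarrow$ $F$ exact'' does not require any $\aleph_1$-filtered-colimit decomposition of cofiber sequences. Since $\gamma$ is essentially surjective and the cofibration-category localization preserves pushouts along cofibrations (Theorem~\ref{thm:main-intro} and Theorem~\ref{thm:localize-cofibration-cat}), \emph{every} cofiber sequence in $\Motnc$ is literally the image under $\gamma$ of a Karoubi sequence in $\Cat^\perf_\infty$: represent the given map by a zig-zag, absorb the backward motivic equivalence into the target, and factor the remaining forward map as a cofibration followed by a weak equivalence. So this step is immediate, and Efimov's result (Theorem~\ref{thm:efimov_morphisms}) is used \emph{only} inside the proof of Theorem~\ref{thm:localization_simple}, not in the deduction you describe. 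Incidentally, the fact that $\aleph_1$-finitary localizing invariants invert $W_{\mrm{simple}}$ is not quite tautological either: for the maps $\eta_\sC$ it requires a reduction to presentable targets (see Lemma~\ref{lem:aleph1inverts} and Remark~\ref{rem:nonfinitary}).
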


Theorem~\ref{thm:omega_1intro} can be used to verify the following.

\begin{thm}[Theorem~\ref{thm:countable_products}]
 The universal localizing invariant $\sU_{\mrm{loc}}$ preserves countable products.
\end{thm}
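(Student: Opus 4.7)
The plan is to combine the improved universal property of Theorem~\ref{thm:omega_1intro} with Efimov's continuity theorem for $\aleph_1$-finitary localizing invariants of dualizable stable $\infty$-categories.

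Fix a countable family $(\sC_n)_{n \in \bbN}$ in $\Cat^\perf_\infty$ and the natural comparison map
\[
\phi \colon \sU_\mrm{loc}\!\left(\prod_n \sC_n\right) \longrightarrow \prod_n \sU_\mrm{loc}(\sC_n)
\]
in $\Motnc$ that we want to show is an equivalence. By Theorem~\ref{thm:omega_1intro}, the $\aleph_1$-finitary exact functors out of $\Motnc$ correspond bijectively to $\aleph_1$-finitary localizing invariants on $\Cat^\perf_\infty$, and (since $\Motnc$ is presentable and generated by the image of $\sU_\mrm{loc}$) such functors form a conservative family. It therefore suffices to show that every $\aleph_1$-finitary localizing invariant $L \colon \Cat^\perf_\infty \to \sX$ preserves countable products.

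For this reduced problem, I would invoke Efimov's theorem: every $\aleph_1$-finitary localizing invariant $L$ admits a canonical extension $\widetilde L$ to the $\infty$-category of dualizable stable presentable $\infty$-categories, with $\widetilde L(\Ind \sC) \simeq L(\sC)$ for $\sC \in \Cat^\perf_\infty$, and $\widetilde L$ preserves countable products of dualizable categories. Applied to the dualizable product of the compactly generated categories $\Ind \sC_n$, this gives $\widetilde L(\prod_n \Ind \sC_n) \simeq \prod_n L(\sC_n)$, while $\widetilde L(\Ind(\prod_n \sC_n)) \simeq L(\prod_n \sC_n)$. So the question becomes whether these two values of $\widetilde L$ agree.

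The main obstacle is precisely this comparison. The natural map $\Ind(\prod_n \sC_n) \to \prod_n \Ind \sC_n$ in $\Cat^\mrm{dual}$ is not an equivalence --- for example, the compact objects of the target are only $\bigoplus_n \sC_n$, not $\prod_n \sC_n$. One must show that $\widetilde L$ nevertheless inverts this map. The natural strategy is to exploit the Karoubi sequence $\bigoplus_n \sC_n \to \prod_n \sC_n \to Q$ in $\Cat^\perf_\infty$ together with the Efimov--Calkin quotient $(\prod_n \Ind \sC_n)/\bigoplus_n \sC_n$ in the dualizable world, and to use Efimov's continuity to identify these two ``quotients'' under $\widetilde L$. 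Combined with the five lemma applied to the resulting cofiber sequences in $\sX$ (the fiber term $\bigoplus_n L(\sC_n)$ on both sides being already identified, as $\sU_\mrm{loc}$ is finitary and additive), this yields the desired preservation of countable products.
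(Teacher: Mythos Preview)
Your reduction in the first paragraph is confused: testing $\phi$ against $\aleph_1$-finitary exact functors $F \colon \Motnc \to \sX$ asks whether $F(\phi) \colon L(\prod_n \sC_n) \to F\bigl(\prod_n \sU_\mrm{loc}(\sC_n)\bigr)$ is an equivalence, and the target here is not $\prod_n L(\sC_n)$ unless $F$ itself preserves countable products. Since the identity on $\Motnc$ already lies in your conservative family, the ``reduction'' collapses to the original statement. What you are really proposing is to prove the \emph{stronger} claim that every $\aleph_1$-finitary localizing invariant preserves countable products --- which is fine as a strategy, but the conservativity detour is a red herring.

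More seriously, the final paragraph does not close the gap you correctly identify. Even granting that $\widetilde L$ preserves countable products in the dualizable world, you still need to compare $\widetilde L(\Ind(\prod_n \sC_n))$ with $\widetilde L(\prod_n \Ind(\sC_n))$, and your five-lemma sketch requires identifying $L(Q)$ with $\widetilde L$ of the dualizable Calkin quotient --- which is essentially the same comparison problem pushed one step along the cofiber sequence, not a resolution of it. The paper takes a different and more self-contained route that avoids the dualizable extension entirely for this statement: since $\Motnc = \Cat^\perf_\infty[W_{\mrm{mot}}^{-1}]$ is a Dwyer--Kan localization of a cofibration category, $\prod_{\mbf{N}} \Motnc$ is the localization of $\prod_{\mbf{N}} \Cat^\perf_\infty$ at levelwise motivic equivalences, so it suffices to show that the product functor $\Pi \colon \prod_{\mbf{N}} \Cat^\perf_\infty \to \Cat^\perf_\infty$ sends levelwise motivic equivalences to motivic equivalences; the adjunction $\Delta \dashv \Pi$ then descends. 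A retract trick reduces to constant families, and the key observation is that for any finitary localizing $E$ the functor $E^\Pi(\sC) := E(\prod_{\mbf{N}} \sC)$ is itself an $\aleph_1$-finitary localizing invariant --- so Theorem~\ref{thm:omega_1intro} applies directly to $E^\Pi$ to show it inverts motivic equivalences.
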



\ssec{Notation}

\begin{itemize}
\item We will denote by $\sS$ the $\infty$-category of spaces and by $\Spt$ the $\infty$-category of spectra. 
\item We will denote by $\Cat^{\perf}_{\infty}$ the $\infty$-category of idempotent complete, small stable $\infty$-categories and exact functors.
For $\sC, \sD \in \Cat^{\perf}_{\infty}$, we denote by $\Funex(\sC,\sD)$ the $\infty$-category of exact functors from $\sC$ to $\sD$. 
\item For a cardinal $\kappa$, and a cocomplete $\infty$-category $\sC$ we denote by $\sC^\kappa$ the subcategory of 
$\kappa$-compact objects. In particular, $\sC^\omega$ is the subcategory of compact objects in the usual sense. 
\item Given a stable $\infty$-category $\sC$ and $x,y \in \sC$, we denote by $\Map_\sC(x,y)$ the mapping space between these objects, and by $\map_\sC(X,Y)$ the corresponding mapping spectrum. 
\item We use the notation $\K(-)$ for the {\bf nonconnective} K-theory spectrum.
\end{itemize}

\ssec{Acknowledgements}
We are very grateful to Sasha Efimov for helpful conversations about his work, and we thank Denis-Charles Cisinski and Fabian Hebestreit for helpful discussions related to this work. 
The first author was supported by the Danish National Research Foundation through the Copenhagen Centre for Geometry and Topology (DNRF151) while much of this research was conducted,  and is funded by the Deutsche Forschungsgemeinschaft (DFG, German Research Foundation) -- Project-ID 427320536 -- SFB 1442, as well as by Germany's Excellence Strategy EXC 2044 390685587, Mathematics Münster: Dynamics--Geometry--Structure.

The second and third author are supported by the SFB 1085 ``Higher Invariants'' funded by the Deutsche Forschungsgesellschaft (DFG). 
While parts of this research were conducted, the third author was funded by Ludwig-Maximilians-Universit\"at M\"unchen.

\section{\texorpdfstring{$\Cat^\perf_\infty$ as a category of cofibrant objects}{Catperf as a category of cofibrant objects}}

Let us recall the relevant results about cofibration categories from \cite{cisinski-rl}.
Since our main reference is written for the dual notion of categories of fibrant objects, we spell out the details for the convenience of the reader.

\begin{defn}
 Let $\sC$ be an $\infty$-category.
 A collection $S$ of morphisms in $\sC$ is {\bf closed under pushouts along a collection of maps $T$} if every span $z \xleftarrow{t} x \xrightarrow{s} y$ with $s \in S$ and $t \in T$ admits a pushout, and for any choice of pushout the map $z \to z \cup_x y$ also lies in $S$.
\end{defn}

\begin{prop}[{\cite[Theorem~7.2.8]{cisinski-rl}}]\label{prop:calculus-fractions}
 Let $\sC$ be an $\infty$-category, let $W$ be a collection of morphisms in $\sC$ and let $\ell : \sC \to \sC[W^{-1}]$ be a Dwyer--Kan localization of $\sC$ at $W$.
 Fix $y \in \sC$.
 Suppose there exists a functor $p : W(y) \to \sC$ with the following properties:
 \begin{enumerate}
  \item $W(y)$ is a small $\infty$-category with an initial object $w_0$ satisfying $p(w_0) = y$;
  \item $p$ sends every morphism of the form $w_0 \to w$ in $W(y)$ to a morphism in $W$.
 \end{enumerate}
 If the functor
 \[ M_y : \colim_{w \in W(y)} \Map_\sC(-,p(w)) : \sC^\op \to \sS \]
 inverts every morphism in $W$, then the natural transformation $\Map_\sC(-,y) \to M_y \simeq \overline{M}_y \circ \ell$ exhibits the induced functor $\overline{M}_y$ as a left Kan extension along $\ell$.
 In particular, $M_y \simeq \Hom_{\sC[W^{-1}]}(\ell(-),\ell(y))$.
\end{prop}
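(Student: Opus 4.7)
The plan is to reduce the Kan extension statement to a direct limit computation. Since $\ell$ is a Dwyer--Kan localization, precomposition with $\ell$ identifies $\Fun(\sC[W^{-1}]^\op, \sS)$ with the full subcategory of $\Fun(\sC^\op, \sS)$ consisting of presheaves that invert $W$. The hypothesis on $M_y$ yields the factorization $M_y \simeq \overline{M}_y \circ \ell$, while initiality of $w_0$ in $W(y)$ supplies the natural transformation $\Map_\sC(-,y) = \Map_\sC(-,p(w_0)) \to M_y$ via inclusion of the initial vertex into the defining colimit. By Yoneda on $\sC$, the Kan extension property is equivalent to showing that for every presheaf $G:\sC^\op\to\sS$ inverting $W$, the induced map
\[ \Map(M_y, G) \to \Map(\Map_\sC(-, y), G) \simeq G(y) \]
is an equivalence.

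Next, I would rewrite $\Map(M_y, G) \simeq \lim_{w \in W(y)} G(p(w))$, interpreted as the limit over $W(y)^\op$ of the diagram $w \mapsto G(p(w))$ (contravariant in $w$ because $G$ is a presheaf). The key step is to show that every morphism in this diagram is an equivalence in $\sS$: the hypothesis on $p$ directly handles the arrows coming from $w_0 \to w$, and for an arbitrary morphism $w \to w'$ I would use initiality of $w_0$ to produce a commuting triangle $w_0 \to w \to w'$ and then invoke two-out-of-three in $\sS$ applied to the image of this triangle under $G \circ p$.

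Once every arrow of the diagram is invertible, the diagram factors through the groupoid completion of $W(y)^\op$. The initial object $w_0$ of $W(y)$ makes the classifying space of $W(y)$, hence of $W(y)^\op$, contractible, so the diagram is equivalent to the constant diagram with value $G(p(w_0)) = G(y)$, whose limit is $G(y)$. This establishes the Kan extension property; the final clause $M_y \simeq \Hom_{\sC[W^{-1}]}(\ell(-), \ell(y))$ then falls out by specializing the universal property to the representable corepresented by $\ell(y)$.

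The main obstacle, and essentially the only real content of the argument, is the propagation step. The hypothesis only guarantees that arrows of the form $w_0 \to w$ are sent into $W$ by $p$, and the class $W$ is not assumed to have two-out-of-three. What makes the argument work is the combination of initiality of $w_0$ with two-out-of-three in the target $\sS$ (not in $W$ itself), which suffices to invert every morphism of the diagram $G \circ p$ and unlocks the classifying-space reduction.
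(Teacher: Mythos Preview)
The paper does not give its own proof of this proposition; it is stated with a citation to \cite[Theorem~7.2.8]{cisinski-rl} and used as a black box. So there is no in-paper argument to compare against.

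That said, your argument is correct and is essentially the standard proof. The reduction to showing that $\Map(M_y,G)\to G(y)$ is an equivalence for every $W$-inverting presheaf $G$ is exactly right, and your identification $\Map(M_y,G)\simeq \lim_{W(y)^\op} G\circ p$ together with the two-out-of-three propagation step is sound. One small simplification: once you know that each structure map $G(p(w))\to G(p(w_0))$ (coming from $w_0\to w$) is an equivalence, you already have a natural equivalence from the diagram $G\circ p$ to the constant diagram at $G(y)$, because these maps assemble into a natural transformation to the value at the terminal object of $W(y)^\op$. Since $W(y)^\op$ has contractible classifying space, the limit of the constant diagram is $G(y)$, and you are done---there is no need to first deduce that \emph{every} arrow of the diagram is inverted before passing to the groupoid completion. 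Your final clause also follows more directly from the general fact that $\ell_!$ sends the representable $\Map_\sC(-,y)$ to the representable $\Map_{\sC[W^{-1}]}(-,\ell(y))$, rather than by specializing the universal property to a particular test object.
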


\begin{rem}\label{rem:zig-zags}
 Since $\pi_0 : \sS \to \Set$ preserves colimits, morphisms $\ell(x) \to \ell(y)$ in the localization are represented by pairs $(f,s)$ with $f : x \to y'$ arbitrary and $s : y \to y'$ in $W$.
 It is convenient to denote such a pair as a zig-zag $x \xrightarrow{f} y'\xleftarrow{s} y$.
 It follows that two such zig-zags $(f,s)$ and $(g,t)$ represent homotopic maps if and only if they are equivalent with respect to the equivalence relation generated by diagrams of the form
 \[\begin{tikzcd}[row sep=.5em]
  & y' \ar[dd]& \\
  x\ar[ur, "f"]\ar[dr, "g"'] & & y\ar[ul, "s"']\ar[dl, "t"] \\
  & y''&
 \end{tikzcd}\]
 As explained in \cite[Remark~7.2.10]{cisinski-rl}, the morphism in $\sC[W^{-1}]$ represented by the zig-zag $x \xrightarrow{f} y'\xleftarrow{s} y$ is explicitly given by $\ell(s)^{-1}\ell(f)$.
\end{rem}

\begin{exam}\label{ex:zig-zag-compose}
 Suppose that $W$ is a subcategory of a small $\infty$-category $\sC$.
 For any $y \in \sC$, define $W(y)$ as the full subcategory of $\sC_{y/}$ spanned by the objects represented by morphisms in $W$.
 If $W$ is closed under pushouts along arbitrary maps, \cite[Theorem~7.2.16]{cisinski-rl} shows that the functor $p : W(y) \subseteq \sC_{y/} \to \sC$ induced by the target projection satisfies the assumptions of Proposition~\ref{prop:calculus-fractions}.
 
 In this case, the composition law on the homotopy category of $\sC[W^{-1}]$ admits an easy description.
 Given two zig-zags $x \xrightarrow{f} y'\xleftarrow{s} y$ and $y \xrightarrow{g} z' \xleftarrow{t} z$, we may form the pushout
 \[\begin{tikzcd}
  y\ar[r, "g"]\ar[d, "s"'] & z'\ar[d, "s'"] \\
  y'\ar[r, "g'"] & z''
 \end{tikzcd}\]
 Then $s'$ is also in $W$, and we have
 \[ \ell(t)^{-1}\ell(g)\ell(s)^{-1} \ell(f) \simeq  \ell(s't)^{-1}\ell(g'f) \]
 in $\sC[W^{-1}]$. It follows from Remark~\ref{rem:zig-zags} that the composite of $(f,s)$ and $(g,t)$ is represented by the zig-zag $x \xrightarrow{g'f} z''\xleftarrow{s't} z$.
\end{exam}

\begin{defn}[{\cite[Definitions~7.4.6,~7.4.12~and~7.5.7]{cisinski-rl}}]
A {\bf category of cofibrant objects} $(\sC,W,\Cof)$ is an $\infty$-category $\sC$ with an initial object $\emptyset$ equipped with two wide subcategories $W$ and $\Cof$, called the {\bf weak equivalences} and the {\bf cofibrations}, such that the following holds:
\begin{enumerate}
\item all maps of the form $\emptyset \to x$ are cofibrations;
\item pushouts of cofibrations along arbitrary maps exist and are cofibrations;
\item pushouts of weak equivalences along cofibrations are weak equivalences;
\item $\sC$ {\bf admits factorisations}: any map in $\sC$ can be written as a composite of a cofibration followed by a weak equivalence;
\item the subcategory $W$ satisfies the two-out-of-three property.
\end{enumerate}
\end{defn}

The main result about categories of cofibrant objects, which has precursors in work of Weiss \cite{weiss:hammock} and Blumberg and Mandell \cite{bm:abstracthomotopy}, is the following.
Recall that a class $W$ of morphisms in an $\infty$-category has the {\bf two-out-of-six property} if for any three composable maps  $w \xrightarrow{f} x \xrightarrow{g} y \xrightarrow{h} z$ with $hg \in W$ and $gf \in W$, we also have $f,g,h,hgf \in W$.

\begin{thm}\label{thm:localize-cofibration-cat}
 Let $(\sC,W,\Cof)$ be a small category of cofibrant objects and let $\ell : \sC \to \sC[W^{-1}]$ be the Dwyer--Kan localization of $\sC$ at $W$.
 \begin{enumerate}
  \item The $\infty$-category $\sC[W^{-1}]$ admits finite colimits and $\ell$ preserves the initial object and pushouts along cofibrations.
  \item Let $f : x \to y$ be a morphism in $\sC$ such that $\ell(f)$ admits a left inverse. Then there exists a morphism $g : y \to x'$ such that $gf \in W$.
  \item We have $W = \ell^{-1}(\sC[W^{-1}]^\simeq)$ if and only if $W$ satisfies the two-out-of-six property.
  \item If $\sC$ admits $\kappa$-small coproducts and the subcategory $W$ is closed under $\kappa$-small coproducts, then $\sC[W^{-1}]$ also admits $\kappa$-small coproducts and $\ell$ preserves $\kappa$-small coproducts.
 \end{enumerate}
\end{thm}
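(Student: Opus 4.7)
My plan is to adapt Cisinski's treatment of fibration categories in \cite[Chapter~7]{cisinski-rl} to the dual cofibration category setting, with Proposition~\ref{prop:calculus-fractions} as the key technical device for computing mapping spaces in the localization. Since Example~\ref{ex:zig-zag-compose} is not directly applicable (our $W$ is only closed under pushouts along cofibrations, not along arbitrary maps), throughout I would use the factorization axiom to replace general morphisms by cofibrations up to weak equivalence whenever the zig-zag calculus is invoked.

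For (1), I would first verify that $\ell(\emptyset)$ is initial via a direct computation with Proposition~\ref{prop:calculus-fractions}:
\[ \Map_{\sC[W^{-1}]}(\ell(\emptyset),\ell(z)) \simeq \colim_{w \in W(z)} \Map_\sC(\emptyset, p(w)) \simeq *. \]
To show that $\ell$ preserves pushouts along cofibrations, I would check directly via Proposition~\ref{prop:calculus-fractions} that the image of such a square satisfies the universal property, with the gluing lemma providing the necessary closure of $W$ under the relevant pushouts when comparing the two parametrising categories of weak equivalences. Existence of arbitrary pushouts in $\sC[W^{-1}]$ then follows by representing each leg of a cospan as a zig-zag, using the factorization axiom to replace the forward components by cofibrations, and forming the pushout of the resulting cofibration span in $\sC$. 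Finite coproducts are recovered as pushouts over $\ell(\emptyset)$.

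Part (2) is the main obstacle. Given $f : x \to y$ with $\ell(f)$ left-invertible, I would first factor $f$ as a cofibration $c : x \to z$ followed by a weak equivalence $u : z \to y$, reducing to the case where $f$ is itself a cofibration. Representing the left inverse as a zig-zag $y \xrightarrow{g} y' \xleftarrow{s} x$ with $s \in W$ yields $\ell(gf) = \ell(s)$ in $\sC[W^{-1}]$, so by Remark~\ref{rem:zig-zags} this identity is witnessed by a finite sequence of zig-zag refinements. Iterating factorizations together with the gluing lemma and two-out-of-three through this sequence should allow one to extract the desired $g' : y \to x'$ with $g'f \in W$. The delicate part is carefully tracking the zig-zag equivalences through the cofibration/weak equivalence machinery without ever needing pushouts of $W$ along non-cofibrations.

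Parts (3) and (4) then follow more easily. For (3), the ``only if'' direction is trivial since isomorphisms in any category satisfy two-out-of-six, while for ``if'' one applies (2) on both sides to produce $g, h$ with $gf, fh \in W$, after which the two-out-of-six hypothesis applied to an appropriate triple of composable maps yields $f \in W$. For (4), the maps $\emptyset \to x_i$ are cofibrations by the first axiom, so $\kappa$-small coproducts are iterated pushouts along cofibrations from $\emptyset$; existence in $\sC[W^{-1}]$ and preservation by $\ell$ then reduce to part (1) combined with the assumed closure of $W$ under $\kappa$-small coproducts.
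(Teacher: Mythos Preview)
Your proposal has two genuine gaps.

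For part~(2), you correctly identify the central difficulty: $W$ is only closed under pushouts along cofibrations, so Example~\ref{ex:zig-zag-compose} does not apply directly, and your plan to ``iterate factorizations and track zig-zag equivalences'' is where the argument remains vague. The paper resolves this cleanly by a trick you are missing: by Ken Brown's lemma, the localization at $W$ coincides with the localization at the class $\Cof_W$ of trivial cofibrations, and $\Cof_W$ \emph{is} closed under pushouts along arbitrary maps (factor the map as a cofibration followed by a weak equivalence, push out in two stages, and use two-out-of-three). This makes Example~\ref{ex:zig-zag-compose} directly applicable with $\Cof_W$ in place of $W$, and the zig-zag calculus then runs without any of the delicate bookkeeping you anticipate. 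The argument for~(2) and~(3) then proceeds exactly as you suggest, except that in~(3) you cannot extract $h$ with $fh \in W$ from part~(2); instead, apply~(2) to $f$ to get $g$ with $gf \in W$, observe $\ell(g)$ is an equivalence, apply~(2) again to $g$ to get $h$ with $hg \in W$, and then invoke two-out-of-six on the triple $f,g,h$.

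For part~(4), your reduction to iterated pushouts only handles \emph{finite} coproducts, since part~(1) gives only finite colimits in $\sC[W^{-1}]$; for infinite $\kappa$ this argument does not go through. The paper instead observes that the adjunction $\sqcup_J \dashv \Delta$ between $\sC$ and $\prod_J \sC$ descends to an adjunction on localizations (because $W$ is assumed closed under $\kappa$-small coproducts), and then invokes the fact that the canonical functor $(\prod_J \sC)[(\prod_J W)^{-1}] \to \prod_J \sC[W^{-1}]$ is an equivalence for categories of cofibrant objects.
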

\begin{proof}
 The first assertion is \cite[Proposition~7.5.6]{cisinski-rl}.
 The second assertion follows from \cite[Corollary~7.5.19]{cisinski-rl}, but will also be part of the proof of the third assertion.
 One direction of the third assertion is obvious since equivalences in an $\infty$-category satisfy two-out-of-six.
 
 For the non-trivial direction, we need to recall a part of the proof of \cite[Proposition~7.5.6]{cisinski-rl}.
 Denote by $\Cof_W$ the subcategory of morphisms which are both cofibrations and weak equivalences.
 Ken Brown's lemma \cite[Proposition~7.4.13]{cisinski-rl} implies that $\sC[\Cof_W^{-1}] \simeq \sC[W^{-1}]$.
 Since every morphism can be factored into a cofibration followed by a weak equivalence and $W$ satisfies the two-out-of-three property, $\Cof_W$ is closed under pushouts.
 Combining Remark~\ref{rem:zig-zags} and Example~\ref{ex:zig-zag-compose}, we can represent $\ell(f)$ by the zig-zag $x \xrightarrow{f} y \xleftarrow{\id} y$, and the inverse to $\ell(f)$ is represented by a zig-zag $y \xrightarrow{g} x' \xleftarrow{t} x$.
 Forming the pushout
 \[\begin{tikzcd}
  x\ar[r, "f"]\ar[d, "t"'] & y\ar[d, "t'"] \\
  x'\ar[r, "f'"] & y'
 \end{tikzcd}\]
 the zig-zag $y \xrightarrow{f'g} y' \xleftarrow{t'} y$ represents the identity on $\ell(y)$.
 Since the identity on $\ell(y)$ is also represented by the zig-zag $y \xrightarrow{\id} y \xleftarrow{\id} y$ and $W$ satisfies two-out-of-three, it follows that $f'g$ is a weak equivalence.
 
 By a completely analogous argument, the zig-zag $x \xrightarrow{gf} x' \xleftarrow{t}$ represents the identity on $\ell(x)$, and it follows that $gf$ is a weak equivalence---this reproves the second assertion.
 If the weak equivalences satisfy two-out-of-six, we conclude that $g$ is a weak equivalence, and therefore so are $f'$ and $f$ by two-out-of-three.
 
 For the fourth assertion, observe that $\sC$ admits $\kappa$-small coproducts if and only if the diagonal functor $\Delta : \sC \to \prod_J \sC$ admits a left adjoint $\sqcup_J$ for every $\kappa$-small set $J$.
 If $W$ is closed under $\kappa$-small coproducts, we obtain for every $\kappa$-small set $J$ an induced adjunction
 \[ C : (\prod_J \sC)\big[(\prod_J W)^{-1}\big] \rightleftarrows \sC : \Delta. \]
 By \cite[Proposition~7.7.1]{cisinski-rl}, the canonical functor $(\prod_J \sC)[\left(\prod_J W\right)^{-1}] \to \prod_J \sC[W^{-1}]$ is an equivalence, so $\sC[W^{-1}]$ admits $\kappa$-small coproducts.
 As $\ell \circ \sqcup_J \simeq C \circ \ell$, the localization functor preserves such coproducts.
\end{proof}

Our key observation is that $\Cat^\perf_\infty$ admits the structure of a category of cofibrant objects.
The proof relies on the following lemma which should be well-known, but for which we have been unable to find a reference.

\begin{lem}\label{lem:localization-pullback}
 The class of right Bousfield localizations is closed under pullbacks along arbitrary functors.
 The same is true for the class of left Bousfield localizations.
\end{lem}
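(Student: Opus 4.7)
The plan is to handle the right Bousfield case directly; the left Bousfield case follows by the dual argument, applied to opposite $\infty$-categories. Recall that a right Bousfield localization is a functor $L : \sC \to \sD$ admitting a fully faithful left adjoint $i : \sD \to \sC$, equivalently a functor for which the unit $\id_\sD \to Li$ is an equivalence. Given an arbitrary functor $F : \sE \to \sD$, form the pullback
\[
\begin{tikzcd}
\sP \ar[r, "G"] \ar[d, "\pi"'] & \sC \ar[d, "L"] \\
\sE \ar[r, "F"'] & \sD
\end{tikzcd}
\]
in $\Cat_\infty$. The goal is to show that $\pi$ is again a right Bousfield localization.

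First I would produce a candidate left adjoint $\tilde{\imath} : \sE \to \sP$ via the universal property of the pullback: the pair of functors $\id_\sE$ and $iF$, together with the natural equivalence $F \xrightarrow{\sim} LiF$ provided by the unit of $i \dashv L$, assembles into a functor $\tilde{\imath}$ with $\pi \tilde{\imath} \simeq \id_\sE$ and $G\tilde{\imath} \simeq iF$. To establish the adjunction $\tilde{\imath} \dashv \pi$, I would compute, for $e \in \sE$ and $p \in \sP$,
\[
\Map_\sP(\tilde{\imath}(e), p) \simeq \Map_\sE(e, \pi p) \times_{\Map_\sD(F(e), LGp)} \Map_\sC(iF(e), Gp),
\]
using the standard formula for mapping spaces in a pullback of $\infty$-categories. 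The adjunction $i \dashv L$ identifies the second factor equivalently with $\Map_\sD(F(e), LGp)$ over the base, so the fiber product collapses naturally to $\Map_\sE(e, \pi p)$. The resulting adjunction $\tilde{\imath} \dashv \pi$ has unit $\id_\sE \to \pi \tilde{\imath}$ which is an equivalence by construction, so $\tilde{\imath}$ is fully faithful, as required.

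The main difficulty lies in organizing these constructions coherently at the $\infty$-categorical level, rather than merely pointwise. A clean way to avoid this is to phrase everything in terms of the universal property of the pullback and the adjunction: a functor $\sX \to \sP$ is determined by a pair of functors $\sX \to \sE$ and $\sX \to \sC$ together with a natural equivalence of their composites to $\sD$, while an adjunction in $\Cat_\infty$ is encoded by a unit transformation inducing pointwise equivalences on mapping spaces. With these tools, each step reduces to formal manipulations with mapping spaces and natural transformations, so no further content beyond the existence of the FF left adjoint to $L$ is needed.
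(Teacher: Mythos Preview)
Your proof is correct and follows essentially the same approach as the paper: construct the candidate left adjoint via the universal property of the pullback using the unit equivalence $\id \simeq Li$, then verify the adjunction by computing mapping spaces in the pullback as a fiber product and observing that the adjunction $i \dashv L$ makes one leg of this fiber product an equivalence. The paper's argument is organized identically, differing only in notation.
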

\begin{proof}
 It is enough to prove one of the statements, the other follows by dualising.
 Consider a pullback square
 \[\begin{tikzcd}
  \sD \times_\sC \sE \ar[r, "q"]\ar[d, "p"'] & \sE\ar[d, "g"] \\
  \sD\ar[r, "f"] & \sC
 \end{tikzcd}\]
 of $\infty$-categories in which $f$ admits a fully faithful left adjoint $f_!$.
 The unit $\omega : \id \overset{\sim}{\Rightarrow} f f_!$ induces a natural equivalence $g \simeq ff_!g$, so the universal property of the pullback provides a functor $h : \sE \to \sD \times_\sC \sE$ and a natural equivalence $\tau : qh \overset{\sim}{\Rightarrow} \id$.
 We claim that $\tau$ is the unit of an adjunction.
 As mapping spaces in pullbacks are themselves pullbacks, we need to check for $x \in \sE$  and $y \in \sD \times_\sC \sE$ that the composite
 \[ \Map_\sD(ph(x),p(y)) \mathop{\times}\limits_{\Map_\sC(fph(x),fp(y))} \Map_\sE(qh(x),q(y)) \to \Map_\sE(qh(x),q(y)) \xrightarrow{\sim} \Map_\sE(x,q(y)) \]
 is an equivalence.
 Since the structure map $\Map_\sD(ph(x),p(y)) \to \Map_\sC(fph(x),fp(y))$ is equivalent to $\Map_\sD(f_!g(x),p(y)) \to \Map_\sC(ff_!g(x),fp(y))$, it follows from the assumptions on $f$ that this is an equivalence.
 We conclude that $h$ is a fully faithful left adjoint to $q$.
\end{proof}

\begin{prop}\label{prop:cat-perf-cofibration-cat}
Consider $\Cat^\perf_\infty$ equipped with the following classes: 
\begin{enumerate}
\item $W_{\mrm{mot}}$ is the class of {\bf motivic equivalences}, see Definition~\ref{def:motivic-equivalence};
\item $\Cof$ is the class of fully faithful exact functors. 
\end{enumerate}
Then $(\Cat^\perf_\infty,W_{\mrm{mot}}, \Cof)$ is a category of cofibrant objects in which $W_{\mrm{mot}}$ satisfies the two-out-of-six property.
Moreover, an exact functor $f : \sC \to \sD$ can be functorially written as a composition
\[ \sC \to \sD \vec{\times}_{\Ind(\sD)^{\aleph_1}} \Ind(\sC)^{\aleph_1} \to \sD, \]
of a fully faithful functor and a motivic equivalence.
\end{prop}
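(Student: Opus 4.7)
The plan is to verify the axioms of a category of cofibrant objects in order of increasing difficulty. Several are essentially formal from the definition of $W_{\mrm{mot}}$. The initial object axiom holds because the zero category $0$ is initial in $\Cat^\perf_\infty$ and every functor out of $0$ is tautologically fully faithful. Two-out-of-three for $W_{\mrm{mot}}$ is immediate since equivalences in any $\infty$-category satisfy two-out-of-three and $W_{\mrm{mot}}$ is defined as a preimage of equivalences. The same reasoning gives two-out-of-six for free: if $f,g,h$ are composable in $\Cat^\perf_\infty$ with $gf,hg\in W_{\mrm{mot}}$, then for any finitary localizing invariant $L$ the maps $L(gf),L(hg)$ are equivalences in the target stable $\infty$-category, so by two-out-of-six for equivalences all of $L(f),L(g),L(h),L(hgf)$ are equivalences.

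Next I would handle the two pushout axioms. For axiom~(ii), pushouts of fully faithful exact functors along arbitrary exact functors exist in $\Cat^\perf_\infty$ and remain fully faithful: this is standard for Karoubi sequences, and the cofiber of the pushout is canonically identified with the cofiber of the original cofibration (the ``pasting lemma'' for Karoubi quotients). For axiom~(iii), consider a pushout square
\[\begin{tikzcd}
\sC \ar[r, hook, "i"]\ar[d, "w"'] & \sD\ar[d] \\
\sC' \ar[r] & \sD\cup_\sC \sC'
\end{tikzcd}\]
with $w\in W_{\mrm{mot}}$. Applying any finitary localizing invariant $L$ to the two Karoubi sequences $\sC\to\sD\to\sD/\sC$ and $\sC'\to\sD\cup_\sC\sC'\to (\sD\cup_\sC\sC')/\sC'$ yields fibre sequences. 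The outer terms are matched by equivalences ($L(w)$ is invertible by hypothesis, and the pasting identification $\sD/\sC\simeq (\sD\cup_\sC\sC')/\sC'$ holds in $\Cat^\perf_\infty$), so the middle map $L(\sD)\to L(\sD\cup_\sC\sC')$ is an equivalence; this proves $\sD\to\sD\cup_\sC\sC' \in W_{\mrm{mot}}$.

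The main work is the factorisation axiom, using the oplax pullback $\sD\vec{\times}_{\Ind(\sD)^{\aleph_1}}\Ind(\sC)^{\aleph_1}$, whose objects can be described as triples $(d,c,\phi\colon d\to \Ind(f)^{\aleph_1}(c))$. The first map is $c\mapsto (f(c),c,\id_{f(c)})$. Its full faithfulness is a direct mapping-space computation: the relevant mapping space is the pullback
\[
\Map_\sD(f(c),f(c'))\mathop{\times}\limits_{\Map_{\Ind^{\aleph_1}(\sD)}(f(c),f(c'))} \Map_{\Ind^{\aleph_1}(\sC)}(c,c'),
\]
and since $c,c'$ are compact in $\Ind^{\aleph_1}(\sC)$ and analogously for $\sD$, this pullback collapses to $\Map_\sC(c,c')$. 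For the projection to $\sD$ to be a motivic equivalence, the key is to exhibit the fibre of the projection over $0$ as $\Ind(\sC)^{\aleph_1}$, yielding a Karoubi sequence
\[
\Ind(\sC)^{\aleph_1}\,\hooklongrightarrow\, \sD\vec{\times}_{\Ind(\sD)^{\aleph_1}}\Ind(\sC)^{\aleph_1}\,\longrightarrow\, \sD.
\]
Applying any finitary localizing invariant then reduces the claim to $L(\Ind(\sC)^{\aleph_1})\simeq 0$, which holds because $\Ind(\sC)^{\aleph_1}$ admits countable coproducts and hence an Eilenberg swindle that $L$ preserves (being finitary and additive).

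The main obstacle is making the last step rigorous in the $\infty$-categorical setting: one needs to verify that the sequence above really is a Karoubi sequence in $\Cat^\perf_\infty$ (i.e.\ that the quotient is computed correctly and is equivalent to $\sD$), not merely a fibre sequence in some larger ambient $\infty$-category. This requires carefully unpacking the definition of the oplax pullback as a presentable subcategory of a functor category and identifying its compact objects, then checking that the evident exact projection has idempotent-complete cofiber $\sD$. Functoriality of the whole construction in $f\colon\sC\to\sD$ is essentially built in, since the lax pullback is a limit construction in $\Cat_\infty$ natural in both inputs.
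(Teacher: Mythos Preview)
Your argument is correct and follows essentially the same route as the paper: the formal axioms and two-out-of-six are immediate, axiom~(iii) is proved by comparing the horizontal Karoubi sequences after applying a localizing invariant, and the factorisation goes through the lax pullback. Two small remarks are worth making.

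First, the ``obstacle'' you flag at the end is not one. The lax pullback $\sD\vec{\times}_{\Ind(\sD)^{\aleph_1}}\Ind(\sC)^{\aleph_1}$ carries a canonical semiorthogonal decomposition into $\sD$ and $\Ind(\sC)^{\aleph_1}$ (the inclusions $d\mapsto(d,0,0)$ and $c\mapsto(0,c,0)$ are fully faithful with right and left adjoints given by the projections), so the sequence $\Ind(\sC)^{\aleph_1}\hookrightarrow \sD\vec{\times}_{\Ind(\sD)^{\aleph_1}}\Ind(\sC)^{\aleph_1}\to\sD$ is a \emph{split} Verdier sequence, hence a Karoubi sequence on the nose. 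No further identification of cofibers is needed. The paper simply cites \cite[Proposition~3.7]{rsw:every-spectrum-is-k-theory} for this step, but your outline is exactly what that reference does.

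Second, for axiom~(ii) you assert that stability of fully faithful functors under pushout is ``standard for Karoubi sequences''. The paper is more careful here: it passes through the equivalence $\Cat^\perf_\infty\simeq\PrL_{\mrm{st},\omega}\simeq(\PrR_{\mrm{st},\omega})^{\op}$, so that pushouts become pullbacks of right adjoints, and then proves a separate lemma (Lemma~\ref{lem:localization-pullback}) that right Bousfield localizations are closed under pullback along arbitrary functors. This is the content you are implicitly invoking; it is not deep, but it does require an argument.
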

\begin{proof}
As any equivalence is fully faithful and a motivic equivalence, we immediately see that $W_{\mrm{mot}}$ and $\Cof$ are wide subcategories.
The exact functor $0 \to \sC$ is fully faithful, and the definition of $W_{\mrm{mot}}$ makes it clear that it satisfies two-out-of-six.
The existence of functorial factorisations of the claimed form follows from \cite[Proposition~3.7]{rsw:every-spectrum-is-k-theory}; note that the proof given there uses precisely the lax pullback construction $\sD \vec{\times}_{\Ind(\sD)^{\aleph_1}} \Ind(\sC)^{\aleph_1}$.

To compute pushouts in $\Cat^\perf_\infty$, we use that the functor $\Ind : \Cat^\perf_\infty \to \PrL_{\mrm{st},\omega}$ to the $\infty$-category of compactly generated, stable, presentable $\infty$-categories and left adjoint functors preserving compact objects is an equivalence.
Its inverse is given by the functor which takes compact objects.
Since $\PrL_{\mrm{st},\omega}$ is equivalent to the opposite of the $\infty$-category $\PrR_{\mrm{st},\omega}$ of compactly generated, stable, presentable $\infty$-categories and right adjoints which preserve filtered colimits (by passing to adjoint functors), and since limits in $\PrR_{\mrm{st},\omega}$ agree with the limits of the underlying categories, this provides a construction of colimits in $\Cat^\perf_\infty$.

Let $i : \sC \to \sD$ be fully faithful and let $f : \sC \to \sC'$ be an exact functor.
Taking ind-completions and passing to right adjoints, we obtain the pullback square
\[\begin{tikzcd}
 \Ind(\sD) \times_{\Ind(\sC)} \Ind(\sC')\ar[r, "j^*"]\ar[d, "g^*"'] & \Ind(\sD)\ar[d, "f^*"] \\
 \Ind(\sD)\ar[r, "i^*"] & \Ind(\sC)
\end{tikzcd}\]
Applying Lemma~\ref{lem:localization-pullback}, we find that $j^*$ is a right Bousfield localization.
Consequently, the structure morphism $j_!^\omega : \Ind(\sD)^\omega \to \left( \Ind(\sD) \times_{\Ind(\sC)} \Ind(\sC') \right)^\omega$ of the pushout in $\Cat^\perf_\infty$ is fully faithful.

Let $i : \sC \to \sD$ be fully faithful and let $f : \sC \to \sC'$ be a motivic equivalence.
In the pushout square
\[\begin{tikzcd}
 \sC\ar[r, "i"]\ar[d, "f"'] & \sD\ar[d, "g"] \\
 \sC'\ar[r, "j"] & \sD'
\end{tikzcd}\]
the induced functor $j$ is fully faithful, as we have just shown.
Taking horizontal cofibers and applying any localizing invariant with stable target, we obtain a map of bifibre sequences which is an equivalence on the two outer terms.
It follows that $g$ is also a motivic equivalence.
\end{proof}

\section{Exhausting \texorpdfstring{$\Cat^\perf_\infty$}{Catperf} by small categories of cofibrant objects}

Ideally, we would like to apply Theorem~\ref{thm:localize-cofibration-cat} directly to the localization of $\Cat^\perf_\infty$ at the motivic equivalences.
However, it is a priori not clear that taking this localization would produce a locally small $\infty$-category.
This section addresses this problem by exhibiting a filtration of $\Cat^\perf_\infty$ by small full subcategories which inherit the structure of a category of cofibrant objects from the ambient category.
This will allow us to localize each filtration step individually and then take a colimit over these localizations afterwards to obtain $\Cat^\perf_\infty[W_{\mrm{mot}}^{-1}]$.


\begin{defn}
A cardinal $\kappa$ is {\bf countably closed} if $\lambda^{\aleph_0} < \kappa$ for all $\lambda < \kappa$. 
\end{defn}

The next lemma shows that there exists a sufficient supply of countably closed cardinals.

\begin{lem}
For any cardinal $\kappa_0$, there exists a regular, countably closed cardinal $\kappa > \kappa_0$. 
\end{lem}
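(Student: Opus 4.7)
The plan is to construct $\kappa$ as the successor cardinal of a suitably chosen fixed point of the operation $\lambda \mapsto \lambda^{\aleph_0}$. The elementary observation driving the argument is that for any infinite cardinal $\lambda$, the cardinal $\mu := \lambda^{\aleph_0}$ is automatically a fixed point, since
\[ \mu^{\aleph_0} = (\lambda^{\aleph_0})^{\aleph_0} = \lambda^{\aleph_0 \cdot \aleph_0} = \lambda^{\aleph_0} = \mu. \]
First I would set $\nu := (\kappa_0 + \aleph_1)^{\aleph_0}$, which then satisfies both $\nu > \kappa_0$ and $\nu^{\aleph_0} = \nu$.

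Next I would take $\kappa := \nu^+$. This cardinal is regular because every successor cardinal is regular, and it is countably closed: if $\lambda < \kappa = \nu^+$, then $\lambda \le \nu$, so $\lambda^{\aleph_0} \le \nu^{\aleph_0} = \nu < \kappa$. Since $\nu > \kappa_0$, we also have $\kappa > \kappa_0$, as required.

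There is essentially no obstacle here; the one point worth underlining is why one cannot simply iterate $\lambda \mapsto \lambda^{\aleph_0}$ up to a countable limit ordinal, which would be the more naive approach. Such an iteration produces a cardinal of countable cofinality, whose $\aleph_0$-th power then jumps strictly upward by K\"onig's theorem, so the construction fails to stabilise. Applying $(-)^{\aleph_0}$ once and then passing to the cardinal successor bypasses both the stabilisation problem and the regularity problem simultaneously.
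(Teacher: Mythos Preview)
Your argument is correct and follows essentially the same route as the paper: produce a cardinal $\nu \geq \kappa_0$ satisfying $\nu^{\aleph_0} = \nu$, then take $\kappa = \nu^+$. The paper uses $\nu = 2^{\max(\kappa_0,\aleph_0)}$ in place of your $\nu = (\kappa_0 + \aleph_1)^{\aleph_0}$, but the verification that $\kappa$ is regular and countably closed is identical.

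One small slip: you assert $\nu > \kappa_0$, but in fact only $\nu \geq \kappa_0$ is guaranteed (for example, if $\kappa_0 = 2^{\aleph_0}$ then $\nu = \kappa_0^{\aleph_0} = \kappa_0$). This does not affect the proof, since $\kappa = \nu^+ > \nu \geq \kappa_0$ still yields $\kappa > \kappa_0$.
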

\begin{proof}
Let $\mu := \max(\kappa_0, \aleph_0)$, and let $\kappa := (2^\mu)^+$.
Since successor cardinals are regular, we only have to show that $\kappa$ is countably closed.
If $\lambda < \kappa$, then $\lambda \leq 2^\mu$ and so $\lambda^{\aleph_0} \leq 2^{\mu \cdot \aleph_0} = 2^\mu < \kappa$, so $\kappa$ is indeed a countably closed cardinal, and it is clearly bigger than $\kappa_0$.
\end{proof}

Our goal is to show that the full subcategory $\Cat^{\perf,\kappa}_\infty$ of $\kappa$-compact objects in $\Cat^\perf_\infty$ inherits the structure of a category of cofibrant objects from $\Cat^\perf_\infty$ whenever $\kappa$ is a regular, countably closed cardinal.
As a first step, we will prove a characterisation of $\kappa$-compact objects in $\Cat^\perf_\infty$ for an arbitrary regular, uncountable cardinal $\kappa$.

\begin{lem}\label{lem:compact-rings}
 \ \begin{enumerate}
  \item The homotopy groups of a compact spectrum are countable.
  \item The homotopy groups of a compact ring spectrum are countable.
  \end{enumerate}
\end{lem}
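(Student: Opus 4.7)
The plan for part (1) is to use that the compact objects of $\Spt$ are precisely the finite spectra, that is, retracts of iterated cofibers of maps between shifted spheres. By Serre's finiteness theorem, the stable homotopy groups of spheres are finitely generated abelian groups. Running an induction on the number of cells of a finite cell spectrum, using the long exact sequence on homotopy groups associated to each cell attachment, shows that any finite spectrum has finitely generated (in particular countable) homotopy groups, and passing to retracts preserves this property.

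For part (2), let $F\colon \Spt\to \Alg(\Spt)$ denote the free $\E{1}$-algebra functor, left adjoint to the forgetful functor $U$. Since $U$ preserves filtered colimits, $F$ sends compact spectra to compact $\E{1}$-rings, and in fact the compact objects of $\Alg(\Spt)$ are precisely retracts of finite colimits of generators of the form $F(X)$ with $X$ a compact spectrum. As an underlying spectrum, $F(X)\simeq \bigoplus_{n\ge 0} X^{\otimes n}$ is a countable direct sum of compact spectra, and hence has countable homotopy groups by (1). The plan is then to show by induction that every finite colimit of such generators has countable homotopy groups and conclude by closure under retracts.

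The main obstacle is closure of the class of $\E{1}$-rings with countable homotopy under pushouts. Since $U\colon \Alg(\Spt)\to \Spt$ preserves sifted colimits, a pushout $B\sqcup_A C$ in $\Alg(\Spt)$ is computed at the level of underlying spectra as the geometric realisation of the bar simplicial spectrum $B\otimes A^{\otimes \bullet}\otimes C$. It therefore suffices to verify that the class of spectra with countable homotopy is closed under (a) smash products and (b) geometric realisations of simplicial spectra whose terms lie in the class. Both facts follow from the characterisation of countable-homotopy spectra as exactly the countable filtered colimits of finite spectra, which is a standard cellular-approximation argument building up cells to hit countably many generators and relations in homotopy: a smash product of two such is again of the same form since finite smashed with finite is finite, and a geometric realisation is a sequential colimit of finite colimits, hence again a countable filtered colimit of finite spectra. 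Combining everything, all compact $\E{1}$-rings have countable homotopy groups.
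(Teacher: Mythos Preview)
Your argument for part~(1) is essentially the paper's: both use that compact spectra are generated by the sphere under finite colimits and desuspensions, together with countability of $\pi_*\bS$ (you invoke Serre finiteness, which is a bit more than needed, but harmless).

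For part~(2), your overall strategy also matches the paper's---show that the class $\sR$ of ring spectra with countable homotopy contains free rings on compact spectra and is closed under finite colimits---but there is a genuine error in your treatment of pushouts. You claim that because $U\colon\Alg(\Spt)\to\Spt$ preserves sifted colimits, the underlying spectrum of a pushout $B\sqcup_A C$ in $\Alg(\Spt)$ is the geometric realisation of the two-sided bar construction $B\otimes A^{\otimes\bullet}\otimes C$. This is false for $\E{1}$-rings: that bar construction computes the relative tensor product $B\otimes_A C$, which is the pushout in $\CAlg$, not in $\Alg$. For $\E{1}$-rings the pushout is the free product, which is typically much larger; for instance, over a field $k$ the coproduct of $k[x]$ and $k[y]$ in associative algebras is $k\langle x,y\rangle$, not $k[x,y]$. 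Note also that the terms $B\otimes A^{\otimes n}\otimes C$ are not even ring spectra in general, so the phrase ``since $U$ preserves sifted colimits'' is a non sequitur here. Since the application in the paper is to endomorphism ring spectra, one really does need the $\E{1}$-statement.

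The paper fixes this by using the \emph{monadic} bar resolution: any $A\in\Alg(\Spt)$ is the geometric realisation of $[n]\mapsto (FU)^{n+1}A$, a simplicial object of \emph{free} ring spectra. Since $F$ takes direct sums to coproducts and geometric realisations commute with coproducts, the coproduct $A\ast B$ becomes a geometric realisation of free rings on spectra with countable homotopy. Combined with your (correct) observations that spectra with countable homotopy are closed under tensor products and geometric realisations, this gives closure of $\sR$ under finite coproducts; the paper then cites the Bousfield--Kan formula to pass to arbitrary finite colimits. Your argument is easily repaired along the same lines.
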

\begin{proof}
  Since the sphere spectrum generates the full subcategory of compact spectra under desuspensions and finite colimits, the first assertion follows from the fact that $\pi_*\bS$ is countable.
  
  For the second assertion, consider the collection $\sR$ of ring spectra whose homotopy groups are countable.
  The free ring spectrum on a spectrum $M$ is a countable direct sum of tensor powers of $M$ and therefore has countable homotopy groups if $M$ is compact.
  Combining this with the bar resolution, we find that $\sR$ is closed under finite coproducts.
  Finally, the Bousfield-Kan formula (\cite[Corollary~12.5]{ShahThesis} (see also \cite[Proposition~5.5.8.13]{HTT}) shows that $\sR$ is closed under finite colimits.
  Thus, it contains all the compact ring spectra.
\end{proof}

\begin{prop}\label{prop:cpctcat}
Let $\kappa$ be a regular, uncountable cardinal and let $\sC$ be an idempotent complete 
$\infty$-category.
Consider the following conditions:
\begin{enumerate}
 \item\label{prop:cpctcat-1} $\sC$ is $\kappa$-compact in $\Cat^\perf_\infty$;
 \item\label{prop:cpctcat-2} $\sC$ is $\kappa$-compact in $\Cat_\infty$;
 \item\label{prop:cpctcat-4} $\sC$ has a $\kappa$-small set of equivalence classes of objects and the homotopy groups of the mapping space $\Map_\sC(x,y)$ are $\kappa$-small for every pair of objects $x,y$ in $\sC$.
\end{enumerate}

The conditions \ref{prop:cpctcat-2} and \ref{prop:cpctcat-4} are equivalent. 

If $\sC$ is furthermore stable, then all three conditions are equivalent. 
\end{prop}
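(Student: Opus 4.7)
The strategy is to establish (2)$\Leftrightarrow$(3) for arbitrary idempotent complete $\sC$, then (1)$\Leftrightarrow$(3) in the stable case. The first equivalence is a standard description of $\kappa$-compact objects in $\Cat_\infty$ for uncountable regular $\kappa$: they are exactly those $\infty$-categories modeled by a $\kappa$-small simplicial set. The mapping spaces of such an $\infty$-category are then $\kappa$-compact in $\sS$---equivalently, have $\kappa$-small homotopy groups---and the set of equivalence classes of objects is $\kappa$-small. Conversely, any $\sC$ satisfying (3) admits a $\kappa$-small cellular model built from its objects and (generators and relations for) its mapping spaces. See Section~5.4 of \cite{HTT} for the underlying arguments in $\sS$ and $\Cat_\infty$.

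For the stable case, (3)$\Rightarrow$(1) rests on the observation that for uncountable regular $\kappa$, a $\kappa$-filtered colimit in $\Cat^\perf_\infty$ has the same underlying $\infty$-category as the colimit taken in $\Cat_\infty$: stability is preserved by filtered colimits (finite limits and colimits commute with filtered colimits), and idempotent completeness by $\aleph_1$-filtered colimits (an idempotent is a functor from a countable diagram, which factors through a stage). Given a $\kappa$-filtered diagram $i \mapsto \sD_i$ in $\Cat^\perf_\infty$ with colimit $\sD$ and an exact functor $F : \sC \to \sD$, hypothesis (2) yields a factorisation $F_i : \sC \to \sD_i$ in $\Cat_\infty$ for some $i$. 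Exactness of $F_i$ amounts to preservation of the zero object and of each pushout square of $\sC$; by (3) there are $\kappa$-small many such squares up to equivalence, so $\kappa$-filteredness lets us advance to a stage $j \geq i$ at which $F_j$ is exact. A symmetric argument on homotopies between functors yields $\Funex(\sC,\sD) \simeq \colim_i \Funex(\sC,\sD_i)$, which is (1).

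For (1)$\Rightarrow$(3), the plan is to use that $\Cat^\perf_\infty$ is compactly generated by $\Spt^\omega$, so every object is a filtered colimit of $\kappa$-small colimits of copies of $\Spt^\omega$; (1) then exhibits $\sC$ as a retract of one such $\kappa$-small colimit. By Morita theory, such a colimit is equivalent to $\operatorname{Perf}(\sA)$ for a spectral algebroid $\sA$ assembled from $\kappa$-small data; a generalisation of \lemref{lem:compact-rings} shows that the mapping spectra of $\sA$ have $\kappa$-small homotopy groups, and the iterative construction of $\operatorname{Perf}(\sA)$ from representables (by shifts, fibres, cofibres, retracts) preserves $\kappa$-smallness of both the set of equivalence classes of objects and the homotopy groups of mapping spectra, so $\operatorname{Perf}(\sA)$ satisfies (3). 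Finally, (3) passes from $\operatorname{Perf}(\sA)$ to its retract $\sC$ because a retraction in $\Cat^\perf_\infty$ is injective on equivalence classes of objects and realises $\sC$'s mapping spaces as retracts of $\operatorname{Perf}(\sA)$'s. The main technical obstacle will be the Morita-theoretic analysis of $\kappa$-small colimits of $\Spt^\omega$ in $\Cat^\perf_\infty$ and the adaptation of \lemref{lem:compact-rings} to the $\kappa$-compact regime.
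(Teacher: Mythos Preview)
Your arguments for (2)$\Leftrightarrow$(3) and (3)$\Rightarrow$(1) match the paper's essentially verbatim: the same citation to \cite[\S5.4]{HTT}, the same observation that the forgetful functor $\Cat^\perf_\infty\to\Cat_\infty$ preserves $\kappa$-filtered colimits, and the same use of the $\kappa$-small collection of pushout squares in $\sC$ to upgrade a factorisation through some $\sD_i$ to an exact one.

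For (1)$\Rightarrow$(3) your route is substantially harder than necessary, and the ``main technical obstacle'' you flag is a genuine one. Arbitrary $\kappa$-small colimits in $\Cat^\perf_\infty$ are computed via the detour through $\PrL_{\mrm{st},\omega}$ and are not easy to control by explicit Morita-theoretic data; the assertion that $\Spt^\omega$ alone compactly generates $\Cat^\perf_\infty$ also requires justification. The paper sidesteps all of this with a two-step bootstrap. First it treats the case $\kappa=\aleph_0$ directly: a \emph{compact} object $\sD$ of $\Cat^\perf_\infty$ is generated as a thick subcategory by a single object $y$ (write $\sD$ as the filtered colimit of its finitely generated thick subcategories and use compactness), so $\sD\simeq\mathbf{Perf}(\End_\sD(y))$ by Schwede--Shipley, and compactness of $\sD$ then forces $\End_\sD(y)$ to be a compact ring spectrum, which has countable homotopy groups by Lemma~\ref{lem:compact-rings}. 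Second, for uncountable $\kappa$ one writes a $\kappa$-compact $\sC$ as a $\kappa$-small \emph{filtered} colimit of $\omega$-compacts; since filtered colimits in $\Cat^\perf_\infty$ are computed in $\Cat_\infty$ and each term already satisfies the countable version of (3), the colimit visibly satisfies (3) for $\kappa$. The whole point is that filtered colimits are transparent while arbitrary colimits are not: reducing to $\omega$-compacts first lets one use only the former and avoids your obstacle entirely.
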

\begin{proof}
 Conditions~\ref{prop:cpctcat-2} 
 and \ref{prop:cpctcat-4} are equivalent by \cite[Proposition~5.4.1.2]{HTT}. 
 
 Suppose now that $\sC$ is stable and $\kappa$-compact as an object of $\Cat_\infty$ and let $(\sD_i)_i$ be a 
 $\kappa$-filtered diagram in $\Cat^\perf_\infty$. 
 Since $\Map_{\Cat^\perf_\infty}(\sC,-)$ is a full subfunctor of $\Map_\Cat(\sC,-)$ and  the forgetful functor $\Cat^\perf_\infty \to \Cat$ preserves filtered colimits by \cite[Proposition~1.1.4.6]{HA} and \cite[Corollary~4.4.5.21]{HTT}, we find that the induced map
 \[ \colim_i \Map_{\Cat^\perf_\infty}(\sC,\sD_i) \to \Map_{\Cat^\perf_\infty}(\sC,\colim_i \sD_i) \]
 is an inclusion of path components.

 Since $\sC$ is $\kappa$-small, the collection of pushout squares in $\sC$ is also $\kappa$-small. It follows that a functor $\sC \to \colim_i \sD_i$ (which, as a functor, factors through some $\sD_{i_0}$) is exact if and only if there exists some $i$ with a map from $i_0$ such that $\sC \to \sD_i$ is exact.
 Hence $\sC$ is $\kappa$-compact in $\Cat^\perf_\infty$, showing that \ref{prop:cpctcat-2} implies \ref{prop:cpctcat-1}.
 
 We are left with showing that \ref{prop:cpctcat-1} implies \ref{prop:cpctcat-4}.
 To do so, we will first prove this implication even holds in the case $\kappa = \aleph_0$.
 So consider a compact object $\sD$ in $\Cat^\perf_\infty$.
 Writing $\sD$ as a filtered colimit of its finitely generated thick subcategories, we see that $\sD$ is finitely generated as a thick subcategory.
 By summing over a finite set of generators, we find a single generator $y$ of $\sD$.
 Consequently, $\Ind(\sD)$ is equivalent to $\Mod(\End_\sD(y))$ by the Schwede--Shipley theorem \cite[Theorem~7.1.2.1]{HA}, so $\sD \simeq \mbf{Perf}(\End_\sD(y))$.
 By \cite[Proposition~7.1.2.6]{HA}, compactness of $\sD$ forces $\End_\sD(y)$ to be a compact ring spectrum.
 Applying Lemma~\ref{lem:compact-rings}, we conclude that \ref{prop:cpctcat-4} (with $\kappa = \aleph_0$) holds for $\sD$.
 
 Let $\sC$ now be a $\kappa$-compact object in $\Cat^\perf_\infty$ with $\kappa$ regular and uncountable.
 Then $\sC$ is a $\kappa$-small filtered colimit of compact objects, all of which have countably many objects up to equivalence and mapping spaces with countable homotopy groups.
 It follows that $\sC$ has less than $\kappa$ many objects up to equivalence, and that the homotopy groups of mapping spaces in $\sC$ are $\kappa$-small.
\end{proof}
\begin{rem}
Note that for (not-necessarily stable) $\infty$-categories,  \cite[Proposition~5.4.1.2]{HTT} shows that these notions of $\kappa$-smallness agree with ``all possible definitions'' (when $\kappa$ is uncountable). 
\end{rem}

\begin{cor}\label{cor:lax-pullback-compact}
Let $\kappa$ be a regular, uncountable cardinal, and $\sC \to \sD\leftarrow \sE$ be a span of $\kappa$-compact objects in $\Cat^\perf_\infty$.
Then the lax pullback $\sC \vec{\times}_\sD \sE$ is $\kappa$-compact. 
\end{cor}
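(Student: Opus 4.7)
The plan is to apply Proposition~\ref{prop:cpctcat}, which reduces $\kappa$-compactness in $\Cat^\perf_\infty$ to the combinatorial condition (iii): having a $\kappa$-small collection of equivalence classes of objects and $\kappa$-small homotopy groups of mapping spaces. To make this applicable, I would first note that $\sC \vec{\times}_\sD \sE$ is indeed stable and idempotent complete: it can be realised as the ordinary pullback $\sC \times_\sD \Fun(\Delta^1, \sD) \times_\sD \sE$ along the source and target evaluations, both classes of properties passing through such a limit in $\Cat_\infty$ since the evaluation functors are exact and stable/idempotent complete structures are stable under limits.

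Now to verify condition (iii). In the pullback description, an object of $\sC \vec{\times}_\sD \sE$ is a triple $(c, e, \varphi \colon F(c) \to G(e))$ where $F \colon \sC \to \sD$ and $G \colon \sE \to \sD$ are the structure functors. So I would bound the collection of equivalence classes of objects by
\[
\coprod_{[c] \in \pi_0 \sC,\, [e] \in \pi_0 \sE} \pi_0 \Map_\sD(F(c), G(e)).
\]
By Proposition~\ref{prop:cpctcat} applied to $\sC$, $\sE$, and $\sD$ respectively, the indexing set and each summand are $\kappa$-small. Since $\kappa$ is regular, a $\kappa$-small union of $\kappa$-small sets is $\kappa$-small, giving the desired bound on the objects.

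Next I would bound the mapping spaces. From the pullback description, a standard computation gives
\[
\Map_{\sC \vec\times_\sD \sE}\bigl((c,e,\varphi),(c',e',\varphi')\bigr) \simeq \Map_\sC(c,c') \mathop{\times}\limits_{\Map_\sD(F(c),G(e'))} \Map_\sE(e,e'),
\]
where the two structure maps compose with $\varphi'$ and $\varphi$ respectively. Each of the three input spaces has $\kappa$-small homotopy groups by Proposition~\ref{prop:cpctcat}, and the long exact sequence of the associated fiber sequence expresses each homotopy group of the pullback as an extension of subquotients of these groups; since $\kappa$ is uncountable and regular, such extensions remain $\kappa$-small.

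The main obstacle is essentially bookkeeping: unwinding the lax pullback into a form to which the characterisation of $\kappa$-compactness from Proposition~\ref{prop:cpctcat} applies, and keeping track of regularity/uncountability of $\kappa$ at each closure step. Given the characterisation already proven, no further obstruction remains, and the two bounds above together with Proposition~\ref{prop:cpctcat} complete the argument.
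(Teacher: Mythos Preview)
Your proof is correct and follows exactly the approach the paper intends: the paper's own proof reads simply ``This is immediate using Proposition~\ref{prop:cpctcat},'' and you have spelled out precisely the details behind that sentence. The bounds on objects and on homotopy groups of mapping spaces via the pullback description are the right ones, and your use of regularity and uncountability of $\kappa$ at each step is exactly what makes condition~(3) of Proposition~\ref{prop:cpctcat} go through.
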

\begin{proof}
This is immediate using Proposition~\ref{prop:cpctcat}.
\end{proof}

\begin{cor}\label{cor:quotient-compact}
 Let $\kappa$ be a regular, uncountable cardinal.
 Then the $\kappa$-compact objects in $\Cat^\perf_\infty$ are closed under taking full subcategories and under taking Karoubi quotients.
\end{cor}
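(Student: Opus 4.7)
The plan is to reduce both assertions to the intrinsic characterization of $\kappa$-compact objects established in Proposition~\ref{prop:cpctcat}, supplemented by a standard presentability argument for the Karoubi quotient.

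For the full subcategory claim, I would argue directly from the characterization. Assume $\sC$ is $\kappa$-compact in $\Cat^\perf_\infty$ and let $\sC' \subseteq \sC$ be a full subcategory which itself lies in $\Cat^\perf_\infty$ (i.e.\ is stable and idempotent complete). Proposition~\ref{prop:cpctcat} tells us that $\sC$ has a $\kappa$-small set of equivalence classes of objects and that all of its mapping spaces have $\kappa$-small homotopy groups. Both of these finiteness properties are immediately inherited by $\sC'$: the number of equivalence classes of objects of $\sC'$ is bounded by that of $\sC$, and the mapping spaces in $\sC'$ literally agree with those in $\sC$. A second application of Proposition~\ref{prop:cpctcat}, this time in the converse direction, then yields that $\sC'$ is $\kappa$-compact in $\Cat^\perf_\infty$.

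For the Karoubi quotient claim, I would proceed via presentability rather than via the explicit characterization. Recall from the proof of Proposition~\ref{prop:cat-perf-cofibration-cat} that $\Cat^\perf_\infty \simeq \PrL_{\mrm{st},\omega}$ is presentable; in particular, its subcategory of $\kappa$-compact objects is closed under $\kappa$-small colimits. Now given any fully faithful exact functor $\sC' \hookrightarrow \sC$ between $\kappa$-compact objects, the Karoubi quotient $\sC/\sC'$ is by definition the pushout $\sC \sqcup_{\sC'} 0$ computed in $\Cat^\perf_\infty$, i.e.\ a finite (hence $\kappa$-small) colimit of $\kappa$-compact objects. Consequently $\sC/\sC'$ is $\kappa$-compact.

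I do not expect any genuine obstacle here: the real work was absorbed into Proposition~\ref{prop:cpctcat}, and what remains is a direct transfer of size bounds along full inclusions together with a general principle about presentable $\infty$-categories. If anything warrants care, it is only the initial verification that the notion of ``full subcategory'' in the statement is understood to refer to a full subcategory which is itself an object of $\Cat^\perf_\infty$, so that Proposition~\ref{prop:cpctcat} applies to it.
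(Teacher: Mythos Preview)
Your proposal is correct and follows essentially the same route as the paper: both arguments deduce the full-subcategory case directly from Proposition~\ref{prop:cpctcat}, and then obtain the Karoubi quotient case by observing that $\sD/\sC$ is a finite colimit of $\kappa$-compact objects (the paper first notes that the subcategory $\sC$ is $\kappa$-compact by the first part, then appeals to closure under finite colimits, exactly as you do).
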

\begin{proof}
 For subcategories, this follows immediately from Proposition~\ref{prop:cpctcat}.
 In particular, we conclude from Corollary~\ref{cor:lax-pullback-compact} that pullbacks of $\kappa$-compact objects are $\kappa$-compact.
 
 Let $\sC$ be a full stable subcategory of $\sD \in (\Cat^\perf_\infty)^\kappa$. In particular, by the case of subcategories, $\sC$ is itself $\kappa$-compact. 
 The Karoubi quotient $\sD/\sC$ is then a finite colimit of $\kappa$-compacts, and is therefore itself $\kappa$-compact. 
%
\end{proof}

\begin{lem}\label{lem:goodisgood}
Let $\kappa$ be a regular, countably closed cardinal and let $\sC$ be $\kappa$-compact in $\Cat^\perf_\infty$.
Then $\Ind(\sC)^{\aleph_1}$ is also $\kappa$-compact.
\end{lem}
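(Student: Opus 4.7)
The plan is to verify the characterisation of $\kappa$-compact objects given by Proposition~\ref{prop:cpctcat}\ref{prop:cpctcat-4} for $\Ind(\sC)^{\aleph_1}$, namely that it has fewer than $\kappa$ equivalence classes of objects and all mapping space homotopy groups are of cardinality less than $\kappa$. The crucial preliminary observation is that the subcategory $\sS^\kappa \subseteq \sS$ of $\kappa$-compact spaces is closed under sequential colimits (because $\kappa$ is regular and uncountable, so sequential colimits of sets of size $<\kappa$ have size $<\kappa$) and closed under countable limits (because $\kappa$ is countably closed, so the Milnor-type bound $\prod_n \lvert \pi_\ast X_n \rvert \le (\sup_n \lvert \pi_\ast X_n \rvert)^{\aleph_0} < \kappa$ controls both $\lim$ and $\lim^1$). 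Throughout, I will use that every object of $\Ind(\sC)^{\aleph_1}$ is a retract of a sequential colimit $\colim_n c_n$ with $c_\bullet \in \Fun(\mathbf{N}, \sC)$, since every countable filtered $\infty$-category admits a cofinal functor out of $\mathbf{N}$.

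For the mapping space condition, I would compute directly. Given $X = \colim_n c_n$ and $Y = \colim_m d_m$, compactness of each $c_n$ in $\Ind(\sC)$ yields the standard formula
\[ \Map_{\Ind(\sC)}(X,Y) \simeq \lim_n \colim_m \Map_\sC(c_n, d_m). \]
Each $\Map_\sC(c_n, d_m)$ is $\kappa$-compact in $\sS$ by Proposition~\ref{prop:cpctcat} applied to $\sC$. The inner $\colim_m$ is a sequential colimit of $\kappa$-compact spaces, hence still $\kappa$-compact; the outer $\lim_n$ is a countable limit, preserving $\kappa$-compactness by the preliminary observation. Passing to retracts only selects subspaces of the above mapping spaces, so the bound persists.

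For counting equivalence classes of objects, I would show that $\Fun(\mathbf{N},\sC)^\simeq$ itself is a $\kappa$-compact space. Using $\mathbf{N} \simeq \colim_n [n]$, we rewrite $\Fun(\mathbf{N},\sC)^\simeq \simeq \lim_n \Fun([n],\sC)^\simeq$. For each fixed $n$, the space $\Fun([n],\sC)^\simeq$ is a finite iterated pullback of copies of $\sC^\simeq$ and $\Fun([1],\sC)^\simeq$, and so is $\kappa$-compact using the hypothesis on $\sC$. The countable limit is then $\kappa$-compact by the preliminary observation, so in particular it has fewer than $\kappa$ components. Every object of $\Ind(\sC)^{\aleph_1}$ is a retract of such a colimit, and retracts of a given $X$ are parametrised by idempotents in $\pi_0\End(X)$, a set of size $<\kappa$ by the mapping space estimate already secured. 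Regularity of $\kappa$ then closes the cardinality count.

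The main obstacle is the countable limit step: this is precisely where countable closedness of $\kappa$ enters, as opposed to mere regularity, and it is what distinguishes the present lemma from naive analogues at arbitrary regular cardinals. Everything else amounts to assembling standard compactness arguments in $\Ind$-categories with the characterisation of $\kappa$-compactness furnished by Proposition~\ref{prop:cpctcat}.
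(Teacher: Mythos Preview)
Your proof is correct and follows essentially the same approach as the paper: verify the criterion of Proposition~\ref{prop:cpctcat} by expressing mapping spaces in $\Ind(\sC)^{\aleph_1}$ as countable limits of countable colimits of mapping spaces in $\sC$, and invoke countable closedness of $\kappa$ at the limit step. The paper presents $\Ind(\sC)^{\aleph_1}$ via the thick subcategory generated by countable coproducts of compacts (so its mapping-spectrum formula is $\prod_n\bigoplus_m X_{n,m}$ rather than your $\lim_n\colim_m$) and counts objects more directly via $|\pi_0\sC^\simeq|^{\aleph_0}<\kappa$, but these are cosmetic differences.
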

\begin{proof}
 We again apply Proposition~\ref{prop:cpctcat}.
 The $\aleph_1$-compact objects are generated under finite colimits by countable direct sums of compact objects, and thus the mapping spectra in $\Ind(\sC)^{\aleph_1}$ are in the thick subcategory of $\Spt$ generated by $\prod_{n\in\mathbb N} \bigoplus_{m\in\mathbb N} X_{n,m}$, where $X_{n,m}$ is a mapping spectrum in $\sC$.
 Since these have $\kappa$-small homotopy groups and $\kappa$ is countably closed, these products also have $\kappa$-small homotopy groups. 
 
 By assumption, $\sC$ has less than $\kappa$ many objects up to equivalence, so there are less than $\kappa$ many equivalence classes of countable coproducts of objects of $\sC$ because $\kappa$ is countably closed.
 Since each of the mapping spectra between such objects has $\kappa$-small homotopy groups, it follows that the stable subcategory they generate is also $\kappa$-small. 
\end{proof}

\begin{cor}\label{cor:catperfk-cofibration-cat}
Let $\kappa$ be a regular, countably closed cardinal.
Restricting both the cofibrations and the weak equivalences from $\Cat^\perf_\infty$ to the full subcategory $\Cat^{\perf,\kappa}_\infty$ yields a category of cofibrant objects $(\Cat^{\perf,\kappa}_\infty,W^\kappa_{\mrm{mot}},\Cof^\kappa)$ with functorial factorisations.

In particular, the object
\[ \Cone(f) := \left( \sD \vec{\times}_{\Ind(\sD)^{\aleph_1}} \Ind(\sC)^{\aleph_1} \right) / \sC \]
is $\kappa$-compact for every morphism $f : \sC \to \sD$ in $\Cat^{\perf,\kappa}_\infty$.
\end{cor}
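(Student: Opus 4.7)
The plan is to check the axioms of a category of cofibrant objects for $\Cat^{\perf,\kappa}_\infty$ one-by-one, using that $(\Cat^\perf_\infty, W_{\mrm{mot}}, \Cof)$ has this structure by \propref{prop:cat-perf-cofibration-cat}, and invoking the closure properties of $\kappa$-compact objects established above to keep each of the required constructions inside the small subcategory.

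First I would handle the easy axioms. The zero category $0$ is $\kappa$-compact and the map $0 \to \sC$ is fully faithful for any $\sC$. Pushouts of cofibrations along arbitrary maps in $\Cat^{\perf,\kappa}_\infty$ exist because they exist in $\Cat^\perf_\infty$; they are finite colimits of $\kappa$-compact objects and hence $\kappa$-compact (as $\kappa > \aleph_0$), and they remain fully faithful by \propref{prop:cat-perf-cofibration-cat}. Similarly, pushouts of weak equivalences along cofibrations remain motivic equivalences by the same proposition. Finally, the two-out-of-three property for $W^\kappa_{\mrm{mot}}$ is inherited from $W_{\mrm{mot}}$ via the fully faithful inclusion $\Cat^{\perf,\kappa}_\infty \subseteq \Cat^\perf_\infty$.

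The main content, and the only step that truly relies on countable closure of $\kappa$, is the existence of factorisations. For $f : \sC \to \sD$ in $\Cat^{\perf,\kappa}_\infty$, \propref{prop:cat-perf-cofibration-cat} provides the functorial factorisation
\[ \sC \to \sD \vec{\times}_{\Ind(\sD)^{\aleph_1}} \Ind(\sC)^{\aleph_1} \to \sD \]
into a fully faithful functor followed by a motivic equivalence. It then suffices to show that the middle object lies in $\Cat^{\perf,\kappa}_\infty$. Here I would apply \lemref{lem:goodisgood} to conclude that $\Ind(\sC)^{\aleph_1}$ and $\Ind(\sD)^{\aleph_1}$ are $\kappa$-compact (which is exactly where countable closure enters), and then \corref{cor:lax-pullback-compact} to see that the lax pullback is $\kappa$-compact as well. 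Functoriality is inherited from \propref{prop:cat-perf-cofibration-cat}.

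For the ``in particular'' claim: $\Cone(f)$ is the Karoubi quotient of the $\kappa$-compact object $\sD \vec{\times}_{\Ind(\sD)^{\aleph_1}} \Ind(\sC)^{\aleph_1}$ by the full stable subcategory $\sC$, itself $\kappa$-compact by hypothesis, so \corref{cor:quotient-compact} immediately yields that $\Cone(f)$ is $\kappa$-compact.
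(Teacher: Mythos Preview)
Your proof is correct and follows essentially the same approach as the paper: reduce to \propref{prop:cat-perf-cofibration-cat} for the ambient cofibration structure, observe that $\Cat^{\perf,\kappa}_\infty$ contains the initial object and is closed under finite colimits, and identify the only nontrivial step as showing the lax pullback stays $\kappa$-compact via \lemref{lem:goodisgood} and \corref{cor:lax-pullback-compact}. The paper's proof is terser and does not spell out the ``in particular'' clause, but your invocation of \corref{cor:quotient-compact} for that is exactly right.
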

\begin{proof}
 The full subcategory $\Cat^{\perf,\kappa}_\infty$ clearly contains the initial object and is closed under finite colimits.
 In light of Proposition~\ref{prop:cat-perf-cofibration-cat}, we only have to show that the lax pullback $\sD \vec{\times}_{\Ind(\sD)^{\aleph_1}} \Ind(\sC)^{\aleph_1}$ is $\kappa$-compact for every exact functor $f : \sC \to \sD$ between $\kappa$-compact objects in $\Cat^\perf_\infty$.
 This follows by combining Lemmas~\ref{lem:goodisgood} and \ref{cor:lax-pullback-compact}.
\end{proof}

\section{Motives as a localization}\label{section:MotLoc}
In this section, we prove our main theorem, namely that the Dwyer--Kan localization of the category $\Cat^\perf_\infty$ at the motivic equivalences is the universal finitary localizing invariant.
Let $\kappa$ be a regular, uncountable cardinal.
Since $\Cat^{\perf,\kappa}_\infty$ is small, the Dwyer--Kan localization
\[ \gamma_\kappa : \Cat^{\perf,\kappa}_\infty \to \sM_\kappa := \Cat^{\perf,\kappa}_\infty[(W^\kappa_{\mrm{mot}})^{-1}] \]
is also a small category.

The following observation will be useful on various occasions.

\begin{lem}\label{lem:equiv-tensor-stable}
 Let $f : \sC \to \sD$ be a motivic equivalence in $\Cat^\perf_\infty$ and let $\sE \in \Cat^\perf_\infty$ be arbitrary.
 Then $f \otimes \sE : \sC \otimes \sE \to \sD \otimes \sE$ is also a motivic equivalence.
\end{lem}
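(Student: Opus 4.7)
The plan is to promote each finitary localizing invariant $L : \Cat^\perf_\infty \to \sX$ with stable target to a new finitary localizing invariant $L_\sE := L \circ (-\otimes \sE)$. Once we have this in hand, the hypothesis that $f$ is a motivic equivalence forces $L_\sE(f) \simeq L(f \otimes \sE)$ to be an equivalence for every such $L$, and unpacking the definition of motivic equivalence then immediately yields that $f \otimes \sE$ is a motivic equivalence.

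To verify that $L_\sE$ is a finitary localizing invariant, I would use that $-\otimes \sE : \Cat^\perf_\infty \to \Cat^\perf_\infty$ preserves all small colimits---this follows from the standard fact that, in a closed symmetric monoidal $\infty$-category, tensoring with a fixed object has a right adjoint (here $\Funex(\sE,-)$). In particular $0 \otimes \sE \simeq 0$ and filtered colimits are preserved, which combined with the analogous properties of $L$ shows that $L_\sE(0)$ is final and that $L_\sE$ is finitary. Since $-\otimes\sE$ also preserves cofibers, a Karoubi sequence $\sC \to \sD \to \sD/\sC$ is sent to a cofiber sequence $\sC \otimes \sE \to \sD \otimes \sE \to (\sD/\sC) \otimes \sE$; to conclude that $L_\sE$ sends Karoubi sequences to fibre sequences, it remains to check that this image is again a Karoubi sequence, which amounts to showing that $-\otimes\sE$ preserves fully faithful exact functors.

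This preservation of fully faithfulness is the main obstacle. The plan is to transport the claim along the symmetric monoidal equivalence $\Ind : \Cat^\perf_\infty \xrightarrow{\sim} \PrL_{\mrm{st},\omega}$, reducing it to the statement that if $i : \sA \to \sB$ is a fully faithful left adjoint in $\PrL_{\mrm{st},\omega}$ with right adjoint $r$, and $\sF \in \PrL_{\mrm{st},\omega}$, then $i \otimes \id_\sF$ is again fully faithful. The key input is the standard compatibility between the symmetric monoidal structure on $\PrL$ and the formation of right adjoints---visible via the symmetric monoidal equivalence $\PrL \simeq (\PrR)^\op$---which identifies the right adjoint of $i \otimes \id_\sF$ with $r \otimes \id_\sF$, with unit given by $\eta \otimes \id_\sF$, where $\eta : \id \xrightarrow{\sim} r i$ is the original unit. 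Since tensoring preserves equivalences, this new unit is again an equivalence, so $i \otimes \id_\sF$ is fully faithful. Restricting back to compact objects then gives the desired fully faithfulness of $\sC \otimes \sE \to \sD \otimes \sE$, completing the argument.
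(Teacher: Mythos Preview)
Your proof is correct and follows the paper's approach exactly: show that $L \circ (-\otimes\sE)$ is again a finitary localizing invariant and conclude from the definition of motivic equivalence. The paper simply asserts that $-\otimes\sE$ preserves fully faithful functors and colimits, while you supply additional justification for the former via the symmetric monoidal equivalence $\PrL\simeq(\PrR)^\op$.
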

\begin{proof}
 Note that the functor $- \otimes \sE$ preserves both fully faithful functors and colimits, so it preserves Karoubi sequences and filtered colimits.
 Hence $L(- \otimes \sE)$ is a finitary localizing invariant whenever $L$ is one, which implies that $f \otimes \sE$ is a motivic equivalence as desired.
\end{proof}

\begin{prop}\label{prop:colimloc}
 Let $\kappa$ be a regular and countably closed cardinal.
  \ \begin{enumerate}
   \item $\sM_\kappa$ is pointed and admits $\kappa$-small colimits;
   \item the localization functor $\gamma_\kappa$ preserves pushouts along fully faithful functors as well as $\kappa$-small coproducts;
   \item the symmetric monoidal structure on $\Cat^{\perf,\kappa}_\infty$ induces a symmetric monoidal structure on $\sM_\kappa$ such that $\gamma_\kappa$ refines to a symmetric monoidal functor.
  \end{enumerate}
\end{prop}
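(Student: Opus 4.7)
The plan is to deduce (1) and (2) by invoking Theorem~\ref{thm:localize-cofibration-cat} for the cofibration category structure $(\Cat^{\perf,\kappa}_\infty, W^\kappa_{\mrm{mot}}, \Cof^\kappa)$ from Corollary~\ref{cor:catperfk-cofibration-cat}. Part~(1) of that theorem gives finite colimits in $\sM_\kappa$ and shows that $\gamma_\kappa$ preserves the initial object $\gamma_\kappa(0)$ and pushouts along fully faithful functors, covering the first half of (2). For the $\kappa$-small coproducts in both (1) and (2), it suffices by Theorem~\ref{thm:localize-cofibration-cat}(4) to check that $W^\kappa_{\mrm{mot}}$ is closed under $\kappa$-small coproducts, and this follows from the fact that every finitary localizing invariant $L : \Cat^\perf_\infty \to \sX$ into a stable $\infty$-category preserves $\kappa$-small coproducts: $L$ sends finite direct sums to direct sums because the Karoubi sequences $\sC \to \sC \oplus \sD \to \sD$ are split, and arbitrary coproducts are filtered colimits of finite subcoproducts and so are preserved by finitariness. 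Finite colimits together with $\kappa$-small coproducts then generate all $\kappa$-small colimits since $\kappa$ is regular.

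For pointedness, I still need to check that $\gamma_\kappa(0)$ is terminal in $\sM_\kappa$. I would compute the mapping spaces via Proposition~\ref{prop:calculus-fractions} and Example~\ref{ex:zig-zag-compose}, applied to the class $\Cof^\kappa \cap W^\kappa_{\mrm{mot}}$ of trivial cofibrations; this class is closed under arbitrary pushouts and models the same localization by Ken Brown's lemma, as already invoked in the proof of Theorem~\ref{thm:localize-cofibration-cat}. This yields
\[
  \Map_{\sM_\kappa}(\gamma_\kappa(\sC), \gamma_\kappa(0)) \simeq \colim_{\sD \in W(0)} \Map_{\Cat^{\perf,\kappa}_\infty}(\sC, \sD),
\]
where $W(0) \subseteq (\Cat^{\perf,\kappa}_\infty)_{0/}$ is the full subcategory on those $0 \to \sD$ which are motivic equivalences. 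Since $0$ is terminal in $\Cat^{\perf,\kappa}_\infty$, every $\sD \in W(0)$ admits a unique map $\sD \to 0$; two-out-of-three for $W^\kappa_{\mrm{mot}}$ shows that this map is itself a motivic equivalence, so it lies in $W(0)$. Hence $0$ is terminal in $W(0)$, the inclusion $\{0\} \hookrightarrow W(0)$ is cofinal for colimits, and the above colimit collapses to $\Map_{\Cat^{\perf,\kappa}_\infty}(\sC, 0) \simeq \ast$.

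For (3), the symmetric monoidal structure on $\Cat^\perf_\infty$ restricts to $\Cat^{\perf,\kappa}_\infty$: using Proposition~\ref{prop:cpctcat}, one checks that $\sC \otimes \sD$ has a $\kappa$-small set of objects up to equivalence and mapping spectra with $\kappa$-small homotopy groups whenever $\sC$ and $\sD$ do, using the description of mapping spectra in $\sC \otimes \sD$ as tensor products of the mapping spectra in the factors. Lemma~\ref{lem:equiv-tensor-stable} then provides the compatibility of $\otimes$ with motivic equivalences in each variable, and the standard universal property of symmetric monoidal Dwyer--Kan localizations produces a symmetric monoidal structure on $\sM_\kappa$ together with the symmetric monoidal refinement of $\gamma_\kappa$.

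I expect the most delicate step to be the pointedness argument, where the cofinality depends on the slightly subtle observation that the terminal projections $\sD \to 0$ automatically lie in $W^\kappa_{\mrm{mot}}$ for $\sD \in W(0)$, which uses two-out-of-three in an essential way. The remaining assertions are comparatively routine applications of the cofibration category formalism and symmetric monoidal localization machinery.
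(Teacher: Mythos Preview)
Your proposal is correct and follows the paper's approach for (1) and (2): the paper simply declares these ``immediate'' from Corollary~\ref{cor:catperfk-cofibration-cat} and Theorem~\ref{thm:localize-cofibration-cat}, and you spell out exactly that deduction. Two points are worth flagging.

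For pointedness, your calculus-of-fractions argument is correct but more elaborate than necessary. A quicker route, parallel to the proof of Theorem~\ref{thm:localize-cofibration-cat}(4), is to note that the adjunction $\Cat^{\perf,\kappa}_\infty \rightleftarrows *$ (with right adjoint the inclusion of the terminal object $0$) has both adjoints trivially preserving weak equivalences, hence descends to an adjunction $\sM_\kappa \rightleftarrows *$ exhibiting $\gamma_\kappa(0)$ as terminal.

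For (3), the paper proves that $\sC_1 \otimes \sC_2$ is $\kappa$-compact by a direct check using the adjunction $\Funex(\sC_1 \otimes \sC_2, -) \simeq \Funex(\sC_1, \Funex(\sC_2, -))$ and $\kappa$-compactness of each factor. Your route through Proposition~\ref{prop:cpctcat} and mapping spectra also works, but it requires two things you glossed over: that tensor products of spectra with $\kappa$-small homotopy groups again have $\kappa$-small homotopy groups (true, e.g.\ via the K\"unneth spectral sequence and regularity of $\kappa$), and that general objects of $\sC \otimes \sD$ are only retracts of finite colimits of elementary tensors rather than elementary tensors themselves. The paper's argument sidesteps both. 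For the induced symmetric monoidal structure on the localization, the paper cites \cite[Proposition~A.5]{nikolaus-scholze} and checks that $W^\kappa_\mrm{mot}$ is closed under tensoring with a fixed object via Lemma~\ref{lem:equiv-tensor-stable}, which is precisely what you do.
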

\begin{proof}
  The first two assertions are immediate from Corollary~\ref{cor:catperfk-cofibration-cat} and Theorem~\ref{thm:localize-cofibration-cat}.
  For the third assertion, observe that $\Cat^{\perf,\kappa}_\infty$ is a symmetric monoidal subcategory of $\Cat^\perf_\infty$:
  the tensor unit $\Spt^\omega$ is a compact object, so it is also $\kappa$-compact.
 Let $\sC_1$ and $\sC_2$ be $\kappa$-compact, and let $\sD : J \to \Cat^\perf_\infty$ be a $\kappa$-filtered diagram.
 From the universal property of the tensor product, it follows that
 \begin{align*}
  \Funex(\sC_1 \otimes \sC_2, \colim_J \sD)
  &\simeq \Funex(\sC_1, \Funex(\sC_2,\colim_J \sD)) \\
  &\simeq \Funex(\sC_1, \colim_J \Funex(\sC_2, \sD)) \\
  &\simeq \colim_J \Funex(\sC_1, \Funex(\sC_2, \sD)) \\
  &\simeq \colim_J \Funex(\sC_1 \otimes \sC_2, \sD)
 \end{align*}
 so $\sC_1 \otimes \sC_2$ is also $\kappa$-compact.
 For the third assertion, \cite[Propsition~A.5]{nikolaus-scholze} implies that it suffices to show that $W^\kappa_\mrm{mot}$ is closed under tensor products.
 This follows readily from Lemma~\ref{lem:equiv-tensor-stable}.
\end{proof}


\begin{cor}\label{cor:finitary}
Let $\kappa$ be a regular and countably closed cardinal.
Then the localization functor $\gamma_\kappa$ preserves $\kappa$-small filtered colimits. 
\end{cor}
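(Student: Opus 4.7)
The plan is to express mapping spaces in $\sM_\kappa$ via the calculus of fractions from Proposition~\ref{prop:calculus-fractions} and reduce the claim to a standard commutation of colimits and limits in spaces. By Proposition~\ref{prop:colimloc}, $\sM_\kappa$ admits $\kappa$-small colimits, so any $\kappa$-small filtered diagram $D : I \to \Cat^{\perf,\kappa}_\infty$ yields a comparison map $c : \colim_i \gamma_\kappa(D_i) \to \gamma_\kappa(\colim_i D_i)$ in $\sM_\kappa$. By the Yoneda lemma and the essential surjectivity of $\gamma_\kappa$, it is enough to prove that for every $Y \in \Cat^{\perf,\kappa}_\infty$ the induced map
\[
\Map_{\sM_\kappa}(\gamma_\kappa(\colim_i D_i), \gamma_\kappa(Y)) \to \lim_i \Map_{\sM_\kappa}(\gamma_\kappa(D_i), \gamma_\kappa(Y))
\]
is an equivalence.

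Next, I would apply Example~\ref{ex:zig-zag-compose} to the class of trivial cofibrations in the cofibration category $(\Cat^{\perf,\kappa}_\infty, W^\kappa_\mrm{mot}, \Cof^\kappa)$ of Corollary~\ref{cor:catperfk-cofibration-cat}; this class is closed under pushouts along arbitrary maps by the cofibration category axioms. Writing $T(Y) \subseteq Y/\Cat^{\perf,\kappa}_\infty$ for the full subcategory spanned by the trivial cofibrations out of $Y$, the example provides the formula
\[
\Map_{\sM_\kappa}(\gamma_\kappa(X), \gamma_\kappa(Y)) \simeq \colim_{(Y \to Z) \in T(Y)} \Map_{\Cat^{\perf,\kappa}_\infty}(X, Z).
\]
Substituting $X = \colim_i D_i$ and using continuity of the mapping space in the first variable, the left-hand side of the previous display becomes $\colim_Z \lim_i \Map(D_i, Z)$, while the right-hand side equals $\lim_i \colim_Z \Map(D_i, Z)$. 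The assertion therefore reduces to commuting a $T(Y)$-indexed colimit with a $\kappa$-small limit in $\sS$, which is automatic as soon as $T(Y)$ is $\kappa$-filtered.

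To verify that $T(Y)$ is $\kappa$-filtered, I would show that it admits all $\kappa$-small colimits. Finite colimits are provided by general cofibration category theory, where pushouts in $T(Y)$ are produced by first replacing spans by cofibrant ones using the functorial factorisation of Proposition~\ref{prop:cat-perf-cofibration-cat} and then taking the honest pushout. Closure under $\kappa$-small filtered colimits rests on two facts. First, motivic equivalences are closed under filtered colimits: for any finitary localizing invariant $L$ with stable target, $L(\colim f_i) \simeq \colim L(f_i)$ is a filtered colimit of equivalences in a stable $\infty$-category, hence an equivalence. Second, fully faithful exact functors are preserved by filtered colimits in $\Cat^\perf_\infty$, as the forgetful functor to $\Cat_\infty$ preserves filtered colimits and filtered colimits of fully faithful functors in $\Cat_\infty$ are fully faithful (being detected on mapping spaces, where it reduces to the stability of equivalences under filtered colimits in $\sS$).

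The main obstacle is the finite-colimits step for $T(Y)$: morphisms between trivial cofibrations need not themselves be cofibrations, so the naive pushout in $Y/\Cat^{\perf,\kappa}_\infty$ of a span in $T(Y)$ may fail to belong to $T(Y)$, and one has to use functorial factorisation carefully to produce cocones within $T(Y)$ --- this is exactly the standard cofibration-category trick that in principle goes back to Ken Brown's lemma and is spelled out in the proof of Theorem~\ref{thm:localize-cofibration-cat}.
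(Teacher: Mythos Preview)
Your approach is genuinely different from the paper's and runs into a real gap at the last step.

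\textbf{The paper's argument.} The paper does not touch the calculus-of-fractions formula for mapping spaces at all. Instead it argues as follows: write the comparison map $c$ as a zig-zag $\sD \xrightarrow{f} \sE \xleftarrow{s} \colim_J \sC_j$ with $s$ a motivic equivalence; by saturation (Theorem~\ref{thm:localize-cofibration-cat}(3)) it suffices to show $f$ is a motivic equivalence. For any finitary localizing invariant $L$ with stable target, $L$ factors as $\overline L \circ \gamma_\kappa$, and the key point is that $\overline L$ preserves $\kappa$-small colimits: every pushout in $\sM_\kappa$ is the image under $\gamma_\kappa$ of a pushout along a cofibration, and $\gamma_\kappa$ preserves $\kappa$-small coproducts (Proposition~\ref{prop:colimloc}), while $L$ itself sends such pushouts to pushouts and preserves $\kappa$-small coproducts since it is finitary. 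Hence $\overline L(c)$ identifies with the canonical equivalence $\colim_J L(\sC_j) \simeq L(\colim_J \sC_j)$, so $L(f)$ is an equivalence.

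\textbf{The gap in your argument.} Your reduction to the $\kappa$-filteredness of $T(Y)$ is correct, and your argument that $T(Y)$ is closed under $\kappa$-small \emph{filtered} colimits in the coslice is fine. The problem is the finite-colimits step. You write that ``pushouts in $T(Y)$ are produced by first replacing spans by cofibrant ones \ldots\ and then taking the honest pushout'', but the object you obtain this way is not a pushout of the original span in $T(Y)$: it only furnishes a \emph{cocone}. You yourself observe this in the final paragraph (``produce cocones within $T(Y)$''), which is precisely the statement that $T(Y)$ is filtered, not that it has finite colimits. Your decomposition ``$\kappa$-small colimits $=$ finite colimits $+$ $\kappa$-small filtered colimits'' therefore does not apply, and you have not explained why ``filtered together with admitting $\kappa$-small filtered colimits'' should imply $\kappa$-filtered. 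That implication is not a standard fact and would need its own argument; without it, the step where you commute the $\kappa$-small limit past $\colim_{T(Y)}$ is unjustified.
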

\begin{proof}
Let $\sC : J \to \Cat^{\perf,\kappa}_\infty$ be a $\kappa$-small filtered diagram,
and consider the comparison map $\colim_J \gamma_\kappa(\sC)\to \gamma_\kappa(\colim_J \sC)$.
Since $\sM_\kappa$ is a Dwyer--Kan localization, there exists some stable $\infty$-category $\sD$ such that $\gamma_\kappa(\sD) \simeq \colim_J \gamma_\kappa(\sC)$.
Then the comparison map can be represented by a zig-zag $\sD \xrightarrow{f} \sE \xleftarrow{\sim} \colim_J \sC$, where the second arrow is a motivic equivalence.
Since we only consider finitary localizing invariants, $f$ is a motivic equivalence, and it follows that the comparison map is an equivalence.
\end{proof}

\begin{lem}\label{lem:finite_colims}
Let $\kappa$ be a regular and countably closed cardinal. Let 
\[
\begin{tikzcd}
a \arrow[r]\arrow[d] & b\arrow[d]\\
c \arrow[r] & d
\end{tikzcd}
\]
be a diagram in $\sM_\kappa$ that becomes a pushout diagram upon applying any finitary localizing invariant. 
Then it is a pushout diagram in $\sM_\kappa$. 
\end{lem}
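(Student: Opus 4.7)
The plan is to construct the pushout of the span $c \leftarrow a \to b$ explicitly in $\sM_\kappa$, produce the canonical comparison map to $d$, and show this comparison is an equivalence by testing it against every finitary localizing invariant. As a first step I would lift the span to $\tilde c \leftarrow \tilde a \to \tilde b$ in $\Cat^{\perf,\kappa}_\infty$ using the zig-zag description of morphisms in $\sM_\kappa$ from \remref{rem:zig-zags}, replacing $b$ and $c$ by motivically equivalent objects if necessary. Applying the functorial factorisation from \propref{prop:cat-perf-cofibration-cat} (which restricts to $\Cat^{\perf,\kappa}_\infty$ by \corref{cor:catperfk-cofibration-cat}), I would factor $\tilde a \to \tilde b$ as $\tilde a \hookrightarrow \tilde b' \xrightarrow{\sim} \tilde b$ with the second map a motivic equivalence, and form $\tilde p := \tilde c \cup_{\tilde a} \tilde b'$ in $\Cat^{\perf,\kappa}_\infty$. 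By \propref{prop:colimloc}(2), the image $p := \gamma_\kappa(\tilde p)$ is the pushout of $c \leftarrow a \to b$ in $\sM_\kappa$, and the commutativity of the given square yields a canonical comparison map $\phi : p \to d$; the goal reduces to showing $\phi$ is an equivalence.

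Next, for any finitary localizing invariant $L : \Cat^\perf_\infty \to \sX$ with $\sX$ stable, the restriction $L|_{\Cat^{\perf,\kappa}_\infty}$ inverts motivic equivalences by definition and thus factors uniquely as $\bar L \circ \gamma_\kappa$ for some $\bar L : \sM_\kappa \to \sX$. I would then verify that $\bar L$ preserves pushouts in $\sM_\kappa$: every such pushout arises, by the recipe of the previous paragraph, as $\gamma_\kappa$ of a pushout along a fully faithful functor in $\Cat^{\perf,\kappa}_\infty$, on which $L$ produces a cocartesian square in the stable category $\sX$ because $L$ is localizing. Combined with the hypothesis of the lemma, this gives that $\bar L(\phi)$ is an equivalence in $\sX$ for every such $L$.

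To close the argument, I would represent $\phi$ by a zig-zag $\tilde p \xrightarrow{f} \tilde d' \xleftarrow{s} \tilde d$ in $\Cat^{\perf,\kappa}_\infty$ with $s$ a motivic equivalence, so that $\phi = \gamma_\kappa(s)^{-1} \gamma_\kappa(f)$. For any finitary localizing invariant $L$ with factorisation $\bar L$, the identity $L(f) = L(s) \circ \bar L(\phi)$ then shows $L(f)$ is an equivalence: the first factor is an equivalence by the definition of motivic equivalence, and the second by the preceding paragraph. Since $L$ was arbitrary, $f$ is a motivic equivalence; hence $\gamma_\kappa(f)$, and therefore $\phi$, is an equivalence in $\sM_\kappa$.

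The one point requiring care -- the main conceptual hurdle rather than a serious obstacle -- is verifying that $\bar L$ preserves \emph{arbitrary} pushouts in $\sM_\kappa$, not merely those along cofibrations which $\gamma_\kappa$ preserves. The remedy is the observation that every pushout in $\sM_\kappa$ can itself be realised as $\gamma_\kappa$ of a pushout along a cofibration by the same factorisation recipe used to construct $\tilde p$, so that the localizing property of $L$ suffices.
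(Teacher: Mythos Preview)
Your proposal is correct and follows essentially the same approach as the paper's proof: form the pushout $p = b \coprod_a c$ in $\sM_\kappa$, observe that the induced functor $\bar L$ from any finitary localizing invariant preserves pushouts (since all pushouts in $\sM_\kappa$ are modelled by pushouts along cofibrations in $\Cat^{\perf,\kappa}_\infty$), conclude that $\bar L(\phi)$ is an equivalence, and then present $\phi$ as a zig-zag to deduce it is an equivalence in $\sM_\kappa$. The paper's proof compresses all of this into three sentences, while you have spelled out the lifting of the span, the factorisation step, and the care needed to ensure $\bar L$ preserves arbitrary pushouts---but the logical skeleton is identical.
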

\begin{proof}
Since all pushout squares can be modelled by pushouts along fully faithful functors in $\Cat^\perf_\infty$, any localizing 
invariant preserves pushouts. 
Hence the map $E(b \coprod_a c) \to E(d)$ is an equivalence for any finitary localizing invariant $E$. 
In particular, presenting $b \coprod_a c \to d$ as a zig-zag in $\Cat^\perf$, we obtain that it is an equivalence in 
$\sM_\kappa$.
\end{proof}

In the next step, we prove that $\sM_\kappa$ is stable.

\begin{defn}
 Set
 \[ \Calk := \Cone(\Spt^\omega \to 0) \simeq \Ind(\Spt^\omega)^{\aleph_1} / \Spt^\omega. \]
\end{defn}

\begin{lem}\label{lem:calk-suspension}
 Let $\kappa$ be a regular and countably closed cardinal.
 The category $\Calk$ is $\kappa$-compact and the functor $- \otimes \gamma_\kappa(\Calk) : \sM_\kappa \to \sM_\kappa$ is equivalent to the suspension functor.
\end{lem}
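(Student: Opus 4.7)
The plan is first to identify $\gamma_\kappa(\Calk)$ with the suspension of the monoidal unit in $\sM_\kappa$ via the Karoubi sequence defining $\Calk$, and then to bootstrap this to the functor-level statement by tensoring that Karoubi sequence with an arbitrary $\kappa$-compact category. For $\kappa$-compactness of $\Calk$ itself, I would note that $\Spt^\omega$ is compact, so \lemref{lem:goodisgood} makes $\Ind(\Spt^\omega)^{\aleph_1}$ $\kappa$-compact, and then $\Calk$, being the Karoubi quotient of $\Ind(\Spt^\omega)^{\aleph_1}$ by $\Spt^\omega$, is $\kappa$-compact by \corref{cor:quotient-compact}.

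To identify $\gamma_\kappa(\Calk)$, I would apply the functorial factorisation from \propref{prop:cat-perf-cofibration-cat} to the map $\Spt^\omega \to 0$. This factors as the fully faithful inclusion $\Spt^\omega \hookrightarrow \Ind(\Spt^\omega)^{\aleph_1}$ followed by a motivic equivalence $\Ind(\Spt^\omega)^{\aleph_1} \to 0$; in particular $\gamma_\kappa(\Ind(\Spt^\omega)^{\aleph_1}) \simeq 0$. The Karoubi sequence $\Spt^\omega \hookrightarrow \Ind(\Spt^\omega)^{\aleph_1} \to \Calk$ is a pushout along a fully faithful functor, so \propref{prop:colimloc} shows that its image under $\gamma_\kappa$ is a pushout square in $\sM_\kappa$ with corners $\gamma_\kappa(\Spt^\omega)$, $0$, $0$, $\gamma_\kappa(\Calk)$. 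This identifies $\gamma_\kappa(\Calk) \simeq \Sigma \gamma_\kappa(\Spt^\omega)$.

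For the functor-level statement, I would repeat the argument after tensoring the whole Karoubi sequence with an arbitrary $\widetilde X \in \Cat^{\perf,\kappa}_\infty$. The tensor product in $\Cat^\perf_\infty$ preserves colimits and fully faithful functors (as used in the proof of \propref{prop:cat-perf-cofibration-cat}), so we obtain a Karoubi sequence $\widetilde X \hookrightarrow \Ind(\Spt^\omega)^{\aleph_1} \otimes \widetilde X \to \Calk \otimes \widetilde X$, and \lemref{lem:equiv-tensor-stable} shows that its middle term maps to $0$ via a motivic equivalence. Applying the symmetric monoidal functor $\gamma_\kappa$ then identifies $\gamma_\kappa(\widetilde X) \otimes \gamma_\kappa(\Calk) \simeq \Sigma \gamma_\kappa(\widetilde X)$ naturally in $\widetilde X$, and essential surjectivity of $\gamma_\kappa$ promotes this to a natural equivalence $-\otimes \gamma_\kappa(\Calk) \simeq \Sigma(-)$ of endofunctors of $\sM_\kappa$. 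The main subtlety is that the last step should be phrased as a genuine natural equivalence of functors rather than a pointwise check; this is most cleanly handled by assembling the argument into a canonical natural transformation $\Sigma(-) \to -\otimes \gamma_\kappa(\Calk)$ arising from the universal cofiber square at the unit, and then verifying pointwise that it is an equivalence using the computation above.
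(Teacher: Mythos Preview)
Your proposal is correct and follows essentially the same approach as the paper: both arguments tensor the defining Karoubi sequence $\Spt^\omega \hookrightarrow \Ind(\Spt^\omega)^{\aleph_1} \to \Calk$ with an arbitrary $\kappa$-compact category, observe that the middle term is motivically trivial (the paper phrases this as ``admits an Eilenberg swindle'', you deduce it from the functorial factorisation), and then apply \propref{prop:colimloc}. The paper is terser---it skips the separate unit-case computation and handles the naturality by noting that $-\otimes\gamma_\kappa(\Calk)$ is induced from the endofunctor $-\otimes\Calk$ on $\Cat^{\perf,\kappa}_\infty$ via the symmetric monoidal localization---but there is no substantive difference.
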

\begin{proof}
 Since $\Spt^\omega$ is compact in $\Cat^{\perf,\kappa}_\infty$, Corollary~\ref{cor:catperfk-cofibration-cat} implies that $\Calk$ is $\kappa$-compact.
 
 Note that $- \otimes \gamma_\kappa(\Calk)$ is the functor induced by the endofunctor $- \otimes \Calk : \Cat^\perf_\infty \to \Cat^\perf_\infty$ because $\gamma_\kappa$ is symmetric monoidal.
 Tensoring with a fixed category preserves fully faithful functors and cofiber sequences, so we have a Karoubi sequence $\sC \to \sC \otimes \Ind(\sC)^{\aleph_1} \to \sC \otimes \Calk$ in $\Cat^{\perf,\kappa}_\infty$ for every $\sC$. 
 Since the middle term admits an Eilenberg swindle, the lemma follows from Proposition~\ref{prop:colimloc}.
\end{proof}

\subsubsection{} 
Our goal is to exhibit an inverse of the suspension functor in $\sM_\kappa$.
Such an inverse is provided by the Grayson construction $\sC \mapsto \Gamma\sC$ which we discuss in detail in 
Appendix~\ref{sec:reminder_on_the_grayson_construction}. 
For now we just list three important properties of it (see Construction~\ref{ssec:construction_of_gamma}, Theorem~\ref{thm:Gamma_loop}, and Lemma~\ref{lem:kappacompact_gamma}).
\begin{enumerate}\label{enum:gamma}
\item $\Gamma\sC$ is given by tensoring with a stable $\infty$-category $\Gamma$.
\item There is a sequence of stable $\infty$-categories
\[
\Gamma \to A \to I
\]
that becomes a cofiber sequence after applying any localizing invariant valued in a stable $\infty$-category. 
Moreover, there are motivic equivalences $0 \to A$ and $\Spt \to I$. 
\item $\Gamma$, $A$ and $I$ are $\kappa$-compact in $\Cat^\perf_\infty$ for any countably closed $\kappa$. 
\end{enumerate}

\begin{prop}\label{prop:mk-stable}
 Let $\kappa$ be a regular and countably closed cardinal.
 Then the functor $- \otimes \gamma_\kappa(\Gamma)$ is inverse to the suspension functor on $\sM_\kappa$.
 In particular, the $\infty$-category $\sM_\kappa$ is stable.
\end{prop}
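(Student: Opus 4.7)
My plan is to show that tensoring with $\gamma_\kappa(\Gamma)$ is inverse to the suspension functor $\Sigma = -\otimes \gamma_\kappa(\Calk)$ on $\sM_\kappa$, where this identification of $\Sigma$ comes from Lemma~\ref{lem:calk-suspension}. By symmetric monoidality of $\gamma_\kappa$ (Proposition~\ref{prop:colimloc}), this reduces to verifying that $\gamma_\kappa(\Gamma) \otimes \gamma_\kappa(\Calk) \simeq \unitm$ in $\sM_\kappa$, since then both composites of $-\otimes \gamma_\kappa(\Gamma)$ and $-\otimes \gamma_\kappa(\Calk)$ reduce to tensoring with $\unitm$.

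The main step is to exhibit a cofiber sequence $\gamma_\kappa(\Gamma) \to 0 \to \unitm$ in $\sM_\kappa$, which is equivalent to $\Sigma\gamma_\kappa(\Gamma) \simeq \unitm$. I would form the square in $\sM_\kappa$ with corners $\gamma_\kappa(\Gamma), \gamma_\kappa(A), 0, \gamma_\kappa(I)$, with edges coming from $\Gamma \to A$ and $A \to I$. The null-homotopy of the composite $\gamma_\kappa(\Gamma) \to \gamma_\kappa(I)$ is provided by the factorization through $\gamma_\kappa(A) \simeq 0$, using the motivic equivalence $0 \to A$ from property (2). Property (2) further ensures that $\Gamma \to A \to I$ is sent to a cofiber sequence by every finitary localizing invariant, so the square becomes a pushout under every such invariant. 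Lemma~\ref{lem:finite_colims} then gives that the square is a pushout already in $\sM_\kappa$. Finally, the motivic equivalence $\Spt^\omega \to I$ identifies $\gamma_\kappa(I) \simeq \unitm$, yielding the desired cofiber sequence.

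Combining this with Lemma~\ref{lem:calk-suspension}, symmetric monoidality makes $-\otimes \gamma_\kappa(\Gamma)$ a two-sided inverse to $\Sigma$, so the suspension functor is an equivalence. Since $\sM_\kappa$ is pointed and admits finite colimits by Proposition~\ref{prop:colimloc}, invertibility of $\Sigma$ suffices for stability by the standard characterization of stable $\infty$-categories.

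The most delicate point is the pushout verification in the main step. One has to confirm that "becomes a pushout under every finitary localizing invariant" is the correct hypothesis for Lemma~\ref{lem:finite_colims}, which relies on the fact that every finitary localizing invariant factors uniquely through $\gamma_\kappa$ and therefore descends to $\sM_\kappa$ compatibly with the underlying diagrams in $\Cat^{\perf,\kappa}_\infty$. One must also be careful that the null-homotopy witnessing the square is precisely the one that the finitary localizing invariants "see" when they send the triple $\Gamma \to A \to I$ to a cofiber sequence. Once these bookkeeping points are pinned down, the rest of the argument is essentially formal.
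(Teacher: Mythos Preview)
Your proposal is correct and follows essentially the same approach as the paper's proof: reduce to showing $\Sigma\gamma_\kappa(\Gamma)\simeq\gamma_\kappa(\Spt^\omega)$ via the sequence $\Gamma\to A\to I$, apply Lemma~\ref{lem:finite_colims} to recognize a cofiber sequence in $\sM_\kappa$, and then use the motivic equivalences $0\to A$ and $\Spt^\omega\to I$. The paper is slightly terser and uses the null-homotopy of $\Gamma\to A\to I$ that already exists in $\Cat^{\perf}_\infty$ (from Theorem~\ref{thm:Gamma_loop}) rather than the one coming from $\gamma_\kappa(A)\simeq 0$; since $\gamma_\kappa(A)\simeq 0$, the space of null-homotopies is contractible and your worry about matching null-homotopies dissolves.
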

\begin{proof}
Lemma~\ref{lem:calk-suspension} shows that $\Sigma \simeq - \otimes \gamma_\kappa(\Calk)$, so it suffices to show that the object $\gamma_\kappa(\Calk)$ is $\otimes$-invertible. Since for the same reason we have an equivalence 
\[
\gamma_\kappa(\Gamma) \otimes \gamma_\kappa(\Calk) \simeq \Sigma \gamma_\kappa(\Gamma)
\]
it suffices to show
\[
\Sigma \gamma_\kappa(\Gamma) \simeq \gamma_\kappa(\Spt^\omega).
\]
Property~(2) implies that
\[
\gamma_\kappa(\Gamma) \stackrel{\iota}\to \gamma_\kappa(A) \stackrel{\pi}\to \gamma_\kappa(I)
\]
by Lemma~\ref{lem:finite_colims}.
Using the other half of property~(2) we obtain that this cofiber sequence is equivalent to 
\[
\gamma_\kappa(\Gamma) \to 0 \to \gamma_\kappa(\Spt^\omega)
\]
and so $\Sigma \gamma_\kappa(\Gamma) \simeq \gamma_\kappa(\Spt^\omega)$. 
Hence, $\Sigma$ is invertible and $\sM_\kappa$ is stable.
\end{proof}

The next lemma will allow us to prove that $\gamma_\kappa$ gives rise to the universal finitary localizing invariant.

\begin{lem}\label{lem:kappa-localizing}
 Let $L : \Cat^\perf_\infty \to \sX$ be a functor with stable and cocomplete target $\sX$ and let $\kappa$ be a regular, uncountable cardinal.
 Then the following are equivalent:
 \begin{enumerate}
  \item $L$ is a finitary localizing invariant;
  \item the restriction of $L$ to $\Cat^{\perf,\kappa}_\infty$ sends Karoubi sequences to fiber sequences, it preserves $\kappa$-small filtered colimits, and $L$ is the $\kappa$-filtered colimit-preserving extension of $L|_{\Cat^{\perf,\kappa}_\infty}$ to $\Cat^\perf_\infty$.
 \end{enumerate}
\end{lem}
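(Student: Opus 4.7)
My plan is to prove the two implications separately.

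For $(1) \Rightarrow (2)$, I would first note that $\Cat^{\perf,\kappa}_\infty$ is closed under Karoubi quotients in $\Cat^\perf_\infty$ by Corollary~\ref{cor:quotient-compact}, so any Karoubi sequence in $\Cat^{\perf,\kappa}_\infty$ is a fortiori one in $\Cat^\perf_\infty$ and is sent to a fiber sequence by $L$. The restriction $L|_{\Cat^{\perf,\kappa}_\infty}$ preserves $\kappa$-small filtered colimits because such colimits in $\Cat^{\perf,\kappa}_\infty$ are computed in $\Cat^\perf_\infty$ and $L$ is finitary. Finally, using $\Cat^\perf_\infty \simeq \Ind_\kappa(\Cat^{\perf,\kappa}_\infty)$, each $\sC \in \Cat^\perf_\infty$ arises as a canonical $\kappa$-filtered colimit of its $\kappa$-compact thick subcategories, and the finitary condition identifies $L$ with the unique $\kappa$-filtered colimit-preserving extension of its restriction.

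For $(2) \Rightarrow (1)$, the assumption that $L$ is the $\kappa$-filtered colimit-preserving extension of its restriction immediately gives that $L$ preserves $\kappa$-filtered colimits. To treat an arbitrary Karoubi sequence $\sC \to \sD \to \sE$, I would write $\sD = \colim_{i \in I} \sD_i$ as a $\kappa$-filtered colimit of its $\kappa$-compact thick subcategories, set $\sC_i := \sC \cap \sD_i$ (a thick subcategory of $\sD_i$, hence $\kappa$-compact by Corollary~\ref{cor:quotient-compact}), and form the Karoubi quotient $\sE_i := \sD_i / \sC_i$. Since every object and morphism of $\sC$ eventually lies in some $\sD_i$, one has $\sC = \colim_i \sC_i$, and because cofibres commute with filtered colimits in $\Cat^\perf_\infty$ we obtain $\sE = \colim_i \sE_i$. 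By hypothesis each $L(\sC_i) \to L(\sD_i) \to L(\sE_i)$ is a fiber sequence, and since $L$ preserves $\kappa$-filtered colimits while filtered colimits of fiber sequences in a stable cocomplete $\infty$-category remain fiber sequences, $L(\sC) \to L(\sD) \to L(\sE)$ is a fiber sequence.

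It remains to upgrade preservation of $\kappa$-filtered colimits by $L$ to preservation of all filtered colimits. For this I would invoke the standard accessibility result that a $\kappa$-filtered colimit-preserving extension $L : \Ind_\kappa(\Cat^{\perf,\kappa}_\infty) \to \sX$ preserves all filtered colimits if and only if its restriction $L_0$ preserves those $\kappa$-small filtered colimits already existing in $\Cat^{\perf,\kappa}_\infty$. The argument expresses an arbitrary filtered $\infty$-category as a $\kappa$-filtered colimit of its $\kappa$-small filtered sub-$\infty$-categories and then unpacks the $\Ind_\kappa$-presentation, the two preservation hypotheses handling the two layers. I expect this accessibility step to be the main obstacle, since it requires cofinality and double-colimit bookkeeping; the rest follows directly from the cofibration-category structure on $\Cat^{\perf,\kappa}_\infty$ given by Corollary~\ref{cor:catperfk-cofibration-cat} and from the characterisation of $\kappa$-compact objects in Proposition~\ref{prop:cpctcat}.
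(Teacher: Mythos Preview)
Your proof is correct and follows essentially the same approach as the paper: both directions hinge on (a) writing an arbitrary Karoubi sequence as a $\kappa$-filtered colimit of Karoubi sequences between $\kappa$-compact categories via $\sC_i = \sC \cap \sD_i$ and $\sE_i = \sD_i/\sC_i$, and (b) decomposing a filtered indexing category as a $\kappa$-filtered colimit of its $\kappa$-small filtered subposets to upgrade $\kappa$-finitariness to finitariness. One small caution: your closing reference to Corollary~\ref{cor:catperfk-cofibration-cat} is slightly off, since that corollary assumes $\kappa$ countably closed whereas the present lemma only requires $\kappa$ regular uncountable---but you do not actually need the cofibration structure here, only closure of $\kappa$-compacts under subcategories and Karoubi quotients, which is Corollary~\ref{cor:quotient-compact}.
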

\begin{proof}
 We begin with a preliminary observation.
 Consider a filtered diagram $\sC : J \to \Cat^\perf_\infty$, and assume without loss of generality that $J$ is a poset.
 Then $J$ is the $\kappa$-filtered colimit of its $\kappa$-small filtered subsets, 
so there exists an equivalence $\colim_J \sC \simeq \colim_{F \in \sF(J)} \colim_F \sC|_F$, where $\sF(J)$ is the poset of $\kappa$-small filtered subposets of $J$. 

 Denote now by $L_\kappa$ the restriction of $L$ to $\Cat^{\perf,\kappa}_\infty$, and let
 \[ \widehat L_\kappa : \Cat^\perf_\infty \to \sX \]
 be the $\kappa$-filtered colimit-preserving extension of $L_\kappa$.
 
 If $L$ is a finitary localizing invariant, then $L_\kappa$ sends Karoubi sequences to fibre sequences and preserves $\kappa$-small filtered colimits.
 The preliminary observation implies that the canonical functor $\widehat L_\kappa \to L$ is an equivalence.
 
 We now prove the converse.
 For a Karoubi sequence $\sC \to \sD \to \sE$ in $\Cat^\perf_\infty$, write $\sD \simeq\colim_{j \in J} \sD_j$ as a $\kappa$-filtered colimit of its $\kappa$-compact thick subcategories.
Then $\sC_j := \sD_j \times_\sD \sC$ is a full subcategory of $\sD_j$.
Denoting by $\sE_j := \sD_j/\sC_j$ the Karoubi quotient, we find that the original Karoubi sequence is the $\kappa$-filtered colimit of the Karoubi sequences $\sC_j \to \sD_j \to \sE_j$.
 Corollary~\ref{cor:quotient-compact} shows that each of these sequences lies in $\Cat^{\perf,\kappa}_\infty$.
Since $L_\kappa$ sends Karoubi sequences in $\Cat^{\perf,\kappa}_\infty$ to cofiber sequences in $\sM_\kappa$, Proposition~\ref{prop:colimloc} implies that $L$ is localizing.
 Moreover, the preliminary observation implies that $L$ preserves filtered colimits because $L_\kappa$ preserves $\kappa$-small filtered colimits.
\end{proof}

We are now able to conclude in the small case:

\begin{thm}\label{thm:mainsmall}
Let $\kappa$ be a regular, countably closed cardinal. 
Then the functor
\[ \lambda_\kappa : \Cat^\perf_\infty \simeq \Ind_\kappa(\Cat^{\perf,\kappa}_\infty) \xrightarrow{\Ind_\kappa(\gamma_\kappa)} \Ind_\kappa(\sM_\kappa) \]
induced by the localization functor $\gamma_\kappa : \Cat^{\perf,\kappa}_\infty \to \sM_\kappa$ is the universal finitary localizing invariant.
\end{thm}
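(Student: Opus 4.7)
The plan is to verify, first, that $\lambda_\kappa$ is itself a finitary localizing invariant, and second, that for every stable presentable $\infty$-category $\sX$ restriction along $\lambda_\kappa$ induces an equivalence
\[ \Fun^{\mrm{L}}(\Ind_\kappa(\sM_\kappa), \sX) \xrightarrow{\sim} \Fun^{\mrm{loc},\mrm{fin}}(\Cat^\perf_\infty, \sX). \]

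For the first assertion I would apply Lemma~\ref{lem:kappa-localizing} directly. The restriction of $\lambda_\kappa$ to $\Cat^{\perf,\kappa}_\infty$ factors as $\gamma_\kappa$ composed with the Yoneda embedding $y_\kappa : \sM_\kappa \hookrightarrow \Ind_\kappa(\sM_\kappa)$. A Karoubi sequence in $\Cat^{\perf,\kappa}_\infty$ is a pushout along a cofibration in the cofibration category structure of Corollary~\ref{cor:catperfk-cofibration-cat}, so Proposition~\ref{prop:colimloc}(2) sends it to a cofiber sequence in the stable category $\sM_\kappa$ (Proposition~\ref{prop:mk-stable}), which $y_\kappa$ preserves since it commutes with $\kappa$-small colimits and its target is stable. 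Preservation of $\kappa$-small filtered colimits follows from Corollary~\ref{cor:finitary}, and $\lambda_\kappa = \Ind_\kappa(\gamma_\kappa)$ is by construction the $\kappa$-filtered colimit-preserving extension of its restriction.

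For the universal property, the strategy is to chain together three equivalences. The universal property of $\Ind_\kappa$ identifies $\Fun^{\mrm{L}}(\Ind_\kappa(\sM_\kappa), \sX)$ with the subcategory of $\Fun(\sM_\kappa, \sX)$ of functors preserving $\kappa$-small colimits. The universal property of Dwyer--Kan localization identifies this in turn with the subcategory of $\Fun(\Cat^{\perf,\kappa}_\infty, \sX)$ consisting of those $F$ that invert $W^\kappa_\mrm{mot}$ and whose induced factorisation $\bar F : \sM_\kappa \to \sX$ preserves $\kappa$-small colimits. On the other hand, Lemma~\ref{lem:kappa-localizing} identifies $\Fun^{\mrm{loc},\mrm{fin}}(\Cat^\perf_\infty, \sX)$ with the subcategory of functors $F : \Cat^{\perf,\kappa}_\infty \to \sX$ sending Karoubi sequences to fiber sequences and preserving $\kappa$-small filtered colimits, the extension to $\Cat^\perf_\infty$ being then automatic.

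The crux is identifying these two conditions on $F$. If $\bar F$ preserves $\kappa$-small colimits, then $F = \bar F \circ \gamma_\kappa$ sends Karoubi sequences to fiber sequences (by the argument of the second paragraph) and preserves $\kappa$-small filtered colimits by Corollary~\ref{cor:finitary}. For the converse, suppose $F$ sends Karoubi sequences to fiber sequences and preserves $\kappa$-small filtered colimits; its $\kappa$-filtered colimit-preserving extension to $\Cat^\perf_\infty$ is a finitary localizing invariant by Lemma~\ref{lem:kappa-localizing}, so $F$ inverts $W^\kappa_\mrm{mot}$ by the very definition of motivic equivalence and factors as $\bar F \circ \gamma_\kappa$. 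To show that $\bar F$ preserves $\kappa$-small colimits I would verify separately (a) exactness, by noting that any cofiber sequence in $\sM_\kappa$ is equivalent to the image of a Karoubi sequence in $\Cat^{\perf,\kappa}_\infty$ via the factorisation axiom, on which $F$ is a fiber sequence in the stable $\sX$; (b) preservation of $\kappa$-small coproducts, by writing such a coproduct as a $\kappa$-small filtered colimit of finite coproducts in $\Cat^{\perf,\kappa}_\infty$, using Proposition~\ref{prop:colimloc}(2) to identify its image under $\gamma_\kappa$, and appealing to additivity of $F$ (which follows from $F$ having stable target and sending split Karoubi sequences to fiber sequences); (c) preservation of $\kappa$-small filtered colimits, which is immediate from Corollary~\ref{cor:finitary}. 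The main subtlety is (a), since $\gamma_\kappa$ a priori only preserves pushouts along cofibrations, so one must reduce the general case through the factorisation axiom before the Karoubi sequence hypothesis on $F$ can be used.
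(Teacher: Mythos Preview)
Your proof is correct and follows essentially the same approach as the paper: both verify that $\lambda_\kappa$ is a finitary localizing invariant via Lemma~\ref{lem:kappa-localizing}, then reduce the universal property to showing that restriction along $\gamma_\kappa$ identifies $\Fun^{\kappa\text{-}\mrm{L}}(\sM_\kappa,\sX)$ with the category of localizing, $\kappa$-finitary functors on $\Cat^{\perf,\kappa}_\infty$. Your breakdown of the key step into (a) exactness, (b) coproducts, (c) filtered colimits is slightly more explicit than the paper's one-line argument (which handles only pushouts and coproducts, these being sufficient), and your part (c) is actually redundant once (a) and (b) are established; but the substance is the same.
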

\begin{proof}
 The $\infty$-category $\sM_\kappa$ is stable and $\kappa$-cocomplete by Propositions~\ref{prop:colimloc} and \ref{prop:mk-stable}, so $\Ind_\kappa(\sM_\kappa)$ is a stable and presentable $\infty$-category.
 Since $\gamma_\kappa$ is localizing and preserves $\kappa$-small filtered colimits, Lemma~\ref{lem:kappa-localizing} implies that $\lambda_\kappa$ is a finitary localizing invariant.
 Consider the commutative diagram
 \[\begin{tikzcd}
  \Fun^\mrm{L}(\Ind_\kappa(\sM_\kappa),\sX)\ar[r, "\lambda_\kappa^*"]\ar[d] & \Fun^{\mrm{loc},\mrm{fin}}(\Cat^\perf_\infty,\sX)\ar[d] \\
  \Fun^{\kappa\mrm{-L}}(\sM_\kappa,\sX)\ar[r, "\gamma_\kappa^*"] & \Fun^{\mrm{loc},\mrm{fin}}(\Cat^{\perf,\kappa}_\infty,\sX)
 \end{tikzcd}\] 
 in which $\Fun^{\kappa\mrm{-L}}$ denotes the category of functors preserving $\kappa$-small colimits and the category on the bottom right denotes the category of functors sending Karoubi sequences to fiber sequences and preserving $\kappa$-small filtered colimits.
 The left vertical arrow is an equivalence by the universal property of $\Ind_\kappa$, and the right vertical arrow is an equivalence by the universal property of $\Ind_\kappa$ combined with Lemma~\ref{lem:kappa-localizing}.
 
 The bottom horizontal arrow is fully faithful since $\gamma_\kappa$ is a localization functor.
 By the definition of motivic equivalences, any finitary localizing invariant $L : \Cat^{\perf,\kappa}_\infty \to \sX$ factors over $\gamma_\kappa$.
 Since $\gamma_\kappa$ is essentially surjective, preserves $\kappa$-small coproducts and pushouts along fully faithful functors, and every pushout in $\sM_\kappa$ can be realised by a pushout along a fully faithful functor, the induced functor $\sM_\kappa \to \sX$ preserves $\kappa$-small colimits.
 Therefore, the bottom horizontal arrow is an equivalence.
\end{proof}

\begin{cor}\label{cor:kcptcountablyclosed}
 Let $\kappa$ be a regular, countably closed cardinal.
 The universal localizing invariant preserves $\kappa$-compact objects.
\end{cor}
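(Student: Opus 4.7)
The plan is short because Theorem~\ref{thm:mainsmall} has already done the heavy lifting. It identifies the universal finitary localizing invariant with
\[ \lambda_\kappa : \Cat^\perf_\infty \simeq \Ind_\kappa(\Cat^{\perf,\kappa}_\infty) \xrightarrow{\Ind_\kappa(\gamma_\kappa)} \Ind_\kappa(\sM_\kappa), \]
so the target $\Motnc$ of the universal localizing invariant is, up to equivalence, $\Ind_\kappa(\sM_\kappa)$.

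The first step is to recall the general fact that for a small $\infty$-category $\sA$, the Yoneda embedding $\sA \hookrightarrow \Ind_\kappa(\sA)$ factors through the full subcategory of $\kappa$-compact objects: every representable presheaf is $\kappa$-compact because $\kappa$-filtered colimits in $\Ind_\kappa(\sA)$ are computed pointwise and commute with $\Map(a,-)$ by definition of $\kappa$-compactness. Applying this to $\sA = \sM_\kappa$, every object of $\sM_\kappa$ is $\kappa$-compact when viewed in $\Ind_\kappa(\sM_\kappa) \simeq \Motnc$.

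The second step is to unwind the definition of $\lambda_\kappa$. If $\sC \in \Cat^\perf_\infty$ is $\kappa$-compact, then under the equivalence $\Cat^\perf_\infty \simeq \Ind_\kappa(\Cat^{\perf,\kappa}_\infty)$ it corresponds (up to equivalence) to the representable presheaf on itself, i.e.\ it lies in the image of the Yoneda embedding $\Cat^{\perf,\kappa}_\infty \hookrightarrow \Ind_\kappa(\Cat^{\perf,\kappa}_\infty)$. Since $\Ind_\kappa(\gamma_\kappa)$ is the $\kappa$-filtered-colimit-preserving extension of $\gamma_\kappa$, it sends this representable to $\gamma_\kappa(\sC) \in \sM_\kappa \subseteq \Ind_\kappa(\sM_\kappa)$. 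By the previous step this is $\kappa$-compact, as required.

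There is no real obstacle here; the only thing to check is the (standard) identification of $\kappa$-compact objects of $\sC$ with representables when $\sC$ is $\kappa$-compact in $\Cat^\perf_\infty$, which is immediate from Proposition~\ref{prop:cpctcat} since a $\kappa$-compact stable $\infty$-category already lies in $\Cat^{\perf,\kappa}_\infty$ and therefore is hit by the Yoneda embedding into $\Ind_\kappa(\Cat^{\perf,\kappa}_\infty)$.
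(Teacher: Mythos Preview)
Your proposal is correct and takes essentially the same approach as the paper, which simply says the corollary is immediate from Theorem~\ref{thm:mainsmall}. Your argument is just a careful unpacking of that immediacy: the universal localizing invariant is identified with $\Ind_\kappa(\gamma_\kappa)$, and $\kappa$-compact objects of $\Cat^\perf_\infty$ land in $\sM_\kappa \subseteq \Ind_\kappa(\sM_\kappa)^\kappa$.
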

\begin{proof}
 This is immediate from Theorem~\ref{thm:mainsmall}.
\end{proof}

\begin{cor}\label{cor:inclusion-fff}
Let $\kappa < \lambda$ be two regular, countably closed cardinals.
Then the induced functor $\sM_\kappa \to \sM_\lambda$ is fully faithful.
\end{cor}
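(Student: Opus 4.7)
The plan is to identify both $\sM_\kappa$ and $\sM_\lambda$ as full subcategories of a single ambient $\infty$-category and use two-out-of-three for fully faithful functors.

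Using Theorem~\ref{thm:mainsmall}, both $\lambda_\kappa : \Cat^\perf_\infty \to \Ind_\kappa(\sM_\kappa)$ and $\lambda_\lambda : \Cat^\perf_\infty \to \Ind_\lambda(\sM_\lambda)$ are universal finitary localizing invariants. By the universal property, there is a canonical equivalence $\Phi : \Ind_\kappa(\sM_\kappa) \xrightarrow{\sim} \Ind_\lambda(\sM_\lambda)$ intertwining $\lambda_\kappa$ and $\lambda_\lambda$; write $\Motnc$ for either of these equivalent targets. The Yoneda embeddings $y_\kappa : \sM_\kappa \hookrightarrow \Ind_\kappa(\sM_\kappa) \simeq \Motnc$ and $y_\lambda : \sM_\lambda \hookrightarrow \Ind_\lambda(\sM_\lambda) \simeq \Motnc$ are fully faithful.

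Next, I would verify that the triangle
\[
\begin{tikzcd}
 \sM_\kappa \ar[r]\ar[dr, "y_\kappa"'] & \sM_\lambda \ar[d, "y_\lambda"] \\
 & \Motnc
\end{tikzcd}
\]
commutes. Since $W_{\mrm{mot}}$ is defined by inversion under all finitary localizing invariants (independently of $\kappa$), we have $W^\kappa_{\mrm{mot}} \subseteq W^\lambda_{\mrm{mot}}$, so the inclusion $\Cat^{\perf,\kappa}_\infty \hookrightarrow \Cat^{\perf,\lambda}_\infty$ descends uniquely through $\gamma_\kappa$ to the functor $\sM_\kappa \to \sM_\lambda$. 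After precomposition with $\gamma_\kappa$, both composites $\sM_\kappa \to \sM_\lambda \xrightarrow{y_\lambda} \Motnc$ and $\sM_\kappa \xrightarrow{y_\kappa} \Motnc$ agree with the restriction of the universal localizing invariant $\sU_{\mrm{loc}}$ to $\Cat^{\perf,\kappa}_\infty$. Since $\gamma_\kappa$ is a Dwyer--Kan localization and in particular essentially surjective, this determines the two functors $\sM_\kappa \to \Motnc$ up to equivalence, yielding the commuting triangle.

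From the triangle, $y_\lambda \circ (\sM_\kappa \to \sM_\lambda) \simeq y_\kappa$ is fully faithful, and $y_\lambda$ is itself fully faithful, so the two-out-of-three principle for fully faithful functors forces $\sM_\kappa \to \sM_\lambda$ to be fully faithful, completing the proof. The only real subtlety is the coherence assertion in Step~3, namely that the comparison $\Phi$ genuinely identifies $y_\kappa$ with $y_\lambda$ restricted along $\sM_\kappa \to \sM_\lambda$; this is handled by uniqueness of colimit-preserving extensions out of $\Ind_\kappa(\sM_\kappa)$ together with essential surjectivity of $\gamma_\kappa$.
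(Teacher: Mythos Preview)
Your proof is correct and follows essentially the same route as the paper: both arguments use Theorem~\ref{thm:mainsmall} to identify $\Ind_\kappa(\sM_\kappa)$ and $\Ind_\lambda(\sM_\lambda)$ via their common universal property, and then deduce full faithfulness of $\sM_\kappa \to \sM_\lambda$ from two-out-of-three against the Yoneda embeddings. The paper compresses this into a single sentence (``the induced functor $\Ind_\kappa(\sM_\kappa) \to \Ind_\lambda(\sM_\lambda)$ is an equivalence''), while you spell out the commutativity of the triangle; one small remark is that in your Step~3 it is the \emph{localization} property of $\gamma_\kappa$, not merely essential surjectivity, that guarantees the two functors $\sM_\kappa \to \Motnc$ agree once their precompositions with $\gamma_\kappa$ do.
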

\begin{proof}
By Theorem~\ref{thm:mainsmall}, the induced functor $\Ind_\kappa(\sM_\kappa) \to \Ind_\lambda(\sM_\lambda)$ is an equivalence.
\end{proof}

Finally, we obtain our main theorem.

\begin{proof}[Proof of Theorem~\ref{thm:main-intro}]
 Let $K$ denote the (large) linear order of regular, countably closed cardinals.
 Then Corollary~\ref{cor:inclusion-fff} implies that the functor $\colim_{\kappa \in K} \sM_\kappa \to \colim_{\kappa \in K} \Ind_\kappa(\sM_\kappa)$ is fully faithful, and it is also essentially surjective since every object of $\Cat^\perf_\infty$ is $\kappa$-compact for some $\kappa$.
 Observing that the latter colimit system is constant, Theorem~\ref{thm:mainsmall} implies that $\colim_{\kappa \in K} \sM_\kappa$ is locally small, stable and presentable.
 
 Since each $\sM_\kappa$ is the localization of $\Cat^{\perf,\kappa}_\infty$ at the class of motivic equivalences, it follows that $\colim_{\kappa \in K} \sM_\kappa$ models the localization $\Motnc := \Cat^{\perf}_\infty[W^{-1}_\mrm{mot}]$.
 In particular, $\gamma : \Cat^\perf_\infty \to \Motnc$ is the universal localizing invariant.
  
 As before, Lemma~\ref{lem:equiv-tensor-stable} together with \cite[Propsition~A.5]{nikolaus-scholze} shows that $\Motnc$ carries a symmetric monoidal structure and that $\gamma$ refines to a symmetric monoidal functor.
\end{proof}

\section{\texorpdfstring{$\aleph_1$-compact categories}{Aleph1-compact categories}}\label{sec:aleph1case}

In the previous section we have already shown that for a countably closed regular cardinal 
$\Ind_\kappa(\sM_{\kappa}) \simeq \Motnc$. 
Our goal in this section is to prove the following more precise result about the relation of $\sM_{\kappa}$ and $\Motnc$, also when $\kappa$ is not countably closed:

\begin{prop}\label{prop:bootstrapdown}
For any  regular uncountable cardinal $\kappa$, the functor $\sM_\kappa\to \Motnc$ induces an equivalence $\sM_\kappa\to \Motnc^\kappa$, and $\Motnc$ is $\kappa$-compactly generated.
\end{prop}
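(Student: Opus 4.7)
The plan is to reduce to the countably closed case established in \thmref{thm:mainsmall}. Fix a regular countably closed cardinal $\lambda \geq \kappa$, so $\Motnc \simeq \Ind_\lambda(\sM_\lambda)$ and $\sM_\lambda \simeq \Motnc^\lambda$. The inclusion $\Cat^{\perf,\kappa}_\infty \hookrightarrow \Cat^{\perf,\lambda}_\infty$ sends motivic equivalences to motivic equivalences, hence descends to $F : \sM_\kappa \to \sM_\lambda \hookrightarrow \Motnc$. I aim to show that $F$ factors through $\Motnc^\kappa$ and induces an equivalence, with $\kappa$-compact generation of $\Motnc$ falling out along the way.

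First I would show that $\gamma : \Cat^\perf_\infty \to \Motnc$ is a left adjoint between presentable $\infty$-categories. It preserves filtered colimits by finitariness, and it preserves pushouts by the following standard trick: given a span $\sB_1 \leftarrow \sB_0 \to \sB_2$, factor $\sB_0 \to \sB_1$ as $\sB_0 \hookrightarrow \sB_1' \to \sB_1$ using \propref{prop:cat-perf-cofibration-cat}; then \thmref{thm:localize-cofibration-cat}(1) ensures $\gamma$ preserves the pushout $\sB_1' \sqcup_{\sB_0} \sB_2$, while the comparison $\sB_1' \sqcup_{\sB_0} \sB_2 \to \sB_1 \sqcup_{\sB_0} \sB_2$ is the pushout of the motivic equivalence $\sB_1' \to \sB_1$ along a cofibration and is therefore itself a motivic equivalence. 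Hence $\gamma$ preserves all colimits, admits a right adjoint, and, combined with \corref{cor:kcptcountablyclosed} applied at a countably closed cardinal, one sees that $\gamma$ preserves $\kappa$-compact objects for every regular uncountable $\kappa$. Consequently $F$ lands in $\Motnc^\kappa$, and since every $\sC \in \Cat^\perf_\infty$ is a $\kappa$-filtered colimit of its $\kappa$-compact sub-thick-subcategories, finitariness of $\gamma$ implies that every object of $\Motnc$ is a $\kappa$-filtered colimit of objects in the image of $F$, giving $\kappa$-compact generation of $\Motnc$.

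For essential surjectivity of $F$ onto $\Motnc^\kappa$, let $M \in \Motnc^\kappa$. Since $M \in \sM_\lambda$, we have $M \simeq \gamma(\sE)$ for some $\sE \in \Cat^{\perf,\lambda}_\infty$. Writing $\sE = \colim_j \sE_j$ as a $\kappa$-filtered colimit of its $\kappa$-compact sub-thick-subcategories and using finitariness, $M \simeq \colim_j \gamma(\sE_j)$; $\kappa$-compactness of $M$ then exhibits $M$ as a retract of $\gamma(\sE_{j_0})$ for some $j_0$. Lifting the splitting idempotent on $\gamma(\sE_{j_0})$ to $\Cat^{\perf,\kappa}_\infty$ by presenting it through the zig-zag calculus of \examref{ex:zig-zag-compose} and splitting inside $\Cat^{\perf,\kappa}_\infty$, which is idempotent complete as a retract-closed subcategory of the idempotent complete $\Cat^\perf_\infty$, yields $\sC \in \Cat^{\perf,\kappa}_\infty$ with $\gamma(\sC) \simeq M$.

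The main obstacle is fully faithfulness of $F$. \propref{prop:calculus-fractions} applied in $\Cat^\perf_\infty$ gives $\Map_{\Motnc}(\gamma(\sC), \gamma(\sD)) \simeq \colim_{(\sD \to \sD') \in W(\sD)} \Map(\sC, \sD')$; the strategy is to show that the analogous colimit over the subcategory $W^\kappa(\sD) \subset W(\sD)$ of motivic equivalences with $\kappa$-compact target computes $\Map_{\sM_\kappa}(\gamma(\sC), \gamma(\sD))$. This reduces to cofinality of $W^\kappa(\sD) \hookrightarrow W(\sD)$ whenever $\sD \in \Cat^{\perf,\kappa}_\infty$: every motivic equivalence $\sD \to \sE$ must admit a factorization $\sD \to \sE' \to \sE$ with $\sD \to \sE'$ a motivic equivalence and $\sE' \in \Cat^{\perf,\kappa}_\infty$. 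For $\kappa$ not countably closed, the lax-pullback factorization of \propref{prop:cat-perf-cofibration-cat} does not stay within $\Cat^{\perf,\kappa}_\infty$, so cofinality must be extracted via a retract argument parallel to the one used for essential surjectivity, applied to $\sE$ regarded as a $\kappa$-filtered colimit of its $\kappa$-compact sub-thick-subcategories.
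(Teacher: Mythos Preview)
Your proposal has a genuine gap at the heart of the full faithfulness argument, and a related circularity earlier on.

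First, the claim that $\gamma$ preserves $\kappa$-compacts for \emph{every} regular uncountable $\kappa$ does not follow from \corref{cor:kcptcountablyclosed}, which only treats countably closed cardinals. Knowing that $\gamma$ is a left adjoint and preserves $\lambda$-compacts for some $\lambda \geq \kappa$ tells you nothing about $\kappa$-compacts. The paper establishes this separately as \corref{cor:kcpt}, and it is deduced \emph{from} the identification $\Ind_\kappa(\sM_\kappa)\simeq\Motnc$ rather than used as an input to it.

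Second, and more seriously, your cofinality sketch for $W^\kappa(\sD)\hookrightarrow W(\sD)$ is not an argument. Given a motivic equivalence $\sD\to\sE$ with $\sD$ $\kappa$-compact, compactness lets you factor it through some $\sE_{j_0}$, but there is no reason for $\sD\to\sE_{j_0}$ to be a motivic equivalence, and the ``retract argument'' you gesture at does not produce one. What is actually needed is that every motivic equivalence is a $\kappa$-filtered colimit of motivic equivalences between $\kappa$-compacts. The paper proves exactly this (\lemref{lem:colimits_of_small_equivalences}) by a trick: one builds an auxiliary finitary localizing invariant $F_\kappa := \Ind_\kappa(\gamma|_{(\Cat^\perf_\infty)^\kappa})$ landing in a $\kappa$-compactly generated category, observes that $F_\kappa$-equivalences coincide with motivic equivalences, and then applies an abstract decomposition lemma for equivalences under an accessible functor. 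With this in hand, the paper bypasses your cofinality question entirely and instead proves $\Ind_\kappa(\sM_\kappa)\simeq\Motnc$ directly via universal properties (\lemref{lm:bootstrapdownidem}), which gives full faithfulness of $\sM_\kappa\to\Motnc^\kappa$ for free.

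Finally, your essential surjectivity step asks to lift a splitting idempotent on $\gamma(\sE_{j_0})$ back to $\Cat^{\perf,\kappa}_\infty$; idempotents in a Dwyer--Kan localization need not lift, and the zig-zag calculus does not help here. The paper avoids this with a telescope argument (\lemref{lem:colimits_compact}): since $M$ is $\kappa$-compact and the system $j\mapsto\gamma(\sC_j)$ preserves $\kappa$-small filtered colimits, one builds a sequence $j_0\to j_1\to\cdots$ with $\gamma(\sC_{j_n})\to\gamma(\sC_{j_{n+1}})$ factoring through $M$, and concludes $M\simeq\gamma(\sC_{\colim_n j_n})$ on the nose.
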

\begin{rem}
The proofs of this section could be somewhat simplified by using the construction of $\Motnc$ from \cite{blumberg2013universal}. To have an independent treatment of $\Motnc$, we decided to include the slightly more complicated arguments. 
\end{rem}
We need a couple of preliminaries. 
\begin{lem}\label{lem:kappa_equiv}
Let $u:\sC \to \sD$ be a filtered colimit-preserving functor between $\kappa$-compactly generated categories admitting filtered colimits, and suppose $u$ preserves $\kappa$-compact objects.
Then any $u$-equivalence $f:x\to y$ in $\sC$ is a $\kappa$-filtered colimit of $u$-equivalences between $\kappa$-compacts. 
\end{lem}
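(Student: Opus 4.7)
The plan is to realize the full subcategory $W \subseteq \Fun(\Delta^1, \sC)$ of $u$-equivalences as a pullback of $\kappa$-accessible $\infty$-categories along $\kappa$-accessible functors, and then read off an explicit description of its $\kappa$-compact objects.

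First, I would recall that $\Fun(\Delta^1, \sC)$ is itself $\kappa$-compactly generated with $\kappa$-compact objects precisely the morphisms between $\kappa$-compact objects of $\sC$ (and similarly for $\sD$), using that finite limits of spaces commute with $\kappa$-filtered colimits. The pointwise functor $u_* : \Fun(\Delta^1, \sC) \to \Fun(\Delta^1, \sD)$ preserves $\kappa$-filtered colimits and $\kappa$-compact objects because $u$ does. Moreover, the constant-arrow functor $\iota : \sD \to \Fun(\Delta^1, \sD)$, $d \mapsto \id_d$, is fully faithful with essential image the full subcategory of equivalences, and also preserves $\kappa$-filtered colimits and $\kappa$-compact objects.

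Next, I would observe that $W$ fits into a pullback diagram
\[
\begin{tikzcd}
 W \ar[r] \ar[d] & \sD \ar[d, "\iota"] \\
 \Fun(\Delta^1, \sC) \ar[r, "u_*"] & \Fun(\Delta^1, \sD).
\end{tikzcd}
\]
Invoking a version of the pullback lemma that tracks $\kappa$-accessibility (compare \cite[Proposition~5.4.6.6]{HTT}), the pullback $W$ is $\kappa$-accessible, and its $\kappa$-compact objects are exactly the $u$-equivalences between $\kappa$-compact objects of $\sC$.

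Finally, since $W$ is closed under $\kappa$-filtered colimits in $\Fun(\Delta^1, \sC)$ (equivalences are stable under such colimits, and $u_*$ preserves them), expressing $f \in W$ as a $\kappa$-filtered colimit of its $\kappa$-compact approximations inside $W$ presents it as the desired $\kappa$-filtered colimit of $u$-equivalences between $\kappa$-compact objects of $\sC$.

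The main obstacle is securing a clean reference for the $\kappa$-accessible version of the pullback lemma (as opposed to mere accessibility) together with the description of $\kappa$-compact objects; in the absence of such a citation, this can be argued by hand by verifying that $\kappa$-filtered colimits in the pullback are computed componentwise and that a $u$-equivalence $g : a \to b$ with $a, b \in \sC^\kappa$ is $\kappa$-compact in $\Fun(\Delta^1, \sC)$, hence also in the full subcategory $W$ closed under $\kappa$-filtered colimits, furnishing $W$ with a small set of $\kappa$-compact generators.
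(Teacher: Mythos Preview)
Your approach is essentially the same as the paper's: set up the pullback
\[
W \simeq \Fun(\Delta^1,\sC) \times_{\Fun(\Delta^1,\sD)} \sD
\]
along the diagonal and $u_*$, and appeal to a $\kappa$-accessible pullback result. The paper resolves the reference issue you flag by citing \cite[Proposition~2.31]{ramzi2024dualizablepresentableinftycategories}, which gives precisely that such a pullback is $\kappa$-compactly generated with projections preserving $\kappa$-compacts, and notes that this requires $\kappa$ to be uncountable (an ambient assumption in that section which you should make explicit).

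One caution about your fallback ``by hand'' sketch: knowing that $u$-equivalences between $\kappa$-compacts are $\kappa$-compact in $W$, and that $W$ is closed under $\kappa$-filtered colimits, does \emph{not} by itself show that these objects generate $W$ under $\kappa$-filtered colimits---that generation statement is exactly what the lemma asserts, so the argument as written is circular. A genuine by-hand proof would need to work harder, e.g.\ by writing $f$ as a $\kappa$-filtered colimit of arrows between $\kappa$-compacts in $\Fun(\Delta^1,\sC)$ and then cofinally refining to land in $W$; this is where the actual content of the cited pullback proposition lies.
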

\begin{proof}
Consider the functors $u^{\Delta^1}: \sC^{\Delta^1}\to \sD^{\Delta^1}$ and the diagonal functor $\sD\to \sD^{\Delta^1}$. They both preserve $\kappa$-compacts (since they are computed pointwise in these categories, and $u$ preserves $\kappa$-compacts by assumption) and filtered colimits (for the same reason). Since $\kappa$ is uncountable, we can apply 
\cite[Proposition~2.31]{ramzi2024dualizablepresentableinftycategories}
to conclude that the pullback $\sC^{\Delta^1}\times_{\sD^{\Delta^1}}\sD$ is $\kappa$-compactly generated, and that the projection functors preserve $\kappa$-compacts. This pullback is exactly the category of $u$-equivalences, so we deduce that $f$ is a $\kappa$-filtered colimit of $u$-equivalences between $\kappa$-compacts. 
\end{proof}

\begin{lem}\label{lem:colimits_of_small_equivalences}
Let $\kappa$ be a regular uncountable cardinal.
\begin{enumerate}
 \item Any  motivic equivalence $f: \sC\to \sD$ in $\Cat^\perf_\infty$ is a $\kappa$-filtered colimit of motivic equivalences between $\kappa$-compact stable $\infty$-categories.
 \item $\Motnc$ is $\kappa$-compactly generated.
\end{enumerate}
\end{lem}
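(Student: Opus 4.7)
The approach is to apply Lemma~\ref{lem:kappa_equiv} to the universal localizing invariant $\gamma : \Cat^\perf_\infty \to \Motnc$. Its three hypotheses are: source and target are $\kappa$-compactly generated, $\gamma$ preserves filtered colimits, and $\gamma$ preserves $\kappa$-compact objects. Once these are verified, claim~(1) is literally the conclusion of Lemma~\ref{lem:kappa_equiv}, since motivic equivalences are by definition $\gamma$-equivalences.

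Two of the three hypotheses can be dispatched quickly. Preservation of filtered colimits is Theorem~\ref{thm:main-intro}. That $\Cat^\perf_\infty$ is $\kappa$-compactly generated follows because it is $\omega$-compactly generated and any $\omega$-filtered colimit can be reorganised as a $\kappa$-filtered colimit of $\kappa$-small $\omega$-filtered sub-colimits, each of which is $\kappa$-compact by Proposition~\ref{prop:cpctcat}.

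The substantive part is to verify (2) and the preservation of $\kappa$-compacts by $\gamma$ simultaneously. Since $\gamma$ is essentially surjective and finitary, the collection $\{\gamma(\sC):\sC\in\Cat^{\perf,\kappa}_\infty\}$ generates $\Motnc$ under $\kappa$-filtered colimits, so it suffices to check that each $\gamma(\sC)$ is $\kappa$-compact in $\Motnc$. The plan is to fix a countably closed regular $\lambda\ge\kappa$, so that Theorem~\ref{thm:mainsmall} identifies $\Motnc\simeq\Ind_\lambda(\sM_\lambda)$ and Corollary~\ref{cor:kcptcountablyclosed} places $\gamma(\sC)$ in $\sM_\lambda$. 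It then remains to upgrade $\lambda$-compactness to $\kappa$-compactness, which we would achieve by computing $\map_{\Motnc}(\gamma(\sC),-)$ via the zig-zag calculus of Remark~\ref{rem:zig-zags} and Example~\ref{ex:zig-zag-compose} inside $\sM_\lambda$ and exploiting the fact that $\sC$ itself is $\kappa$-compact to absorb a $\kappa$-filtered colimit of targets into the zig-zag data without leaving a $\kappa$-controlled world.

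The main obstacle is exactly this upgrade. Corollary~\ref{cor:kcptcountablyclosed} gives preservation of $\kappa$-compacts only for countably closed $\kappa$, and the cofibration category structure of Corollary~\ref{cor:catperfk-cofibration-cat} on $\Cat^{\perf,\kappa}_\infty$ also relies on countable closure via Lemma~\ref{lem:goodisgood}, since the factorisation of Proposition~\ref{prop:cat-perf-cofibration-cat} uses $\aleph_1$-ind-completions whose images need not be $\kappa$-compact when $\kappa$ is not countably closed. For general uncountable regular $\kappa$ one therefore cannot invoke Section~3 directly, and has to carry out the mapping-space analysis at the level of $\sM_\lambda$ using only the intrinsic size bound on $\sC$.
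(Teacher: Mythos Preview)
Your plan has a real gap at exactly the place you flag as ``the main obstacle'': you never give an argument for the upgrade from $\lambda$-compactness to $\kappa$-compactness of $\gamma(\sC)$, and the zig-zag calculus does not obviously supply one. The difficulty is structural. To apply Lemma~\ref{lem:kappa_equiv} to $\gamma$ you need both that $\Motnc$ is $\kappa$-compactly generated and that $\gamma$ preserves $\kappa$-compacts, but these are essentially what you are trying to prove; your reduction ``it suffices to check each $\gamma(\sC)$ is $\kappa$-compact'' is correct, but you then propose to verify this by a mapping-space computation in $\sM_\lambda$ that you do not carry out and that has no evident reason to succeed without countable closure.

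The paper sidesteps this entirely by replacing $\gamma$ with an auxiliary functor. Let $\mathcal N_\kappa\subseteq\Motnc$ be the stable subcategory generated under $\kappa$-small colimits by motives of $\kappa$-compact categories, and set $F_\kappa:=\Ind_\kappa(\gamma\vert_{(\Cat^\perf_\infty)^\kappa}):\Cat^\perf_\infty\to\Ind_\kappa(\mathcal N_\kappa)$. By construction $F_\kappa$ preserves $\kappa$-compacts and lands in a $\kappa$-compactly generated target, and Lemma~\ref{lem:kappa-localizing} shows it is a finitary localizing invariant; hence $F_\kappa$-equivalences coincide with motivic equivalences, and Lemma~\ref{lem:kappa_equiv} applied to $F_\kappa$ (not to $\gamma$) gives~(1) directly. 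For~(2), the universal property of $\Motnc$ yields a factorisation of $\gamma$ through $F_\kappa$ and a retraction $\Motnc\to\Ind_\kappa(\mathcal N_\kappa)\to\Motnc$, so $\Motnc$ is a retract of a $\kappa$-compactly generated category and one concludes by \cite[Corollary~2.33]{ramzi2024dualizablepresentableinftycategories}. The point is that one never needs to know $\gamma$ itself preserves $\kappa$-compacts to run this argument; that fact (Corollary~\ref{cor:kcpt}) is instead deduced afterwards from Lemma~\ref{lm:bootstrapdownidem}.
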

\begin{proof}
Let $\mathcal N_\kappa\subseteq \Motnc$ denote the stable full subcategory of $\Motnc$ generated under $\kappa$-small colimits by motives of $\kappa$-compact categories. The functor $\gamma: \Cat^\perf_\infty\to \Motnc$, being finitary, factors as 
$\Cat^\perf_\infty=  \Ind_\kappa((\Cat^\perf_\infty)^\kappa) \xrightarrow{\Ind_\kappa(\gamma)}\Ind_\kappa(\mathcal N_\kappa) \to \Motnc$. By Lemma \ref{lem:kappa-localizing}, $F_\kappa:= \Ind_\kappa(\gamma): \Cat^\perf_\infty\to \Ind_\kappa(\mathcal N_\kappa)$ is a finitary localizing invariant. 

It follows that $f$ is an $F_\kappa$-equivalence (since it is a motivic equivalence). Conversely, since $F_\kappa$ factors $\gamma$, any $F_\kappa$-equivalence is a motivic equivalence. Thus it suffices to prove the assertion~(1) for
$F_\kappa$-equivalences, which is a special case of Lemma~\ref{lem:kappa_equiv} because $F_\kappa$ is finitary and by 
design preserves $\kappa$-compacts.

By the universal property of $\Motnc$, the localization functor $\gamma$ also factors through $F_\kappa$, and again because of that same universal property, 
the composite $\Motnc\to \Ind_\kappa(\mathcal N_\kappa)\to \Motnc$ is the identity.
So $\Motnc$ is a retract of a $\kappa$-compactly generated category. Since $\kappa$ is uncountable, we can apply \cite[Corollary~2.33]{ramzi2024dualizablepresentableinftycategories} to conclude that $\Motnc$ is $\kappa$-compactly generated. 
\end{proof}

We will then need the following technical statement:
\begin{lem}\label{lem:filtcolimiff}
Let $\kappa$ be an arbitrary regular cardinal. A functor $\overline{f}:\Motnc\to \mathcal{E}$ preserves $\kappa$-filtered colimits if and only if its restriction $f$ to $\Cat^\perf_\infty$ does. 
\end{lem}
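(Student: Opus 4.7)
The ``only if'' direction is immediate: since $\gamma$ is finitary, $f = \overline{f}\circ\gamma$ inherits preservation of $\kappa$-filtered colimits from $\overline{f}$.

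For the converse, my plan is to reconstruct $\overline{f}$ as the $\kappa$-filtered-colimit-preserving left Kan extension of its restriction to the $\kappa$-compact part of $\Motnc$, and then identify this extension with $\overline{f}$ itself via the universal property of the Dwyer--Kan localization $\gamma$. Assume $\kappa$ is regular and uncountable, so that Proposition~\ref{prop:bootstrapdown} is available, and that $\mathcal{E}$ admits $\kappa$-filtered colimits (otherwise the claim is vacuous). The proposition gives $\Motnc \simeq \Ind_\kappa(\sM_\kappa)$ via the inclusion $\sM_\kappa \simeq \Motnc^\kappa \hookrightarrow \Motnc$, so there is an essentially unique $\kappa$-filtered-colimit-preserving functor $g : \Motnc \to \mathcal{E}$ whose restriction to $\sM_\kappa$ agrees with $\overline{f}|_{\sM_\kappa}$. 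By construction, $g$ preserves $\kappa$-filtered colimits, and it suffices to show $g \simeq \overline{f}$.

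To compare the two functors, I would pull them back along $\gamma$. By Theorem~\ref{thm:mainsmall} the restriction $\gamma|_{\Cat^{\perf,\kappa}_\infty}$ factors as $\gamma_\kappa : \Cat^{\perf,\kappa}_\infty \to \sM_\kappa$ followed by the inclusion $\sM_\kappa \hookrightarrow \Motnc$, so $g \circ \gamma|_{\Cat^{\perf,\kappa}_\infty} \simeq \overline{f}|_{\sM_\kappa}\circ\gamma_\kappa \simeq f|_{\Cat^{\perf,\kappa}_\infty}$. Both $g\circ\gamma$ and $f$ preserve $\kappa$-filtered colimits ($g\circ\gamma$ because $\gamma$ is finitary and $g$ preserves $\kappa$-filtered colimits; $f$ by hypothesis), and $\Cat^\perf_\infty \simeq \Ind_\kappa(\Cat^{\perf,\kappa}_\infty)$, so the universal property of $\Ind_\kappa$ forces $g\circ\gamma \simeq f \simeq \overline{f}\circ\gamma$ globally. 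Since $\gamma$ is a Dwyer--Kan localization, restriction along $\gamma$ is fully faithful on functor $\infty$-categories, so $g \simeq \overline{f}$, and $\overline{f}$ inherits preservation of $\kappa$-filtered colimits from $g$.

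The main obstacle I anticipate is more bookkeeping than conceptual: cleanly justifying the factorization $\gamma|_{\Cat^{\perf,\kappa}_\infty} \simeq \iota\circ\gamma_\kappa$, and, if the statement is really intended for arbitrary regular $\kappa$, handling the corner case $\kappa = \aleph_0$ where Proposition~\ref{prop:bootstrapdown} does not directly apply. The latter can be dealt with by bootstrapping from the uncountable case, writing any filtered diagram in $\Motnc$ as an $\aleph_1$-filtered colimit of countable filtered subdiagrams and checking that $\overline{f}$ commutes with both layers separately.
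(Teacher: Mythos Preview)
Your argument has a circularity problem: you invoke Proposition~\ref{prop:bootstrapdown} to obtain $\Motnc \simeq \Ind_\kappa(\sM_\kappa)$, but in the paper's logical order that proposition is proved \emph{after} Lemma~\ref{lem:filtcolimiff}. Indeed, the proof of Proposition~\ref{prop:bootstrapdown} relies on Corollary~\ref{cor:kcpt}, which follows from Lemma~\ref{lm:bootstrapdownidem}, and the proof of Lemma~\ref{lm:bootstrapdownidem} explicitly cites Lemma~\ref{lem:filtcolimiff} as one of its two key ingredients. So you are assuming what you are trying to prove. You could try to rescue part of the strategy by restricting to countably closed $\kappa$, for which Theorem~\ref{thm:mainsmall} already gives $\Motnc \simeq \Ind_\kappa(\sM_\kappa)$ without circularity; but this does not cover the case of a general uncountable regular $\kappa$ (e.g.\ $\kappa = \aleph_1$), and the bootstrap you sketch at the end does not obviously descend from countably closed cardinals to smaller ones.

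The paper's proof avoids this entirely by working directly with diagrams rather than with compact generation. Using \cite[Lemma~5.3.1.18]{HTT} to reduce to $\kappa$-filtered posets, it applies \cite[Theorem~7.9.8]{cisinski-rl} (valid because $\Cat^\perf_\infty$ has functorial factorizations and is hence hereditary) to lift any such diagram in $\Motnc$ to $\Cat^\perf_\infty$; then the result follows from the assumption on $f$ and the fact that $\gamma$ preserves filtered colimits. This argument is logically prior to any statement about $\kappa$-compact generation of $\Motnc$, and works uniformly for all regular $\kappa$.
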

\begin{proof}
We note that ``only if'' is clear, so we focus on ``if''.  This is essentially the same fact as 
\cite[Section~7.7.7]{cisinski-rl}, but this is phrased using all colimits rather than just a specified class, and the proof given there does not adapt so well to this situation, so we give a full argument.

Let $f:\Cat^\perf_\infty\to\mathcal{E}$ be a functor that sends weak equivalences to equivalences and preserves 
$\kappa$-filtered colimits, where $\mathcal E$ is some category with $\kappa$-filtered colimits. 
By \cite[Lemma~5.3.1.18]{HTT}, 
to prove that the induced functor $\overline{f}: \Motnc\to \mathcal{E}$ preserves $\kappa$-filtered colimits, it suffices to consider diagrams indexed by $\kappa$-filtered $1$-categories (in fact, posets). For these, we may apply 
\cite[Theorem~7.9.8]{cisinski-rl} to lift the diagrams to $\Cat^\perf_\infty$ (the result does apply since 
$\Cat^\perf_\infty$ has functorial factorizations and is therefore hereditary in the sense of 
\cite[Definition~7.9.4]{cisinski-rl}), and use the fact that both $f$ and the localization functor 
$\Cat^\perf_\infty\to \Motnc$ preserve $\kappa$-filtered colimits to conclude. 
\end{proof}

We can now prove a slightly weaker version of Proposition~\ref{prop:bootstrapdown}.
\begin{lem}\label{lm:bootstrapdownidem}
For any regular uncountable cardinal $\kappa$, the functor $\sM_\kappa\to \Motnc$ induces an equivalence $\Ind_\kappa(\sM_\kappa)\simeq \Motnc$.
\end{lem}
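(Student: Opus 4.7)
The plan is to construct mutually inverse $\kappa$-filtered colimit preserving functors between $\Ind_\kappa(\sM_\kappa)$ and $\Motnc$. First, since $\gamma : \Cat^\perf_\infty \to \Motnc$ inverts all motivic equivalences, its restriction to $\Cat^{\perf,\kappa}_\infty$ factors uniquely through $\gamma_\kappa$, yielding a functor $j_\kappa : \sM_\kappa \to \Motnc$; the universal property of $\Ind_\kappa(\sM_\kappa)$ as the free $\kappa$-filtered cocompletion then produces a unique $\kappa$-filtered colimit preserving extension $\Phi_\kappa : \Ind_\kappa(\sM_\kappa) \to \Motnc$ of $j_\kappa$.

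For the other direction, using $\Cat^\perf_\infty \simeq \Ind_\kappa(\Cat^{\perf,\kappa}_\infty)$, I extend the composite $\Cat^{\perf,\kappa}_\infty \xrightarrow{\gamma_\kappa} \sM_\kappa \hookrightarrow \Ind_\kappa(\sM_\kappa)$ to a $\kappa$-filtered colimit preserving functor $\Lambda_\kappa : \Cat^\perf_\infty \to \Ind_\kappa(\sM_\kappa)$. The key claim is that $\Lambda_\kappa$ inverts every motivic equivalence $f$: by Lemma~\ref{lem:colimits_of_small_equivalences}(1), $f$ is a $\kappa$-filtered colimit (in the arrow category) of motivic equivalences $f_j$ between $\kappa$-compact categories; each $\gamma_\kappa(f_j)$ is an equivalence in $\sM_\kappa$ by the definition of the localization, so $\Lambda_\kappa(f) \simeq \colim_j \Lambda_\kappa(f_j)$ is a $\kappa$-filtered colimit of equivalences, hence an equivalence. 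Therefore $\Lambda_\kappa$ factors uniquely through $\gamma$ as $\bar{\Lambda}_\kappa : \Motnc \to \Ind_\kappa(\sM_\kappa)$.

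To finish, I verify that the two composites are identities. For $\Phi_\kappa \circ \bar{\Lambda}_\kappa \simeq \id_{\Motnc}$, it suffices to check after precomposition with $\gamma$ because $\gamma$ is a localization; by construction $\Phi_\kappa \circ \bar{\Lambda}_\kappa \circ \gamma = \Phi_\kappa \circ \Lambda_\kappa$, and both this and $\gamma$ preserve $\kappa$-filtered colimits and agree on $\Cat^{\perf,\kappa}_\infty$ (both restrict to $j_\kappa \circ \gamma_\kappa$), so they coincide on all of $\Cat^\perf_\infty = \Ind_\kappa(\Cat^{\perf,\kappa}_\infty)$. For $\bar{\Lambda}_\kappa \circ \Phi_\kappa \simeq \id_{\Ind_\kappa(\sM_\kappa)}$, both functors preserve $\kappa$-filtered colimits, so it is enough to check on $\sM_\kappa$: for $X \in \sM_\kappa$, picking $\sC \in \Cat^{\perf,\kappa}_\infty$ with $\gamma_\kappa(\sC) = X$ (possible since $\gamma_\kappa$ is essentially surjective), we compute $\bar{\Lambda}_\kappa(\Phi_\kappa(X)) = \bar{\Lambda}_\kappa(\gamma(\sC)) = \Lambda_\kappa(\sC) = X$. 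The principal obstacle is the inversion of all motivic equivalences by $\Lambda_\kappa$, which rests entirely on Lemma~\ref{lem:colimits_of_small_equivalences}(1) and is where uncountability of $\kappa$ enters.
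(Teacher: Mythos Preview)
Your construction of $\Phi_\kappa$ and $\Lambda_\kappa$ is sound, and the use of Lemma~\ref{lem:colimits_of_small_equivalences}(1) to show $\Lambda_\kappa$ inverts motivic equivalences is correct. The verification that $\Phi_\kappa \circ \bar{\Lambda}_\kappa \simeq \id_{\Motnc}$ also goes through as written.

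However, there is a genuine gap in the other direction. You assert that $\bar{\Lambda}_\kappa$ preserves $\kappa$-filtered colimits, but this is not automatic: all you know is that $\bar{\Lambda}_\kappa \circ \gamma = \Lambda_\kappa$ preserves $\kappa$-filtered colimits and that $\gamma$ does as well. Deducing from this that $\bar{\Lambda}_\kappa$ itself preserves $\kappa$-filtered colimits is precisely the content of Lemma~\ref{lem:filtcolimiff}, whose proof is not formal---it requires lifting $\kappa$-filtered diagrams from $\Motnc$ back to $\Cat^\perf_\infty$ via the hereditary cofibration category structure and \cite[Theorem~7.9.8]{cisinski-rl}. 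Without this, you cannot reduce the identification $\bar{\Lambda}_\kappa \circ \Phi_\kappa \simeq \id$ to a check on $\sM_\kappa$. The paper's own proof of Lemma~\ref{lm:bootstrapdownidem} isolates exactly this step and handles it separately; your argument needs to invoke Lemma~\ref{lem:filtcolimiff} at this point, and once you do, it becomes essentially a repackaging of the paper's approach (constructing the equivalence directly rather than checking the universal property against a test category $\mathcal{E}$).

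A secondary point: your verification on $\sM_\kappa$ is done only objectwise (picking $\sC$ with $\gamma_\kappa(\sC) = X$), whereas you need the restrictions to agree as functors. This is easily fixed by noting that $\gamma_\kappa$ is a localization, so it suffices to compare after precomposing with $\gamma_\kappa$, where both sides become the composite $\Cat^{\perf,\kappa}_\infty \xrightarrow{\gamma_\kappa} \sM_\kappa \hookrightarrow \Ind_\kappa(\sM_\kappa)$.
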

\begin{proof}
Let $\mathcal{E}$ be an arbitrary $\infty$-category with $\kappa$-filtered colimits. We wish to prove that restriction along the functor 
$\sM_\kappa\to \Motnc$ induces an equivalence 
\[
\Fun^{\kappa-\mathrm{fin}}(\Motnc,\mathcal{E})\to \Fun(\sM_\kappa,\mathcal{E}).   
\]
By definition, it is equivalent to the restriction functor 
$\Fun^{\kappa-\mathrm{fin}}(\Motnc,\mathcal{E})\to \Fun_{W^\kappa_{\mrm{mot}}}((\Cat^\perf_\infty)^\kappa,\mathcal{E})$.

Since the slice categories $(\Cat^\perf_\infty)^\kappa_{/\sC}$ are $\kappa$-filtered for every $\sC\in\Cat^\perf_\infty$, we find that any functor $(\Cat^\perf_\infty)^\kappa\to \mathcal{E}$ admits a (pointwise) left Kan extension. 

Using the universal property of left Kan extensions, together with the fact that $\Cat^\perf_\infty\to \Motnc$ 
preserves filtered colimits, we see that the above restriction functor is equivalent to the restriction
\[
\Fun^{\kappa-\mathrm{fin}}(\Motnc,\mathcal{E})\to \Fun^{\kappa-\mathrm{fin}}_{W^\kappa_{\mrm{mot}}}(\Cat^\perf_\infty,\mathcal{E})
\]
 along $\Cat^\perf_\infty\to \Motnc$.
To conclude, we are thus left with two facts: first, that $\Ind_\kappa(W^\kappa_{\mrm{mot}}) = 
W_{\mrm{mot}}$, and second, that a functor $\Motnc\to \mathcal{E}$ preserves $\kappa$-filtered colimits if and only if its 
restriction to $\Cat^\perf_\infty$ does. Both facts were proved above, see Lemmas \ref{lem:colimits_of_small_equivalences} 
and \ref{lem:filtcolimiff}. 
%
%
\end{proof}

As an immediate corollary, we obtain the following slight extension of Corollary \ref{cor:kcptcountablyclosed}:
\begin{cor}\label{cor:kcpt}
Let $\kappa$ be a regular uncountable cardinal. The universal localizing invariant preserves $\kappa$-compacts.
\end{cor}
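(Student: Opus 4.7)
The plan is to deduce this corollary as a direct consequence of Lemma~\ref{lm:bootstrapdownidem}, which identifies $\Motnc$ with $\Ind_\kappa(\sM_\kappa)$. The strategy mirrors the proof of Corollary~\ref{cor:kcptcountablyclosed}, but routed through the bootstrap equivalence rather than through Theorem~\ref{thm:mainsmall} directly.

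First I would observe that the universal localizing invariant $\gamma : \Cat^\perf_\infty \to \Motnc$, when restricted to the subcategory $(\Cat^\perf_\infty)^\kappa$ of $\kappa$-compact objects, factors through $\sM_\kappa$. This is automatic from Definition~\ref{def:motivic-equivalence}: $\gamma$ inverts every motivic equivalence between $\kappa$-compact categories, so its restriction descends through $\gamma_\kappa : (\Cat^\perf_\infty)^\kappa \to \sM_\kappa$. The induced functor $\sM_\kappa \to \Motnc$ is precisely the canonical one used in Lemma~\ref{lm:bootstrapdownidem}.

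Next I would invoke the general fact that for any small $\infty$-category $\sA$, the Yoneda embedding $\sA \hookrightarrow \Ind_\kappa(\sA)$ lands in the full subcategory of $\kappa$-compact objects, which is immediate from the universal property of $\Ind_\kappa$ as the free completion under $\kappa$-filtered colimits. Applying this to $\sA = \sM_\kappa$ and transporting along the equivalence $\Ind_\kappa(\sM_\kappa) \simeq \Motnc$, every object in the image of $\sM_\kappa \to \Motnc$ is $\kappa$-compact. Combining this with the factorization from the previous paragraph, $\gamma(\sC)$ is $\kappa$-compact for every $\kappa$-compact $\sC \in \Cat^\perf_\infty$, as required.

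There is no substantial obstacle here: the conceptual work is concentrated in Lemma~\ref{lm:bootstrapdownidem}, and the remainder is essentially a one-line consequence of the universal property of the Ind-completion. The only small point to verify carefully is that the functor $\sM_\kappa \to \Motnc$ implicit in Lemma~\ref{lm:bootstrapdownidem} agrees with the one obtained by descending $\gamma$ along $\gamma_\kappa$; this is visible from the construction of both functors.
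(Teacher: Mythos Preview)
Your proposal is correct and matches the paper's approach exactly: the paper presents this as an immediate corollary of Lemma~\ref{lm:bootstrapdownidem} without further elaboration, and you have simply spelled out the underlying reason, namely that the restriction of $\gamma$ to $(\Cat^\perf_\infty)^\kappa$ factors through $\sM_\kappa$, which lands in the $\kappa$-compact objects of $\Ind_\kappa(\sM_\kappa)\simeq\Motnc$.
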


From Lemma~\ref{lm:bootstrapdownidem}, it also follows that $\sM_\kappa$ embeds fully faithfully inside $\Motnc^\kappa$. We 
now wish to prove that this embedding is an equivalence, i.e.\ it is essentially surjective. First, we need an abstract statement 
about colimits that are $\kappa$-compact:
\begin{lem}\label{lem:colimits_compact}
Let $\sD$ be an $\infty$-category with filtered colimits, $\kappa$ a regular uncountable cardinal and $d\in \sD$ a $\kappa$-compact object. Let $J$ be a $\kappa$-filtered $\infty$-category admitting $\kappa$-small filtered colimits, and 
$d_\bullet: J\to \sD^\kappa$ a diagram preserving them, with colimit $d$. There exists $j\in J$ with $d_j\simeq d$. 
\end{lem}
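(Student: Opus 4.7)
The plan is a retract-plus-idempotent-absorption argument, iterated countably often inside $J$, followed by an $\omega$-indexed colimit in $J$; $\kappa$-compactness of the $d_{j_n}$'s (valid since $d_\bullet$ factors through $\sD^\kappa$) is the key ingredient. First, $\kappa$-compactness of $d$ together with the $\kappa$-filtered colimit equivalence
\[
\Map_\sD(d,d) \simeq \colim_{j \in J} \Map_\sD(d, d_j)
\]
lets me lift $\id_d$ to obtain $j_0 \in J$ and a section $s : d \to d_{j_0}$ of the structure map $\phi_{j_0}: d_{j_0} \to d$. The resulting idempotent $e_0 := s\phi_{j_0}$ on $d_{j_0}$ becomes homotopic to $\id_{d_{j_0}}$ after postcomposition with $\phi_{j_0}$, so $\kappa$-compactness of $d_{j_0}$ together with the analogous filtered-colimit description of $\Map_\sD(d_{j_0},d)$ yields $\alpha_1: j_0 \to j_1$ in $J$ with $f_{\alpha_1} \simeq f_{\alpha_1} e_0$ in $\Map_\sD(d_{j_0}, d_{j_1})$, where $f_{\alpha_1}$ is the image of $\alpha_1$ in $\sD$. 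Iterating with the shifted sections $t_n := f_{\alpha_n}\cdots f_{\alpha_1} s$ of $\phi_{j_n}$ and their idempotents $e_n := t_n \phi_{j_n}$ on $d_{j_n}$, I produce a tower $j_0 \to j_1 \to j_2 \to \cdots$ in $J$ together with homotopies $f_{\alpha_{n+1}} \simeq f_{\alpha_{n+1}} e_n$ killing each idempotent at the next stage.

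Since $\kappa$ is uncountable, $\omega$-indexed colimits exist in $J$, so I form $j_\omega := \colim_n j_n$; by hypothesis, $d_\bullet$ preserves this colimit, giving $d_{j_\omega} \simeq \colim_n d_{j_n}$ in $\sD$ with structure maps $\iota_n: d_{j_n} \to d_{j_\omega}$. The sections $t_n$ satisfy $\iota_n t_n \simeq \iota_0 s$ and assemble into $t_\omega : d \to d_{j_\omega}$ with $\phi_{j_\omega} t_\omega \simeq \id_d$. For the opposite composite, precomposing with $\iota_n$ gives the chain
\[
(t_\omega \phi_{j_\omega}) \iota_n \simeq t_\omega \phi_{j_n} = \iota_{n+1} f_{\alpha_{n+1}} e_n \simeq \iota_{n+1} f_{\alpha_{n+1}} = \iota_n,
\]
so the universal property of $d_{j_\omega} \simeq \colim_n d_{j_n}$ forces $t_\omega \phi_{j_\omega} \simeq \id_{d_{j_\omega}}$. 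Hence $\phi_{j_\omega}$ is an equivalence and $j := j_\omega$ is the desired index.

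The main technical obstacle is the $\infty$-categorical coherence of the argument: the universal property in the final step demands a \emph{compatible} system of homotopies $H_n: \iota_n \simeq (t_\omega \phi_{j_\omega}) \iota_n$ in the tower $\lim_n \Map_\sD(d_{j_n}, d_{j_\omega})$, while the naive iteration only produces one $H_n$ at each stage without a priori compatibility between them. I would address this by strengthening the inductive construction so that the choice of $\alpha_{n+1}$ solves not only the idempotent-killing problem for $e_n$, but also the compatibility of $H_n$ with $H_{n-1}$ under the transition map $\Map_\sD(d_{j_n}, d_{j_\omega}) \to \Map_\sD(d_{j_{n-1}}, d_{j_\omega})$. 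This is possible because $\kappa$-filtered colimits of spaces commute with finite limits, so the finitely many coherence conditions accumulated at each stage can always be solved by passing to a sufficiently large $\alpha_{n+1}$.
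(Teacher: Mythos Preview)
Your proposal follows essentially the same strategy as the paper's proof. Both lift $\id_d$ through the $\kappa$-filtered colimit to obtain a section $s\colon d \to d_{j_0}$, iterate using $\kappa$-compactness of each $d_{j_n}$ to produce $j_{n+1}$ at which the idempotent $s_n\phi_{j_n}$ is absorbed by the transition map, and then pass to the $\omega$-colimit $j_\omega = \colim_n j_n$ in $J$ (available since $\kappa$ is uncountable and $d_\bullet$ preserves this colimit).

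The main difference is presentational. The paper packages the conclusion via an interleaving principle: since each transition $d_{j_n} \to d_{j_{n+1}}$ factors through $d$ with $d \to d_{j_n} \to d$ the identity, one has a ladder
\[
 d_{j_0} \to d \to d_{j_1} \to d \to d_{j_2} \to \cdots
\]
whose even and odd cofinal subsequences compute $\colim_n d_{j_n}$ and $d$ respectively, and the paper simply asserts ``the colimit $\colim_n d_{j_n}$ is thus equivalent to $d$''. You instead verify the two triangle identities for $\phi_{j_\omega}$ and $t_\omega$ directly. Your flagging of the $\infty$-categorical coherence issue is appropriate---the paper elides it entirely---and your proposed fix (arranging the finitely many accumulated coherences at each inductive step using that $\kappa$-filtered colimits of spaces commute with finite limits) is a reasonable way to make either version rigorous. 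Note that the interleaving formulation partially sidesteps the problem: the ladder is a genuine functor out of $\mathbb N$ (no higher coherence needed since $\mathbb N$ is free on its spine), and the odd subsequence factors through the groupoidification of $\mathbb N$, hence has colimit $d$; what remains is identifying the even subsequence with $d_{j_\bullet}$, which is where the homotopies $f_{\alpha_{n+1}} \simeq f_{\alpha_{n+1}} e_n$ must be organised.
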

\begin{proof}
Since $d$ is $\kappa$-compact and $J$ is $\kappa$-filtered, the equivalence $d\simeq \colim_J d_j$ factors through some 
$d_{j_0}$. Now the composite $d_{j_0}\to d \to d_{j_0} \to \colim_J d_j$ agrees with the canonical map, so by 
$\kappa$-compactness of $d_{j_0}$, there exists some $j_1$ with a map $j_0\to j_1$ such that the composite 
$d_{j_0}\to d\to d_{j_1}$ agrees with the canonical map $d_{j_0}\to d_{j_1}$. Inductively, we construct a diagram 
$j_\bullet: \mathbb N\to J$ such that for each $n$, $d_{j_n}\to d_{j_{n+1}}$ factors through $d$ and the composite 
$d\to d_{j_n}\to d$ is the identity of $d$. The colimit $\colim_n d_{j_n}$ is thus equivalent to $d$, and by assumption 
(since $\mathbb N$ is $\kappa$-small filtered) equivalent to $d_{\colim_n j_n}$. So we may pick $j=\colim_n j_n$.
\end{proof}
Finally, we may conclude:
\begin{proof}[Proof of Proposition~\ref{prop:bootstrapdown}]
As indicated above, we need to prove that every $\kappa$-compact motive $M$ is $\gamma(\sC)$ for some $\kappa$-compact 
stable $\infty$-category $\sC$. Since $\gamma:\Cat^\perf_\infty \to \Motnc$ is essentially surjective, 
$M\simeq\gamma(\sC)$ for some $\sC$, not necessarily $\kappa$-compact. 
By writing $\sC\simeq \colim_J \sC_j$ as a canonical $\kappa$-filtered colimit of $\kappa$-compact stable $\infty$-categories, that is, with $J=(\Cat^\perf_\infty)_{/\sC}$, and 
using that $\gamma$ preserves $\kappa$-compactness by Corollary \ref{cor:kcpt}, we may apply
Lemma~\ref{lem:colimits_compact} to  
\[
J \to \Cat^\perf_\infty \stackrel{\gamma}\to \Motnc,
\] 
so we find that $M\simeq\gamma(\sC_j)$ for some $j\in J$. 
\end{proof}

\section{The universal \texorpdfstring{$\aleph_1$}{Aleph1}-finitary invariant}

One of the main subtleties of the theory of localizing motives is the necessity to 
consider finitary invariants in the definition. 
One has to impose some finitary assumptions in order not to run into set-theoretic issues but one 
may also consider the $\kappa$-finitary version of this invariant $\mathcal{M}_{\mrm{loc},\kappa}$ for any regular 
uncountable cardinal $\kappa$ (see also Remark~\ref{rem:nonfinitary}). 
It is worth pointing out that some important localizing invariants such as $TC$, $TR$ and Selmer K-theory 
are only $\aleph_1$-finitary but not finitary. In this sense $\mathcal{M}_{\mrm{loc},\kappa}$ for $\kappa>\omega$ 
is in principle a more fundamental invariant. 

The first nontrivial case to consider is $\kappa=\aleph_1$. 
Surprisingly, it turns out that in this case one obtains the same $\infty$-category.

\begin{thm}\label{thm:universal_omega_1_finitary}
The functor $\mathcal{U}_\mrm{loc}:\Cat^\perf_\infty \to \Motnc$ is the universal $\aleph_1$-finitary localizing invariant: 
for every stable $\infty$-category $\sX$, restriction along $\gamma$ induces an equivalence
\[ 
\gamma^* : \Fun^{\aleph_1-\mathrm{fin}}_{\mathrm{ex}}(\Motnc,\sX) \xrightarrow{\sim} \Fun^{\mrm{loc},\aleph_1-\mathrm{fin}}(\Cat^\perf_\infty,\sX). 
\]
\end{thm}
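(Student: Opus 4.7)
The plan is to reduce the theorem, via the identification $\Motnc \simeq \Ind_{\aleph_1}(\sM_{\aleph_1})$ from Proposition~\ref{prop:bootstrapdown}, to a single key claim: every $\aleph_1$-finitary localizing invariant $L : \Cat^\perf_\infty \to \sX$ with stable target inverts motivic equivalences. This is not formal, because the class of $\aleph_1$-finitary invariants is strictly larger than the class of finitary ones, so the defining property of $W_{\mrm{mot}}$ does not apply directly.

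To set up the reduction, I would assemble the commutative square
\[\begin{tikzcd}
 \Fun^{\aleph_1-\mathrm{fin}}_{\mathrm{ex}}(\Motnc,\sX) \ar[r,"\gamma^*"]\ar[d] & \Fun^{\mathrm{loc},\aleph_1-\mathrm{fin}}(\Cat^\perf_\infty,\sX)\ar[d] \\
 \Fun_{\mathrm{ex}}(\sM_{\aleph_1},\sX) \ar[r,"\gamma_{\aleph_1}^*"] & \Fun^{\mathrm{loc}}_{W^{\aleph_1}_{\mrm{mot}}}(\Cat^{\perf,\aleph_1}_\infty,\sX)
\end{tikzcd}\]
in which the bottom right denotes functors sending Karoubi sequences to fibre sequences and inverting motivic equivalences between $\aleph_1$-compacts. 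The left vertical is an equivalence by the universal property of $\Ind_{\aleph_1}$. The bottom horizontal is an equivalence by the definition of $\sM_{\aleph_1}$ as a Dwyer--Kan localization, together with the fact that $\gamma_{\aleph_1}$ preserves pushouts along fully faithful functors and every pushout in $\sM_{\aleph_1}$ is of this form, which forces the induced extension to be exact. The right vertical should be an equivalence by an $\aleph_1$-analogue of Lemma~\ref{lem:kappa-localizing}, using Lemma~\ref{lem:colimits_of_small_equivalences}(1) to express every motivic equivalence and every Karoubi sequence in $\Cat^\perf_\infty$ as an $\aleph_1$-filtered colimit of such data between $\aleph_1$-compacts. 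Well-definedness of this arrow is precisely the key claim.

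The main obstacle is therefore the key claim itself, for which the plan is to deploy the explicit class $W_{\mrm{simple}}$ of motivic equivalences advertised in the introduction together with Efimov's unpublished work on dualizable stable $\infty$-categories. I would construct $W_{\mrm{simple}}$ so that (a) every $\aleph_1$-finitary localizing invariant visibly inverts each element of $W_{\mrm{simple}}$, by an Eilenberg-swindle argument built into the definition, and (b) any $\aleph_1$-finitary functor on $\Cat^\perf_\infty$ inverting $W_{\mrm{simple}}$ automatically inverts all motivic equivalences between $\aleph_1$-compact categories. Part (b) is the most delicate step and should amount to generating $W_{\mrm{mot}} \cap \Cat^{\perf,\aleph_1}_\infty$ from $W_{\mrm{simple}}$ using pushouts along fully faithful functors and finite colimits, in the spirit of the calculus-of-fractions presentation used for $\sM_{\aleph_1}$. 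Once the key claim is in hand, the commutative square above yields the theorem.
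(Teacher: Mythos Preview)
Your overall scaffolding is right and matches the paper: reduce via $\Motnc \simeq \Ind_{\aleph_1}(\sM_{\aleph_1})$ to the single key claim that every $\aleph_1$-finitary localizing invariant with stable target inverts motivic equivalences between $\aleph_1$-compacts, and attack that claim through an explicit class $W_{\mrm{simple}}$ together with Efimov's input. Where your proposal has a genuine gap is the mechanism you suggest for part~(b).

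You propose that (b) ``should amount to generating $W_{\mrm{mot}} \cap \Cat^{\perf,\aleph_1}_\infty$ from $W_{\mrm{simple}}$ using pushouts along fully faithful functors and finite colimits.'' The paper does \emph{not} proceed this way, and there is no indication such a generation statement holds. The class $W_{\mrm{simple}}$ consists only of the splitting maps $s_\sC : S_2\sC \to \sC \times \sC$ and certain explicit maps $\eta_\sC : \sC \to \Calk^2\Gamma^2\sC$; motivic equivalences are far wilder, and the calculus-of-fractions description of $\sM_{\aleph_1}$ gives no handle on reducing an arbitrary motivic equivalence to a finite-colimit combination of these. What the paper does instead is a \emph{right-inverse-plus-two-out-of-six} argument. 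Efimov's result is precisely that for $\sC$ $\aleph_1$-compact the comparison
\[
\Map_{\mathcal{M}_{\splt,\aleph_1}}\bigl(\mathcal{U}_{\splt}(\sC),\mathcal{U}_{\splt}(\Calk^2\sD)\bigr) \longrightarrow \Map_{\Motnc}\bigl(\mathcal{U}_{\mrm{loc}}(\sC),\mathcal{U}_{\mrm{loc}}(\Calk^2\sD)\bigr)
\]
is an equivalence. Given a motivic equivalence $f:\sC\to\sD$ between $\aleph_1$-compacts, this lets one lift $\mathcal{U}_{\mrm{loc}}(\eta_\sC)\circ\mathcal{U}_{\mrm{loc}}(f)^{-1}$ to an actual exact functor $\mrm{ef}:\sD\to \Calk^2\Gamma^2\sC$. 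Any functor $E$ inverting $W_{\mrm{simple}}$ factors (on $\kappa$-compacts) through representable \emph{splitting} motives, and Efimov's equivalence then forces $E(\mrm{ef}\circ f)\simeq E(\eta_\sC)$ and $E(\Calk^2\Gamma^2(f)\circ\mrm{ef})\simeq E(\eta_\sD)$; since $E$ inverts $\eta_\sC,\eta_\sD$ by assumption, $E(f)$ is an equivalence. The role of Efimov's work is thus not to control the saturation of $W_{\mrm{simple}}$ but to manufacture the categorical inverse $\mrm{ef}$.

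A smaller correction on (a): inverting the maps $\eta_\sC$ is not an Eilenberg-swindle argument. The $\Calk$ half uses a swindle, but the $\Gamma$ half is the Grayson construction, and showing that an arbitrary $\aleph_1$-finitary localizing invariant with possibly non-presentable target inverts $\eta_\sC$ requires a separate reduction to presentable targets via $\K$-theory corepresentability (this is Lemma~\ref{lem:aleph1inverts} in the paper).
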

The proof of this theorem occupies the entire section.

\ssec{Splitting invariants}
The functor $\Cat^\perf_\infty \to \Motnc$ factors through the {\it universal splitting invariant}
\[
\Cat^\perf_\infty \to \mathcal{M}_\splt.
\]
As an intermediate step, we prove that this functor is already very close to being a localization.


\begin{defn}\label{defn:additive}
Let $\sX$ be an $\infty$-category. 
A finite product-preserving functor $E:\Cat^\perf_\infty \to \sX$ is said to be a splitting invariant if 
every semiorthogonal decomposition
\[\begin{tikzcd} \sC\ar[r, shift left=1, "i"] & \sD\ar[l, shift left=1, "r"]\ar[r, shift left=1, "p"] & \sE\ar[l ,shift left, "j"] \end{tikzcd}\]
induces an equivalence
\[ (E(r), E(p)) : E(\sD) \xrightarrow{\sim} E(\sC) \times E(\sE). \]
\end{defn}

\begin{rem}
 Splitting invariants usually go by the name of ``additive invariants'' in the literature.
 We have chosen to use a different term to avoid confusion with additive functors, and additive presheaves specifically.
 See in particular Example~\ref{exam:universal_additive} below, which would use unnecessarily ambiguous terminology if we had to speak about ``additive presheaves''.
 
 Note that we will consequently also refer to the target of the universal splitting invariant as the $\infty$-category of splitting motives, rather than additive motives.
\end{rem}

\begin{exam}\label{exam:mapss2}
The $\infty$-category of fiber sequences $S_2\sC$ in a given small stable $\infty$-category $\sC$ admits 
a semiorthogonal decomposition into two copies of $\sC$. For a finite product-preserving functor to send this 
semiorthogonal decomposition to a product means inverting the map $s_{\sC}:S_2\sC \to \sC \times \sC$ given by taking 
the first and the last components of a fiber sequence. 
\end{exam}

\begin{lem}\label{lem:splitting-grouplike}
 Let $E : \Cat^\perf_\infty \to \sX$ be a splitting invariant.
 Then $E$ lifts uniquely to a splitting invariant $\Cat^\perf_\infty \to \mrm{CGrp}(\sX)$ valued in commutative group objects in $\sX$.
\end{lem}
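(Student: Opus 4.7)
The plan has two parts: produce a canonical lift of $E$ to a $\mrm{CMon}(\sX)$-valued functor, then upgrade it to a $\mrm{CGrp}(\sX)$-valued one.

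For the first part, I would observe that $\Cat^\perf_\infty$ is semi-additive: the zero category is both initial and terminal, and the canonical comparison $\sC \sqcup \sD \to \sC \times \sD$ is an equivalence, with both sides computing the direct sum of stable $\infty$-categories. A standard fact about semi-additive $\infty$-categories then yields a unique lift of any finite-product-preserving functor $\Cat^\perf_\infty \to \sX$ to a finite-product-preserving functor $\Cat^\perf_\infty \to \mrm{CMon}(\sX)$. This lift is still a splitting invariant because the forgetful functor $\mrm{CMon}(\sX) \to \sX$ is conservative and creates finite products.

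For the second part, I would exploit the $S_2$-construction from Example~\ref{exam:mapss2}. The $\infty$-category $S_2\sC$ of fiber sequences admits a semiorthogonal decomposition whose inclusions $i : x \mapsto (x = x \to 0)$ and $j : z \mapsto (0 \to z = z)$ have adjoints $p_1$ and $p_3$. Splitting invariance of $E$ gives an equivalence $(E(p_1), E(p_3)) : E(S_2\sC) \xrightarrow{\sim} E(\sC) \times E(\sC)$. Since $p_2 \circ i = \id = p_2 \circ j$, tracing through the definition of the semi-additive lift identifies the functor $E(p_2) : E(S_2\sC) \to E(\sC)$ with the addition map $+ : E(\sC) \times E(\sC) \to E(\sC)$ of the commutative monoid structure produced above.

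The key input is then the exact functor $f : \sC \to S_2\sC$ given by $x \mapsto (x \to 0 \to \Sigma x)$. Since $p_1 f = \id$, $p_2 f = 0$ and $p_3 f = \Sigma$, applying $E$ yields the identity $\id_{E(\sC)} + E(\Sigma) = 0$ in $\End_{\mrm{CMon}(\sX)}(E(\sC))$. This exhibits $E(\Sigma)$ as an additive inverse for $\id$, which produces an explicit inverse $(x, y) \mapsto (x, y + E(\Sigma)(x))$ to the shear map $(x, y) \mapsto (x, x+y)$ on $E(\sC) \times E(\sC)$, proving that $E(\sC)$ is grouplike. Uniqueness of the $\mrm{CGrp}(\sX)$-valued lift is automatic since $\mrm{CGrp}(\sX) \subseteq \mrm{CMon}(\sX)$ is a full subcategory. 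The main subtle step I anticipate is verifying that $E(p_2)$ really coincides with the addition map, which requires carefully matching the semi-additive lift with the biproduct decomposition of $E(S_2\sC)$ coming from $E(i)$ and $E(j)$.
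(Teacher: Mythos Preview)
Your proof is correct and follows essentially the same approach as the paper. The paper also lifts to $\mrm{CMon}(\sX)$ via semi-additivity and proves $\id + E(\Sigma) \simeq 0$ using the functor you call $f$ (which the paper calls $t$); the only presentational difference is that instead of identifying $E(p_2)$ with the addition map, the paper introduces a second functor $s : x \mapsto (x \to x \oplus \Sigma x \to \Sigma x)$ with the same outer components as $t$, so that $E(s) \simeq E(t)$ and one can read off $E(\id \oplus \Sigma) \simeq 0$ from the middle term directly.
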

\begin{proof}
 Since $E$ preserves finite products, \cite[Corollary~2.5]{ggn} shows that $E$ lifts uniquely to a functor
 \[ E : \Cat^\perf_\infty \to \mrm{CMon}(\sX). \]
 So we only have to show that $E(\sC)$ is grouplike for every $\sC \in \Cat^\perf_\infty$.
 Since $E$ inverts $s_\sC$, it follows that the maps induced by the exact functors
 \[ s : \sC \to S_2\sC,\ x \mapsto (x \to x \oplus \Sigma x \to \Sigma x),\quad\text{and}\quad t : \sC \to S_2\sC,\ x \mapsto (x \to 0 \to \Sigma x), \]
 are homotopic.
 Consequently, $0 \simeq E(\id + \Sigma)$.
 It follows that
 \[ \begin{pmatrix} \id & \Sigma \\ 0 & \id \end{pmatrix} : \sC \times \sC \to \sC \times \sC \]
 becomes an inverse of the shear map upon application of $E$.

 The of $E$ is also a splitting invariant because limits in $\mrm{CGrp}(\sX)$ are computed in $\sX$.
\end{proof}

\begin{exam}\label{exam:universal_additive}
Let $\kappa$ be a regular cardinal. We consider the initial $\kappa$-finitary splitting invariant valued in a presentable $\infty$-category:
\[
\mathcal{U}_{\splt, \kappa}:\Cat^\perf_\infty \to \mathcal{M}_{\splt, \kappa}^\mrm{un}.
\] 
Explicitly, $\mathcal{M}_\splt^\mrm{un}$ may be constructed as the localization of 
$\mrm{Fun}((\Cat^{\perf,\kappa}_\infty)^{\mrm{op}}, \Spc)$ with respect to morphisms 
\[
\yo(\sC_1) \coprod \yo(\sC_2)\to \yo(\sC)
\] 
for all $\sC \in \Cat^{\perf,\kappa}_\infty$ equipped with a semiorthogonal decomposition into $\sC_1$ and $\sC_2$ (see \cite[Section~6]{blumberg2013universal}). 
Equivalently, we may identify $\mathcal{M}_{\splt, \kappa}^\mrm{un}$ with the $\infty$-category of presheaves that are 
splitting in the sense of Definition~\ref{defn:additive}:
\[
\mathcal{M}_{\splt, \kappa}^\mrm{un} \simeq \mrm{Fun}^{\splt}((\Cat^{\perf,\kappa}_\infty)^{\mrm{op}}, \Spc).
\]
This further leads to 
the universal stable $\kappa$-finitary splitting invariant defined as
\[
\Cat^\perf_\infty \to \mrm{Stab}(\mathcal{M}_{\splt, \kappa}^\mrm{un}) =: \mathcal{M}_{\splt, \kappa}.
\]
By Lemma~\ref{lem:splitting-grouplike}, we may in fact also identify $\mathcal{M}_{\splt, \kappa}^\mrm{un}$ with the 
$\infty$-category of splitting presheaves valued in {\it connective spectra} 
\[
\mrm{Fun}^{\splt}((\Cat^{\perf,\kappa}_\infty)^{\mrm{op}}, \mrm{Spt}_{\ge 0})
\]
and $\mathcal{M}_{\splt, \kappa}$ with 
\[
\mrm{Fun}^{\splt}((\Cat^{\perf,\kappa}_\infty)^{\mrm{op}}, \mrm{Spt}).
\]
In particular, $\mathcal{M}_{\splt, \kappa}^\mrm{un}$ is a prestable $\infty$-category, and the functor 
$\mathcal{M}_{\splt, \kappa}^\mrm{un} \to \mathcal{M}_{\splt, \kappa}$ is fully faithful.
\end{exam}

\begin{exam}\label{exam:KFun}
For any idempotent complete stable $\infty$-category $\sC$, the functor $\sD \mapsto \K^{\mathrm {cn}}(\Fun_\ex(\sC, \sD))$ is a splitting invariant. 
Via the Yoneda lemma, the identity functor on $\sC$ induces a natural transformation
\[
\Maps_{\mathcal{M}_{\splt, \kappa}}(\mathcal{U}_{\splt,\kappa}(\sC),\mathcal{U}_{\splt,\kappa}(-)) \to \K^{\mathrm {cn}}(\Fun_\ex(\sC, -))
\]
for any $\sC \in (\Cat^\perf_\infty)^\kappa$. 
\end{exam}

\begin{prop}\label{prop:additive_morphisms}
For any $\sC \in (\Cat^\perf_\infty)^\kappa$ and $\sD \in \Cat^\perf_\infty$ the map 
\[
\Maps_{\mathcal{M}_{\splt, \kappa}}(\mathcal{U}_{\splt,\kappa}(\sC),\mathcal{U}_{\splt,\kappa}(\sD)) \to \K^{\mathrm {cn}}(\Fun_\ex(\sC, \sD))
\]
is an equivalence.
\end{prop}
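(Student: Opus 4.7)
The strategy is to show that both sides are $\kappa$-finitary splitting invariants of $\sD$ valued in $\Spt_{\ge 0}$, reduce to the case $\sD\in(\Cat^\perf_\infty)^\kappa$, and then use the presheaf presentation of $\mathcal{M}_{\splt,\kappa}^{\mrm{un}}$ from Example~\ref{exam:universal_additive} to recast the statement as a universal property of $\K^\mrm{cn}(\Funex(-,\sD))$ as the splittingification of the Yoneda presheaf $\yo(\sD)$.

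First, I would check that both $F_1(\sD) := \Maps_{\mathcal{M}_{\splt,\kappa}}(\mathcal{U}_{\splt,\kappa}(\sC),\mathcal{U}_{\splt,\kappa}(\sD))$ and $F_2(\sD) := \K^\mrm{cn}(\Funex(\sC,\sD))$ are $\kappa$-finitary splitting invariants of $\sD$. For $F_2$ this uses Example~\ref{exam:KFun} together with the observation that $\Funex(\sC,-)\colon \Cat^\perf_\infty\to\Cat^\perf_\infty$ preserves $\kappa$-filtered colimits whenever $\sC$ is $\kappa$-compact: testing against a $\kappa$-compact $\sE$, the internal hom adjunction reduces this to $\sE\otimes\sC$ being $\kappa$-compact, which follows from the argument used in Proposition~\ref{prop:colimloc}. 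Combined with finitariness of $\K^\mrm{cn}$, this gives the claim. For $F_1$, note that $\mathcal{U}_{\splt,\kappa}(\sC)$ is $\kappa$-compact in $\mathcal{M}_{\splt,\kappa}$ since $\mathcal{U}_{\splt,\kappa}$ is $\kappa$-finitary, so mapping spectra out of it preserve $\kappa$-filtered colimits; and they convert semiorthogonal decompositions to products since $\mathcal{U}_{\splt,\kappa}$ splits them. Since $\kappa$-finitary functors are Kan-extended from $(\Cat^\perf_\infty)^\kappa$, it suffices to verify the equivalence for $\sD\in(\Cat^\perf_\infty)^\kappa$.

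For $\sC,\sD\in(\Cat^\perf_\infty)^\kappa$ I would pass through the model $\mathcal{M}_{\splt,\kappa}^{\mrm{un}}\simeq\Fun^{\splt}((\Cat^{\perf,\kappa}_\infty)^{\op},\Spt_{\ge 0})$. Writing $L\dashv\iota$ for the localization adjunction against the presheaf category, the Yoneda lemma gives
\[
\Maps_{\mathcal{M}_{\splt,\kappa}}(\mathcal{U}_{\splt,\kappa}(\sC),\mathcal{U}_{\splt,\kappa}(\sD))\simeq (L\yo(\sD))(\sC),
\]
so the task becomes identifying the splittingification $L\yo(\sD)$ with the splitting presheaf $G_\sD := \K^\mrm{cn}(\Funex(-,\sD))$. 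The comparison transformation of the proposition corresponds to a canonical map $\yo(\sD)\to G_\sD$ (classifying the identity of $\sD$), and by the universal property of $L$ it is enough to show that for every splitting presheaf $P$ the precomposition map $\Maps(G_\sD,P)\xrightarrow{\sim}\Maps(\yo(\sD),P)=P(\sD)$ is an equivalence. This is a parametrized version of the classical corepresentability of $\K^\mrm{cn}$ as the universal splitting invariant valued in connective spectra (\cite[Theorem~9.8]{blumberg2013universal}): a map $\yo(\sD)\to P$ is a point $p\in\Omega^\infty P(\sD)$, giving for each $\sC$ a map of grouplike $E_\infty$-monoids $\Funex(\sC,\sD)^\simeq\to\Omega^\infty P(\sC)$ (grouplike by Lemma~\ref{lem:splitting-grouplike}), and the splitting condition on $P$ together with the universal property of $\K^\mrm{cn}$ extends this uniquely to a map $G_\sD\to P$ of splitting presheaves.

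The main obstacle is the last step: verifying that the extension is natural in $\sC$ and unique on the nose. I expect the cleanest way is to realize $G_\sD$ as built from $\yo(\sD)$ by iteratively attaching cells that invert the maps $\yo(\sD)\otimes(\yo(\sC_1)\sqcup\yo(\sC_2))\to \yo(\sD)\otimes\yo(\sC)$ arising from semiorthogonal decompositions in the $\sC$ variable, thereby matching the generators of the splitting localization; alternatively one can invoke the $S_\bullet$-delooping of $\K^\mrm{cn}$ together with the fact that $\Funex(\sC,-)$ commutes with $S_\bullet$ to express $G_\sD$ directly in terms of the Waldhausen construction applied to $\yo(\sD)$.
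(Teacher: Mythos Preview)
Your approach is workable in principle but takes a harder road than necessary, and the ``main obstacle'' you flag is genuine: showing directly that $G_\sD = \K^{\mrm{cn}}(\Funex(-,\sD))$ is the splittingification of $\yo(\sD)$ amounts to reproving the corepresentability theorem for $\K^{\mrm{cn}}$ at the presheaf level. Your first sketch (extend $\Funex(\sC,\sD)^\simeq \to \Omega^\infty P(\sC)$ via ``the universal property of $\K^{\mrm{cn}}$'') does not close as stated: the splitting hypothesis on $P$ is a condition in the contravariant $\sC$-variable, whereas the universal property you invoke is about $\K^{\mrm{cn}}$ of the argument $\Funex(\sC,\sD)$, and you have not supplied the mechanism linking the two. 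Your second sketch, rewriting $G_\sD$ as $\Omega|\yo(S_\bullet\sD)|$ using $S_n\Funex(\sC,\sD)\simeq\Funex(\sC,S_n\sD)$, would work but is essentially a re-derivation of \cite[Theorem~4.2]{polynomial} internal to presheaves.

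The paper's argument sidesteps all of this with a one-line reduction that you did not spot. Precomposition with the adjoint pair $(-\otimes\sC) \dashv \Funex(\sC,-)$ on $\Cat^{\perf,\kappa}_\infty$ induces an adjunction $L\dashv R$ on $\Fun((\Cat^{\perf,\kappa}_\infty)^{\op},\Spt)$; since $E(-\otimes\sC)$ and $E(\Funex(\sC,-))$ are splitting whenever $E$ is, this adjunction restricts to $\mathcal{M}_{\splt,\kappa}$. On representables one has $L\yo(\sD)=\yo(\Funex(\sC,\sD))$, so on the localized level $L\,\mathcal{U}_{\splt,\kappa}(\sD)\simeq\mathcal{U}_{\splt,\kappa}(\Funex(\sC,\sD))$, and evaluating at $\sC\simeq\Spt^\omega\otimes\sC$ gives
\[
\Maps_{\mathcal{M}_{\splt,\kappa}}(\mathcal{U}_{\splt,\kappa}(\sC),\mathcal{U}_{\splt,\kappa}(\sD))\simeq \Maps_{\mathcal{M}_{\splt,\kappa}}(\mathcal{U}_{\splt,\kappa}(\Spt^\omega),\mathcal{U}_{\splt,\kappa}(\Funex(\sC,\sD))).
\]
This reduces immediately to the case $\sC=\Spt^\omega$, which is exactly the cited corepresentability result. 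The adjunction trick is both shorter and makes transparent that the only nontrivial input is the unit case; your approach would also need that input (buried inside the $S_\bullet$ argument) but packages it less efficiently.
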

\begin{proof}
For any $\sC$, precomposing with the adjoint functors $- \otimes \sC$ and $\Fun_\ex(\sC, -)$ induces an adjunction 
\[
\begin{tikzcd}
\Fun((\Cat^{\perf,\kappa}_\infty)^{\mathrm{op}}, \Spt) \arrow[r, shift left, "L"] &\Fun((\Cat^{\perf,\kappa}_\infty)^{\mathrm{op}}, \Spt)\arrow[l,shift left, "R"]
\end{tikzcd}
\]
that also restricts to an adjunction on $\mathcal{M}_{\splt, \kappa}$ because both $E(\sC \otimes -)$ and $E(\Fun_\ex(\sC, -))$ are splitting invariants for a splitting invariant $E$. 
In particular, in the commutative diagram
\[
\begin{tikzcd}
\Maps_{\mathcal{M}_{\splt, \kappa}}(\mathcal{U}_{\splt,\kappa}(\sC), \mathcal{U}_{\splt,\kappa}(\sD)) \arrow[r,"\simeq"] \arrow[dr] &
\Maps_{\mathcal{M}_{\splt, \kappa}}(\mathcal{U}_{\splt,\kappa}(\Spt), \mathcal{U}_{\splt,\kappa}(\Fun_\ex(\sC,\sD)))\arrow[d, "\simeq"]\\
& \K^{\mathrm{cn}}(\Fun_\ex(\sC,\sD))
\end{tikzcd}
\]
the horizontal map is an equivalence. By \cite[Theorem~4.2]{polynomial} the vertical map is also an equivalence, therefore so is the diagonal one. 
\end{proof}

The following result is a slightly modified version of \cite[Proposition~4.6]{polynomial}. 

\begin{prop}\label{prop:additive_loc}
A finite product-preserving functor $E:\Cat^\perf_\infty \to \sE$ is splitting if and only if it inverts 
the maps $s_{\sC}$ from Example~\ref{exam:mapss2} for all $\sC \in \Cat^\perf_\infty$. Moreover, for a regular cardinal $\kappa$ and a $\kappa$-finitary finite product-preserving functor both conditions are also equivalent to inverting $s_{\sC}$ for all 
$\sC \in \Cat^{\perf,\kappa}_\infty$.
\end{prop}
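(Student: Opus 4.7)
The plan is to recognise the proposition as a reformulation of an Additivity-type theorem for the functors $s_\sC$, combined with the observation that the general case of a semiorthogonal decomposition can be reduced to the canonical one on $S_2$.

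The easy direction is straightforward. I would observe that $S_2\sC$ itself carries a canonical semiorthogonal decomposition into two copies of $\sC$: the fully faithful exact functors $\iota_1 : \sC \to S_2\sC$, $x \mapsto (x \xrightarrow{\id} x \to 0)$ and $\iota_2 : \sC \to S_2\sC$, $z \mapsto (0 \to z \xrightarrow{\id} z)$ admit $d_0$ and $d_2$ as adjoints respectively, and every $(x \to y \to z) \in S_2\sC$ sits in a canonical fiber sequence $\iota_1(x) \to (x \to y \to z) \to \iota_2(z)$. Since $s_\sC = (d_0, d_2)$ by definition, any splitting invariant sends $s_\sC$ to an equivalence.

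For the converse, assume $E$ preserves finite products and inverts every $s_\sC$. The split inclusion $\sigma : \sC \oplus \sC \to S_2\sC$, $(x,z) \mapsto (x \to x \oplus z \to z)$ is a section of $s_\sC$, so $E(\sigma)$ is the two-sided inverse of $E(s_\sC)$. Using that $E$-values carry a canonical commutative monoid structure (by \cite[Corollary~2.5]{ggn}) with respect to which $E(\nabla)$ is addition, and the on-the-nose identity $d_1 \sigma \simeq \nabla$ (the fold map), I would extract the Additivity identity
\[ E(d_1) \simeq E(d_0) + E(d_2) : E(S_2\sC) \to E(\sC), \]
and hence $E(d_1\tau) \simeq E(d_0\tau) + E(d_2\tau)$ for every exact $\tau : \sA \to S_2\sD$.

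Given a semiorthogonal decomposition of $\sD$ into $\sC$ and $\sE$ with inclusions $i,j$ and adjoints $r,p$, every object of $\sD$ sits in a natural fiber sequence $ir \to \id \to jp$, defining an exact functor $\tau : \sD \to S_2\sD$ with $d_0\tau = ir$, $d_1\tau = \id_\sD$, $d_2\tau = jp$. Applying the Additivity identity to this $\tau$ yields $E(i \oplus j) \circ E((r,p)) \simeq \id_{E(\sD)}$; combined with the on-the-nose identity $(r,p) \circ (i \oplus j) \simeq \id_{\sC \times \sE}$ provided by the semiorthogonality relations $ri \simeq \id$, $pj \simeq \id$, $rj \simeq 0$, $pi \simeq 0$, this produces the desired equivalence $E(\sD) \simeq E(\sC) \times E(\sE)$. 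For the $\kappa$-finitary refinement, I would use that $\sC \mapsto S_2\sC$ is a finite-limit construction and therefore commutes with $\kappa$-filtered colimits in $\Cat^\perf_\infty$, so that any $\kappa$-finitary product-preserving $E$ inverting $s_\sC$ for $\sC \in \Cat^{\perf,\kappa}_\infty$ automatically inverts $s_\sD$ for arbitrary $\sD \in \Cat^\perf_\infty$, by writing $\sD$ as a $\kappa$-filtered colimit of $\kappa$-compacts. The main technical obstacle I anticipate is the careful bookkeeping required to extract the Additivity identity in its functorial form, in particular justifying the interchange between the monoid structure on mapping spaces and horizontal composition with an arbitrary $\tau$; once this is pinned down, the rest is formal.
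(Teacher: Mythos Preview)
Your proof is correct and reaches the same conclusion as the paper, namely that $E((i\oplus j)\circ(r,p))\simeq\id_{E(\sD)}$, but the paper arrives there by a shorter route that bypasses exactly the bookkeeping you flagged as the main obstacle.

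Instead of first extracting the Additivity identity $E(d_1)\simeq E(d_0)+E(d_2)$ via the commutative monoid structure and then specialising to $\tau$, the paper works directly with the given semiorthogonal decomposition. It writes down two exact functors $c_1,c_2:\sD\rightrightarrows S_2\sD$: the functor $c_1$ is your $\tau$ (sending $d$ to the canonical sequence $ir(d)\to d\to jp(d)$), while $c_2$ sends $d$ to the split sequence $ir(d)\to ir(d)\oplus jp(d)\to jp(d)$. These satisfy $s_\sD\circ c_1\simeq s_\sD\circ c_2$ on the nose, so invertibility of $E(s_\sD)$ forces $E(c_1)\simeq E(c_2)$. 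Post-composing with $d_1$ immediately gives $E(\id_\sD)\simeq E(ir\oplus jp)$, with no appeal to the monoid structure or to \cite{ggn}. Your approach buys a clean intermediate statement (Additivity) that is conceptually satisfying and reusable; the paper's buys brevity and eliminates the interchange worry entirely, since it only uses that $E(s_\sD)$ is a monomorphism. The $\kappa$-finitary clause is handled identically in both.
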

\begin{proof}
If $E$ is splitting, then it also inverts $s_{\sC}$ (see Example~\ref{exam:mapss2}). 
Now we assume that $E$ inverts $s_{\sC}$ for all $\sC \in \Cat^\perf_\infty$. 
Consider $\sC$ with a semiorthogonal decomposition into $\sC_1$ and $\sC_2$. 
Denote the projection maps $\sC \to \sC_i \to \sC$ by $\pi_i$. The canonical and the trivial decompositions into the components define two functors 
\[
c_1,c_2:\sC \rightrightarrows S_2\sC
\]
which become equivalent after composing with $s_{\sC}: S_2\sC \to \sC \times \sC$. 
We have $E(c_1) \simeq E(c_2)$ and so, the composites
\[
E(\sC) \rightrightarrows E(S_2\sC) \to E(\sC)
\]
which are given by the identity map and $E(\pi_1 \oplus \pi_2)$, respectively, are homotopic.

If $E$ is $\kappa$-finitary, inverting $s_{\sC}$ for all $\sC \in \Cat^{\perf,\kappa}_\infty$ automatically implies inverting 
$s_{\sC}$ for arbitrary $\sC$, since we have $s_{\sC} \simeq \colim s_{\sC_i}$ for some ind-system of $\kappa$-compact 
$\infty$-categories $\sC_i$. This shows the `moreover' part of the statement.
\end{proof}

Let $\kappa$ be a regular cardinal. Denote by $W_{s, \kappa}$ 
the set of morphisms $s_{\sC}:S_2\sC \to \sC \times \sC$ for all $\sC \in \Cat^{\perf,\kappa}_\infty$.

\begin{cor}\label{cor:motadd}
Let $\mrm{Rep}\mathcal{M}_{\splt, \kappa} \subset \mathcal{M}_{\splt, \kappa}$ denote the full subcategory of 
additive motives spanned by the ones that are of the form $\mathcal U_{\splt}(\sC)$ for some 
$\sC\in(\Cat^\perf_\infty)^\kappa$. 
Then the functor $\Cat^{\perf,\kappa}_\infty \to \mrm{Rep}\mathcal{M}_{\splt, \kappa}$ is a Dwyer--Kan localization at 
$W_{s,\kappa}$. 
\end{cor}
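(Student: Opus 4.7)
The functor $\mathcal{U}_{\splt,\kappa}$ is a splitting invariant and hence inverts every $s_\sC \in W_{s,\kappa}$, as observed in Example~\ref{exam:mapss2}. It therefore factors through the Dwyer--Kan localization, yielding a functor $\phi : \Cat^{\perf,\kappa}_\infty[W_{s,\kappa}^{-1}] \to \mrm{Rep}\mathcal{M}_{\splt,\kappa}$ which is essentially surjective by the definition of $\mrm{Rep}\mathcal{M}_{\splt,\kappa}$. The entire content of the corollary is thus to verify full faithfulness of $\phi$.

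My plan is to work inside the presheaf $\infty$-category $P := \Fun((\Cat^{\perf,\kappa}_\infty)^{\op}, \Spc)$. The full subcategory $P_{W_{s,\kappa}} \subseteq P$ of presheaves inverting $W_{s,\kappa}$ is equivalent to $P(\Cat^{\perf,\kappa}_\infty[W_{s,\kappa}^{-1}])$, and so, denoting its Bousfield reflection by $L_W$, the mapping spaces on the source of $\phi$ are computed via $L_W y(\sD)(\sC)$. By Example~\ref{exam:universal_additive}, $\mathcal{M}_{\splt,\kappa}^{\mrm{un}} \subseteq P$ is the further subcategory of splitting presheaves, with reflection $L_\splt$; its representables $L_\splt y(\sC)$ embed fully faithfully (via the prestable-to-stable inclusion $\mathcal{M}_{\splt,\kappa}^{\mrm{un}} \hookrightarrow \mathcal{M}_{\splt,\kappa}$) as $\mrm{Rep}\mathcal{M}_{\splt,\kappa}$, so the mapping spaces on the target of $\phi$ are $L_\splt y(\sD)(\sC)$. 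Since splitting presheaves are in particular $W_{s,\kappa}$-local, the reflection factors as $L_\splt = L_\splt \circ L_W$, and full faithfulness of $\phi$ reduces to the assertion that $L_W y(\sD)$ itself lies in $\mathcal{M}_{\splt,\kappa}^{\mrm{un}}$.

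Since $L_W y(\sD)$ is already $W_{s,\kappa}$-local, Proposition~\ref{prop:additive_loc} says this amounts to showing that $L_W y(\sD)$ is finite-product-preserving; equivalently, the canonical map $y(\sC_1) \sqcup y(\sC_2) \to y(\sC_1 \oplus \sC_2)$ should be inverted by $L_W$ for all $\sC_1, \sC_2 \in \Cat^{\perf,\kappa}_\infty$. I expect this to be the main obstacle. The plan is to exploit the biproduct structure on $\Cat^{\perf,\kappa}_\infty$: using the canonical semi-orthogonal decomposition of $\sC_1 \oplus \sC_2$ together with the map $s_{\sC_1 \oplus \sC_2} : S_2(\sC_1 \oplus \sC_2) \to (\sC_1 \oplus \sC_2) \times (\sC_1 \oplus \sC_2)$, one constructs an explicit zig-zag in $P$ between $y(\sC_1 \oplus \sC_2)$ and $y(\sC_1) \sqcup y(\sC_2)$ whose steps are all inverted by $L_W$. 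This is the representable-level analog of the familiar observation that inverting the shear maps in a semi-additive $\infty$-category implements group-completion of the mapping monoids, and it is what bridges mere $W_{s,\kappa}$-locality to genuine splitting.
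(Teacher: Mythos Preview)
Your overall strategy is sound and, after unwinding, reduces to the same point the paper isolates: showing that each $L_W y(\sD)$ is a splitting presheaf is equivalent to showing that the localization functor $U \colon \Cat^{\perf,\kappa}_\infty \to \Cat^{\perf,\kappa}_\infty[W_{s,\kappa}^{-1}]$ preserves finite (co)products. The paper establishes this with a one-line observation you are missing: $s_\sC \times \id_\sD$ is a retract of $s_{\sC \times \sD}$ in the arrow category, so $-\times \sD$ preserves the generating weak equivalences and the biproduct adjunctions descend to the localization. From there the paper packages the conclusion via the universal property of $\mathcal{M}_{\splt,\kappa}^{\mrm{un}}$, but your presheaf formulation would work equally well once this step is in place.

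Your proposed argument for this step, however, contains a real error. You claim that $L_W y(\sD)$ preserving finite products is \emph{equivalent} to the map $y(\sC_1)\sqcup y(\sC_2)\to y(\sC_1\oplus\sC_2)$ being an $L_W$-equivalence. The latter is strictly stronger---it demands that \emph{every} $W_{s,\kappa}$-local presheaf preserve finite products, not just the reflected representables---and it is false. For instance, $F(\sC):=\Omega^\infty\bigl(\K(\sC^\op)^{\otimes 2}\bigr)$ inverts each $s_\sC$ (because $\K$ does) but does not preserve products, owing to the cross terms in $(\K(\sC_1^\op)\oplus\K(\sC_2^\op))^{\otimes 2}$. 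Hence no zig-zag of the form you envisage can exist. The shear-map/group-completion heuristic you invoke is the content of Lemma~\ref{lem:splitting-grouplike}, which already \emph{presupposes} finite-product preservation; it cannot be used to establish it. Replace your zig-zag plan with the retract observation above and your argument goes through.
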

\begin{proof}
Consider the localization functor 
\[
U:\Cat^{\perf,\kappa}_\infty\to \Cat^{\perf,\kappa}_\infty[W_{s, \kappa}^{-1}].
\]
Note that $s_{\sC} \times \sD: S_2\sC \times \sD \to \sC \times \sC \times \sD$ is a retract of the map 
$s_{\sC \times \sD}$, so the usual adjointabilty argument shows that $U$ is a finite product-preserving functor. 
Now its $\kappa$-ind completion 
\[
U':\Cat^\perf_\infty\simeq \Ind_{\kappa}(\Cat^{\perf,\kappa}_\infty)\to \Ind_{\kappa}(\Cat^{\perf,\kappa}_\infty[W_{s,\kappa}^{-1}])
\]
is finite product-preserving and it inverts $s_{\sC}$ for all $\sC \in (\Cat^\perf)^\kappa$. 
By Proposition~\ref{prop:additive_loc}, $U'$ is a splitting invariant and any other $\kappa$-finitary splitting invariant 
factors uniquely through $U'$. Thus, $U'$ is a universal splitting invariant valued in an $\infty$-category. 
Now, using \cite[Proposition~5.3.6.2]{HTT}, the composite 
\[
 \Cat^\perf_\infty\xrightarrow{U'} \Ind_{\kappa}(\Cat^{\perf,\kappa}_\infty[W_{s,\kappa}^{-1}]) \to \Ind_{\kappa}(\mathrm{P}_{\coprod}^{\kappa-\mathrm{small}}(\Cat^{\perf,\kappa}_\infty[W_{s,\kappa}^{-1}]))
\]
gives a universal splitting invariant valued in a presentable $\infty$-category, i.e.\ its target is equivalent to $\mathcal{M}_{\splt, \kappa}^\mrm{un}$ (see Example~\ref{exam:universal_additive}). Here, and as in \cite[Proposition~5.3.6.2]{HTT}, $\mrm{P}_{\coprod}^{\kappa-\mathrm{small}}$ denotes the $\infty$-category obtained by freely adding $\kappa$-small colimits while retaining finite coproducts. 

Consequently,  
we obtain that the functor $\Cat^\perf_\infty[W_{s,\kappa}^{-1}] \to \mathcal{M}_{\splt, \kappa}^\mrm{un} \to \mathcal{M}_{\splt, \kappa}$ is 
fully faithful and its image coincides with $\mrm{Rep}\mathcal{M}_{\splt, \kappa}$. 
\end{proof}

\begin{rem}
The immediate analog of our main theorem (Theorem~\ref{thm:main-intro}) is \emph{false} for splitting motives, even the ``connective'' ones. 
This is because one cannot categorify cofibers of splitting motives. 
So the analog that we explore is necessarily different. The point of the theorem above is that while 
$\mathcal U_{\splt,\kappa}$ is not surjective, that is the only obstruction to it being a localization. 
\end{rem}

We may use Corollary~\ref{cor:motadd} together with our localization results of 
Sections~\ref{section:MotLoc}~and~\ref{sec:aleph1case} to reprove the representability of K-theory in $\Motnc$
(see \cite[Theorem~9.8]{blumberg2013universal}) without using the construction of $\Motnc$ as a localization of presheaves.

\begin{cor}\label{cor:Kthcorep}
The unit object $\mathcal{U}_\mrm{loc}(\mrm{Spt}^\omega)$ of $\Motnc$ is compact and we have a natural equivalence
\[
\Maps_{\Motnc}(\mathcal{U}_\mrm{loc}(\mrm{Spt}^\omega), \mathcal{U}_\mrm{loc}(-)) \simeq \K^{\mrm{cn}}(-).
\]
\end{cor}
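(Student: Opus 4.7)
The plan is to factor $\mathcal{U}_\mrm{loc}$ through the universal finitary splitting invariant $\mathcal{U}_{\splt,\omega}$ and reduce the corollary to Proposition~\ref{prop:additive_morphisms}. Since $\mathcal{U}_\mrm{loc}$ is a finitary localizing invariant valued in a stable $\infty$-category, it is in particular a finitary splitting invariant, and the universal property of $\mathcal{U}_{\splt,\omega}$ produces a cocontinuous factorization $p\colon \mathcal{M}_{\splt,\omega}\to\Motnc$. The universal property of $\Motnc$ embeds cocontinuous functors out of $\Motnc$ into those out of $\mathcal{M}_{\splt,\omega}$ via $p^{*}$, so $p$ is a reflective Bousfield localization with fully faithful right adjoint $q$. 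The technical heart of the argument is to identify the class of $p$-local equivalences with the strongly saturated class generated by the Verdier maps
\[ \alpha_{\sC,\sD,\sE}\colon \mathrm{cofib}_{\mathcal{M}_{\splt,\omega}}\!\bigl(\mathcal{U}_{\splt,\omega}(\sC)\to\mathcal{U}_{\splt,\omega}(\sD)\bigr) \to \mathcal{U}_{\splt,\omega}(\sE) \]
indexed by Karoubi sequences $\sC\to\sD\to\sE$ of compact stable $\infty$-categories. This rests on writing an arbitrary Karoubi sequence as a filtered colimit of compact Karoubi sequences, combined with the fact that $\mathcal{U}_{\splt,\omega}$ preserves filtered colimits.

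Next I would show that the cocontinuous functor $\map_{\mathcal{M}_{\splt,\omega}}(\mathcal{U}_{\splt,\omega}(\Spt^\omega),-)\colon\mathcal{M}_{\splt,\omega}\to\Spt$ sends the cofiber of each compact $\alpha_{\sC,\sD,\sE}$ to zero. Cocontinuity uses that $\mathcal{U}_{\splt,\omega}(\Spt^\omega)$ is compact in $\mathcal{M}_{\splt,\omega}$, which follows because representables are compact in the presheaf category and the splitting Bousfield localization of Example~\ref{exam:universal_additive} is generated by a set of maps between compact objects. Commuting $\map$ past the two cofibers and applying the spectrum-enhanced form of Proposition~\ref{prop:additive_morphisms}, which gives $\map_{\mathcal{M}_{\splt,\omega}}(\mathcal{U}_{\splt,\omega}(\Spt^\omega),\mathcal{U}_{\splt,\omega}(\sX))\simeq\K(\sX)$, this reduces to the vanishing of
\[ \mathrm{cofib}\!\bigl(\mathrm{cofib}(\K(\sC)\to\K(\sD))\to\K(\sE)\bigr), \]
which is immediate since $\K$ is a localizing invariant.

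Combining the two previous steps, the class of maps inverted by $\map_{\mathcal{M}_{\splt,\omega}}(\mathcal{U}_{\splt,\omega}(\Spt^\omega),-)$ contains every $p$-local equivalence, so the functor factors as $F\circ p$ for a necessarily cocontinuous $F\colon\Motnc\to\Spt$. Using $p\dashv q$ and $pq\simeq\id_{\Motnc}$, for every $Y\in\Motnc$ we obtain
\[ F(Y)\simeq F(p(qY))\simeq \map_{\mathcal{M}_{\splt,\omega}}(\mathcal{U}_{\splt,\omega}(\Spt^\omega),qY)\simeq \map_{\Motnc}(\mathcal{U}_\mrm{loc}(\Spt^\omega),Y). \]
Thus $\map_{\Motnc}(\mathcal{U}_\mrm{loc}(\Spt^\omega),-)\simeq F$ is cocontinuous, which is exactly compactness of $\mathcal{U}_\mrm{loc}(\Spt^\omega)$. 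Evaluating at $Y=\mathcal{U}_\mrm{loc}(\sD)=p\mathcal{U}_{\splt,\omega}(\sD)$ yields $\map_{\Motnc}(\mathcal{U}_\mrm{loc}(\Spt^\omega),\mathcal{U}_\mrm{loc}(\sD))\simeq\K(\sD)$, and applying $\Omega^\infty$ gives the stated formula on mapping spaces. The principal difficulty is the generation statement for $p$-local equivalences in the first paragraph; the remainder is essentially formal modulo the standard upgrade of Proposition~\ref{prop:additive_morphisms} from mapping spaces to mapping spectra.
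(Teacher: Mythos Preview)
There is a genuine gap. Your ``spectrum-enhanced form of Proposition~\ref{prop:additive_morphisms}'' identifies the mapping spectrum in $\mathcal{M}_{\splt,\omega}$ with \emph{connective} K-theory, not nonconnective K-theory. Indeed, $\mathcal{U}_{\splt,\omega}(\sX)$ lies in the image of the fully faithful embedding $\mathcal{M}_{\splt,\omega}^{\mrm{un}}\hookrightarrow\mathcal{M}_{\splt,\omega}$, and as a spectrum-valued presheaf it is $\sC\mapsto \K^{\mrm{cn}}(\Funex(\sC,\sX))$; evaluating at $\Spt^\omega$ via the enriched Yoneda lemma gives $\map_{\mathcal{M}_{\splt,\omega}}(\mathcal{U}_{\splt,\omega}(\Spt^\omega),\mathcal{U}_{\splt,\omega}(\sX))\simeq\K^{\mrm{cn}}(\sX)$, which is a connective spectrum. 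With this correction your key vanishing step becomes the assertion that
\[
 \mathrm{cofib}\bigl(\K^{\mrm{cn}}(\sC)\to\K^{\mrm{cn}}(\sD)\bigr)\longrightarrow \K^{\mrm{cn}}(\sE)
\]
is an equivalence for every Karoubi sequence $\sC\to\sD\to\sE$; but this is exactly what fails for connective K-theory and is the raison d'\^etre of the nonconnective theory. Consequently $\map_{\mathcal{M}_{\splt,\omega}}(\mathcal{U}_{\splt,\omega}(\Spt^\omega),-)$ does \emph{not} invert the maps $\alpha_{\sC,\sD,\sE}$ in general, and the factorization through $p$ does not follow.

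The paper's proof avoids this trap by working on the level of mapping \emph{spaces} and using a different, weaker invariance property: it exploits the description of $\Motnc$ as a Dwyer--Kan localization (Theorem~\ref{thm:main-intro} and Proposition~\ref{prop:bootstrapdown}) together with Corollary~\ref{cor:motadd}, and then uses that the space-valued functor $\K^{\mrm{cn}}\simeq\Omega^\infty\tau_{\geq 0}\K$ inverts all motivic equivalences (because $\K$ does). Inverting motivic equivalences is strictly weaker than sending Karoubi sequences to cofiber sequences, and this distinction is precisely what makes the argument go through; the paper's Remark immediately following the corollary flags this as the subtle point. Your approach could be salvaged only by reproving, independently, that connective K-theory descends along $p$, which amounts to the same content.
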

\begin{proof}
To prove compactness it suffices to show that mapping out of $\mathcal{U}_\mrm{loc}(\mrm{Spt}^\omega)$ commutes with filtered 
colimits. Any filtered diagram lies in $\Motnc^{\kappa}$ for some uncountable regular $\kappa$ 
which by Proposition~\ref{prop:bootstrapdown} is equivalent to $\mathcal{M}_{\kappa}:= \Cat_{\infty}^{\perf,\kappa}[(W_{\mathrm{mot}}^{\kappa})^{-1}]$, so it suffices to show that 
mapping out of $\gamma_{\kappa}(\mrm{Spt}^\omega)$ commutes with $\kappa$-small filtered colimits in 
$\mathcal{M}_\kappa$.

Note that every morphism in $W_{s,\kappa}$ is a motivic equivalence.
Since $\Cat_{\infty}^{\perf,\kappa} \to \mrm{Rep}\mathcal{M}_{\splt,\kappa}$ is a Dwyer--Kan localization at 
$W_{s,\kappa}$ by Corollary~\ref{cor:motadd} and $\Cat_{\infty}^{\perf,\kappa} \to \mathcal{M}_{\kappa}$ is a Dwyer--Kan 
localization at the motivic equivalences, 
it follows that the induced functor
\[
\ell_\kappa : \mrm{Rep}\mathcal{M}_{\splt,\kappa} \to \mathcal{M}_{\kappa} 
\]
is also a Dwyer--Kan localization.
 
Proposition~\ref{prop:additive_morphisms} implies that connective K-theory is corepresented by 
$\mathcal{U}_{\splt,\kappa}(\mrm{Spt}^\omega)$ on $\mrm{Rep}\mathcal{M}_{\splt,\kappa}$.
Since $\K^{\mathrm {cn}} \simeq \tau_{\geq 0}\K$ inverts all motivic equivalences, it factors over the localization $\ell_\kappa$.
In particular, the induced functor on $\Motnc$ is left Kan extended along $\ell_\kappa$, so connective K-theory 
(as a space-valued functor) is corepresented by $\gamma_{\kappa}(\mrm{Spt}^\omega)$ on $\mathcal{M}_{\kappa}$.
 
By \cite[Example~7.9.6 \& Theorem~7.9.8]{cisinski-rl}, every filtered diagram in $\mathcal{M}_{\kappa}$ lifts to a diagram in 
$\Cat_{\infty}^{\perf,\kappa}$, so compactness of $\mathcal{U}_\mrm{loc}(\mrm{Spt}^\omega)$ follows from the fact that 
connective K-theory preserves filtered colimits.
\end{proof}
\begin{rem}
This last corollary, which is already in \cite{blumberg2013universal}, is more subtle than it appears. While most of the 
proof is formal, we crucially use here that the connective cover of non-connective $\K$-theory, defined as the universal 
localizing invariant under the functor corepresented by the unit in $\Cat^\perf_\infty$, coincides with the universal 
splitting invariant (up to idempotent-completion) --- this ingredient is also used in the original proof of this corollary, 
cf.~\cite[Lemma 6.3]{schlichting_negative_2008}.  This is somewhat of a miracle and fails in formally analogous settings, e.g. in the 
setting of $\mathcal{V}$-linear localizing invariants for a general (non-rigid) base $\mathcal{V}$, as in Appendix~\ref{app:linear}. We do not prove this here, but it fails for instance for $\mathcal{V}= \mrm{Spt}^{BG}$, when $G$ is a nontrivial finite group. 

Among other things, that this holds for $\mathcal{V}=\mrm{Spt}$ (or more generally for $\mathcal{V}$ rigid over $\mrm{Spt}$) is crucial in Efimov's approach to the rigidity of motives. 
\end{rem}

\ssec{An explicit class of motivic equivalences}
It follows from Theorem~\ref{thm:Gamma_loop} and \cite{rsw:every-spectrum-is-k-theory} that the objects 
\[
\mathcal{U}_\mrm{loc}(\sC)\;\;\;\; \text{ and }\;\;\;\; \mathcal{U}_\mrm{loc}(\Calk^2\Gamma^2 \sC))
\] are equivalent. 
To finish our preparations we need to construct an actual functor $\eta:\sC \to \Calk^2\Gamma^2\sC$ that realizes such an equivalence.
As in Section~\ref{section:MotLoc}, we will use the abbreviations $\Calk := \Calk(\Spt^\omega)$ and $\Gamma := \Gamma(\Spt^\omega)$.

\begin{lem}\label{lem:eta_spt}
 There exists a functor $\eta:\Spt \to \Calk^{\otimes^2} \otimes \Gamma^{\otimes^2}$ that induces the equivalence 
$\mathcal{U}_\mrm{loc}(\Spt^\omega) \simeq \mathcal{U}_\mrm{loc}(\Calk^{\otimes^2} \otimes \Gamma^{\otimes^2})$. 
\end{lem}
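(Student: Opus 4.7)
The plan is to exploit that $\Spt^\omega$ is the free stable $\infty$-category on one object (the sphere spectrum $\mathbf{S}$), so specifying an exact functor $\eta\colon \Spt^\omega \to \Calk^{\otimes 2}\otimes \Gamma^{\otimes 2}$ is the same as specifying a single object $X := \eta(\mathbf{S})$ in the target. By Corollary~\ref{cor:Kthcorep}, the induced morphism $\gamma(\eta)$ in $\Motnc$ corresponds, under the identification $\Map_{\Motnc}(\gamma(\Spt^\omega),\gamma(\sD)) \simeq \Omega^\infty \K^{\mrm{cn}}(\sD)$, to the connective K-theory class $[X]$.

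Applying Proposition~\ref{prop:mk-stable} twice gives $\gamma(\Calk^{\otimes 2}\otimes \Gamma^{\otimes 2}) \simeq \gamma(\Spt^\omega)$, hence $\K(\Calk^{\otimes 2}\otimes \Gamma^{\otimes 2}) \simeq \K(\mathbf{S}) \simeq \mathbf{S}$, and the motivic equivalences $\gamma(\Spt^\omega) \xrightarrow{\sim} \gamma(\Calk^{\otimes 2}\otimes \Gamma^{\otimes 2})$ correspond exactly to units in $\pi_0\mathbf{S} \simeq \mathbb{Z}$, i.e., to $\pm 1$. The proof therefore reduces to exhibiting a single object $X$ with $[X] = \pm 1$ in $\pi_0 \K(\Calk^{\otimes 2}\otimes \Gamma^{\otimes 2})$.

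To construct $X$, I would trace through the chain of natural functors underlying the identification $\gamma(\Calk^{\otimes 2}\otimes \Gamma^{\otimes 2}) \simeq \gamma(\Spt^\omega)$. The Karoubi sequence $\Spt^\omega \to \Ind(\Spt^\omega)^{\aleph_1} \to \Calk$, whose middle term is motivically zero via the Eilenberg swindle, implements the suspension $\gamma(\Calk) \simeq \Sigma\gamma(\Spt^\omega)$ and iteratively $\gamma(\Calk^{\otimes 2}) \simeq \Sigma^2 \gamma(\Spt^\omega)$. Dually, the cofiber sequence $\Gamma \to A \to I$ together with the motivic equivalences $0 \to A$ and $\Spt^\omega \to I$ provides $\gamma(\Gamma) \simeq \Omega\gamma(\Spt^\omega)$ and $\gamma(\Gamma^{\otimes 2}) \simeq \Omega^2 \gamma(\Spt^\omega)$. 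Tensoring these identifications and using the concrete presentation of $\Gamma$ from the Grayson construction (Appendix~\ref{sec:reminder_on_the_grayson_construction}) together with an explicit generator in $\Calk$ (of the form obtained from a fixed $\aleph_1$-compact but non-compact spectrum modulo compacts), one can pin down a specific object $X \in \Calk^{\otimes 2}\otimes \Gamma^{\otimes 2}$ and verify that its class matches the generator of $\K_0$.

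The main obstacle is that the boundary maps in $\Motnc$ witnessing the shifts $\gamma(\Calk) \simeq \Sigma\gamma(\Spt^\omega)$ and $\gamma(\Gamma) \simeq \Omega\gamma(\Spt^\omega)$, as well as the inverses of motivic equivalences such as $\Spt^\omega \to I$, are \emph{not} induced by exact functors in $\Cat^\perf_\infty$. Consequently one cannot simply compose the naturally available functors to produce $\eta$; the crux is to produce a single object whose K-theory class realizes the identification, despite the fact that the motivic identification itself proceeds through unrealized boundary morphisms. I expect this to require a careful choice guided by the explicit Grayson model, so that the resulting class can be computed directly.
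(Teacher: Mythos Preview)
Your first two paragraphs are exactly the paper's argument: via Corollary~\ref{cor:Kthcorep} one identifies $\Map_{\Motnc}(\gamma(\Spt^\omega),\gamma(\Calk^{\otimes 2}\otimes\Gamma^{\otimes 2}))$ with $\Omega^\infty\K(\Calk^{\otimes 2}\otimes\Gamma^{\otimes 2})$, and since $\gamma(\Calk^{\otimes 2}\otimes\Gamma^{\otimes 2})\simeq\gamma(\Spt^\omega)$ the problem reduces to finding an object $X$ whose class $[X]$ is a unit in $\K_0(\Calk^{\otimes 2}\otimes\Gamma^{\otimes 2})\simeq\mathbb{Z}$.

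The gap is in your third and fourth paragraphs: you are trying to \emph{construct} such an $X$ explicitly by tracing boundary maps, and you correctly observe that this is hard because those boundary maps are not realized by exact functors. But no construction is needed. In any small stable $\infty$-category $\sC$, the map $X\mapsto[X]$ from equivalence classes of objects to $\K_0(\sC)$ is \emph{surjective}: every class in $\K_0$ is a difference $[X]-[Y]$, and $[X]-[Y]=[X\oplus\Sigma Y]$ since $[Y]+[\Sigma Y]=0$. Hence once you know $\K_0(\Calk^{\otimes 2}\otimes\Gamma^{\otimes 2})\simeq\mathbb{Z}$, there automatically exists an object representing the class $1$, and the corresponding exact functor $\eta\colon\Spt^\omega\to\Calk^{\otimes 2}\otimes\Gamma^{\otimes 2}$ induces the desired equivalence. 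The paper phrases this as ``every connected component of the $\K$-theory space is represented by a functor''; this one sentence replaces everything in your last two paragraphs.
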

\begin{proof}
By Corollary~\ref{cor:Kthcorep} (see also \cite[Theorem~9.8]{blumberg2013universal}), the map 
\[
\mathcal{U}_\mrm{loc}: 
\Fun_\mrm{ex}(\Spt^\omega, \Calk^{\otimes^2} \otimes \Gamma^{\otimes^2})^\simeq \to \Map_{\Motnc}(\mathcal{U}_\mrm{loc}(\Spt^\omega), \mathcal{U}_\mrm{loc}(\Calk^{\otimes^2} \otimes \Gamma^{\otimes^2})).
\]
induces an equivalence 
\[
\K(\Fun_\mrm{ex}(\Spt^\omega, \Calk^{\otimes^2} \otimes \Gamma^{\otimes^2})) \simeq 
\Map_{\Motnc}(\mathcal{U}_\mrm{loc}(\Spt^\omega), \mathcal{U}_\mrm{loc}(\Calk^{\otimes^2} \otimes \Gamma^{\otimes^2})).
\]
Every connected component of the $\K$-theory space is represented by a functor, so the equivalence 
$\mathcal{U}_\mrm{loc}(\Spt^\omega) \simeq \mathcal{U}_\mrm{loc}(\Calk^{\otimes^2} \otimes \Gamma^{\otimes^2})$ is also represented 
by a functor $\Spt^\omega \to \Calk^{\otimes^2} \otimes \Gamma^{\otimes^2}$.
\end{proof}

\begin{lem}\label{lem:assembly}
There exists a natural map 
\[
 \Calk^{\otimes^2} \otimes \Gamma^{\otimes^2} \otimes \sC \to \Calk^2 (\Gamma^2 \sC)
\] 
that induces an equivalence on any localizing invariant $E$. 
\end{lem}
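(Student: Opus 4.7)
The plan is to construct the map as a composition of two natural \emph{assembly} maps, one for $\Gamma$ and one for $\Calk$, and then to exploit Eilenberg swindles to verify that the $\Calk$-assembly maps become equivalences after applying any localizing invariant. First, by property~(1) of the Grayson construction recalled on p.~\pageref{enum:gamma}, we have a natural equivalence $\Gamma \sD \simeq \Gamma \otimes \sD$ for every $\sD \in \Cat^\perf_\infty$. Applying this twice gives a natural equivalence $\Gamma^{\otimes 2} \otimes \sC \simeq \Gamma^2 \sC$, so it suffices to produce a natural motivic equivalence
\[
\alpha_\sD : \Calk \otimes \sD \to \Calk(\sD)
\]
for every $\sD \in \Cat^\perf_\infty$; applying this to $\sD = \Gamma^2\sC$ and then to $\sD = \Calk(\Gamma^2\sC)$ and composing with the functor $- \otimes \Calk$ produces the desired natural map $\Calk^{\otimes 2} \otimes \Gamma^{\otimes 2} \otimes \sC \to \Calk^2(\Gamma^2\sC)$. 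Being a composite of motivic equivalences, the resulting map is itself a motivic equivalence.

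To construct $\alpha_\sD$, recall that $\Calk := \Ind(\Spt^\omega)^{\aleph_1}/\Spt^\omega$ and $\Calk(\sD) := \Ind(\sD)^{\aleph_1}/\sD$. Since tensoring with $\sD$ preserves fully faithful functors and cofibers (as used repeatedly in Sections~\ref{section:MotLoc}-\ref{sec:aleph1case}), tensoring the defining Karoubi sequence for $\Calk$ with $\sD$ produces a Karoubi sequence $\Spt^\omega \otimes \sD \to \Ind(\Spt^\omega)^{\aleph_1} \otimes \sD \to \Calk \otimes \sD$. Under the canonical identification $\Spt^\omega \otimes \sD \simeq \sD$, the canonical $\Spt$-enrichment of stable $\infty$-categories provides a natural \emph{assembly} functor $\Ind(\Spt^\omega)^{\aleph_1} \otimes \sD \to \Ind(\sD)^{\aleph_1}$, $(M, d) \mapsto M \otimes d$, fitting into a map of Karoubi sequences
\[
\begin{tikzcd}
\sD \ar[r] \ar[d, equals] & \Ind(\Spt^\omega)^{\aleph_1} \otimes \sD \ar[r] \ar[d] & \Calk \otimes \sD \ar[d, "\alpha_\sD"] \\
\sD \ar[r] & \Ind(\sD)^{\aleph_1} \ar[r] & \Calk(\sD)
\end{tikzcd}
\]
whose induced map on cofibers is the desired $\alpha_\sD$.

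Applying an arbitrary localizing invariant $E$ with stable target, both rows become fiber sequences, and the left-hand vertical map is an equivalence. Moreover, both middle terms vanish under $E$: the category $\Ind(\sD)^{\aleph_1}$ admits an Eilenberg swindle (as already used in the proof of Lemma~\ref{lem:calk-suspension}), and $\Ind(\Spt^\omega)^{\aleph_1} \otimes \sD$ inherits an Eilenberg swindle from the one on $\Ind(\Spt^\omega)^{\aleph_1}$ via the tensor product. By the long exact sequence of homotopy groups associated to a fiber sequence, $E(\alpha_\sD)$ is an equivalence, so $\alpha_\sD \in W_\mrm{mot}$.

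The main obstacle is pinning down the assembly map $\Ind(\Spt^\omega)^{\aleph_1} \otimes \sD \to \Ind(\sD)^{\aleph_1}$ as a natural transformation of functors in $\sD$. This requires invoking the $\Spt$-module structure on $\Cat^\perf_\infty$ and the compatibility of $\Ind^{\aleph_1}$ with the tensor product (which follows since $\Ind(-)^{\aleph_1} = \Ind^{\aleph_1}$ is a lax symmetric monoidal functor in this setting); once this is in place, naturality of the Karoubi sequence comparison, and thus of $\alpha_\sD$, is automatic.
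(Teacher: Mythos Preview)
Your proof is correct and follows essentially the same approach as the paper: both construct the assembly map $\alpha_\sD : \Calk \otimes \sD \to \Calk(\sD)$ as the induced map on cofibers in the same commutative ladder of Karoubi sequences, and both iterate this once (together with the identification $\Gamma^2\sC \simeq \Gamma^{\otimes 2}\otimes\sC$) to obtain the desired map. Your version is slightly more explicit than the paper's about why $E(\alpha_\sD)$ is an equivalence---you spell out the Eilenberg swindle on both middle terms, whereas the paper leaves this implicit---and you also flag the naturality of the assembly map, which the paper takes for granted.
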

\begin{proof}
Define $\widetilde\Calk(\sC) = \Calk \otimes \sC$. 
This construction is functorial in $\sC$ and fits into a natural commutative diagram (whose rows are Karoubi sequences)
\[
\begin{tikzcd}
\Spt^\omega \otimes \sC \arrow[r]\arrow[d, "\simeq"] & \Ind(\Spt^\omega)^{\aleph_1} \otimes \sC \arrow[r]
\arrow[d, "\boxtimes"] & \widetilde\Calk(\sC)\arrow[d]\\
\sC \arrow[r] & \Ind(\sC)^{\aleph_1}\arrow[r] & \Calk(\sC).
\end{tikzcd}
\]
In particular, we have a natural transformation of functors $\widetilde\Calk \to \Calk$ and also 
$\widetilde\Calk^2 \to \Calk^2$ which induces an equivalence 
$E(\widetilde\Calk^2(\sC)) \simeq E(\Calk^2(\sC))$ for any localizing invariant $E$. 

By construction of $\Gamma$ (see Section~\ref{ssec:construction_of_gamma}),
we have $\Gamma^2(\sC) = \Gamma^{\otimes^2} \otimes \sC$.
Combining this fact with the construction above, 
we get a natural equivalence
\[
 \Calk^{\otimes^2} \otimes \Gamma^{\otimes^2} \otimes \sC \simeq \widetilde\Calk^2(\Gamma^2(\sC))
 \longrightarrow \Calk^2 (\Gamma^2 \sC)
\] 
as desired.
\end{proof}

Finally, combining Lemma~\ref{lem:eta_spt} and Lemma~\ref{lem:assembly} we obtain the following.

\begin{cor}\label{cor:eta_c}
There are maps 
\[
\eta_\sC:\sC \to \Calk^2(\Gamma^2 \sC)
\]
that become equivalences on $\mathcal{U}_{\mrm{loc}}(-)$.
\end{cor}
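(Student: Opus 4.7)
The plan is to construct $\eta_\sC$ as a composite of the map supplied by Lemma~\ref{lem:eta_spt} (tensored with $\sC$) followed by the assembly map of Lemma~\ref{lem:assembly}, and then verify that each of these two constituents becomes an equivalence after applying $\mathcal{U}_{\mrm{loc}}$.

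More concretely, I would proceed as follows. Using the canonical equivalence $\Spt^\omega \otimes \sC \simeq \sC$ (since $\Spt^\omega$ is the tensor unit of $\Cat^\perf_\infty$), I define
\[ \eta_\sC : \sC \simeq \Spt^\omega \otimes \sC \xrightarrow{\eta \otimes \sC} \Calk^{\otimes^2} \otimes \Gamma^{\otimes^2} \otimes \sC \xrightarrow{\alpha_\sC} \Calk^2(\Gamma^2 \sC), \]
where $\eta$ is the functor produced in Lemma~\ref{lem:eta_spt} and $\alpha_\sC$ is the natural assembly map of Lemma~\ref{lem:assembly}. Naturality in $\sC$ is immediate from the constructions of $\eta \otimes (-)$ and of $\alpha$.

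To check that $\mathcal{U}_{\mrm{loc}}(\eta_\sC)$ is an equivalence, I treat each factor separately. For the first factor, Lemma~\ref{lem:eta_spt} asserts that $\eta$ itself is a motivic equivalence; by Lemma~\ref{lem:equiv-tensor-stable}, tensoring a motivic equivalence with the fixed stable $\infty$-category $\sC$ again produces a motivic equivalence, so $\mathcal{U}_{\mrm{loc}}(\eta \otimes \sC)$ is an equivalence. For the second factor, Lemma~\ref{lem:assembly} directly states that $\alpha_\sC$ induces an equivalence on every localizing invariant, and in particular on $\mathcal{U}_{\mrm{loc}}$. Composing these two equivalences gives the required conclusion.

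There is no real obstacle here: all the content has been isolated in the two preceding lemmas, and the proof amounts to a one-line composition together with an invocation of Lemma~\ref{lem:equiv-tensor-stable}. The only mild subtlety worth flagging is bookkeeping of the tensor unit identification $\Spt^\omega \otimes \sC \simeq \sC$ so that the source of $\eta_\sC$ really is $\sC$ on the nose, but this is harmless.
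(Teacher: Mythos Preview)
Your proof is correct and is essentially the same as the paper's: the paper simply states that the corollary follows by combining Lemma~\ref{lem:eta_spt} and Lemma~\ref{lem:assembly}, and your write-up spells out precisely that composition together with the use of Lemma~\ref{lem:equiv-tensor-stable} to handle the tensored map.
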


\begin{rem}\label{rem:nonfinitary}
Let $\kappa$ be a regular uncountable cardinal. 
All the definitions and constructions in \cite{blumberg2013universal} can be done in the context of $\kappa$-finitary 
localizing invariants. 
The universal $\kappa$-finitary localizing invariant valued in a stable {\it presentable} $\infty$-category 
\[
\mathcal{U}_{\mrm{loc}, \kappa}:\Cat^\perf_\infty \to \mathcal{M}_{\mrm{loc},\kappa}
\]
is a symmetric monoidal functor and the proof of 
\cite[Theorem~9.8]{blumberg2013universal} goes through 
to show that $\mathcal{U}_{\mrm{loc},\kappa}(\Spt^\omega)$ corepresents K-theory (similar to the proof of 
Corollary~\ref{cor:Kthcorep}, this follows from Proposition~\ref{prop:additive_morphisms} using the fact that 
$\mathcal{M}_{\splt, \kappa} \to \mathcal{M}_{\mrm{loc},\kappa}$ is a localization and that connective K-theory inverts 
the corresponding equivalences). 

In particular, the map $\eta:\Spt^\omega \to \Calk^{\otimes^2} \otimes \Gamma^{\otimes^2}$ of Lemma~\ref{lem:eta_spt} 
still induces an equivalence on $\mathcal{U}_{\mrm{loc},\kappa}(-)$. 
Hence it follows that the map $\eta_\sC$ of Corollary~\ref{cor:eta_c} induces an equivalence on any 
$\kappa$-finitary localizing invariant valued in a stable presentable $\infty$-category. 
\end{rem}

\sssec{}
We denote by $W_{\mrm{simple}}$ the smallest class of morphisms in $\Cat^\perf_\infty$ containing 
\begin{itemize}
\item maps of the form $s_{\sC}:S_2\sC \to \sC \times \sC$ for all $\sC \in (\Cat^\perf_\infty)^{\aleph_1}$, 
\item the maps $\eta_\sC:\sC \to \Calk^2\Gamma^2\sC$ constructed in Lemma~\ref{cor:eta_c} 
for all $\sC \in (\Cat^\perf_\infty)^{\aleph_1}$.
\end{itemize}

\begin{lem}\label{lem:aleph1inverts}
Any $\aleph_1$-finitary localizing invariant $E:\Cat^\perf_\infty \to \sX$ valued in a stable $\infty$-category 
inverts all elements of $W_{\mrm{simple}}$.
\end{lem}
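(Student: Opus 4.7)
My plan is to split $W_{\mrm{simple}}$ into its two constituent types of morphisms and argue separately. For the maps $s_\sC : S_2\sC \to \sC \times \sC$, I would observe that the semiorthogonal decomposition of $S_2\sC$ into two copies of $\sC$ yields a split Karoubi sequence $\sC \to S_2\sC \to \sC$. Any localizing invariant valued in a stable $\infty$-category sends this to a split cofiber sequence, which is automatically a biproduct; tracing through the definitions (as in the proof of Lemma~\ref{lem:splitting-grouplike}) shows that $E(s_\sC)$ is the induced biproduct projection, hence an equivalence. No $\aleph_1$-finitary hypothesis is needed for this half.

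For the maps $\eta_\sC$ with $\sC \in (\Cat^\perf_\infty)^{\aleph_1}$, Remark~\ref{rem:nonfinitary} already handles the case of $\aleph_1$-finitary localizing invariants valued in a \emph{presentable} stable $\infty$-category. The plan is to bootstrap from this via an $\Ind$-completion trick to reduce the general stable case. I would let $\sX_0 \subseteq \sX$ denote the smallest stable full subcategory containing $E((\Cat^\perf_\infty)^{\aleph_1})$; since $(\Cat^\perf_\infty)^{\aleph_1}$ is essentially small by Proposition~\ref{prop:cpctcat}, so is $\sX_0$, and $\Ind_{\aleph_1}(\sX_0)$ is a presentable stable $\infty$-category whose Yoneda embedding $\iota : \sX_0 \hookrightarrow \Ind_{\aleph_1}(\sX_0)$ is fully faithful and preserves $\aleph_1$-small colimits, in particular finite colimits.

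Next, I would left Kan extend $\iota \circ E|_{(\Cat^\perf_\infty)^{\aleph_1}}$ along the equivalence $\Cat^\perf_\infty \simeq \Ind_{\aleph_1}((\Cat^\perf_\infty)^{\aleph_1})$ to obtain $\bar E : \Cat^\perf_\infty \to \Ind_{\aleph_1}(\sX_0)$. By construction $\bar E$ preserves $\aleph_1$-filtered colimits. To see it is localizing, I would adapt the argument of Lemma~\ref{lem:kappa-localizing} to the $\aleph_1$-finitary setting: any Karoubi sequence in $\Cat^\perf_\infty$ is an $\aleph_1$-filtered colimit of Karoubi sequences in $(\Cat^\perf_\infty)^{\aleph_1}$ via Corollary~\ref{cor:quotient-compact}, and $\iota \circ E|_{(\Cat^\perf_\infty)^{\aleph_1}}$ sends these to fiber sequences because $E$ is localizing and $\iota$ is exact. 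Remark~\ref{rem:nonfinitary} then implies $\bar E(\eta_\sC)$ is an equivalence.

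To conclude, I would use the universal property of $\Ind_{\aleph_1}(\sX_0)$ to extend the inclusion $\sX_0 \hookrightarrow \sX$ (using that $\sX$ admits $\aleph_1$-filtered colimits, since $E$ is $\aleph_1$-finitary) to an $\aleph_1$-filtered colimit-preserving functor $\phi : \Ind_{\aleph_1}(\sX_0) \to \sX$. Both $E$ and $\phi \circ \bar E$ preserve $\aleph_1$-filtered colimits and agree on $(\Cat^\perf_\infty)^{\aleph_1}$, so uniqueness of the extension forces $E \simeq \phi \circ \bar E$. Since any functor preserves equivalences, $E(\eta_\sC) \simeq \phi(\bar E(\eta_\sC))$ is then an equivalence. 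The main obstacle is the careful verification that $\bar E$ is $\aleph_1$-finitary localizing into a presentable stable target, which amounts to the $\aleph_1$-analogue of Lemma~\ref{lem:kappa-localizing} together with exactness of the Yoneda embedding $\iota$; everything else is either formal manipulation with universal properties or a direct appeal to Remark~\ref{rem:nonfinitary}.
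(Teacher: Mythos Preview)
Your approach is the same bootstrap idea as the paper's---reduce to the presentable case via an $\Ind$-completion and invoke Remark~\ref{rem:nonfinitary}---but there is a genuine gap in the final transfer step. You construct $\phi : \Ind_{\aleph_1}(\sX_0) \to \sX$ by extending the inclusion $\sX_0 \hookrightarrow \sX$, claiming that $\sX$ admits $\aleph_1$-filtered colimits ``since $E$ is $\aleph_1$-finitary''. This inference is not justified: the hypothesis only says $\sX$ is stable, and $E$ preserving $\aleph_1$-filtered colimits tells you that \emph{specific} $\aleph_1$-filtered colimits (those of diagrams $E \circ F$ with $F$ in $\Cat^\perf_\infty$) exist in $\sX$, not that arbitrary $\aleph_1$-filtered diagrams in $\sX_0$ have colimits. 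Without $\phi$, you cannot recover $E$ from $\bar E$ on objects outside $(\Cat^\perf_\infty)^{\aleph_1}$, and the target $\Calk^2\Gamma^2\sC$ of $\eta_\sC$ is \emph{not} $\aleph_1$-compact (already $\Ind(\sC)^{\aleph_1}$ typically fails to be), so you cannot fall back on conservativity of $\iota$ either.

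The paper's fix is to let the cardinal float: for each fixed $\alpha \in W_{\mrm{simple}}$, pick a regular $\kappa$ large enough that both endpoints of $\alpha$ lie in $\Cat^{\perf,\kappa}_\infty$, take $\sX'$ to be the stable closure of $E(\Cat^{\perf,\kappa}_\infty)$ in $\sX$, and form $\bar E$ into $\Ind_\kappa(\mathrm{P}_{\ex}^{\kappa\text{-small}}(\sX'))$. Since $\alpha$ is now between $\kappa$-compacts, $\bar E(\alpha)$ is the image of $E(\alpha)$ under the fully faithful (hence conservative) embedding $\sX' \hookrightarrow \Ind_\kappa(\bar{\sX})$, and you conclude directly without ever needing a functor back to $\sX$. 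Your argument becomes correct with this modification; the key point you missed is that working at $\aleph_1$ throughout forces you to handle the non-$\aleph_1$-compact endpoint of $\eta_\sC$, whereas enlarging $\kappa$ sidesteps the issue entirely.
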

\begin{proof}
$E$ inverts the maps $s_{\sC}$ by Proposition~\ref{prop:additive_loc}. 
As observed in Remark~\ref{rem:nonfinitary} any $\aleph_1$-finitary localizing invariant valued in a presentable stable 
$\infty$-category also inverts $\eta_{\sC}$ for all $\sC$, so the result holds if $\sX$ is presentable. 

Now any map $\alpha$ in the class $W_{\mathrm{simple}}$ belongs to $\Cat^{\perf,\kappa}_\infty$ for some regular cardinal 
$\kappa$. 
Set $\sX'$ to be the smallest full subcategory of $\sX$ containing the image of $E|_{\Cat^{\perf,\kappa}_\infty}$ and closed 
under finite limits and finite colimits. Now using \cite[Proposition~5.3.6.2]{HTT} define $\bar{\sX}$ to be the universal 
closure of $\sX'$ under $\kappa$-small colimits, i.e. 
\[
\bar{\sX} = \mathrm{P}_{\ex}^{\kappa-\mathrm{small}}(\sX').
\]
Now the Kan extension
\[
\bar{E}:\Cat^\perf_\infty \simeq \Ind_{\kappa}(\Cat^{\perf,\kappa}_\infty) \to \Ind_{\kappa}(\bar{\sX})
\]
of the restriction $E|_{\Cat^{\perf,\kappa}_\infty}: \Cat^{\perf,\kappa}_\infty \to \sX' \to \bar{\sX}$
is a $\kappa$-finitary localizing invariant valued in a presentable $\infty$-category and so it inverts $\alpha$. Since 
$\sX' \to \bar{\sX}$ is conservative, we obtain that $E$ also inverts $\alpha$.
\end{proof}

Being a finitary localizing invariant, the functor $\mathcal{U}_\mrm{loc}$ also inverts all elements of $W_{\mrm{simple}}$, 
and for any $\infty$-category $\sX$ we have an embedding 
\[
\Theta:\Fun^{\aleph_1-\mathrm{fin}}(\Motnc,\sX) \to \Fun^{\aleph_1-\mathrm{fin}}_{W_{\mrm{simple}}}(\Cat^\perf_\infty,\sX)
\]
where the subscript $W_{\mrm{simple}}$ denotes the subcategory of functors inverting all elements of $W_{\mrm{simple}}$.

\begin{thm}\label{thm:localization_simple}
For any $\infty$-category $\sX$, the functor $\Theta$ is an equivalence of $\infty$-categories.
\end{thm}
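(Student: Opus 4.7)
The embedding statement preceding the theorem supplies the fully faithfulness of $\Theta$; the task is essential surjectivity. Concretely, we must show that any $F \in \Fun^{\aleph_1-\mathrm{fin}}_{W_{\mrm{simple}}}(\Cat^\perf_\infty, \sX)$ inverts every motivic equivalence and thus factors through $\gamma : \Cat^\perf_\infty \to \Motnc$. The first step is a reduction to $\aleph_1$-compact inputs: Lemma~\ref{lem:colimits_of_small_equivalences} writes any motivic equivalence as an $\aleph_1$-filtered colimit of motivic equivalences between $\aleph_1$-compact stable $\infty$-categories, so by $\aleph_1$-finitariness of $F$ it suffices to show that $F_0 := F|_{\Cat^{\perf,\aleph_1}_\infty}$ inverts every motivic equivalence between $\aleph_1$-compact categories.

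Next, since $F_0$ inverts $s_\sC$ for all $\sC \in \Cat^{\perf,\aleph_1}_\infty$, Corollary~\ref{cor:motadd} produces a unique factorization $F_0 \simeq \bar F \circ \iota$, where $\iota : \Cat^{\perf,\aleph_1}_\infty \to \mathrm{Rep}\mathcal{M}_{\splt,\aleph_1}$ is the Dwyer--Kan localization at $W_{s,\aleph_1}$. The argument in the proof of Corollary~\ref{cor:Kthcorep} shows that the induced functor $\ell_{\aleph_1} : \mathrm{Rep}\mathcal{M}_{\splt,\aleph_1} \to \sM_{\aleph_1}$ is itself a Dwyer--Kan localization, so factoring $\bar F$ through $\ell_{\aleph_1}$ reduces to verifying that $\bar F$ inverts every morphism $\ell_{\aleph_1}$ inverts. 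By hypothesis, $\bar F$ already inverts $\iota(\eta_\sC)$ for every $\aleph_1$-compact $\sC$.

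The main obstacle is to show that $\ell_{\aleph_1}$ coincides with the Dwyer--Kan localization of $\mathrm{Rep}\mathcal{M}_{\splt,\aleph_1}$ at the explicit class $\{\iota(\eta_\sC) : \sC \in \Cat^{\perf,\aleph_1}_\infty\}$. Conceptually, the $\eta_\sC$ arise from the Grayson construction and realize at the categorical level the identities $\Sigma\Omega \simeq \mathrm{id}$ responsible for the stability of $\sM_{\aleph_1}$; they are precisely what upgrades the connective mapping spectra in $\mathrm{Rep}\mathcal{M}_{\splt,\aleph_1}$ supplied by Proposition~\ref{prop:additive_morphisms} into the nonconnective mapping spectra of $\sM_{\aleph_1}$ described by Corollary~\ref{cor:Kthcorep}. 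A proof would combine the calculus of fractions (Proposition~\ref{prop:calculus-fractions} and Example~\ref{ex:zig-zag-compose}) with Grayson-type manipulations in order to present, after passing through $\iota$, an arbitrary motivic equivalence as a zig-zag built from $\iota(\eta_\sC)$'s and morphisms already inverted in $\mathrm{Rep}\mathcal{M}_{\splt,\aleph_1}$.

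Finally, Proposition~\ref{prop:bootstrapdown} identifies $\sM_{\aleph_1}$ with $\Motnc^{\aleph_1}$ and exhibits $\Motnc$ as $\aleph_1$-compactly generated. The functor $\tilde F : \sM_{\aleph_1} \to \sX$ produced by the previous steps then extends along the inclusion $\sM_{\aleph_1} \hookrightarrow \Motnc$ by left Kan extension, and the $\aleph_1$-finitariness of $F$ together with that of $\gamma$ ensures that this extension reproduces $F$, thereby supplying the desired factorization through $\gamma$.
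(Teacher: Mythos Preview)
Your reduction steps are correct and match the paper's: reduce to motivic equivalences between $\aleph_1$-compact categories via Lemma~\ref{lem:colimits_of_small_equivalences} (the paper uses Proposition~\ref{prop:bootstrapdown} for this, to the same effect), and factor $F_0$ through $\mathrm{Rep}\mathcal{M}_{\splt,\aleph_1}$ via Corollary~\ref{cor:motadd}. The final Kan-extension step is also fine.

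The gap is exactly where you flag the ``main obstacle'': you assert that inverting the $\iota(\eta_\sC)$ in $\mathrm{Rep}\mathcal{M}_{\splt,\aleph_1}$ already yields $\sM_{\aleph_1}$, but the justification you offer (``calculus of fractions \ldots\ Grayson-type manipulations'') is not a proof. The difficulty is that a general motivic equivalence $f:\sC\to\sD$ between $\aleph_1$-compacts has no evident zig-zag presentation in terms of the $\eta$'s using only the cofibration-category machinery; you need an actual map $\sD\to\Calk^2\Gamma^2\sC$ realizing the would-be inverse, and a reason why the resulting triangle identities hold at the level of splitting motives rather than merely in $\Motnc$. Nothing in the calculus of fractions produces such a lift.

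The paper closes this gap with Efimov's unpublished Theorem~\ref{thm:efimov_morphisms}: for $\aleph_1$-compact $\sC$, the map
\[
\Map_{\mathcal{M}_{\splt,\aleph_1}}(\mathcal U_{\splt,\aleph_1}(\sC),\mathcal U_{\splt,\aleph_1}(\Calk^2\sD))\longrightarrow \Map_{\Motnc}(\mathcal U_{\mrm{loc}}(\sC),\mathcal U_{\mrm{loc}}(\Calk^2\sD))
\]
is an equivalence. This lets one lift the motivic inverse $\mathcal U_{\mrm{loc}}(f)^{-1}$ composed with $\eta_\sD$ to an honest functor $\mrm{ef}:\sD\to\Calk^2\Gamma^2\sC$, and (via Corollary~\ref{cor:additiveinvCalk2}) guarantees that any functor inverting $W_{\mrm{simple}}$ already sees $\mrm{ef}\circ f\simeq\eta_\sC$ and $(\Calk^2\Gamma^2 f)\circ\mrm{ef}\simeq\eta_\sD$. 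A two-out-of-six argument then forces $E(f)$ to be an equivalence. Your outline is salvageable, but only by inserting Efimov's theorem at precisely this point; without it the argument does not go through.
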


We note that Theorem~\ref{thm:universal_omega_1_finitary} follows immediately from Theorem~\ref{thm:localization_simple}. 
Indeed, since any fiber sequence in $\Motnc$ can be modelled by a Karoubi sequence in $\Cat^\perf_\infty$ we also have 
that the embedding 
\[
\Fun^{\aleph_1-\mathrm{fin}}_{\mathrm{ex}}(\Motnc,\sX) \to \Fun^{\mathrm{loc},\aleph_1-\mathrm{fin}}_{W_{\mrm{simple}}}(\Cat^\perf_\infty,\sX)
\]
is an equivalence. 
By Corollary~\ref{cor:eta_c} (see also Remark~\ref{rem:nonfinitary}) and definition of $W_{\mrm{simple}}$, 
its elements are inverted by any $\aleph_1$-finitary localizing invariant $E$ valued in a stable  
$\infty$-category, so we have a chain of fully faithful functors 
\[
\Fun^{\aleph_1-\mathrm{fin}}_{\mathrm{ex}}(\Motnc,\sX)\to \Fun^{\mathrm{loc}, \aleph_1-\mathrm{fin}}(\Cat^\perf_\infty,\sX) \stackrel{=}\to \Fun^{\mathrm{loc}, \aleph_1-\mathrm{fin}}_{W_{\mrm{simple}}}(\Cat^\perf_\infty,\sX) 
\]
each of them is now forced to be an equivalence.

The main ingredient in the proof is the following {\it currently unpublished} result of Efimov.

\begin{thm}\label{thm:efimov_morphisms}
Let $\sC$ be an $\aleph_1$-compact stable $\infty$-category. 
Then the induced map
\[
\Map_{\mathcal{M}_{\splt, \aleph_1}}(\mathcal{U}_{\splt,\aleph_1}(\sC), \mathcal{U}_{\splt,\aleph_1}(\mrm{Calk}^2(\sD))) \to \Map_{\Motnc}(\mathcal{U}_{\mrm{loc}}(\sC), \mathcal{U}_{\mrm{loc}}(\mrm{Calk}^2(\sD)))
\]
is an equivalence for any $\sD \in \Cat^\perf_\infty$. 
\end{thm}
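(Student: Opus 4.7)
The plan is to model both mapping spaces as $K$-theory spectra and then match them using a rigidity property of the motives $\mathcal{U}_{\mrm{loc}}(\mrm{Calk}^2(\sD))$.

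First I would identify the left-hand side. By Proposition~\ref{prop:additive_morphisms} applied at $\kappa = \aleph_1$, the LHS is equivalent to $\Omega^\infty \K^{\mrm{cn}}(\Funex(\sC, \mrm{Calk}^2(\sD)))$. For the right-hand side, the evaluation functor $\sC \otimes \Funex(\sC, -) \to \id$ induces, after applying $\mathcal{U}_{\mrm{loc}}$ and taking the adjoint, an assembly map
\[
a_{\sC, \sE} : \mathcal{U}_{\mrm{loc}}(\Funex(\sC, \sE)) \longrightarrow \uHom_{\Motnc}(\mathcal{U}_{\mrm{loc}}(\sC), \mathcal{U}_{\mrm{loc}}(\sE)).
\]
Corepresenting the target by $\mathcal{U}_{\mrm{loc}}(\Spt^\omega)$ via Corollary~\ref{cor:Kthcorep}, this yields a map of spectra $\K(\Funex(\sC, \sE)) \to \map_{\Motnc}(\mathcal{U}_{\mrm{loc}}(\sC), \mathcal{U}_{\mrm{loc}}(\sE))$. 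The theorem thus reduces to showing, for $\sE = \mrm{Calk}^2(\sD)$ and $\sC$ $\aleph_1$-compact, that (i) $a_{\sC,\sE}$ is an equivalence and (ii) $\K(\Funex(\sC, \mrm{Calk}^2(\sD)))$ is already connective (so coincides with $\K^{\mrm{cn}}$).

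I would tackle (i) by a cellular argument in $\sC$. Both functors $\sC \mapsto \mathcal{U}_{\mrm{loc}}(\Funex(\sC, \mrm{Calk}^2(\sD)))$ and $\sC \mapsto \uHom_{\Motnc}(\mathcal{U}_{\mrm{loc}}(\sC), \mathcal{U}_{\mrm{loc}}(\mrm{Calk}^2(\sD)))$ turn Karoubi sequences in $\sC$ into fiber sequences; granting the fact that the RHS also preserves $\aleph_1$-small filtered colimits in $\sC$, it is enough to verify that $a_{\sC, \sE}$ is an equivalence on a generating class of $(\Cat^\perf_\infty)^{\aleph_1}$. The base case $\sC = \Spt^\omega$ is trivial since $\Funex(\Spt^\omega, -) \simeq \id$ and $\mathcal{U}_{\mrm{loc}}(\Spt^\omega)$ is the tensor unit. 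For (ii), iterating the shift formula $\K(\mrm{Calk}(\sE)) \simeq \Sigma \K(\sE)$ (Lemma~\ref{lem:calk-suspension}) gives a two-degree buffer $\K(\mrm{Calk}^2(\sD)) \simeq \Sigma^2 \K(\sD)$, which absorbs the potentially negative $K$-theory contributions that could arise when $\sC$ is $\aleph_1$- but not $\omega$-compact; a parallel cellular induction on $\sC$ should propagate connectivity from the base case $\sC = \Spt^\omega$ through all the generating operations.

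The heart of the matter, and by far the hardest step, is the rigidity input used in (i): that the motive $\mathcal{U}_{\mrm{loc}}(\mrm{Calk}^2(\sD))$ is dualizable enough in $\Motnc$ to make its internal hom out of an $\aleph_1$-compact motive agree with the motive of the $\Funex$-category, and to ensure the $\aleph_1$-filtered continuity of $\uHom_{\Motnc}(\mathcal{U}_{\mrm{loc}}(-), \mathcal{U}_{\mrm{loc}}(\mrm{Calk}^2(\sD)))$. The reason two applications of $\mrm{Calk}$ are required is that a single application already yields a category whose $\Ind^{\aleph_1}$ is compactly assembled in a useful sense (since $\mrm{Calk}(\sE)$ is a Karoubi quotient of a category with an Eilenberg swindle), and the second application transports this to the full dualizability structure needed in $\Motnc$. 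This rigidity statement is precisely Efimov's unpublished contribution; a self-contained proof would require setting up the full machinery of continuous $K$-theory of dualizable presentable $\infty$-categories and its compatibility with mapping spectra in $\Motnc$, at least for the subcategory generated by motives of the form $\mathcal{U}_{\mrm{loc}}(\mrm{Calk}^2(\sD))$.
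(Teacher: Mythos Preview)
The paper does not prove this theorem. It is stated explicitly as a ``currently unpublished result of Efimov'' and used as a black box input for the rest of Section~6. There is therefore no paper proof to compare against, and you yourself acknowledge that the decisive step is ``precisely Efimov's unpublished contribution''; what you have written is a reduction outline, not a proof.

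Regarding the reduction itself, there are genuine gaps beyond the acknowledged one. In step (i) you assert that $\sC \mapsto \mathcal{U}_{\mrm{loc}}(\Funex(\sC,\mrm{Calk}^2(\sD)))$ sends Karoubi sequences in $\sC$ to fiber sequences. This is not formal: for a Karoubi sequence $\sC_1 \to \sC_2 \to \sC_3$, the sequence $\Funex(\sC_3,\sE) \to \Funex(\sC_2,\sE) \to \Funex(\sC_1,\sE)$ has fully faithful first map with the correct kernel, but the restriction map is not in general a Karoubi quotient, so applying a localizing invariant does not yield a fiber sequence. That this \emph{does} happen for $\sE = \mrm{Calk}^2(\sD)$ is itself a nontrivial feature of these categories and is part of what must be proved. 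In step (ii), the shift formula $\K(\mrm{Calk}^2(\sD)) \simeq \Sigma^2 \K(\sD)$ concerns $\K$ of $\mrm{Calk}^2(\sD)$ itself, not of $\Funex(\sC,\mrm{Calk}^2(\sD))$; and even granting a two-fold shift, $\Sigma^2\K(\sD)$ need not be connective since nonconnective $\K(\sD)$ can have homotopy in arbitrarily negative degrees. The proposed cellular induction also fails because connectivity is not closed under fibers, which is what you would need when passing along Karoubi sequences contravariantly in $\sC$. The actual mechanism uses structural properties of $\mrm{Calk}(\sD)$ (notably the presence of countable coproducts) that interact specifically with $\aleph_1$-compactness of $\sC$, and unwinding this is the substance of Efimov's argument.
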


\begin{cor}\label{cor:inverting_unit}
Let $\kappa$ be a regular cardinal. 
Let $\sC, \sD$ be small idempotent complete stable $\infty$-categories such that $\sC$ is also $\aleph_1$-compact. 
Given exact functors $F,G:\sC \to \Calk^2\sD$ such that there is an equivalence 
$\mathcal{U}_\mrm{loc}(F) \simeq \mathcal{U}_\mrm{loc}(G)$, there is also an equivalence $\mathcal{U}_{\splt,\kappa}(F) \simeq \mathcal{U}_{\splt,\kappa}(G)$. 
\end{cor}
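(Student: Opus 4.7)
The plan is to first establish the statement for $\kappa = \aleph_1$ directly from Efimov's theorem (Theorem~\ref{thm:efimov_morphisms}), and then propagate the conclusion to general regular $\kappa$ using the universal properties of the invariants $\mathcal{U}_{\splt, \kappa}$, with Proposition~\ref{prop:additive_morphisms} handling the subtlety that arises when $\kappa > \aleph_1$.

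For $\kappa = \aleph_1$, I would directly invoke Theorem~\ref{thm:efimov_morphisms}: the equivalence
\[
\Map_{\mathcal{M}_{\splt, \aleph_1}}(\mathcal{U}_{\splt, \aleph_1}(\sC), \mathcal{U}_{\splt, \aleph_1}(\Calk^2(\sD))) \xrightarrow{\sim} \Map_{\Motnc}(\mathcal{U}_\mrm{loc}(\sC), \mathcal{U}_\mrm{loc}(\Calk^2(\sD)))
\]
is obtained by postcomposing with the canonical functor $\mathcal{M}_{\splt, \aleph_1} \to \Motnc$, so it sends $\mathcal{U}_{\splt, \aleph_1}(F)$ to $\mathcal{U}_\mrm{loc}(F)$ (and similarly for $G$); equivalence of the targets therefore forces equivalence of the sources.

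When $\kappa \leq \aleph_1$, I would use that $\mathcal{U}_{\splt, \kappa}$ is $\kappa$-finitary and hence $\aleph_1$-finitary, so that the universal property of $\mathcal{U}_{\splt, \aleph_1}$ produces a unique colimit-preserving factorisation $\mathcal{U}_{\splt, \kappa} \simeq \Psi \circ \mathcal{U}_{\splt, \aleph_1}$ for some $\Psi : \mathcal{M}_{\splt, \aleph_1} \to \mathcal{M}_{\splt, \kappa}$. Applying $\Psi$ to the equivalence from the previous paragraph delivers the desired conclusion.

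The hard part will be $\kappa > \aleph_1$, where the direction of factorisation reverses: $\mathcal{U}_{\splt, \aleph_1}$ is $\aleph_1$-finitary and hence $\kappa$-finitary, and the universal property of $\mathcal{U}_{\splt, \kappa}$ provides $\mathcal{U}_{\splt, \aleph_1} \simeq \Phi \circ \mathcal{U}_{\splt, \kappa}$ for some $\Phi : \mathcal{M}_{\splt, \kappa} \to \mathcal{M}_{\splt, \aleph_1}$, which in general need not be fully faithful. To circumvent this, my plan is to show that $\Phi$ nevertheless induces an equivalence on the specific mapping spaces of interest: since $\sC$ is $\aleph_1$-compact and hence $\kappa$-compact, Proposition~\ref{prop:additive_morphisms} identifies both source and target of
\[
\Phi_* : \Map_{\mathcal{M}_{\splt, \kappa}}(\mathcal{U}_{\splt, \kappa}(\sC), \mathcal{U}_{\splt, \kappa}(\Calk^2(\sD))) \to \Map_{\mathcal{M}_{\splt, \aleph_1}}(\mathcal{U}_{\splt, \aleph_1}(\sC), \mathcal{U}_{\splt, \aleph_1}(\Calk^2(\sD)))
\]
with $\K^\mrm{cn}(\Fun_\ex(\sC, \Calk^2(\sD)))$, and the naturality of the comparison map constructed in Example~\ref{exam:KFun} then forces $\Phi_*$ to correspond to the identity under these identifications. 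This equivalence lifts the $\kappa = \aleph_1$ conclusion to the claim at level $\kappa$.
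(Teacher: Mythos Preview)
Your proposal is correct and follows essentially the same approach as the paper's proof: both reduce the case $\kappa = \omega$ by factoring $\mathcal{U}_{\splt,\omega}$ through $\mathcal{U}_{\splt,\aleph_1}$, and for $\kappa \geq \aleph_1$ both combine Theorem~\ref{thm:efimov_morphisms} (for the comparison $\mathcal{M}_{\splt,\aleph_1} \to \Motnc$) with Proposition~\ref{prop:additive_morphisms} (for the comparison $\mathcal{M}_{\splt,\kappa} \to \mathcal{M}_{\splt,\aleph_1}$). The paper packages the $\kappa \geq \aleph_1$ case into a single commutative triangle and invokes $\pi_0$-injectivity of the diagonal, whereas you separate $\kappa = \aleph_1$ from $\kappa > \aleph_1$ and are a bit more explicit about why $\Phi_*$ corresponds to the identity under the K-theory identification; but the substance is identical.
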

\begin{proof}
Since the map $\mathcal{M}_{\splt, \kappa} \to \Motnc$ factors through $\mathcal{M}_{\splt, \omega}$, it suffices to 
prove the case $\kappa>\omega$.
For any $\kappa \ge \aleph_1$ we have a commutative diagram 
\[
\begin{tikzcd}
\Maps_{\mathcal{M}_{\splt, \kappa}}(\mathcal{U}_{\splt,\kappa}(\sC),\mathcal{U}_{\splt,\kappa}(\Calk^2\sD))\arrow[d]\arrow[dr]\\
\Maps_{\mathcal{M}_{\splt, {\aleph_1}}}(\mathcal{U}_{\splt,{\aleph_1}}(\sC),\mathcal{U}_{\splt,{\aleph_1}}(\Calk^2\sD))\arrow[r] &
\Maps_{\Motnc}(\mathcal{U}_{\mrm{loc}}(\sC),\mathcal{U}_{\mrm{loc}}(\Calk^2\sD)).
\end{tikzcd}
\]
The horizontal map is an equivalence by Theorem~\ref{thm:efimov_morphisms} and the vertical map is an equivalence 
by Proposition~\ref{prop:additive_morphisms}. Hence, the diagonal map is also an equivalence and the result follows from 
of its $\pi_0$-injectivity.
\end{proof}

\begin{cor}\label{cor:additiveinvCalk2}
Let $E:\Cat^\perf_\infty \to \sX$ be a functor that 
inverts maps $S_2\sE \to \sE \times \sE$ for all $\sE \in \Cat^\perf_\infty$. 
Let $\sC, \sD$ be small idempotent complete stable 
$\infty$-categories such that $\sC$ is $\aleph_1$-compact. 
Then given exact functors $F,G:\sC \to \Calk^2\sD$ such that there is an equivalence 
$\mathcal{U}_\mrm{loc}(F) \simeq \mathcal{U}_\mrm{loc}(G)$, there is also an equivalence 
$E(F) \simeq E(G)$. 
\end{cor}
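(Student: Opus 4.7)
The plan is to combine Corollary~\ref{cor:inverting_unit} with Corollary~\ref{cor:motadd} by passing to a sufficiently large regular cardinal $\kappa$ where everything is $\kappa$-compact.

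First I would apply Corollary~\ref{cor:inverting_unit} to the data at hand: since $\sC$ is $\aleph_1$-compact and $\mathcal{U}_\mrm{loc}(F) \simeq \mathcal{U}_\mrm{loc}(G)$, we obtain equivalences $\mathcal{U}_{\splt,\kappa}(F) \simeq \mathcal{U}_{\splt,\kappa}(G)$ in $\mathcal{M}_{\splt,\kappa}$ for every regular cardinal $\kappa$. Now choose a regular cardinal $\kappa \ge \aleph_1$ large enough that both $\sD$ and $\Calk^2\sD$ belong to $\Cat^{\perf,\kappa}_\infty$; since $\sC$ is already $\aleph_1$-compact, hence $\kappa$-compact, both $F$ and $G$ are morphisms in the small category $\Cat^{\perf,\kappa}_\infty$.

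Next, I would invoke Corollary~\ref{cor:motadd}, which identifies $\mrm{Rep}\mathcal{M}_{\splt,\kappa} \subseteq \mathcal{M}_{\splt,\kappa}$ as the Dwyer--Kan localization of $\Cat^{\perf,\kappa}_\infty$ at the class $W_{s,\kappa}$. By hypothesis, the restriction $E|_{\Cat^{\perf,\kappa}_\infty}$ inverts every map of the form $s_\sE$ with $\sE \in \Cat^{\perf,\kappa}_\infty$, so the universal property of the Dwyer--Kan localization produces an essentially unique factorisation of this restriction through $\mrm{Rep}\mathcal{M}_{\splt,\kappa}$. Since $\mrm{Rep}\mathcal{M}_{\splt,\kappa}$ is a full subcategory of $\mathcal{M}_{\splt,\kappa}$, the equivalence $\mathcal{U}_{\splt,\kappa}(F) \simeq \mathcal{U}_{\splt,\kappa}(G)$ already holds in $\mrm{Rep}\mathcal{M}_{\splt,\kappa}$ as an equivalence of morphisms; applying the factored functor yields $E(F) \simeq E(G)$.

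The potential obstacle is that $E$ is assumed neither finitary, nor finite-product-preserving, nor even splitting in the sense of Definition~\ref{defn:additive}; the only hypothesis is that it inverts the shear-type maps $s_\sE$. What makes the strategy work is that Corollary~\ref{cor:motadd} exhibits the relevant quotient as a genuine Dwyer--Kan localization, so that inverting $W_{s,\kappa}$ is by itself enough to guarantee a factorisation, with no additional preservation properties required of $E$. The only care needed is the choice of $\kappa$, which must accommodate the possibly large category $\Calk^2\sD$; but any regular cardinal bigger than the presentation size of $\sD$ (say the successor of its mapping-space cardinality) suffices, by Proposition~\ref{prop:cpctcat}.
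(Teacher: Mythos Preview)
Your proposal is correct and follows essentially the same approach as the paper: choose a sufficiently large regular cardinal $\kappa$ so that the relevant categories are $\kappa$-compact, use Corollary~\ref{cor:motadd} to factor $E|_{\Cat^{\perf,\kappa}_\infty}$ through $\mrm{Rep}\mathcal{M}_{\splt,\kappa}$, and then apply Corollary~\ref{cor:inverting_unit}. Your remark that Corollary~\ref{cor:motadd} is a genuine Dwyer--Kan localization, so that no finitariness or product-preservation hypotheses on $E$ are needed, is exactly the point the paper relies on (though more tersely).
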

\begin{proof}
Since being $\kappa$-finitary implies being $\lambda$-finitary for $\lambda \geq \kappa$, we may assume $\kappa$ is large 
enough, so that $\sC,\sD,\Calk^2(\sC), \Calk^2(\sD)$ are all $\kappa$-compact. 
By Corollary~\ref{cor:motadd}, the restriction of $E$ to $\kappa$-compact objects factors through 
$\mathrm{Rep}\mathcal{M}_{\splt, \kappa}$ and so the result follows from 
Corollary~\ref{cor:inverting_unit}. 
\end{proof}

\begin{proof}[Proof of Theorem~\ref{thm:localization_simple}]
By Theorem~\ref{thm:main-intro}, it suffices to show that any $\aleph_1$-filtered colimit-preserving functor 
$E:\Cat^\perf_\infty \to \sX$ inverts motivic equivalences as long as it inverts all elements of $W_{\mrm{simple}}$. 
By Proposition~\ref{prop:bootstrapdown} it suffices to show that $E$ inverts 
any motivic equivalence $f:\sC \to \sD$ where we assume that $\sC$ and $\sD$ are $\aleph_1$-compact. 

Consider the solid part of the commutative diagram
\[
\begin{tikzcd}[column sep=3.5em]
\sC\arrow[r,"f"]\arrow[d,"\eta_\sC"'] & \sD\arrow[d, "\eta_\sD"]\arrow[dl, dotted]\\
\Calk^2\Gamma^2\sC\arrow[r,"\Calk^2\Gamma^2 f"] & \Calk^2\Gamma^2\sD.
\end{tikzcd}
\]
Since the maps $\eta$ are invertible in $\Motnc$, there is a dotted map 
\[
\begin{tikzcd}[column sep=3.5em]
\mathcal{U}_{\mrm{loc}}(\sC)\arrow[r,"f"]\arrow[d,"\eta_\sC"'] & \mathcal{U}_{\mrm{loc}}(\sD)\arrow[d, "\eta_\sD"]\arrow[dl, dotted]\\
\mathcal{U}_{\mrm{loc}}(\Calk^2\Gamma^2\sC)\arrow[r,"\Calk^2\Gamma^2 f"] & \mathcal{U}_{\mrm{loc}}(\Calk^2\Gamma^2\sD)
\end{tikzcd}
\]
making the diagram in $\Motnc$ commutative. By Theorem~\ref{thm:efimov_morphisms} and by the virtue of the fact that any 
class in $\K_0$ of a stable $\infty$-category is represented by an object, there exists a functor 
$\mrm{ef}:\sD \to \mrm{Calk}^2\Gamma^2\sC$ that induces the dotted arrow. 
We note that $E$ at least inverts splitting equivalences, so we may apply Corollary~\ref{cor:additiveinvCalk2}: 
$E(\mrm{ef}) \circ E(f)$ and 
$E(\Calk^2\Gamma^2 f) \circ E(\mrm{ef})$ are homotopic to equivalences $E(\eta_\sC)$ and $E(\eta_\sD)$, and so $E(f)$ is also 
an equivalence. 
\end{proof}

As an application of this result, we obtain the following.

\begin{thm}\label{thm:countable_products}
The universal localizing invariant 
\[
\mathcal{U}_\mrm{loc}:\Cat^\perf_\infty \to \Motnc
\]
preserves countable products. 
\end{thm}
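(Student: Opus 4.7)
The plan is to prove that the canonical comparison map
\[
\alpha \colon \mathcal{U}_{\mrm{loc}}\Big(\prod_{n \in \mathbb{N}} \sC_n\Big) \to \prod_{n \in \mathbb{N}} \mathcal{U}_{\mrm{loc}}(\sC_n)
\]
is an equivalence in $\Motnc$ for every countable family $(\sC_n)_{n \in \mathbb{N}}$ in $\Cat^\perf_\infty$. By Proposition~\ref{prop:bootstrapdown}, $\Motnc$ is $\aleph_1$-compactly generated by the motives $\mathcal{U}_{\mrm{loc}}(\sE)$ of $\aleph_1$-compact stable $\infty$-categories $\sE$, so it suffices to check that $\alpha$ induces an equivalence after applying $\map_{\Motnc}(\mathcal{U}_{\mrm{loc}}(\sE),-)$ for every such $\sE$.

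For such $\sE$, the combination of Proposition~\ref{prop:additive_morphisms} with Efimov's Theorem~\ref{thm:efimov_morphisms} and the motivic equivalence $\sD \simeq \Calk^2 \Gamma^2 \sD$ of Corollary~\ref{cor:eta_c} identifies the mapping space $\Map_{\Motnc}(\mathcal{U}_{\mrm{loc}}(\sE), \mathcal{U}_{\mrm{loc}}(\sD))$ with $\Omega^\infty \K(\Fun_\ex(\sE, \Calk^2 \Gamma^2 \sD))$. Using the Calkin/Grayson identifications $\Sigma \mathcal{U}_{\mrm{loc}}(-) \simeq \mathcal{U}_{\mrm{loc}}(\Calk \otimes -)$ and $\Omega \mathcal{U}_{\mrm{loc}}(-) \simeq \mathcal{U}_{\mrm{loc}}(\Gamma \otimes -)$ from Lemma~\ref{lem:calk-suspension} and Proposition~\ref{prop:mk-stable} to assemble the different levels of the mapping spectrum, one obtains a natural equivalence
\[
\map_{\Motnc}(\mathcal{U}_{\mrm{loc}}(\sE), \mathcal{U}_{\mrm{loc}}(\sD)) \simeq \K(\Fun_\ex(\sE, \sD))
\]
with non-connective $\K$-theory on the right.

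With this formula in hand, the required assertion reduces to showing that the canonical map
\[
\K\big(\Fun_\ex(\sE, \textstyle\prod_n \sC_n)\big) \to \prod_n \K(\Fun_\ex(\sE, \sC_n))
\]
is an equivalence. Since $\Fun_\ex(\sE, -)$ preserves products in the target variable, this is precisely the statement that non-connective algebraic $\K$-theory preserves countable products of small stable $\infty$-categories, which is an independently established theorem (Kasprowski--Winges). The main obstacle in the plan lies in the second paragraph: Efimov's theorem gives the identification of mapping spaces only on targets of the form $\Calk^2 \sD'$, and only at the level of spaces (connective $\K$-theory), so producing the non-connective, spectrum-level formula on arbitrary $\sD$ requires a careful assembly using the suspension and loop computations of Section~\ref{section:MotLoc}, which together reduce the question precisely to product-preservation for non-connective $\K$-theory.
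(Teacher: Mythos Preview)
Your approach differs substantially from the paper's, and the obstacle you flag in the second paragraph is a genuine gap that the sketched assembly does not close.

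From Proposition~\ref{prop:additive_morphisms} and Theorem~\ref{thm:efimov_morphisms} you correctly obtain, for $\aleph_1$-compact $\sE$ and $n \geq 2$,
\[
\Omega^\infty \Sigma^n \map_{\Motnc}(\mathcal{U}_{\mrm{loc}}(\sE), \mathcal{U}_{\mrm{loc}}(\sD)) \;\simeq\; \K^{\mrm{cn}}\bigl(\Fun_\ex(\sE, \Calk^n \sD)\bigr).
\]
To identify the assembled spectrum with $\K(\Fun_\ex(\sE,\sD))$ you would need $\K\bigl(\Fun_\ex(\sE, \Calk^n \sD)\bigr) \simeq \Sigma^n \K\bigl(\Fun_\ex(\sE,\sD)\bigr)$ compatibly in $n$, which amounts to asking that $\K(\Fun_\ex(\sE,-))$ be a localizing invariant. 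But $\Fun_\ex(\sE,-)$ is a right adjoint on $\Cat^\perf_\infty$ and has no reason to preserve Karoubi quotients; concretely, the Calkin sequence $\sD \to \Ind(\sD)^{\aleph_1} \to \Calk(\sD)$ is not taken to a Karoubi sequence because $\sD \hookrightarrow \Ind(\sD)^{\aleph_1}$ admits no right adjoint. The formula $\map_{\Motnc}(\mathcal{U}_{\mrm{loc}}(\sE), \mathcal{U}_{\mrm{loc}}(\sD)) \simeq \K(\Fun_\ex(\sE,\sD))$ is therefore not available for merely $\aleph_1$-compact $\sE$ (and indeed is expected to fail unless $\sE$ is smooth in the noncommutative sense); the paper only establishes the case $\sE = \Spt^\omega$ in Corollary~\ref{cor:Kthcorep}.

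The paper's proof sidesteps this entirely and never computes mapping spectra. It uses the description $\Motnc \simeq \Cat^\perf_\infty[W_{\mrm{mot}}^{-1}]$ and shows that the countable product functor descends to the localization by checking that it sends levelwise motivic equivalences to motivic equivalences. After a retract trick this reduces to showing that $E^\Pi(\sC) := E(\sC^{\mathbb{N}})$ inverts motivic equivalences for every finitary localizing invariant $E$, which is immediate from Theorem~\ref{thm:localization_simple} because $E^\Pi$ is an $\aleph_1$-finitary localizing invariant. In particular, product-preservation of $\K$ itself (Corollary~\ref{cor:products}) is not used.
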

\begin{proof}
It suffices to check that the adjunction 
\[
\Delta : \Cat^\perf_\infty  \rightleftarrows  (\prod_J \Cat^\perf_\infty) : \Pi
\]
factors through an adjunction on motives.
By Theorem~\ref{thm:main-intro}, we may identify $\Motnc$ with the localization 
$\Cat^\perf_\infty[W^{-1}_{\mrm{mot}}]$ where $W_{\mrm{mot}}$ denotes the class of motivic equivalences. Since $\Cat^\perf_\infty$ is a category of cofibrant objects, the functor
$\prod_{\mbf{N}} \Cat^\perf_\infty \to \prod_{\mbf{N}} \Motnc$ is also a localization at the class of levelwise 
motivic equivalences (one can see this directly or using \cite[Theorem~7.9.8]{cisinski-rl}). 
Now it suffices to show that for any finitary localizing invariant $E$ and a countable family of motivic equivalences 
$f_i: \sC_i \to \sD_i \in \Cat^{\perf}_\infty$ the map
\[
E(\prod\limits_{\mbf{N}} \sC_i) \to E(\prod\limits_{\mbf{N}} \sD_i)
\]
is an equivalence. 
Consider the functor $E^{\Pi}(\sC) := E(\prod_{\mbf{N}} \sC)$. 
The map above is a retract of $E^\Pi(-)$ applied to the motivic equivalence 
$\bigoplus_\mbf{N} \sC_i \to \bigoplus_\mbf{N} \sD_i$, so it suffices to show that $E^{\Pi}(-)$ sends motivic 
equivalences to equivalences. This follows from Theorem~\ref{thm:localization_simple} since $E^{\Pi}$ is an 
$\aleph_1$-finitary localizing invariant.
\end{proof}

\appendix
\section{Reminder on the Grayson construction}\label{sec:reminder_on_the_grayson_construction}

Kasprowski and the third author \cite{KasprowskiWinges} defined for a small stable $\infty$-category $\sC$ 
a new stable $\infty$-category $\Gamma \sC$ which serves as a categorical delooping of $\sC$. 
More precisely, it has the property that $\mathcal{U}_\mathrm{loc}(\Gamma \sC)$ is equivalent to the 
$\Omega \mathcal{U}_\mathrm{loc}(\sC)$. 
In this section we give a slightly different presentation of the construction which we learnt from Sasha Efimov. 
Our proof of this presentation is based on the weighted $\mathbb{A}^1$-invariance of Annala--Hoyois--Iwasa 
\cite{annala2023algebraic}. 
Using this interpretation, one can give a new proof that  $\Gamma \sC$, somewhat surprisingly, also deloops arbitrary 
localizing invariants which are not assumed to be finitary or to factor through 
$\Motnc$. 

\sssec{}
First, we need to introduce several notions from spectral algebraic geometry. 
We denote by $\mathrm{CAlg}^{\mrm{cn}}$ the $\infty$-category of connective commutative ring spectra. 

Given an accessible functor $F:\mathrm{CAlg}^{\mrm{cn}} \to \Spc$ one defines the $\infty$-category $\mbf{Perf}_F$ as the 
value of the right Kan extension of 
\[
\mbf{Perf}_{(-)}:(\mathrm{CAlg}^{\mrm{cn}})^{\mrm{op}} \to \Cat^\perf
\]
along the Yoneda embedding $\mrm{Spec}:(\mathrm{CAlg}^{\mrm{cn}})^{\mrm{op}} \to \Fun(\mathrm{CAlg}^{\mrm{cn}}, \Spc)$. 

\begin{exam}\label{exam:perf_scheme}
For a spectral scheme $X$, the $\infty$-category $\mbf{Perf}_X$ coincides with the usual $\infty$-category of 
perfect complexes defined in terms of sheaves of $\mathcal{O}_X$-modules (see \cite[Propositions~6.2.4.1~and~6.2.5.10]{SAG}). 
\end{exam}

\begin{exam}\label{exam:quot_stack}
Consider the functors $\mathrm{CAlg}^{\mrm{cn}} \to \Spc$ corepresented by the free monoid algebras of $\mbf{N}$ and 
$\mbf{Z}$:
\[
\mbf{A}^1 = \mrm{Spec}(\mbf{S}[\mbf{N}]), \; \mbf{G}_{m} = \mrm{Spec}(\mbf{S}[\mbf{Z}]).
\]
The comultiplication map $\mbf{Z} \to \mbf{Z} \times \mbf{Z}$ and the coaction map $\mbf{N} \to \mbf{N} \times \mbf{Z}$ given by the same formula
\[
n \mapsto (n,n)
\] 
yield simplicial diagrams in $\Fun(\mathrm{CAlg}^{\mrm{cn}}, \Spc)$:
\[
\begin{tikzcd}[row sep = 1]
\mbf{A}^1 \arrow[r, shift right] & \mbf{A}^1 \times \mbf{G}_{m} 
\arrow[l] \arrow[l, shift right] \arrow[r, shift right] \arrow[r, shift right=2] 
&\mbf{A}^1 \times \mbf{G}_{m} \times \mbf{G}_{m}
\arrow[l]\arrow[l, shift right=1]\arrow[l, shift right=2] \arrow[r, shift right]\arrow[r, shift right=2]\arrow[r, shift right=3] 
&\cdots
\arrow[l]\arrow[l, shift right=1]\arrow[l, shift right=2]\arrow[l, shift right=3] \\
\Spec(k) \arrow[r, shift right] & \mbf{G}_{m} 
\arrow[l] \arrow[l, shift right] \arrow[r, shift right] \arrow[r, shift right=2] 
&\mbf{G}_{m} \times \mbf{G}_{m}
\arrow[l]\arrow[l, shift right=1]\arrow[l, shift right=2] \arrow[r, shift right]\arrow[r, shift right=2]\arrow[r, shift right=3] 
&\cdots
\arrow[l]\arrow[l, shift right=1]\arrow[l, shift right=2]\arrow[l, shift right=3] 
\end{tikzcd}
\]
We denote their respective colimits by $[\mbf{A}^1/\mbf{G}_{m}]$ and $\mrm{B}\mbf{G}_{m}$ 
(see also \cite[Sections~2,4]{Moulinos_2021}). 
The main result in \cite{Moulinos_2021} (combined with \cite[Proposition~9.4.4.5]{SAG}) shows that we have 
\[
\mbf{Perf}_{[\mbf{A}^1/\mbf{G}_{m}]} \simeq \Fun((\mbf{Z},\leq), \Spt)^\omega,
\; \; \; \; \text{ and } \; \; \; \; 
\mbf{Perf}_{B\mbf{G}_{m}} \simeq \Fun(\mbf{Z}^{\mrm{disc}}, \Spt)^\omega.
\]
\end{exam}

\begin{warning}
Usually when employing the functor-of-points approach as above, one limits oneself to {\it fpqc sheaves} among
arbitrary functors. To make the exposition shorter and more elementary we avoid using the topology and 
perform all the constructions directly with functors. 
In particular, $[\mbf{A}^1/\mbf{G}_{m}]$ and $\mrm{B}\mbf{G}_{m}$ in Example~\ref{exam:quot_stack} 
are defined as colimits of functors and not of fpqc sheaves. These two different definitions, however, do not change the 
values of $\mathrm{Perf}_{(-)}$ \cite[Propositions~6.2.3.1~and~6.2.6.2]{SAG}. 
\end{warning}

We can make the formula given in Example~\ref{exam:quot_stack} more explicit.

\begin{prop}\label{prop:Z-complexes}
Let $\sC$ be a small idempotent complete stable $\infty$-category. The following are equivalent for an object 
$X$ of $\Fun((\mbf{Z},\le), \Ind(\sC))$:
\begin{enumerate}
\item $X$ is compact;
\item $X(i)$ are in $\sC$, $X(i) = 0$ for small enough $i$, and the sequence stabilizes for big enough $i$. 
\end{enumerate}
\end{prop}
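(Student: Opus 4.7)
The plan is to identify a set of compact generators of $\Fun((\mbf{Z},\le), \Ind(\sC))$, from which compactness in this category will coincide with the combinatorial description in (2).

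First, I would observe that all colimits in $\Fun((\mbf{Z},\le), \Ind(\sC))$ are computed pointwise, so each evaluation functor $\mrm{ev}_i : \Fun((\mbf{Z},\le), \Ind(\sC)) \to \Ind(\sC)$ preserves all colimits and in particular admits a left adjoint $i_! : \Ind(\sC) \to \Fun((\mbf{Z},\le), \Ind(\sC))$ given by left Kan extension along $\{i\} \hook (\mbf{Z},\le)$. Because $(\mbf{Z},\le)$ is a poset, the pointwise formula gives $(i_!c)(j) = c$ if $j \ge i$ and $0$ otherwise. Since $\mrm{ev}_i$ preserves filtered colimits, $i_!$ preserves compact objects; hence $i_!c$ is compact in $\Fun((\mbf{Z},\le), \Ind(\sC))$ for every $c \in \sC \simeq \Ind(\sC)^\omega$. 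Moreover, the family $\{i_!c : i \in \mbf{Z},\ c \in \sC\}$ is a set of compact generators: an object $X$ with $\Map(i_!c, X) \simeq \Map_{\Ind(\sC)}(c, X(i)) \simeq 0$ for all such $(i,c)$ must satisfy $X(i) = 0$ for all $i$, since $\sC$ generates $\Ind(\sC)$. By \cite[Lemma~5.4.2.4]{HTT}, the compact objects of $\Fun((\mbf{Z},\le), \Ind(\sC))$ are therefore precisely the retracts of finite colimits of the $i_!c$'s.

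For the implication $(1) \Rightarrow (2)$, I would observe that each generator $i_!c$ visibly satisfies condition (2): it vanishes below $i$, is equal to $c \in \sC$ from $i$ onward, and is constant there. Condition (2) is stable under finite colimits in $\Fun((\mbf{Z},\le), \Ind(\sC))$ (finite colimits of objects of $\sC$ remain in $\sC$, and both the vanishing below and the stabilisation above are preserved) and under retracts (using that $\sC$ is idempotent complete). Combining with the previous paragraph, every compact object satisfies (2).

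For $(2) \Rightarrow (1)$, suppose $X$ is supported in an interval $[a,b]$ with $X(i) \in \sC$ for all $i$ and $X(i) \simeq X(b)$ for $i \ge b$. I would build $X$ by a finite inductive sequence of pushouts of compact generators. Set $X^{(a)} := a_! X(a)$, which agrees with $X$ on $(-\infty, a]$ and is constant equal to $X(a)$ above. For $a < i \le b$, form the pushout
\[
X^{(i)} := X^{(i-1)} \mathop{\cup}_{i_! X(i-1)} i_! X(i),
\]
where the map $i_! X(i-1) \to X^{(i-1)}$ is adjoint to the identity $X(i-1) \to X^{(i-1)}(i) = X(i-1)$ and the map $i_! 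X(i-1) \to i_! X(i)$ is $i_!$ applied to the structure map of $X$. A direct pointwise check shows that $X^{(i)}$ agrees with $X$ on $(-\infty, i]$ and is constant equal to $X(i)$ on $[i,\infty)$, so $X^{(b)} \simeq X$. Thus $X$ is a finite colimit of compact objects and is itself compact.

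The only mildly delicate point is verifying that the iterative pushout in $(2) \Rightarrow (1)$ actually recovers $X$; this amounts to unwinding the adjunction $i_! \dashv \mrm{ev}_i$ and checking pointwise that the pushouts realise the structure maps of $X$, which is straightforward given the explicit description of $i_!c$ above.
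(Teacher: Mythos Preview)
Your proof is correct and uses essentially the same compact generators as the paper (your $i_!c$ is the paper's $Y_c$ shifted to start at $i$; the paper's skyscraper objects $X_{M,k}$ are cofibres $(k)_!M/(k{+}1)_!M$). The $(2)\Rightarrow(1)$ directions are effectively the same argument, phrased as an iterated pushout by you and as ``finite extension'' in the paper.

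The genuine difference is in $(1)\Rightarrow(2)$. You invoke the general fact that compacts in a compactly generated stable $\infty$-category are retracts of finite colimits of the generators, and then check that condition~(2) is closed under finite colimits and retracts. The paper instead gives a direct argument: it writes any $X$ as the filtered colimit of explicit truncations $X_n$ (obtained by killing $X$ below $-n$ and freezing it at $X(n)$ above $n$), and uses compactness of $X$ to factor $\id_X$ through some $X_n$, exhibiting $X$ as a retract of an object manifestly satisfying~(2). Your route is cleaner and more conceptual once one is willing to cite the structure theory of compact objects; the paper's route is more self-contained and makes the relevant filtration completely explicit, at the cost of a slightly ad hoc construction.
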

\begin{proof}
Given $M \in \Ind(\sC)$ let $X_{M,k}$ be the object of $\Fun((\mbf{Z},\le), \Ind(\sC))$ such that $X_{M,k}(i)$ is non-zero in one degree $k$ only and $X_{M,k}(k)=M$. This 
corepresents the functor 
\[
F \in \Fun((\mbf{Z},\le), \Ind(\sC)) \longmapsto \Fib(\Map(M, F(k)) \to \Map(M, F(k+1)))
\]
which commutes with filtered colimits whenever $M$ belongs to $\sC$. 
In the same context we define $Y_M$ to be the object of $\Fun((\mbf{Z},\le), \Ind(\sC))$ such that $Y_M(i) = 0$ for $i<0$, $Y_M(0) = M$ and 
$Y_M(i) \to Y_M(i+1)$ is an equivalence for all $i\ge 0$. This corepresents the functor
\[
F \in \Fun((\mbf{Z},\le), \Ind(\sC)) \longmapsto \Map(M, F(0))
\]
which commutes with filtered colimits whenever $M$ is compact in $\Ind(\sC)$. 
Any object $X$ satisfying the conditions in (2) is a finite extension of objects of the form $Y_M$ and $X_{M,k}$ for 
compact $M$, hence it is compact by the computations above. 

On the other hand, for a compact object $X$ of $\Fun((\mbf{Z},\le), \Ind(\sC))$, consider 
\[
X_n(i) = 
\begin{cases}
0,  &\text{ if } i<-n,\\
X(i), & \text{ if } -n \ge i\le n\\
X(n), & \text{ if } i>n
\end{cases}
\]
We have a direct system of maps  
\[
X_0 \to \cdots \to X_n \to X_{n+1} \to \cdots 
\]
whose colimit is $X$. By compactness, the identity map on $X$ factors through some $X_n$, which satisfies condition (2) 
along with any of its retracts.
\end{proof}

\sssec{}
We will now explore Example~\ref{exam:quot_stack} in more detail. 
Note first that we have a projection map and an immersion map 
\[
\pi:[\mbf{A}^1/\mbf{G}_{m}] \to \mrm{B}\mbf{G}_{m} \; \; \;  \text{ and } \; \; \; \iota:\mrm{B}\mbf{G}_{m} \to [\mbf{A}^1/\mbf{G}_{m}]
\]
with $\pi \circ \iota \simeq \id_{\mrm{B}\mbf{G}_{m}}$.
Using the comultiplication map $\mbf{N}\to \mbf{N} \times \mbf{N}$ and the fact that sifted colimits commute with 
products, we may construct a multiplication map
\[
m:[\mbf{A}^1/\mbf{G}_{m}] \times [\mbf{A}^1/\mbf{G}_{m}] \to [\mbf{A}^1/\mbf{G}_{m}] \times \mrm{B}\mbf{G}_{m} \to [\mbf{A}^1/\mbf{G}_{m}]
\]
such that the two composites
\[
\begin{tikzcd}
{[\mbf{A}^1/\mbf{G}_{m}]} \arrow[r,shift left, "0"]\arrow[r,shift right, swap, "1"] 
&{[\mbf{A}^1/\mbf{G}_{m}] \times [\mbf{A}^1/\mbf{G}_{m}]} \arrow[r, "m"] & {[\mbf{A}^1/\mbf{G}_{m}]}
\end{tikzcd}
\]
are equivalent to $\id_{[\mbf{A}^1/\mbf{G}_{m}]}$ and $\iota \circ \pi$.
We also have a commutative diagram 
\begin{equation}\label{eq:P1}
\begin{tikzcd}
{\mbf{G}_{m}} \arrow[r]\arrow[d] & {\mbf{A}^1}\arrow[d,"1"]\\
{\mbf{A}^1} \arrow[r, "\id"] & {[\mbf{A}^1/\mbf{G}_{m}]}
\end{tikzcd}
\end{equation}
which yields a map 
\[
\phi:\mbf{P}^1 \to [\mbf{A}^1/\mbf{G}_{m}],
\]
where $\mbf{P}^1$ denotes the pushout\footnote{taken in $\Fun(\CAlg^\mrm{cn}, \Spc)$} of the diagram (\ref{eq:P1}).

Additionally, we define a stack $\mathcal{B}$ as the pushout  
\[
\begin{tikzcd}
{\mrm{B}\mbf{G}_{m}} \arrow[r,"\iota"]\arrow[d,"\iota"] & {[\mbf{A}^1/\mbf{G}_{m}]}\arrow[d] \\
{[\mbf{A}^1/\mbf{G}_{m}]}\arrow[r] & {\mathcal{B}}.
\end{tikzcd}
\]
Again we have a projection map and an immersion map
\[
\pi_{\mathcal{B}}:\mathcal{B} \to \mrm{B}\mbf{G}_{m} \; \; \;  \text{ and } \; \; \; \iota_{\mathcal{B}}:\mrm{B}\mbf{G}_{m} \to \mathcal{B}
\]
with $\pi_{\mathcal{B}} \circ \iota_{\mathcal{B}} \simeq \id_{\mrm{B}\mbf{G}_{m}}$. Similarly, we have an action map 
\[
a:[\mbf{A}^1/\mbf{G}_{m}] \times \mathcal{B} \to  \mathcal{B}
\]
such that the two composites
\[
\begin{tikzcd}
{\mathcal{B}} \arrow[r,shift left, "0"]\arrow[r,shift right, swap, "1"] 
&{[\mbf{A}^1/\mbf{G}_{m}]} \times \mathcal{B} \arrow[r, "a"] & {\mathcal{B}}
\end{tikzcd}
\]
are equivalent to $\id_{\mathcal{B}}$ and $\iota \circ \pi$.
There is also a map $h:\sB \to [\mbf{A}^1/\mbf{G}_{m}]$ induced by the commutative diagram 
\[
\begin{tikzcd}
{\mrm{B}\mbf{G}_{m}} \arrow[r,"\iota"]\arrow[d,"\iota"] & {[\mbf{A}^1/\mbf{G}_{m}]}\arrow[d,"\id"] \\
{[\mbf{A}^1/\mbf{G}_{m}]}\arrow[r,"\id"] & {[\mbf{A}^1/\mbf{G}_{m}]}.
\end{tikzcd}
\]

Unraveling the definitions of all the maps we obtain the following lemma.

\begin{lem}\label{lem:P-homotopy-diagram}
There are commutative diagrams of the following form:
\begin{enumerate}
\item
$
\begin{tikzcd}
{[\mbf{A}^1/\mbf{G}_{m}]}\arrow[rr, "\id"] \arrow[d, shift left=2, "0"]\arrow[d,shift right=2, swap, "1"]
&&{[\mbf{A}^1/\mbf{G}_{m}]} \arrow[d, shift left = 2, "\pi \circ \iota"]\arrow[d, shift right = 2, swap, "\id"]\\ 
\mbf{P}^1 \times {[\mbf{A}^1/\mbf{G}_{m}]}\arrow[r, "\phi \times \id"]& 
{[\mbf{A}^1/\mbf{G}_{m}] \times [\mbf{A}^1/\mbf{G}_{m}]} \arrow[r,"m"] &{[\mbf{A}^1/\mbf{G}_{m}]}
\end{tikzcd}
$
\item
$
\begin{tikzcd}
\mathcal{B} \arrow[rr, "\id"] \arrow[d, shift left=2, "0"]\arrow[d,shift right=2, swap, "1"]
&&\mathcal{B} \arrow[d, shift left = 2, "\pi \circ \iota"]\arrow[d, shift right = 2, swap, "\id"]\\ 
\mbf{P}^1 \times \mathcal{B} \arrow[r, "\phi \times \id"]& 
{[\mbf{A}^1/\mbf{G}_{m}] \times \mathcal{B}} \arrow[r,"a"] &\mathcal{B}
\end{tikzcd}
$
\end{enumerate}
\end{lem}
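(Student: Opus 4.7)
The proof is, as the statement suggests, a matter of unravelling the definitions of the maps $\phi$, $m$, $a$, $\iota$, $\pi$, $\iota_{\mathcal{B}}$, $\pi_{\mathcal{B}}$. The plan is to handle parts (1) and (2) in parallel, since once the analysis of $\phi$ is done, the arguments are formally identical (with $m$, $\iota$, $\pi$ replaced by $a$, $\iota_{\mathcal{B}}$, $\pi_{\mathcal{B}}$).

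First, I would analyse $\phi: \mbf{P}^1 \to [\mbf{A}^1/\mbf{G}_{m}]$. By construction, $\mbf{P}^1$ is the pushout of two copies of $\mbf{A}^1$ along $\mbf{G}_{m}$, and $\phi$ arises from the universal property applied to the square \eqref{eq:P1}: on the bottom-left copy of $\mbf{A}^1$, $\phi$ is the quotient map $\mbf{A}^1 \to [\mbf{A}^1/\mbf{G}_{m}]$, while on the top-right copy it factors through the point $1 \in \mbf{G}_{m}$, hence through $\iota$. Consequently, the two maps $0, 1 : [\mbf{A}^1/\mbf{G}_{m}] \to \mbf{P}^1 \times [\mbf{A}^1/\mbf{G}_{m}]$ appearing in the left column of the diagram — inserting the distinguished points of $\mbf{P}^1$ in the first factor — compose with $\phi \times \id$ to give precisely the two sections $0, 1 : [\mbf{A}^1/\mbf{G}_{m}] \to [\mbf{A}^1/\mbf{G}_{m}] \times [\mbf{A}^1/\mbf{G}_{m}]$ that enter the defining property of $m$ recalled just before the lemma. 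The two commutativities then follow from the already established equalities $m \circ 0 \simeq \iota \circ \pi$ and $m \circ 1 \simeq \id$.

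For part (2), the argument is word-for-word the same once one replaces $m$ by the action $a : [\mbf{A}^1/\mbf{G}_{m}] \times \mathcal{B} \to \mathcal{B}$ and $\iota, \pi$ by $\iota_{\mathcal{B}}, \pi_{\mathcal{B}}$; the relevant input is again the pair of composites $a \circ 0 \simeq \iota_{\mathcal{B}} \circ \pi_{\mathcal{B}}$ and $a \circ 1 \simeq \id$ coming from the construction of $a$ in the text.

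The main obstacle, if there is one, is purely bookkeeping: everything lives in the $\infty$-category $\Fun(\CAlg^{\mrm{cn}}, \Spc)$, so every pushout identification and every one of the squares produced by universal properties of $\mbf{P}^1$, $[\mbf{A}^1/\mbf{G}_{m}]$, and $\mathcal{B}$ is only specified up to coherent homotopy. One must therefore check that the homotopies witnessing $\phi|_{\mbf{A}^1_0} \simeq \id_{[\mbf{A}^1/\mbf{G}_{m}]}$ and $\phi|_{\mbf{A}^1_1} \simeq \iota \circ \pi \circ \text{const}$ assemble coherently with the homotopies built into the definitions of $m$ and $a$ so as to produce the displayed $2$-simplices. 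Since each ingredient is itself a direct invocation of a universal property, the coherences are built in by design, and no genuinely new homotopy-theoretic input is needed.
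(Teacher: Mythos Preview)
Your proposal is correct and follows exactly the route the paper takes: the paper's entire proof is the single sentence ``Unraveling the definitions of all the maps we obtain the following lemma,'' and what you have written is precisely that unraveling spelled out in detail. Your identification of $(\phi\times\id)\circ 0$ and $(\phi\times\id)\circ 1$ with the sections appearing in the defining property of $m$ (resp.\ $a$), followed by the recorded identities $m\circ 1\simeq\id$, $m\circ 0\simeq\iota\circ\pi$ (resp.\ for $a$), is exactly what is meant.
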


\ssec{Construction of \texorpdfstring{$\Gamma$}{Gamma}}\label{ssec:construction_of_gamma}

By \cite[Proposition~6.3]{Moulinos_2021}, there is a unique map 
\[
1:\mrm{Spec}(\mbf{S}) \to [\mbf{A}^1/\mbf{G}_{m}] 
\]
which induces the colimit functor
\[
\Fun((\mbf{Z},\le),\Spt)^\omega \simeq \mbf{Perf}_{[\mbf{A}^1/\mbf{G}_{m}]} \to \Spt^\omega.
\]
Thanks to Proposition~\ref{prop:Z-complexes}, the functor identifies the codomain with the localization of 
$\mbf{Perf}_{[\mbf{A}^1/\mbf{G}_{m}]}$ 
at the subcategory of those $F \in \Fun((\mbf{Z},\le), \Spt)^\omega$ for which $F(i)$ is nonzero for only 
finitely many $i$. We denote this subcategory by $\mbf{Perf}_{0, [\mbf{A}^1/\mbf{G}_{m}]}$. 
By Proposition~\ref{prop:Z-complexes} and by construction of $\mbf{Perf}_{(-)}$, we may identify  $\mbf{Perf}_{\sB}$ with the 
full subcategory of 
\[
(F,G)\in \mrm{Fun}((\mbf{Z}, \leq), \Spt) \times_{\mrm{Fun}(\mbf{Z}^\mrm{disc}, \Spt)} \mrm{Fun}((\mbf{Z}, \leq), \Spt)
\]
such that $F(i)$ and $G(i)$ belong to $\Spt^\omega$ for all $i$, $F(i)$ and $G(i)$ are 0 for small enough $i$, and the 
sequences given by $F(i)$ and $G(i)$ stabilize for big enough $i$. 
Similarly to the above, the map 
\[
\mrm{Spec}(\mbf{S})\coprod \mrm{Spec}(\mbf{S}) \to [\mbf{A}^1/\mbf{G}_{m}] \coprod [\mbf{A}^1/\mbf{G}_{m}] \to \sB
\] 
also induces a localization functor
\[
\mbf{Perf}_{\sB} \to \Spt^\omega \times \Spt^\omega
\]
whose kernel is denoted by $\mbf{Perf}_{0, \mathcal{B}}$. 
The commutative diagram 
\[
\begin{tikzcd}
\mrm{Spec}(\mbf{S})\coprod \mrm{Spec}(\mbf{S}) \arrow[r]\arrow[d] & \sB\arrow[d, "h"]\\
\mrm{Spec}(\mbf{S}) \arrow[r] & {[\mbf{A}^1/\mbf{G}_{m}]}
\end{tikzcd}
\]
induces a map 
\[
\Delta:\mbf{Perf}_{0, [\mbf{A}^1/\mbf{G}_{m}]} \to \mbf{Perf}_{0, \mathcal{B}}.
\]
Now we may construct a fully faithful functor 
\[
\Delta':\mbf{Perf}_{0, [\mbf{A}^1/\mbf{G}_{m}]}
\to
\mbf{Perf}_{0, \mathcal{B}} \vec{\times}_{\Ind(\mbf{Perf}_{0, \mathcal{\sB}})^{\aleph_1}} \Ind(\mbf{Perf}_{0, [\mbf{A}^1/\mbf{G}_{m}]})^{\aleph_1}
\]
Given a stable $\infty$-category $\sC$ we denote by $\Gamma \sC$ the Karoubi quotient of 
\[
\Delta' \otimes \sC: \mbf{Perf}_{0, [\mbf{A}^1/\mbf{G}_{m}]} \otimes \sC
\to
(\mbf{Perf}_{0, \mathcal{B}} \vec{\times}_{\Ind(\mbf{Perf}_{0, \mathcal{\sB}})^{\aleph_1}} \Ind(\mbf{Perf}_{0, [\mbf{A}^1/\mbf{G}_{m}]})^{\aleph_1}) \otimes \sC.
\]

\begin{rem}\label{rem:support}
The notation for the subcategories comes from the theory of quasi-geometric algebraic stacks. 
More generally, there is a notion of an open immersion of functors $\mathrm{CAlg}^{\mrm{cn}} \to \Spc$ and for an open immersion 
$U\to X$ of those functors that are {\it quasi-geometric} (see \cite[Definition~9.1.0.1]{SAG}) 
and {\it perfect} (see \cite[Definition~9.4.4.1]{SAG}) one always has that 
$\mbf{Perf}_X \to \mbf{Perf}_U$ is a homological epimorphism and the kernel consists of perfect complexes 
supported on the complement to $U$. 
\end{rem}

\begin{rem}\label{rem:binacyc}
Example~\ref{exam:quot_stack} together with Proposition~\ref{prop:Z-complexes} shows that 
$\mbf{Perf}_{[\mbf{A}^1/\mbf{G}_{m}]}$ is equivalent to the $\infty$-category of $\mathbb{Z}$-complexes 
$\mathcal{F}(\Spt^\omega)$ of 
\cite{KasprowskiWinges}. Similarly, one can show that $\mbf{Perf}_{\mathcal{B}}$ is equivalent to 
$\mathcal{B}(\Spt^\omega)$, the $\infty$-category of binary complexes over $\Spt^\omega$. 

Futhermore, $\mbf{Perf}_{0, [\mbf{A}^1/\mbf{G}_{m}]}$ and $\mbf{Perf}_{0, \mathcal{B}}$ correspond to 
the acyclic versions $\mathcal{F}^q(\Spt^\omega)$ and $\mathcal{B}^q(\Spt^\omega)$.
\end{rem}

The following is essentially \cite[Corollary~4.8]{annala2023algebraic}, but done in the context of 
spectral stacks (rather than derived schemes). 

\begin{prop}\label{prop:localizing-isomorphism}
For any localizing invariant $E$ the maps 
\[
\iota^*:E(\mbf{Perf}_{[\mbf{A}^1/\mbf{G}_{m}]}) \to E(\mbf{Perf}_{\mrm{B}\mbf{G}_{m}})
\]
\[
\iota_{\mathcal{B}}^*:E(\mbf{Perf}_{\mathcal{B}}) \to E(\mbf{Perf}_{\mrm{B}\mbf{G}_{m}})
\]
are equivalences. 
\end{prop}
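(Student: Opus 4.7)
The plan is to deduce both equivalences from the weighted $\mbf{P}^1$-homotopy invariance of localizing invariants, following Annala--Hoyois--Iwasa \cite{annala2023algebraic}, combined with Lemma~\ref{lem:P-homotopy-diagram}.

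First, since $\pi \circ \iota \simeq \id_{\mrm{B}\mbf{G}_m}$ one already has $E(\iota^*) \circ E(\pi^*) \simeq \id$, and likewise for $(\iota_{\mathcal{B}}, \pi_{\mathcal{B}})$. Hence it suffices to verify that the other composition is also the identity. Applying $\mbf{Perf}_{(-)}$ and then $E$ to the diagrams of Lemma~\ref{lem:P-homotopy-diagram}, the two composites through $(m\circ(\phi\times\id))^*$ become $\id$ and $\pi^* \circ \iota^*$ respectively, so it suffices to show that the two rational point sections
\[
0^*,\ 1^*: E(\mbf{Perf}_{\mbf{P}^1 \times X}) \longrightarrow E(\mbf{Perf}_X)
\]
induce homotopic maps on $E$, for $X = [\mbf{A}^1/\mbf{G}_m]$ and $X = \mathcal{B}$.

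This homotopy is a consequence of the projective bundle formula for $\mbf{P}^1$-bundles, in the form of \cite[Corollary~4.8]{annala2023algebraic} transferred from derived schemes to the spectral stacks at hand. Concretely, the pair $(\mathcal{O},\,\mathcal{O}(1))$ yields a semi-orthogonal decomposition of $\mbf{Perf}_{\mbf{P}^1 \times X}$ into two copies of $\mbf{Perf}_X$, hence a direct sum splitting $E(\mbf{Perf}_{\mbf{P}^1 \times X}) \simeq E(\mbf{Perf}_X) \oplus E(\mbf{Perf}_X)$. Under this decomposition, both $0^*$ and $1^*$ act by the formula $(a,b) \mapsto a + b$, because any rational point of $\mbf{P}^1$ pulls $\mathcal{O}(1)$ back to the trivial line bundle; thus $E(0^*) \simeq E(1^*)$, as required.

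The main obstacle lies in establishing the projective bundle formula over the non-affine bases $X = [\mbf{A}^1/\mbf{G}_m]$ and $X = \mathcal{B}$. Since $\mbf{Perf}_{(-)}$ is defined on general $X$ by right Kan extension from affines, the semi-orthogonal decomposition in question is not immediate, and needs to be propagated through the defining (co)limits. This is precisely the type of compatibility supplied by the framework of \cite{annala2023algebraic}, and one must verify that the non-sheafified stacks used here do not obstruct their arguments; by the warning following Example~\ref{exam:quot_stack}, the values of $\mbf{Perf}_{(-)}$ coincide with the sheaf-theoretic ones, so this step should go through without essential change.
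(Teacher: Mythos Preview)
Your proposal is correct and follows the same overall strategy as the paper: use Lemma~\ref{lem:P-homotopy-diagram} to reduce to showing that the two sections $0^*$ and $1^*$ induce homotopic maps after applying $E$, and deduce this from a projective bundle formula. The difference lies in how the projective bundle step is handled. You try to establish the relative semi-orthogonal decomposition of $\mbf{Perf}_{\mbf{P}^1\times X}$ directly over the stacks $X=[\mbf{A}^1/\mbf{G}_m]$ and $X=\mathcal B$, and flag this as the main obstacle. The paper sidesteps this entirely: it only uses the absolute decomposition of $\mbf{Perf}_{\mbf{P}^1}$ (over $\Spec(\mbf{S})$, citing \cite[Theorem~7.2.2.1]{SAG}) to conclude $F(0^*)\simeq F(1^*)$ for \emph{every} localizing invariant $F$, and then specializes to $F(\sC):=E(\sC\otimes\mbf{Perf}_X)$, which is again localizing since $-\otimes\mbf{Perf}_X$ preserves Karoubi sequences. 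This immediately gives the homotopy you need without ever invoking a relative projective bundle formula or worrying about propagating semi-orthogonal decompositions through the defining limits.

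What you gain from the paper's trick is precisely the elimination of the hand-waving in your final paragraph: no compatibility check with \cite{annala2023algebraic}'s framework is required, and no appeal to sheafification issues is needed beyond the Warning already quoted. Your route would also work once the relative Beilinson decomposition is justified (e.g.\ via dualizability of $\mbf{Perf}_{\mbf{P}^1}$ and K\"unneth), but the tensor-with-a-fixed-category manoeuvre is the cleaner reduction.
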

\begin{proof}
Recall from \cite[Theorem~7.2.2.1]{SAG} that for any localizing invariant $F$
\[
F(\Spt^\omega) \oplus F(\Spt^\omega) \stackrel{p^* + i_*}\to F(\mbf{Perf}_{\mbf{P}^1}) 
\]
where $p:\mbf{P}^1 \to \Spec (\mbf{S})$ is the projection map and $i:\Spec(\mbf{S}) \to \mbf{P}^1$ is the embedding of 
the point at infinity. Hence, in particular, the maps induced by the embeddings of 0 and 1 in $\mbf{P}^1$ 
\[
0^*,1^*:F(\mbf{Perf}_{\mbf{P}^1}) \to F(\Spt^\omega)
\]
are homotopic. 
Taking $F(\sC) := E(\sC \otimes \mbf{Perf}_{[\mbf{A}^1/\mbf{G}_{m}]})$ or 
$F(\sC) := E(\sC \otimes \mbf{Perf}_{\mathcal{B}})$, respectively, and applying 
Lemma~\ref{lem:P-homotopy-diagram}, we obtain that the maps 
\[
\pi^*\circ \iota^* \simeq (\iota \circ \pi)^*:E(\mbf{Perf}_{[\mbf{A}^1/\mbf{G}_{m}]}) \to E(\mbf{Perf}_{[\mbf{A}^1/\mbf{G}_{m}]})
\]
\[
\pi_{\mathcal{B}}^*\circ \iota_{\mathcal{B}}^* \simeq (\iota_{\mathcal{B}} \circ \pi_{\mathcal{B}})^*:E(\mbf{Perf}_{\mathcal{B}}) \to E(\mbf{Perf}_{\mathcal{B}})
\]
are homotopic to the identity maps. On the other hand, $\iota^* \circ \pi^*$ and 
$\iota_{\mathcal{B}}^* \circ \pi^*_{\mathcal{B}}$ are homotopic to the identity by construction. Hence $\iota^*$ and 
$\pi^*$ (resp., $\iota_{\mathcal{B}}^*$ and $\pi_{\mathcal{B}}^*$) are mutually inverse equivalences.
\end{proof}

\begin{thm}\label{thm:Gamma_loop}
For any small idempotent complete stable $\infty$-category $\sC$ 
there is a sequence in $\Cat^{\perf}_\infty$
\[
\Gamma \sC \stackrel{\alpha}\to A\sC \stackrel{\beta}\to I\sC
\]
functorial in $\sC$ equipped with a natural transformation $i:\sC \to I\sC$ such that for any localizing invariant $E$ valued 
in a stable $\infty$-category the following conditions hold: 
\begin{itemize}
\item the composite $\beta\circ \alpha$ is zero and $E$ applied to the sequence is a cofiber sequence;
\item $E(A\sC) \simeq 0$;
\item $E(i):E(\sC) \to E(I\sC)$ is an equivalence.
\end{itemize}
In particular, there is a functorial identification $E(\Gamma\sC) \simeq \Omega E(\sC)$.
\end{thm}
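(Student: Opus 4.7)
The plan is to assemble a commutative $3\times 3$ grid in $\Cat^\perf_\infty$ whose first two rows and columns become Karoubi sequences (possibly after lax pullback factorizations), and then apply the $3\times 3$ lemma after passage to any localizing invariant $E$ to extract a cofiber sequence in the third column. I would set $A\sC := \Cone(h^*\otimes\sC)$ where $h^*\otimes\sC:\mbf{Perf}_{[\mbf{A}^1/\mbf{G}_m]}\otimes\sC \to \mbf{Perf}_\sB\otimes\sC$ is induced by the diagonal $h:\sB\to[\mbf{A}^1/\mbf{G}_m]$, and $I\sC := \Cone(\Delta_\sC:\sC\to\sC\times\sC)$ the Karoubi cofiber of the diagonal, with $i:\sC\to I\sC$ defined as the composite $\sC\xrightarrow{(0,-)}\sC\times\sC\to I\sC$.

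The first two columns of the grid are the Karoubi sequences $\mbf{Perf}_{0,[\mbf{A}^1/\mbf{G}_m]}\otimes\sC \hookrightarrow \mbf{Perf}_{[\mbf{A}^1/\mbf{G}_m]}\otimes\sC \twoheadrightarrow \sC$ and $\mbf{Perf}_{0,\sB}\otimes\sC \hookrightarrow \mbf{Perf}_\sB\otimes\sC \twoheadrightarrow \sC\times\sC$; the first two rows are $\mbf{Perf}_{0,[\mbf{A}^1/\mbf{G}_m]}\otimes\sC \xrightarrow{\Delta\otimes\sC} \mbf{Perf}_{0,\sB}\otimes\sC \to \Gamma\sC$ (the definition of $\Gamma$) and $\mbf{Perf}_{[\mbf{A}^1/\mbf{G}_m]}\otimes\sC \xrightarrow{h^*\otimes\sC} \mbf{Perf}_\sB\otimes\sC \to A\sC$. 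The third row is then $\sC\xrightarrow{\Delta_\sC} \sC\times\sC \to I\sC$ by construction, and the third column $\Gamma\sC\xrightarrow{\alpha} A\sC\xrightarrow{\beta} I\sC$ is the one we want.

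The vanishing $E(A\sC)\simeq 0$ would follow from \propref{prop:localizing-isomorphism}: the identity $h\circ\iota_\sB = \iota$ together with two-out-of-three forces $E(h^*)$ to be an equivalence, and the same stays true after tensoring with $\sC$ upon applying the argument to the localizing invariant $\sD\mapsto E(\sD\otimes\sC)$. The equivalence $E(i):E(\sC)\xrightarrow{\sim} E(I\sC)$ is a direct computation in the Karoubi sequence $\sC\xrightarrow{\Delta_\sC}\sC\times\sC\to I\sC$ after applying $E$. The nullity of $\beta\circ\alpha$ in $\Cat^\perf_\infty$ follows because it factors through $\mbf{Perf}_{0,\sB}\otimes\sC \to \mbf{Perf}_\sB\otimes\sC \to \sC\times\sC$, which is null since objects of $\mbf{Perf}_{0,\sB}\otimes\sC$ are acyclic and thus have vanishing stable values. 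Finally, the $3\times 3$ lemma applied to the $E$-image of the grid identifies the third column as a cofiber sequence, and combining with $E(A\sC)\simeq 0$ and $E(I\sC)\simeq E(\sC)$ yields the functorial identification $E(\Gamma\sC)\simeq\Omega E(\sC)$.

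The main technical hurdle is ensuring that the grid is really Karoubi in the first two rows, given that $h^*\otimes\sC$ and $\Delta\otimes\sC$ may not be fully faithful: the Karoubi cofibers have to be interpreted via the lax pullback factorizations of \propref{prop:cat-perf-cofibration-cat}, and the natural map from a lax pullback to its target must be shown to be an $E$-equivalence for any localizing invariant $E$ (not merely a motivic equivalence). This last point relies on the fact that the kernel of such a map admits an Eilenberg swindle stable under $-\otimes\sC$.
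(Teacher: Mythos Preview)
Your proposal follows essentially the same strategy as the paper: build the $3\times 3$ grid whose first two rows (your columns) are the Karoubi localization sequences for $\mbf{Perf}_0 \subset \mbf{Perf}$, whose first two columns (your rows) are the $\Cone$-sequences for $\Delta$ and $h^*$, invoke \propref{prop:localizing-isomorphism} for $E(A\sC)\simeq 0$, and deduce the third row/column is a cofiber sequence after $E$.

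One point of imprecision worth correcting: you set $A\sC := \Cone(h^*\otimes\sC)$ and $I\sC := \Cone(\Delta_\sC)$, taking the lax-pullback cone \emph{after} tensoring with $\sC$. The paper instead defines $A\sC := \Cone(h^*)\otimes\sC$ and $I\sC := \Cone(\mathrm{diag})\otimes\sC$, consistent with the given construction $\Gamma\sC = \Cone(\Delta)\otimes\sC$ from \sssecref{ssec:construction_of_gamma}. These two versions are genuinely different $\infty$-categories (since $\Ind(-)^{\aleph_1}$ does not commute with $-\otimes\sC$), and mixing them makes the grid incoherent as stated: your $\alpha : \Gamma\sC \to A\sC$ would have to pass through the comparison map $\Cone(\Delta)\otimes\sC \to \Cone(\Delta\otimes\sC)$, which you have not written down. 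The fix is simply to adopt the paper's convention throughout, so that the entire grid is the $\sC=\Spt^\omega$ grid tensored with $\sC$; this also makes functoriality in $\sC$ transparent.

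Your justification for $\beta\circ\alpha \simeq 0$ is also slightly off: the composite does not literally ``factor through'' $\mbf{Perf}_{0,\sB}\otimes\sC \to \sC\times\sC$, since $\Gamma\sC$ is a quotient, not a subcategory. The correct reason (the paper's ``by construction'') is that the $\Cone$ functor on the arrow category sends the composite horizontal map $(\Delta \Rightarrow \mathrm{diag})$ to zero, because both components of this map of arrows are themselves null (the top two rows being Karoubi sequences). Your identification of the Eilenberg-swindle argument for why the lax-pullback projection is an $E$-equivalence for \emph{arbitrary} (not just finitary) localizing $E$ is exactly right and is the key technical point.
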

\begin{proof}
By construction, $E(\Gamma\sC)$ is canonically equivalent to the cofiber 
\[
\Cofib(\Delta:E(\mbf{Perf}_{0, [\mbf{A}^1/\mbf{G}_{m}]} \otimes \sC) \to E(\mbf{Perf}_{0, \mathcal{B}} \otimes \sC).
\]
By construction of $\Gamma$, we have a functorial commutative diagram 
\[
\begin{tikzcd}{}
\mbf{Perf}_{0, [\mbf{A}^1/\mbf{G}_{m}]} \otimes \sC) \arrow[r]\arrow[d, "\Delta"]
& \mbf{Perf}_{[\mbf{A}^1/\mbf{G}_{m}]} \otimes \sC) \arrow[r]\arrow[d, "h^*"] 
& \Spt^\omega \otimes \sC \arrow[d, "\mathrm{diag}"] \\
\mbf{Perf}_{0, \mathcal{B}} \otimes \sC \arrow[r]\arrow[d]
& \mbf{Perf}_{\mathcal{B}} \otimes \sC \arrow[r]\arrow[d]
& (\Spt^\omega \otimes \sC) \times (\Spt^\omega \otimes_k \sC)\arrow[d]\\
\Cone(\Delta)\otimes \sC \arrow[r]
& \Cone(h^*) \otimes \sC \arrow[r] 
& \Cone(\mathrm{diag})\otimes \sC
\end{tikzcd}
\]
where $\Cone(f:\sA \to \sB)$ denotes the Karoubi quotient $\sA \times^\to_{\Ind(\sB)^{\aleph_1}} \Ind(\sB)^{\aleph_1}$. 
The bottom sequence composes to 0 by construction. 
The top two horizontal sequences are localization sequences, so they induce cofiber sequences on $E$. On the other 
hand, all vertical sequences also induce cofiber sequences on $E$ by Proposition~\ref{prop:cat-perf-cofibration-cat}. 
Hence, the bottom sequence also induces a cofiber sequence on $E$. 

We set $A\sC = \Cone(h^*) \otimes \sC$ and $I\sC = \Cone(\mathrm{diag})\otimes \sC$. Define $i$ to be the composite 
\[
\sC \stackrel{(0, \id)}\to
(\Spt^\omega \otimes \sC) \times (\Spt^\omega \otimes \sC) \to \Cone(\mathrm{diag}) \otimes \sC.
\]
The map $E(h^*)$ is an equivalence by Proposition~\ref{prop:localizing-isomorphism}, so $E(A\sC) \simeq 0$. 
Being a cofiber of the diagonal map into the direct sum, the object $E(\Cone(\mathrm{diag})\otimes \sC)$ may be identified 
with $E(\sC)$ with the projection map being  
\[
E(\sC \times \sC) \stackrel{\begin{pmatrix} -1 & 1\end{pmatrix}}\to E(\sC),
\]
so the map $E(i)$ is an equivalence. 
\end{proof}

\begin{lem}\label{lem:kappacompact_gamma}
Let $\kappa$ be a regular and countably closed cardinal.
Then the stable $\infty$-categories $\Gamma \Spt$, $A\Spt$, $I\Spt$ are $\kappa$-compact.
\end{lem}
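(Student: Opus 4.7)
The plan is to use Proposition~\ref{prop:cpctcat} to check $\aleph_1$-compactness of the four base categories $\mbf{Perf}_{[\mbf{A}^1/\mbf{G}_m]}$, $\mbf{Perf}_{\mathcal{B}}$, $\mbf{Perf}_{0, [\mbf{A}^1/\mbf{G}_m]}$, and $\mbf{Perf}_{0, \mathcal{B}}$, and then propagate to $\Gamma\Spt$, $A\Spt$, and $I\Spt$ via the closure properties of $\Cat^{\perf,\kappa}_\infty$ under the operations used in their construction.

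First I would verify that $\mbf{Perf}_{[\mbf{A}^1/\mbf{G}_m]}$ is $\aleph_1$-compact. By Proposition~\ref{prop:Z-complexes} together with the equivalence $\mbf{Perf}_{[\mbf{A}^1/\mbf{G}_m]} \simeq \Fun((\mbf{Z},\le), \Spt)^\omega$ from Example~\ref{exam:quot_stack}, a compact object is a $\mbf{Z}$-indexed diagram of compact spectra with a finite window of non-trivial values and stabilising behaviour outside that window. Since there are countably many equivalence classes of compact spectra (Lemma~\ref{lem:compact-rings}) and the transition maps come from mapping spectra in $\Spt^\omega$, which have countable homotopy groups, there are only countably many such diagrams up to equivalence. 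Mapping spectra between two such objects are finite limits of mapping spectra in $\Spt^\omega$, so they again have countable homotopy groups. Applying Proposition~\ref{prop:cpctcat}, this establishes $\aleph_1$-compactness. The same argument handles $\mbf{Perf}_{\mathcal{B}}$ via its explicit description, given just before Lemma~\ref{lem:P-homotopy-diagram}, as pairs of $\mbf{Z}$-indexed diagrams of compact spectra sharing a common underlying discrete diagram and the same support/stabilisation condition. The $0$-subscript full subcategories are then $\aleph_1$-compact as full subcategories (Corollary~\ref{cor:quotient-compact}), hence $\kappa$-compact since $\aleph_1 \le \kappa$.

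Next I would use that $-\otimes \Spt^\omega$ is equivalent to the identity on $\Cat^\perf_\infty$, so the tensoring with $\Spt^\omega$ at the end of each construction can be ignored. Thus $I\Spt = \Cone(\mathrm{diag}: \Spt^\omega \to \Spt^\omega \times \Spt^\omega)$ is $\kappa$-compact by Corollary~\ref{cor:catperfk-cofibration-cat}. Similarly, $A\Spt = \Cone(h^*)$, where $h^*: \mbf{Perf}_{[\mbf{A}^1/\mbf{G}_m]} \to \mbf{Perf}_{\mathcal{B}}$ is a morphism between $\kappa$-compact categories, is $\kappa$-compact by the same corollary. For $\Gamma\Spt$, the target of $\Delta'$ is $\kappa$-compact by Corollary~\ref{cor:lax-pullback-compact} combined with Lemma~\ref{lem:goodisgood} (which is where countable-closedness of $\kappa$ enters), and the Karoubi quotient is $\kappa$-compact as a finite colimit of $\kappa$-compacts.

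The main obstacle is Step~1: rigorously verifying $\aleph_1$-compactness of $\mbf{Perf}_{[\mbf{A}^1/\mbf{G}_m]}$ and $\mbf{Perf}_{\mathcal{B}}$. Once the compact objects are explicitly identified as diagrams of compact spectra with prescribed support and stabilisation, the $\aleph_1$-bounds on equivalence classes of objects and on homotopy groups of mapping spectra are fairly routine, but organising the bookkeeping cleanly requires some care. After that point, every remaining implication is a direct application of the closure properties developed in Sections~2 and~3.
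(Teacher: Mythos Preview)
Your proposal is correct and follows essentially the same approach as the paper's proof: first verify that the base categories $\mbf{Perf}_{[\mbf{A}^1/\mbf{G}_m]}$, $\mbf{Perf}_{\mathcal B}$ and their $0$-subscript subcategories are $\aleph_1$-compact (the paper phrases this as ``countable'' and obtains $\mbf{Perf}_{\mathcal B}$ via its pullback description rather than the explicit one, but this is the same argument), and then invoke Corollary~\ref{cor:catperfk-cofibration-cat} for the $\Cone$-constructions, which is precisely where countable closure of $\kappa$ is used. Your unpacking of the $\Gamma$ case into Corollary~\ref{cor:lax-pullback-compact} plus Lemma~\ref{lem:goodisgood} plus closure under Karoubi quotients is just the proof of Corollary~\ref{cor:catperfk-cofibration-cat} spelled out, so there is no genuine difference.
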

\begin{proof}
$\mbf{Perf}_{[\mbf{A}^1/\mbf{G}_m]}$ and $\mbf{Perf}_{B\mbf{G}_m}$ have been identified with compact filtered spectra and compact graded spectra respectively, so by Proposition~\ref{prop:Z-complexes}, they are both countable. Furthermore, $\mbf{Perf}_{\mathcal{B}}$ is a pullback of these categories and hence is also countable. The same therefore holds for their subcategories $\mbf{Perf}_{0,[\mbf{A}^1/\mbf{G}_m]}, \mbf{Perf}_{0,\mathcal{B}}$. 
Since $\kappa$ is countably closed, the claim about the $\Cone$-constructions 
follows from Corollary~\ref{cor:catperfk-cofibration-cat}.
\end{proof}

\ssec{Relation to the argument in \cite{KasprowskiWinges}}
It is also possible to give a proof of Theorem~\ref{thm:Gamma_loop} in terms of the categories used in \cite{KasprowskiWinges}.
Using the dictionary provided by Remark~\ref{rem:binacyc}, one can produce the analog of the large diagram in the proof of Theorem~\ref{thm:Gamma_loop}, and repeat the same argument using the results from \cite{KasprowskiWinges}.

The only statement in {\it loc.\ cit.} that uses preservation of filtered colimits is \cite[Lemma~3.10]{KasprowskiWinges}.
This can be proved for arbitrary splitting invariants using the following observation.
Recall that a \emph{universal K-equivalence} is an exact functor $f : \sA \to \sB$ for which there exists an exact functor $g : \sB \to \sA$ such that $[gf] = [\id_\sA] \in K_0(\Funex(\sA,\sA))$ and $[fg] = [\id_\sB] \in K_0(\Funex(\sB,\sB))$.
 In particular, any universal K-equivalence induces an equivalence after applying a splitting invariant. We learned the following lemma from Sasha Efimov:
 
 \begin{lem}\label{lem:associated-graded}
 Let $\sA$ be a stable $\infty$-category which is the union of an ascending sequence
 \[ 0 = \sA_{-1} \subseteq \sA_0 \subseteq \sA_1 \subseteq \sA_2 \subseteq \ldots \]
 of full stable subcategories.
 Assume that each inclusion $i_n : \sA_n \to \sA$ admits a right adjoint functor, and denote the resulting semi-orthogonal decomposition by
 \[\begin{tikzcd}
  \sA_n\ar[r, shift right=2, "i_n"']\ar[r, phantom, "\dashv" {rotate=90}]	& \sA\ar[r, shift right=2, "q_n"']\ar[l, shift right=2, "p_n"']\ar[r, phantom, "\dashv" {rotate=90}] & \sB_n\ar[l, shift right=2, "j_n"'].
 \end{tikzcd}\]
 Then each inclusion functor $f_n : \sA_n \to \sA_{n+1}$, $n \geq -1$, also participates in a semi-orthogonal decomposition
 \[\begin{tikzcd}
	\sA_{n}\ar[r, shift right=2, "f_n"']\ar[r, phantom, "\dashv" {rotate=90}]	& \sA_{n+1}\ar[r, shift right=2, "r_{n+1}"']\ar[l, shift right=2, "g_n"']\ar[r, phantom, "\dashv" {rotate=90}] & \sG_{n+1}\ar[l, shift right=2, "s_{n+1}"'],
 \end{tikzcd}\]
 and the sequence of exact functors $(r_np_n : \sA \to \sB_n)_n$ induces a universal K-equivalence
 \[ \mrm{gr} = (r_np_n)_n : \sA \to \bigoplus_{n \geq 0} \sG_n. \]
\end{lem}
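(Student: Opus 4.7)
The proof has two parts: constructing the semi-orthogonal decomposition of each $\sA_{n+1}$, and proving the universal K-equivalence statement for $\mathrm{gr}$.

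For the first part, I would set $g_n := p_n \circ i_{n+1} : \sA_{n+1} \to \sA_n$. Using $i_n \simeq i_{n+1} \circ f_n$ together with the adjunction $i_n \dashv p_n$, a direct calculation gives $f_n \dashv g_n$. Define $\sG_{n+1}$ as the full subcategory of $y \in \sA_{n+1}$ with $g_n y \simeq 0$, equivalently those $y$ whose image under $i_{n+1}$ lies in $\sB_n$. Standard semi-orthogonal decomposition arguments then identify $s_{n+1}$ with the inclusion of $\sG_{n+1}$ and $r_{n+1}$ with its left adjoint, realized as the cofiber of the unit $f_n g_n \to \mathrm{id}_{\sA_{n+1}}$.

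For the second part, define an exact functor $h : \bigoplus_{n \geq 0} \sG_n \to \sA$ by $h((y_n)_n) := \bigoplus_n i_n s_n y_n$; this is well-defined because any tuple in the coproduct has finite support. A componentwise calculation verifies $\mathrm{gr} \circ h \simeq \mathrm{id}$: for $y_n \in \sG_n$ and any $m$, the composite $r_m p_m i_n s_n y_n$ equals $y_n$ if $m = n$ and vanishes otherwise. The three cases are: for $m < n$, $i_n s_n y_n \in \sG_n \subseteq \sB_{n-1} \subseteq \sB_m$ (using $\sA_m \subseteq \sA_{n-1}$) is in the kernel of $p_m$; for $m > n$, $i_n s_n y_n \in \sA_n \subseteq \sA_{m-1}$ is in the kernel of $r_m$; for $m = n$, both functors act as the evident equivalences.

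The harder identity $[h \circ \mathrm{gr}] = [\mathrm{id}_\sA]$ in $K_0(\Funex(\sA, \sA))$ I would prove via an Eilenberg-swindle-style telescope. Set $F_n := i_n p_n$, $\pi_n := i_n s_n r_n p_n$, and $R_n := \mathrm{cofib}(F_n \to \mathrm{id}_\sA)$, with conventions $F_{-1} = 0$ and $R_{-1} = \mathrm{id}_\sA$. Applying $i_n$ to the decomposition of $p_n x$ in $\sA_n$ and using $g_{n-1} p_n \simeq p_{n-1}$ (obtained from the factorization $i_{n-1} \simeq i_n \circ f_{n-1}$ of right adjoints) yields cofiber sequences of exact endofunctors $F_{n-1} \to F_n \to \pi_n$; the octahedral axiom applied to $F_n \to F_{n+1} \to \mathrm{id}_\sA$ then produces cofiber sequences $\pi_{n+1} \to R_n \to R_{n+1}$. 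Because $\sA = \bigcup_n \sA_n$, for each $x$ there exists $N$ with $\pi_n x \simeq 0$ for $n > N$ and $R_n x \simeq 0$ for $n \geq N$, so the pointwise-finite direct sums $\bigoplus_{n \geq 0} \pi_n$, $\bigoplus_{n \geq -1} R_n$, and $\bigoplus_{n \geq 0} R_n$ all define genuine exact endofunctors of $\sA$. Summing the cofiber sequences $\pi_{n+1} \to R_n \to R_{n+1}$ over $n \geq -1$ produces a cofiber sequence
\[ \bigoplus_{n \geq 0} \pi_n \longrightarrow \bigoplus_{n \geq -1} R_n \longrightarrow \bigoplus_{n \geq 0} R_n \]
whose left term is naturally equivalent to $h \circ \mathrm{gr}$; splitting off the $n = -1$ summand identifies the middle term with $\mathrm{id}_\sA \oplus \bigoplus_{n \geq 0} R_n$. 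Passing to $K_0$ and cancelling the two $[\bigoplus_{n \geq 0} R_n]$ contributions gives $[h \circ \mathrm{gr}] = [\mathrm{id}_\sA]$, as required.

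The main obstacle is the careful handling of the infinite filtration: one must verify that the relevant infinite direct sums are genuinely pointwise-finite on $\sA$ (hence actual endofunctors, not merely formal expressions) and that the cofiber sequences assemble coherently. The telescope swindle cancelling the tail $\bigoplus_{n \geq 0} R_n$ against itself is the crucial move, upgrading the levelwise finite identities $[\mathrm{id}_{\sA_N}] = \sum_{n \leq N}[\pi_n|_{\sA_N}]$ in each $K_0(\Funex(\sA_N, \sA_N))$ to a global identity in $K_0(\Funex(\sA, \sA))$.
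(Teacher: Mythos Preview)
Your proof is correct and follows essentially the same route as the paper: the paper also takes $g_n = p_n i_{n+1}$, defines the inverse functor $t = \sum_n i_n s_n$ (your $h$), checks $\mathrm{gr}\circ t \simeq \id$ componentwise, and then proves $[t\circ\mathrm{gr}] = [\id_\sA]$ via the same telescoping cofiber sequence---your $R_n = \mathrm{cofib}(i_n p_n \to \id_\sA)$ is precisely the paper's $j_n q_n$, and your octahedral argument produces exactly the sequence the paper extracts from its bicartesian squares.
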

\begin{proof}
 It is straightforward to check that the composite $g_n := p_ni_{n+1} : \sA_{n+1} \to \sA_n$ is right adjoint to the inclusion functor $f_n : \sC_n \to \sC_{n+1}$, giving rise to the second semi-orthogonal decomposition.

 Let $n \geq k \geq 0$ and denote by $f_{k,n} : \sA_k \to \sA_n$ the inclusion functor, so that $i_k \simeq f_{k,n} i_n$.
 If $k < n$, then 
 \[ r_n p_n i_k \simeq r_n p_n i_n f_{k,n} \simeq r_n f_{k,n} \simeq 0 \]
 because $f_{k,n}$ factors over $\sA_{n-1}$.
 Since every object of $\sA$ is of the form $i_k(a)$ for some $k$, the functor $\mrm{gr}$ is well-defined.
 We claim that the exact functor
 \[ t := \sum_{n \geq 0} i_ns_n : \bigoplus_{n \geq 0} \sG_n \to \sC, \]
 witnesses that $\mrm{gr}$ is a universal K-equivalence.
 
 For $n > k \geq 0$, the preceding paragraph shows that $r_np_ni_ks_k \simeq 0$.
 If $n = k$, then $r_np_ni_ns_n \simeq \id_{\sG_n}$.
 Observing that the right adjoint $g_{k,n}$ of $f_{k,n}$ satisfies $p_k \simeq g_{k,n} p_n$, we obtain
 \[ f_k p_k i_n s_n \simeq f_k g_{k,n} p_n i_n s_n \simeq f_k g_{k,n} s_n \simeq f_k g_{k,n-1} g_{n-1} s_n \simeq 0.  \]
 Consequently, the composite $\mrm{gr} \circ i_ks_k : \sG_k \to \bigoplus_{n \geq 0} \sG_n$ is precisely the inclusion of the $k$-th summand.
 This shows that $\mrm{gr} \circ t \simeq \id_{\bigoplus_\mbf{N} \sG_n}$.
 
 Using once more that every object of $\sA$ comes from some filtration step, it follows that for any given $a \in \sA$, we have $q_n(a) \simeq 0$ for all but finitely many $n \geq -1$.
 In particular, the internal direct sum $\bigoplus_{n \geq -1} j_nq_n$ is a well-defined endofunctor of $\sA$.
 Note that there exists a cofiber sequence of exact functors
 \[ \id_\sA \to \bigoplus_{n \geq -1} j_nq_n \to \bigoplus_{n \geq 0} j_nq_n. \]
 Unwinding definitions, one finds that there exist bicartesian squares
 \[\begin{tikzcd}
  i_np_n\ar[r]\ar[d] & i_ns_nr_np_n\ar[d] \\
  \id_\sA\ar[r] & j_{n-1}q_{n-1}
 \end{tikzcd}\]
 which give rise to the cofiber sequence
 \[ t \circ \mrm{gr} \simeq \bigoplus_{n \geq 0} i_ns_nr_np_n \to \bigoplus_{n \geq 0} j_{n-1}q_{n-1} \to \bigoplus_{n \geq 0} j_nq_n. \]
 It follows that $[t \circ \mrm{gr}] = [\id_\sA] \in K_0(\Funex(\sA,\sA))$ as required.
\end{proof} 

%
%
%

\ssec{Applications to products}

Thanks to Theorem~\ref{thm:Gamma_loop} both suspensions and loops of any localizing invariant are categorified via 
certain functors 
\[
\Calk, \Gamma : \Cat^\perf_{\infty} \to \Cat^\perf_{\infty},
\]
i.e.\ there are natural equivalences $E(\Calk(\sC)) \simeq \Sigma E(\sC)$ and $E(\Gamma(\sC)) \simeq \Omega E(\sC)$ 
for {\it any} localizing invariant $E$. 
In particular, somewhat counterintuitively, for a localizing invariant valued in spectra, its zeroth homotopy group 
completely determines its values. 

\begin{cor}\label{cor:pi0_determines_the_invariant}
Let $\sE$ be a stable $\infty$-category with a non-degenerate t-structure and let $E \to F$ be a natural 
transformation  between localizing invariants $\Cat^\perf_{\infty} \to \sE$. 
Assume that $\pi_0E(\sC) \to \pi_0F(\sC)$ is an equivalence for all $\sC \in \Cat^\perf_{\infty}$. Then 
$E(\sC) \to F(\sC)$ is also an equivalence for all $\sC \in  \Cat^\perf_{\infty}$.
\end{cor}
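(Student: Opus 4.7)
The plan is to leverage the categorifications of $\Omega$ and $\Sigma$ on localizing invariants provided by the Grayson construction $\Gamma$ and the Calkin construction $\Calk$ in order to bootstrap the hypothesis on $\pi_0$ to one on all $\pi_n$, and then to invoke non-degeneracy of the t-structure on $\sE$.

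The first step is to recall that for every localizing invariant $L$ valued in a stable $\infty$-category there are natural equivalences $L(\Gamma \sC) \simeq \Omega L(\sC)$ (Theorem~\ref{thm:Gamma_loop}) and $L(\Calk(\sC)) \simeq \Sigma L(\sC)$; the latter is induced by the Karoubi sequence $\sC \to \Ind(\sC)^{\aleph_1} \to \Calk(\sC)$ whose middle term admits an Eilenberg swindle. The crucial point I would stress is that these identifications are natural not only in $\sC$ but also in the invariant $L$, since they are produced from a fixed cofiber sequence in $\Cat^\perf_\infty$ together with the identification $L(\Ind(\sC)^{\aleph_1}) \simeq 0$, both of which are natural with respect to morphisms $L \to L'$ of localizing invariants. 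Applied to the natural transformation $E \to F$ this yields commutative squares comparing $\Sigma E(\sC) \to \Sigma F(\sC)$ with $E(\Calk \sC) \to F(\Calk \sC)$, and symmetrically $\Omega E(\sC) \to \Omega F(\sC)$ with $E(\Gamma \sC) \to F(\Gamma \sC)$.

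Iterating, I would apply the hypothesis to $\Calk^{n}(\sC)$ and use the square $n$ times to deduce that $\pi_{-n} E(\sC) \to \pi_{-n} F(\sC)$ is an isomorphism in $\sE^\heart$ for every $n \geq 0$; symmetrically, applying the hypothesis to $\Gamma^{n}(\sC)$ shows that $\pi_{n} E(\sC) \to \pi_{n} F(\sC)$ is an isomorphism for every $n \geq 0$. Consequently, for every integer $n$ and every $\sC \in \Cat^\perf_\infty$ the map $E(\sC) \to F(\sC)$ induces an isomorphism on $\pi_n$.

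To conclude, I would form the cofiber $G(\sC) := \Cofib(E(\sC) \to F(\sC))$ in $\sE$. The long exact sequence of homotopy objects attached to the cofiber sequence $E(\sC) \to F(\sC) \to G(\sC)$ forces $\pi_n G(\sC) = 0$ for every $n \in \mbf{Z}$, and non-degeneracy of the t-structure on $\sE$ then implies $G(\sC) \simeq 0$, so that $E(\sC) \to F(\sC)$ is an equivalence. I do not anticipate any substantial obstacle here; the only step that deserves care is the naturality in the invariant $L$ of the identifications $L(\Calk \sC) \simeq \Sigma L(\sC)$ and $L(\Gamma \sC) \simeq \Omega L(\sC)$, which, as explained above, is automatic from their construction.
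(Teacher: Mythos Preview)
Your proof is correct and follows essentially the same approach as the paper: both use the natural identifications $L(\Calk\sC)\simeq\Sigma L(\sC)$ and $L(\Gamma\sC)\simeq\Omega L(\sC)$ to reduce the statement about all $\pi_n$ to the hypothesis on $\pi_0$ applied to $\Calk^n\sC$ and $\Gamma^n\sC$, and then invoke non-degeneracy of the t-structure. Your additional care regarding naturality in the invariant and the explicit cofiber argument at the end are welcome elaborations but not substantive departures.
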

\begin{proof}
Since the t-structure on $\sE$ is non-degenerate, it suffices to show that $\pi_iE(\sC) \to \pi_iF(\sC)$ is an 
equivalence for all $i$. Now we use the natural equivalences $E(\Calk(\sC)) \simeq \Sigma E(\sC)$ and $E(\Gamma(\sC)) \simeq \Omega E(\sC)$. For $i$ negative we have a commutative square
\[
\begin{tikzcd}
\pi_iE(\sC)\arrow[d,"\simeq"] \arrow[r]& \pi_iF(\sC)\arrow[d,"\simeq"]\\
\pi_{0}E(\Calk^{-i}(\sC)) \arrow[r,"\simeq"] &\pi_0F(\Calk^{-i}(\sC)),
\end{tikzcd}
\] 
where the bottom horizontal map is an equivalence by assumption, which implies that the upper map is an equivalence. 
Similarly, for $i$ positive the result follows from commutativity of the square 
\[
\begin{tikzcd}
\pi_iE(\sC)\arrow[d,"\simeq"] \arrow[r]& \pi_{i}F(\sC)\arrow[d,"\simeq"]\\
\pi_{0}E(\Gamma^i\sC) \arrow[r,"\simeq"] &\pi_0F(\Gamma^{i}\sC).
\end{tikzcd}
\]
\end{proof}

Corollary~\ref{cor:pi0_determines_the_invariant} gives an alternative proof of \cite[Theorem~1.3]{KasprowskiWinges}. 

\begin{cor}\label{cor:products}
For any family of stable $\infty$-categories $\sC_i \in \Cat^\perf_\infty$ indexed by a set $I$, the map 
\[
\K(\prod_{i\in I} \sC_i) \to \prod_{i\in I}\K(\sC_i)
\]
is an equivalence
\end{cor}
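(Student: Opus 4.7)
The plan is to apply Corollary~\ref{cor:pi0_determines_the_invariant} to a natural transformation between two localizing invariants $\Cat^\perf_\infty \to \Spt$ whose evaluation at $\Spt^\omega$ is the desired comparison map. Concretely, set
\[ E(\sC) := \K\bigl((\prod_{i\in I} \sC_i) \otimes \sC\bigr), \qquad F(\sC) := \prod_{i\in I} \K(\sC_i \otimes \sC), \]
with natural transformation $\eta : E \to F$ induced by the projections $\prod_{i\in I}\sC_i \to \sC_j$ (tensored with $\sC$, passed through $\K$, and assembled via the universal property of the product). Evaluating $\eta$ at $\sC = \Spt^\omega$ recovers the comparison map from the statement.

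Both $E$ and $F$ are localizing invariants of $\sC$: $F$ is a product of the localizing invariants $\sC\mapsto\K(\sC_i\otimes\sC)$, and $E$ is $\K$ post-composed with the endofunctor $\sC \mapsto (\prod_{i\in I}\sC_i)\otimes\sC$, which preserves Karoubi sequences since tensoring with a fixed stable $\infty$-category is a left adjoint preserving fully faithful inclusions (cf.~Lemma~\ref{lem:equiv-tensor-stable}).

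By Corollary~\ref{cor:pi0_determines_the_invariant} applied with the standard non-degenerate $t$-structure on $\Spt$, it suffices to show that $\pi_0\eta_\sC$ is an isomorphism for every $\sC \in \Cat^\perf_\infty$, i.e.\ that
\[ K_0\bigl((\prod_{i\in I}\sC_i)\otimes\sC\bigr) \xrightarrow{\sim} \prod_{i\in I} K_0(\sC_i\otimes\sC). \]

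One half of this verification is the classical fact that $K_0$ preserves arbitrary products of idempotent complete stable $\infty$-categories: any tuple $([y_i])$ with $y_i = [a_i]-[b_i]$ lifts to $[(a_i)_i]-[(b_i)_i]$, and cofiber relations are componentwise. The other half, which I expect to be the main obstacle, concerns the interaction of the tensor product with infinite products in $\Cat^\perf_\infty$: the natural comparison functor $(\prod_{i\in I}\sC_i)\otimes\sC \to \prod_{i\in I}(\sC_i\otimes\sC)$ is typically not a categorical equivalence, so its inducing an isomorphism on $K_0$ requires careful analysis of generators and relations in the tensor product. Once this $\pi_0$-check is complete, Corollary~\ref{cor:pi0_determines_the_invariant} yields that $\eta$ is an equivalence, and evaluation at $\Spt^\omega$ completes the proof.
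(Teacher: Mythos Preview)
Your setup introduces an unnecessary obstacle that you then fail to overcome. The $\pi_0$ check you need is that
\[
K_0\bigl((\textstyle\prod_i \sC_i)\otimes\sC\bigr) \longrightarrow \prod_i K_0(\sC_i\otimes\sC)
\]
is an isomorphism for \emph{every} $\sC$. As you yourself observe, the comparison functor $(\prod_i \sC_i)\otimes\sC \to \prod_i(\sC_i\otimes\sC)$ is not an equivalence in general, so this is \emph{not} the classical ``$K_0$ preserves products'' statement; it mixes in the interaction of $\otimes$ with infinite products, which you leave unaddressed. That is a genuine gap: the proof as written is incomplete, and it is unclear that the missing step is any easier than the corollary itself.

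The paper avoids this entirely by first reducing to the case of a \emph{constant} family. Given an arbitrary family $(\sC_i)_{i\in I}$, each $\sC_i$ is a retract of $\sD:=\prod_j\sC_j$, so the comparison map for $(\sC_i)_i$ is a retract of the comparison map for the constant family $(\sD)_{i\in I}$, and it suffices to treat the latter. For a constant family one can take
\[
E(\sC)=\K(\textstyle\prod_I\sC),\qquad F(\sC)=\textstyle\prod_I\K(\sC),
\]
with the obvious natural transformation. Both are localizing invariants because the functor $\sC\mapsto\prod_I\sC$ preserves Karoubi sequences (products of fully faithful functors are fully faithful, and products of Karoubi quotients are Karoubi quotients). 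Now the $\pi_0$ check is precisely the classical fact that $K_0(\prod_I\sC)\to\prod_I K_0(\sC)$ is an isomorphism, with no tensor products in sight, and Corollary~\ref{cor:pi0_determines_the_invariant} finishes the argument. The moral: use $\prod_I(-)$ rather than $(-)\otimes(\prod_i\sC_i)$ as the endofunctor of $\Cat^\perf_\infty$ feeding into $\K$.
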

\begin{proof}
It suffices to prove the claim for a constant family $\sC_i = \sC$ because any other family is a retract of such. 
Define localizing invariants $E$ and $F$ valued in spectra via the following formulas:
\[
E(\sC) = \K(\prod\limits_{i\in I} \sC),
F(\sC) = \prod\limits_{i\in I} \K(\sC).
\]
The natural map $E \to F$ is an isomorphism on $\pi_0$, so by Corollary~\ref{cor:pi0_determines_the_invariant} it is an equivalence.
\end{proof}

\section{Motives over an arbitrary base}\label{app:linear}

In the main body of the paper, we worked in so-called ``absolute'' localizing motives, that is, motives of stable $\infty$-categories. One can, in turn, decide to work over more general bases, for example consider only $\mathbb Z$-linear stable $\infty$-categories (the $\infty$-categorical version of classical dg-categories). More generally, one can work over any base $\mathcal V\in\CAlg(\PrLst)$. To make sense of this precisely, it is easier to work in the dualizable setting rather than in the small setting. From Efimov's recent breakthroughs in the theory of localizing invariants (and the relevant $\mathcal V$-linear analogs, see \cite{ramzi2024dualizablepresentableinftycategories}), the whole theory is insensitive to this and we invite the reader to check for themselves the relevant results. 

The goal of this brief appendix is to indicate the necessary changes in the main text to make the whole story go through $\mathcal V$-linearly.  The point will be that they are quite mild, and except for precise cardinality estimates, one does not lose anything. Therefore, while we do indicate the changes, we do not provide details regarding said changes. 
\begin{notation}
Throughout this section, we fix $\mathcal V$, as well as a regular uncountable cardinal $\lambda$ such that $\mathcal V\in\CAlg(\PrL_\lambda)$. That is, $\mathcal V$ is $\lambda$-compactly generated, its unit is $\lambda$-compact, and tensor products of  $\lambda$-compacts remain $\lambda$-compact. 
\end{notation}
\newcommand{\DblV}{\Mod_\mathcal V^\mathrm{dbl}}
\newcommand{\V}{\mathcal V}

\begin{notation}
We let $\DblV$ denote the $\infty$-category of dualizable $\V$-modules in $\PrL$ and internal left adjoints between them. 
\end{notation}
We refer to \cite{ramzi2024dualizablepresentableinftycategories} for basics about this $\infty$-category. In particular, it has colimits (which are computed underlying in $\PrL$) and so we can define: 
\begin{defn}
A $\V$-localizing invariant is a functor $E: \DblV\to \mathcal E$ for some stable $\infty$-category $\mathcal E$ that sends cofiber sequences along fully faithful functors to cofiber sequences. 

$E$ is called finitary if it preserves filtered colimits. 
\end{defn}
\begin{defn}
A $\V$-motivic equivalence is a map in $\DblV$ which is sent to an equivalence by any finitary $\V$-localizing invariant. 
\end{defn}
\begin{rem}\label{rem:basechangemotivic}
The basechange functor $\V\otimes -: (\PrLst)^{\mathrm{dbl}}\to \DblV$ preserves fully faithful functors and all colimits, therefore precomposition by it sends (finitary) $\V$-localizing invariants to (finitary) localizing invariants. Therefore, $\V\otimes -$ sends motivic equivalences to $\V$-motivic equivalences. 

Similarly, $\V$-motivic equivalences are closed under tensor products. 
\end{rem}
We now wish to prove the following analog of our main result:
\begin{thm}
The localization of $\DblV$ at motivic equivalences is locally small, cocomplete and stable, and is the target of the universal $\V$-localizing invariant. 
\end{thm}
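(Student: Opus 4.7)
The plan is to mirror the proof of Theorem \ref{thm:main-intro} one step at a time, with $\DblV$ in place of $\Cat^\perf_\infty$, and with the class $W_{\mrm{mot}}^\V$ of $\V$-motivic equivalences and $\Cof^\V$ the class of fully faithful internal left adjoints in place of $W_{\mrm{mot}}$ and $\Cof$. The first step is to upgrade $(\DblV, W_{\mrm{mot}}^\V, \Cof^\V)$ to a category of cofibrant objects, analogous to \propref{prop:cat-perf-cofibration-cat}. The lax pullback factorisation used there has a direct analog in the dualizable setting (using the internal $\Funex$ or the analog of the $\Ind^{\aleph_1}$ construction as in Efimov's framework, see \cite{ramzi2024dualizablepresentableinftycategories}), and the key pushout-closure of $W_{\mrm{mot}}^\V$ along cofibrations follows verbatim from the same argument (Bousfield localization under pullback, cf.\ \lemref{lem:localization-pullback}).

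Next, I would exhaust $\DblV$ by small subcategories of cofibrant objects. Here the main adjustment is to choose, for each regular countably closed cardinal $\kappa \geq \lambda$, the subcategory $\DblV^\kappa$ of $\kappa$-compact dualizable $\V$-modules. One checks a $\V$-linear analog of \propref{prop:cpctcat} and \corref{cor:catperfk-cofibration-cat}, namely that $\DblV^\kappa$ is closed under the lax pullback factorisation. This uses that $\V$ itself is $\lambda$-compactly generated with $\lambda$-compact unit, so tensor products behave well with respect to $\kappa$-compactness once $\kappa \geq \lambda$ is countably closed. One then defines $\sM_\kappa^\V$ as the Dwyer--Kan localization of $\DblV^\kappa$ at motivic equivalences and verifies, as in \propref{prop:colimloc}, that it is pointed, $\kappa$-cocomplete, symmetric monoidal, and that $\gamma_\kappa^\V$ preserves the relevant colimits.

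Stability of $\sM_\kappa^\V$ is established as in \propref{prop:mk-stable}, by base-changing the Calkin object $\Calk$ and the Grayson object $\Gamma$ to $\V$. By Remark \ref{rem:basechangemotivic}, the cofiber sequence $\Gamma \to A \to I$ and the motivic equivalences $0\to A$, $\Spt \to I$ pass to $\V$-motivic equivalences after tensoring with $\V$; since both categorifications of suspension and loop are realised by tensoring with fixed dualizable $\V$-modules, the argument for $\otimes$-invertibility of $\gamma_\kappa^\V(\V\otimes\Calk)$ carries over verbatim. The analog of \lemref{lem:kappa-localizing} then gives, for each such $\kappa$, a universal property for the $\kappa$-finitary extension $\lambda_\kappa^\V : \DblV \to \Ind_\kappa(\sM_\kappa^\V)$, and taking a filtered colimit over the proper class of regular countably closed cardinals produces the localization $\DblV[W_{\mrm{mot}}^{\V,-1}]$, which is locally small, stable, cocomplete, and carries the universal $\V$-localizing invariant.

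The main obstacle, as the authors indicate, will be purely bookkeeping: verifying that the cardinality estimates in \lemref{lem:compact-rings}, \propref{prop:cpctcat} and \lemref{lem:goodisgood} still go through once $\V$ is fixed. The clean way to handle this is to enlarge ``countably closed'' to ``$\lambda$-closed'' (meaning $\mu^{<\lambda} < \kappa$ for $\mu < \kappa$, and $\kappa \geq \lambda$) and to run the same size counting with $\lambda$-compact generators and $\aleph_1$-filtered colimits replaced by $\lambda$-filtered colimits where appropriate. Everything else---the Grayson construction, which is geometric and so immediately $\V$-linear; the cofibration category structure; and the Bousfield localization lemma---transports without incident, so the theorem follows by the same schema as Theorem \ref{thm:main-intro}.
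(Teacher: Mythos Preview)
Your proposal is correct and follows essentially the same schema as the paper's own argument: mirror the proof of Theorem~\ref{thm:main-intro} with $\DblV$ in place of $\Cat^\perf_\infty$, establish the cofibration structure, handle the set theory by restricting to $\kappa$-compacts, and deduce stability via the Calkin/Grayson objects base-changed along $\V\otimes -$.

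Two points where the paper streamlines your outline are worth noting. First, for the functorial factorisation the paper uses the specific embedding $\mathcal M \hookrightarrow \mathcal P_\V(\mathcal M^\lambda)$ into $\V$-enriched presheaves on $\lambda$-compacts (which admits an Eilenberg swindle since $\lambda$ is uncountable); this is the precise replacement for your ``analog of the $\Ind^{\aleph_1}$ construction''. Second, rather than redo the cardinal arithmetic of Lemma~\ref{lem:compact-rings} and Proposition~\ref{prop:cpctcat} $\V$-linearly as you suggest, the paper leverages presentability of $\DblV$ directly: it defines $(\V,\lambda)$-closed cardinals abstractly by requiring that the functors $\mathcal P_\V((-)^\lambda)$ and oriented-pullback send $\lambda$-compacts to $\kappa$-compacts, and then uses $\lambda$-accessibility of these functors together with $\lambda\ll\kappa$ to propagate this to all $\kappa$-compacts. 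This sidesteps any explicit size counting. Finally, the paper's stability argument is one line: the base-change functor of Remark~\ref{rem:basechangemotivic} induces a symmetric monoidal, colimit-preserving functor $\Motnc\to \DblV[W_{\V\text{-}\mathrm{mot}}^{-1}]$ from a stable source, which forces the target to be stable---no need to re-verify the $\Gamma$-sequence $\V$-linearly.
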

In the case of $\Cat^\perf_\infty$ and ordinary localizing invariants, our proof relied on several ingredients, and we outline them here. 

The first claim is that the direct analog of Proposition~\ref{prop:cat-perf-cofibration-cat} holds in this setting:
\begin{obs}
There is a cofibration structure on $\DblV$ with $\V$-motivic equivalences as weak equivalences and (underlying) fully faithful functors as cofibrations. 
\end{obs}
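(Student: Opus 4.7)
The plan is to mimic the proof of Proposition~\ref{prop:cat-perf-cofibration-cat} in the $\V$-linear dualizable setting. First, the easy axioms: the initial object $0 \in \DblV$ satisfies that any map $0 \to \sC$ is fully faithful; cofibrations and weak equivalences are wide subcategories containing all equivalences; and two-out-of-three and two-out-of-six for $\V$-motivic equivalences follow immediately from the definition together with the fact that equivalences in stable $\infty$-categories satisfy two-out-of-six.

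For the factorization of an arbitrary morphism $f:\sC\to\sD$, the role of $\sC\to\Ind(\sC)^{\aleph_1}$ is played by the $\aleph_1$-continuous cocompletion developed for dualizable $\V$-modules in \cite{ramzi2024dualizablepresentableinftycategories}. This construction fits into a cofiber sequence $\sC \to P(\sC) \to \Calk(\sC)$ of fully faithful functors in $\DblV$ whose middle term admits an Eilenberg swindle, and is therefore sent to zero by any finitary $\V$-localizing invariant. Forming the lax pullback $\sD \vec{\times}_{P(\sD)} P(\sC)$ yields a functorial factorization of $f$ as a fully faithful map followed by a $\V$-motivic equivalence, by the same argument as in \cite[Proposition~3.7]{rsw:every-spectrum-is-k-theory}.

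For the pushout axioms I would use the Bousfield localization trick: a fully faithful internal left adjoint $i:\sC\to\sD$ in $\DblV$ corresponds, via passage to right adjoints, to a right Bousfield localization $i^R:\sD\to\sC$. Pushouts in $\DblV$ are computed by the underlying pullback of right adjoints (inside the dualizable subcategory of $\PrR$), and Lemma~\ref{lem:localization-pullback}, being a purely $\infty$-categorical statement, implies that this pullback is again a right Bousfield localization; hence the structure map of the pushout is fully faithful. Combining this with the fact that cofibers of fully faithful maps in $\DblV$ are preserved by $\V$-localizing invariants, the same argument as at the end of the proof of Proposition~\ref{prop:cat-perf-cofibration-cat} (take horizontal cofibers in the pushout square and observe that two out of three terms agree) shows that pushouts of $\V$-motivic equivalences along cofibrations are $\V$-motivic equivalences.

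The main obstacle is setting up the factorization rigorously: one must verify that the $\aleph_1$-continuous cocompletion and the resulting lax pullback remain in $\DblV$, that the assignment is functorial, and that the relevant Eilenberg swindle exists in this setting. These technical points are handled by the dualizable framework of \cite{ramzi2024dualizablepresentableinftycategories} together with Efimov's machinery; once this is in place, every other part of the argument is a direct transcription of the proof of Proposition~\ref{prop:cat-perf-cofibration-cat}.
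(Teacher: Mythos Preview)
Your overall strategy matches the paper's: reduce to Proposition~\ref{prop:cat-perf-cofibration-cat}, observe that only the factorization axiom needs a genuinely new ingredient, and supply that ingredient via an embedding into a motivically trivial object followed by a lax pullback. The pushout arguments and the easy axioms are handled identically in both.

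There is one concrete discrepancy worth flagging. You describe the replacement for $\Ind(\sC)^{\aleph_1}$ as an ``$\aleph_1$-continuous cocompletion'', but the paper does not use $\aleph_1$ here: it uses the cardinal $\lambda$ fixed at the outset so that $\V \in \CAlg(\PrL_\lambda)$, and the specific construction is the $\V$-enriched presheaf category $\mathcal P_\V(\mathcal M^\lambda)$ on the $\lambda$-compact objects. The embedding $\mathcal M \to \mathcal P_\V(\mathcal M^\lambda)$ is an internal left adjoint by results from \cite{ramzi2024dualizablepresentableinftycategories}, and the Eilenberg swindle on the target comes from the fact that $\mathcal M^\lambda$ admits countable coproducts (since $\lambda$ is uncountable). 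For a general base $\V$ that is not $\aleph_1$-compactly generated, an $\aleph_1$-based construction need not land in $\DblV$ or have the required naturality, so the cardinal really must be adapted to $\V$. Once you make this correction, your argument and the paper's coincide.
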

The only difference with the proof of Proposition~\ref{prop:cat-perf-cofibration-cat} is the construction of 
the functorial factorizations. To explain how to fix this, we first recall that $\V$-modules have underlying $\V$-enriched 
$\infty$-categories, which allows us to take $\V$-enriched presheaf categories of small subcategories of $\V$-modules, which 
we denote by $\mathcal P_\V(\sC)$ if $\sC$ is such a subcategory. With this in hand, we can now recall that by 
\cite[Corollaries~1.7~and~1.59,~Remark~1.60]{ramzi2024dualizablepresentableinftycategories}, there is an internal left adjoint 
embedding $\mathcal M\to \mathcal P_\V(\mathcal M^\lambda)$, natural in the dualizable $\V$-module $\mathcal M$. Since 
$\lambda$ is uncountable, $\mathcal M^\lambda$ admits infinite coproducts and so this presheaf category admits an Eilenberg 
swindle, which implies that it is motivically trivial. All the other constructions and arguments go through unchanged.

The second claim we make is that the set-theoretic care that we took in the body of the paper also goes through here.  The key input to making everything work is the main result of \cite{ramzi2024dualizablepresentableinftycategories}, that the $\infty$-category $\DblV$ is itself presentable. 
In particular, there are two key set-theoretic insights in the main body of the text, which we state here in the $\V$-linear setting. The first was Corollary~\ref{cor:catperfk-cofibration-cat}:

\begin{obs}\label{obs:smallcof}
There is a cofinal class of regular cardinals $\kappa$ such that the cofibration structure described above restricts to a cofibration structure on $\kappa$-compact dualizable $\V$-modules.
\end{obs}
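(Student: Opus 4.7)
The plan is to mirror the argument of Corollary~\ref{cor:catperfk-cofibration-cat}, substituting the $\V$-enriched presheaf construction $\mathcal P_{\V}(-)$ for the auxiliary $\Ind(-)^{\aleph_1}$ used in the absolute case. The starting point will be the theorem of Efimov recorded in \cite{ramzi2024dualizablepresentableinftycategories} that $\DblV$ is presentable. From this it is formal that there is a cofinal class of regular cardinals $\kappa$ for which $(\DblV)^\kappa$ is small, contains the initial object, and is stable under finite colimits. The task is to cut this class down to those $\kappa$ that are additionally ``$\lambda$-closed'' in a sense to be made precise, and to verify that the functorial factorization restricts to $(\DblV)^\kappa$ for such $\kappa$.

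The functorial factorization of an internal left adjoint $f : \mathcal M \to \mathcal N$ between $\kappa$-compact dualizable $\V$-modules should take the form
\[ \mathcal M \to \mathcal N \vec{\times}_{\mathcal P_{\V}(\mathcal N^\lambda)} \mathcal P_{\V}(\mathcal M^\lambda) \to \mathcal N, \]
and I would verify in two steps that the middle term is $\kappa$-compact. First, the $\V$-linear analog of Corollary~\ref{cor:lax-pullback-compact}---a lax pullback of $\kappa$-compact objects along $\kappa$-compact functors is $\kappa$-compact---follows by a routine argument from presentability once $\kappa$ is sufficiently large. Second, the $\V$-linear analog of Lemma~\ref{lem:goodisgood} asserts that $\mathcal P_{\V}(\mathcal M^\lambda)$ is $\kappa$-compact whenever $\mathcal M$ is; this is the key statement requiring a cardinal closure condition on $\kappa$. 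Once these two ingredients are in place, the other cofibration axioms (stability of cofibrations under pushouts, stability of weak equivalences under pushouts along cofibrations, two-out-of-three for $\V$-motivic equivalences) restrict from $\DblV$ to $(\DblV)^\kappa$ for free, by the same reasoning as in the absolute case.

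The main obstacle I anticipate is pinning down the correct closure condition on $\kappa$. In the absolute case countable closure sufficed because the $\aleph_1$-compacts in $\Ind(\sC)$ are built from countable coproducts of compact objects of $\sC$, so that only countably many mapping spectra had to be controlled. Here one must bound both the number of equivalence classes of objects and the sizes of the internal mapping objects of $\mathcal P_{\V}(\mathcal M^\lambda)$ in terms of $\lambda$, the size of $\mathcal M^\lambda$, and the size of $\V^\lambda$. I expect the correct condition to be that $\kappa$ is closed under the operation $\mu \mapsto \mu^\lambda$, suitably incorporating the size of $\V^\lambda$. Any class of regular cardinals defined by such a closure condition is cofinal, which is precisely what the statement demands.
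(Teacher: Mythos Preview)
Your outline is correct and the overall strategy matches the paper's, but the implementation of the key step---the $\V$-linear analog of Lemma~\ref{lem:goodisgood}---differs in an instructive way. You propose to verify that $\mathcal P_\V(\mathcal M^\lambda)$ is $\kappa$-compact by bounding the number of equivalence classes and the ``sizes'' of internal mapping objects, mimicking the explicit argument via Proposition~\ref{prop:cpctcat} in the absolute case, and you anticipate a closure condition like $\mu \mapsto \mu^\lambda$. This would require a characterization of $\kappa$-compact objects in $\DblV$ in terms of such size data, which is not readily available in the $\V$-linear setting and would take nontrivial work to establish.

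The paper sidesteps this entirely. Rather than characterizing $\kappa$-compacts, it observes that both $\mathcal P_\V((-)^\lambda)$ and the lax pullback construction are $\lambda$-accessible functors on $\DblV$. One then \emph{defines} the class of $(\V,\lambda)$-closed cardinals $\kappa$ by the requirements that $\lambda \ll \kappa$ and that these two functors send $\lambda$-compacts to $\kappa$-compacts; cofinality is automatic from presentability since $(\DblV)^\lambda$ is small, so its image under any functor lands in $(\DblV)^\kappa$ for $\kappa$ large enough. The relation $\lambda \ll \kappa$ then lets one bootstrap from $\lambda$-compact inputs to $\kappa$-compact inputs, as any $\kappa$-compact is a $\kappa$-small $\lambda$-filtered colimit of $\lambda$-compacts. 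Your approach, if carried through, would yield sharper explicit cardinal bounds analogous to ``countably closed'' in the absolute case; the paper's approach trades that precision for a much shorter argument, and indeed remarks that one ``could run similar proofs to get more precise cardinality estimates.''
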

In the absolute case, we singled out the countably closed cardinals (though in fact, with more care, we could have proved it for all regular uncountable cardinals).  The second was the following, used in Lemma~\ref{lem:kappa-localizing} and subsequent results:
\begin{obs}\label{obs:smalllocseq}
There exists a regular cardinal $\kappa$ (in the absolute case, $\kappa= \aleph_1$) so that every localization sequence is a $\kappa$-filtered colimit of localization sequences between $\kappa$-compacts. 
\end{obs}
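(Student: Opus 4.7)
The plan is to adapt the argument used in the absolute case (cf.\ the proof of Lemma~\ref{lem:kappa-localizing}), replacing the ad hoc filtration of $\Cat^\perf_\infty$ by its $\kappa$-compact thick subcategories with the presentability of $\DblV$ established in \cite{ramzi2024dualizablepresentableinftycategories}. First I would choose a regular cardinal $\kappa$ satisfying three conditions simultaneously: $\kappa > \lambda$, $\DblV$ is $\kappa$-accessible, and $\kappa$ lies in the cofinal class provided by Observation~\ref{obs:smallcof}, so that the cofibration structure on $\DblV$ restricts to one on the full subcategory $\DblV^\kappa$ of $\kappa$-compacts with functorial factorizations. Any sufficiently large regular cardinal satisfies these conditions.

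Given a localization sequence $\sC \to \sD \to \sE$ in $\DblV$, I would express $\sD$ as the canonical $\kappa$-filtered colimit $\sD \simeq \colim_{j \in J} \sD_j$ indexed by the slice $J = (\DblV^\kappa)_{/\sD}$. For each $j$, the composite $\sD_j \to \sD \to \sE$ is a morphism in $\DblV$ whose source is $\kappa$-compact; applying the functorial factorization from Observation~\ref{obs:smallcof} (after, if necessary, writing $\sE$ itself as a $\kappa$-filtered colimit of $\kappa$-compacts and selecting cofinal compatible lifts) yields a localization sequence $\sC_j \to \sD_j \to \sE_j$ in $\DblV^\kappa$ together with a compatible map to the original sequence. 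Making these choices coherently indexes a $\kappa$-filtered diagram of localization sequences of $\kappa$-compacts over $\sC \to \sD \to \sE$.

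It remains to verify that the colimit of this system recovers the given sequence. By construction $\colim_j \sD_j \simeq \sD$, and the task reduces to identifying $\colim_j \sE_j$ with $\sE$, whereupon $\colim_j \sC_j \simeq \sC$ follows by passing to fibers of fully faithful internal left adjoints. In the absolute case this identification is transparent because the $\sD_j$ can be taken as literal full thick subcategories of $\sD$ and $\sC_j = \sD_j \cap \sC$; in the dualizable setting the $\sD_j$ are not sub-objects of $\sD$ in any direct sense. The main obstacle is therefore to show that, in $\DblV$, the formation of cofibers along fully faithful morphisms and of fibers of internal left adjoints commutes with $\kappa$-filtered colimits of the sort considered. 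This compatibility is the critical input; once established using the machinery of \cite{ramzi2024dualizablepresentableinftycategories} (in particular the fact that fully faithful internal left adjoints correspond to Bousfield localizations of dualizable modules and behave well under filtered colimits), the remaining assembly is routine.
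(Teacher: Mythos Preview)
Your approach diverges substantially from the paper's, and as written it has a genuine gap. The functorial factorization from Observation~\ref{obs:smallcof} factors a map $f$ as a cofibration followed by a motivic equivalence; applying it to $\sD_j \to \sE$ (or to a lift $\sD_j \to \sE_j$) produces $\sD_j \hookrightarrow \sE'_j \xrightarrow{\sim} \sE_j$, which is \emph{not} a localization sequence with $\sD_j$ in the middle. To obtain $\sC_j \to \sD_j \to \sE_j$ you would instead need to lift the fully faithful map $\sC \to \sD$ through the system $\sD_j \to \sD$, and this is precisely where the absolute argument used that the $\sD_j$ are literal full subcategories of $\sD$ so that one can intersect with $\sC$. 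You acknowledge this obstacle but do not resolve it, and the appeal to ``cofinal compatible lifts'' together with the functorial factorization does not do the job. The coherence of the resulting diagram is a further unresolved issue.

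The paper's argument bypasses all of this by working one categorical level up: it observes that the $\infty$-category of localization sequences in $\DblV$ is itself presentable (being a limit of accessible functors between presentable categories, as in Lemma~\ref{lem:kappa_equiv}), and that for $\kappa$ large enough the forgetful functors to $\DblV$ preserve and jointly reflect $\kappa$-compacts. Hence every object of this category---every localization sequence---is automatically a $\kappa$-filtered colimit of $\kappa$-compact objects, which are exactly the localization sequences between $\kappa$-compact dualizable $\V$-modules. This two-line argument avoids any hands-on construction of the approximating system and any coherence issues; the key idea you are missing is to treat the collection of localization sequences as a presentable category in its own right rather than trying to build the filtration from a filtration of the middle term.
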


For the first statement, namely Observation~\ref{obs:smallcof}, we needed to prove that $\kappa$-compacts are closed under cofibers (which is obvious for any $\kappa$), and we needed to check that for suitable $\kappa$'s, the ``mapping cone'' construction from Proposition~\ref{prop:cat-perf-cofibration-cat} (adapted here with $\mathcal P_\V(\mathcal M^\lambda)$ in place of $\Ind(C)^{\aleph_1}$) preserves $\kappa$-compacts --- this was the combination of Corollary~\ref{cor:lax-pullback-compact} and Corollary~\ref{lem:goodisgood}. While we could run similar proofs to get more precise cardinality estimates, we explain the proof here simply using presentability of $\DblV$. For this, we introduce a class of cardinals that corresponds to the countably closed ones in the absolute case. 
\begin{defn}
A regular cardinal $\kappa$ is called $(\V,\lambda)$-closed if it satisfies $\lambda \ll \kappa$ as in \cite[Definition 5.4.2.8]{HTT} and the following conditions hold
\begin{enumerate}
\item[a)]
for all $\lambda$-compact dualizable $\V$-modules $\mathcal M$, $\mathcal P_\V(\mathcal M^\lambda)$ is $\kappa$-compact as a dualizable $\V$-module;
\item[b)] 
for any cospan $(\mathcal M_0\to \mathcal M_{01}\leftarrow \mathcal M_1)$ of $\lambda$-compact dualizable $\V$-modules and internal left adjoints, their oriented pullback $\mathcal M_0\vec{\times}_{\mathcal M_{01}} \mathcal M_1$ is $\kappa$-compact.
\end{enumerate}
\end{defn}
\begin{rem}
Since $\DblV$ is presentable, $(\DblV)^\lambda$ is small and so all $\kappa$'s large enough satisfy parts a) and b) of this definition. Since the first part of the definition is satisfied for cofinally many $\kappa$'s, we also see that there is a cofinal collection of $(\V,\lambda)$-closed cardinals. 
\end{rem}
\begin{lem}
Suppose $\kappa\geq \lambda$ is a $(\V,\lambda)$-closed cardinal, and $\mathcal M$ is a $\kappa$-compact dualizable $\V$-module. Then $\mathcal P_\V(\mathcal M^\lambda)$ is also $\kappa$-compact. 

Furthermore, lax pullbacks of $\kappa$-compact dualizable $\V$-modules are also $\kappa$-compact. 
\end{lem}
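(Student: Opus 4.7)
The plan is to reduce both assertions to the $\lambda$-compact case provided by the definition of $(\V,\lambda)$-closedness, using the hypothesis $\lambda \ll \kappa$ together with the presentability of $\DblV$ from \cite{ramzi2024dualizablepresentableinftycategories}. In any $\lambda$-presentable $\infty$-category, the condition $\lambda \ll \kappa$ (in the sense of \cite[Definition~5.4.2.8]{HTT}) forces every $\kappa$-compact object to be expressible as a $\kappa$-small $\lambda$-filtered colimit of $\lambda$-compact objects. The first step is therefore to fix a presentation $\mathcal M \simeq \colim_{i \in I} \mathcal M_i$ of this form, with each $\mathcal M_i \in (\DblV)^\lambda$.

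For the first assertion, I would verify that the assignment $\mathcal M \mapsto \mathcal P_\V(\mathcal M^\lambda)$ commutes with $\lambda$-filtered colimits in $\DblV$, so that $\mathcal P_\V(\mathcal M^\lambda) \simeq \colim_i \mathcal P_\V(\mathcal M_i^\lambda)$. Each term on the right is $\kappa$-compact by part (a) of $(\V,\lambda)$-closedness, and $\kappa$-compact objects of a presentable $\infty$-category are closed under $\kappa$-small colimits, so $\mathcal P_\V(\mathcal M^\lambda)$ is $\kappa$-compact. For the lax pullback statement, the same strategy applies to cospans: the presentable diagram $\infty$-category $\Fun(\Lambda^2_0,\DblV)$ has $(\DblV)^\lambda$-valued cospans as its $\lambda$-compact objects, so any cospan of $\kappa$-compacts is a $\kappa$-small $\lambda$-filtered colimit of cospans of $\lambda$-compact modules; combining this with part (b) of the definition and preservation of $\lambda$-filtered colimits by the lax pullback functor yields the claim.

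The main obstacle is precisely the preservation of $\lambda$-filtered colimits, both by $\mathcal M \mapsto \mathcal P_\V(\mathcal M^\lambda)$ and by the lax pullback functor. For the former, the expected argument is that $(-)^\lambda$ preserves $\lambda$-filtered colimits of $\lambda$-compactly generated dualizable modules and that the free $\V$-module construction $\mathcal P_\V(-)$ preserves colimits of small $\V$-enriched $\infty$-categories; both facts should be directly available from the framework of \cite{ramzi2024dualizablepresentableinftycategories}. For the lax pullback, its description as a finite limit built from a pullback in $\DblV$ and the $\V$-linear analog of $\Ind(-)^{\aleph_1}$ reduces the claim to the previous commutation together with the fact that finite limits in $\DblV$ commute with $\lambda$-filtered colimits, which again follows from the set-up of \cite{ramzi2024dualizablepresentableinftycategories}.
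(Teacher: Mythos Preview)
Your approach is exactly the paper's: write a $\kappa$-compact object (or cospan) as a $\kappa$-small $\lambda$-filtered colimit of $\lambda$-compacts using $\lambda \ll \kappa$, then use that $\mathcal P_\V((-)^\lambda)$ and the lax pullback are $\lambda$-accessible to reduce to conditions (a) and (b) of $(\V,\lambda)$-closedness; the paper in fact simply asserts $\lambda$-accessibility of these functors without further argument. One small correction to your final paragraph: the oriented fiber product $\mathcal M_0 \vec{\times}_{\mathcal M_{01}} \mathcal M_1$ does not involve any $\Ind(-)^{\aleph_1}$-type construction---you are conflating it with the $\Cone$ construction from Proposition~\ref{prop:cat-perf-cofibration-cat}---but since it is computed as a finite limit of underlying $\infty$-categories, its $\lambda$-accessibility is in fact easier than you suggest, and the overall argument is unaffected.
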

\begin{proof}
Both of these follow from the functors $\mathcal P_\V((-)^\lambda)$ and ``lax pullback'' being $\lambda$-accessible: indeed, 
the relation $\lambda \ll\kappa$ guarantees that any $\kappa$-compact dualizable $\V$-module is a $\kappa$-small, 
$\lambda$-filtered colimit of $\lambda$-compacts, and so applying these functors also yields $\kappa$-small, 
$\lambda$-filtered colimits of their values at $\lambda$-compacts, which are by design $\kappa$-compact. 
\end{proof}

For the second statement, Observation~\ref{obs:smalllocseq}, we use again presentability: the $\infty$-category of 
localization sequences in $\DblV$ is presentable and the forgetful functors to $\DblV$ preserve and jointly reflect 
$\kappa$-compacts for $\kappa$ large enough, which proves the claim. 

Finally, to conclude the proof in exactly the same manner as in Section~\ref{section:MotLoc}, we needed a final ingredient involving the Grayson construction:
\begin{obs}
The localization $\DblV[W_{\V-\mathrm{mot}}^{-1}]$ is stable. 
\end{obs}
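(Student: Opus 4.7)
The plan is to mirror Proposition~\ref{prop:mk-stable} in the dualizable $\V$-linear setting by constructing, inside $\DblV$, a ``Calkin'' object $\Calk_\V$ whose tensor action realizes the suspension on the localization, and then inverting it using the Grayson construction base-changed from $\Cat^\perf_\infty$.

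First I would set $\Calk_\V := \Cofib(\V \hookrightarrow \mathcal P_\V(\V))$ in $\DblV$, where the fully faithful inclusion is the one already used in the proof of the cofibration category structure (Observation~\ref{obs:smallcof}), and $\mathcal P_\V(\V)$ admits an Eilenberg swindle since $\lambda$ is uncountable. Tensoring this cofiber-of-fully-faithful sequence with an arbitrary $\mathcal M \in \DblV$ preserves both fully-faithfulness and the cofiber (as tensoring is a left adjoint in each variable), and the middle term $\mathcal P_\V(\V) \otimes \mathcal M$ still admits a swindle, hence is $\V$-motivically trivial. As in Lemma~\ref{lem:calk-suspension}, this shows that $- \otimes \Calk_\V$ models the suspension on $\DblV[W_{\V-\mathrm{mot}}^{-1}]$, so stability reduces to producing a tensor inverse to $\Calk_\V$.

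Next I would obtain such an inverse by base-changing the Grayson construction of Theorem~\ref{thm:Gamma_loop}. Applying the symmetric-monoidal functor $\V \otimes \Ind(-) \colon \Cat^\perf_\infty \to \DblV$ to the sequence $\Gamma \to A \to I$ yields a sequence $\Gamma_\V \to A_\V \to I_\V$ in $\DblV$. Since $\V \otimes \Ind(-)$ preserves fully faithful functors, sends Karoubi sequences to cofiber sequences along fully faithful maps in $\DblV$, and by Remark~\ref{rem:basechangemotivic} sends motivic equivalences to $\V$-motivic equivalences, this is a $\V$-Karoubi sequence with $A_\V$ $\V$-motivically trivial and $\V \to I_\V$ a $\V$-motivic equivalence. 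Running the calculation of Proposition~\ref{prop:mk-stable} verbatim in the symmetric monoidal localization $\DblV[W_{\V-\mathrm{mot}}^{-1}]$ then gives $\Sigma \Gamma_\V \simeq \V$, so $\Gamma_\V$ is tensor-inverse to $\Calk_\V$. Combined with the finite (co)completeness and pointedness which follow from the cofibration category structure and Theorem~\ref{thm:localize-cofibration-cat}, the localization is therefore stable.

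The main obstacle will be verifying that the base-change functor $\V \otimes \Ind(-)$ interacts correctly with cofibers in $\DblV$, in particular that Karoubi quotients in $\Cat^\perf_\infty$ go to the analogous cofibers-along-fully-faithful-maps in the dualizable $\V$-linear category rather than merely in $\PrLst$. This amounts to checking that for compactly generated $\V$-modules the relevant cofiber in $\DblV$ agrees with the presentable Verdier-type quotient underlying it, which is a routine unwinding of the identification of $\DblV$-colimits described in \cite{ramzi2024dualizablepresentableinftycategories}. Once this compatibility is in place, the whole argument is essentially formal from Theorem~\ref{thm:Gamma_loop} together with Remark~\ref{rem:basechangemotivic}.
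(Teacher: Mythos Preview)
Your approach is correct, but the paper takes a much shorter route. Rather than rebuilding a $\V$-linear Calkin object and base-changing the Grayson sequence to verify tensor-invertibility from scratch, the paper simply observes that Remark~\ref{rem:basechangemotivic} produces a symmetric monoidal, colimit-preserving functor $\Cat^\perf_\infty[W_{\mathrm{mot}}^{-1}] \to \DblV[W_{\V-\mathrm{mot}}^{-1}]$, and since the source is already stable by Theorem~\ref{thm:main-intro}, the target is stable as well: a symmetric monoidal colimit-preserving functor sends $\Sigma\mathbf{1}$ to $\Sigma\mathbf{1}$ and preserves $\otimes$-invertibility.

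Your argument effectively unwinds this transport by hand: your $\Gamma_\V$ is exactly the image of $\gamma(\Gamma)$ under that functor, and your verification that $\Sigma\Gamma_\V \simeq \V$ is the image of the relation $\Sigma\gamma(\Gamma) \simeq \gamma(\Spt^\omega)$ already established in Proposition~\ref{prop:mk-stable}. The advantage of the paper's packaging is that it treats stability of $\Motnc$ as a black box and avoids re-running Lemma~\ref{lem:finite_colims}, the swindle argument, and the Grayson computation in the new setting. The advantage of your approach is that it makes the suspension and desuspension explicit as concrete objects of $\DblV$, which could be useful if one wanted finer control (e.g.\ cardinality bounds on $\Calk_\V$ and $\Gamma_\V$) without first appealing to the absolute case. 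Note also that your separate construction of $\Calk_\V$ via $\mathcal P_\V(\V^\lambda)$ is not needed: the base-changed $\V \otimes \Ind(\Calk)$ already does the job, and the paper's argument shows you need not even name it.
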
 
To prove this, we note that Remark~\ref{rem:basechangemotivic} gives us a symmetric monoidal, colimit-preserving functor $\Cat^\perf_\infty[W_{\mathrm{mot}}^{-1}]\to \DblV[W_{\V-\mathrm{mot}}^{-1}]$, where the source is already stable by Theorem~\ref{thm:main-intro}, which proves stability of the target as well. 

\begin{exam}
The trace functor as defined in \cite{HSS} always defines a finitary $\V$-localizing invariant $\mathrm{HH}_\V: \DblV\to \V$. In particular, $(\Motnc)_\V = 0$ if and only if $\V=0$. 
\end{exam}

\let\mathbb=\mathbf

{\small
\bibliography{references}
}

\parskip 0pt

\end{document}